\newtheorem{theorem}{Theorem}[chapter]
\newtheorem{lemma}[theorem]{Lemma}
\newtheorem{prop}[theorem]{Proposition}
\newtheorem{corollary}[theorem]{Corollary}
\newtheorem{definition}[theorem]{Definition}
\newtheorem{Oss}[theorem]{Remark}
\newtheorem{Prob}[theorem]{Question}
\newtheorem*{es}{Example}
\newtheorem{claim}{Claim}
\pgfplotsset{compat=1.15}
\begin{document}
	\definecolor{ududff}{rgb}{0.30196078431372547,0.30196078431372547,1.}
	\definecolor{qqqqff}{rgb}{0.,0.,1.} 
	\definecolor{zzttqq}{rgb}{0.6,0.2,0.}
	
	\definecolor{ffzztt}{rgb}{1.,0.6,0.2}
	\definecolor{ffqqqq}{rgb}{1.,0.,0.}
	\definecolor{qqffqq}{rgb}{0.,1.,0.}
	\definecolor{yqqqqq}{rgb}{0.5019607843137255,0.,0.}
	\definecolor{ffqqff}{rgb}{1.,0.,1.}
	\definecolor{ffffww}{rgb}{1.,1.,0.4}
	\definecolor{qqffff}{rgb}{0.,1.,1.}
	
	\definecolor{ffqqtt}{rgb}{1.,0.,0.2} 
	\definecolor{ffffff}{rgb}{1.,1.,1.} 
	\definecolor{ffwwzz}{rgb}{1.,0.4,0.6} 
	\definecolor{ffttww}{rgb}{1.,0.,1} 
	\definecolor{ffcqcb}{rgb}{1.,0.7529411764705882,0.796078431372549} 
	\definecolor{ccqqqq}{rgb}{0.8,0.,0.} 
	\definecolor{cczzqq}{rgb}{0.8,0.6,0.}
	\definecolor{yqqqqq}{rgb}{0.5019607843137255,0.,0.} 
	\definecolor{ffccww}{rgb}{1.,0.8,0.4}
	\definecolor{ccffcc}{rgb}{0.8,1.,0.8}
	\definecolor{zzffqq}{rgb}{0.6,1.,0.}
	\definecolor{zzccqq}{rgb}{0.6,0.8,0.}
	\definecolor{qqzzqq}{rgb}{0.,0.6,0.}
	
	\definecolor{ffxfqq}{rgb}{1.,0.4980392156862745,0.}
	\definecolor{xfqqff}{rgb}{0.4980392156862745,0.,1.}
	\definecolor{ffffqq}{rgb}{1.,1.,0.}
	
	  \begin{figure}[htbp]
	  	\begin{minipage}{0.3\textwidth}
	  		\begin{flushleft}
	  			\includegraphics[scale=0.2]{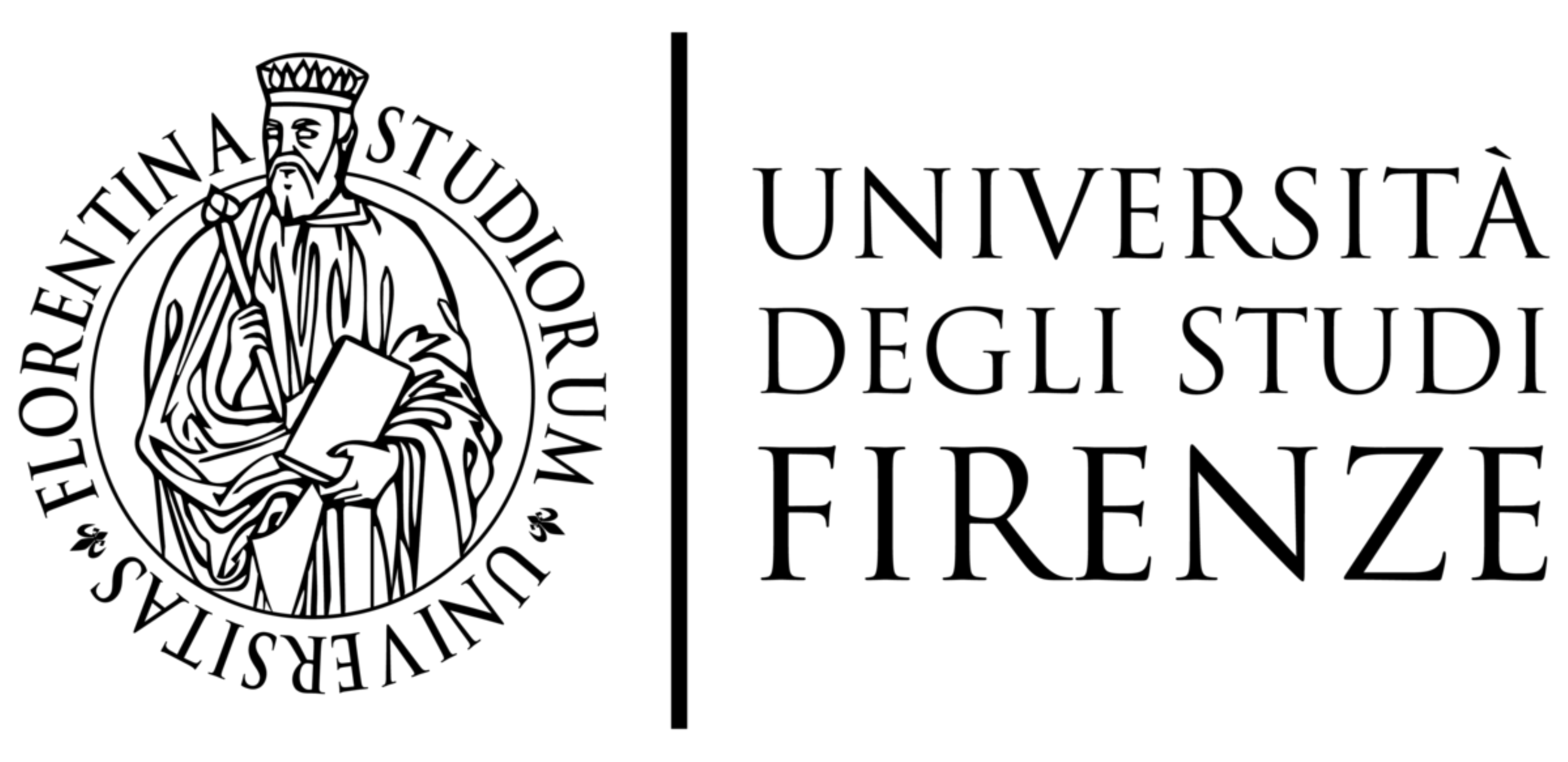}
	  		\end{flushleft}
	  	\end{minipage}~
	  	\begin{minipage}{0.7\textwidth}
	  		\vspace {18mm}
	  		\begin{flushright}
	  			{\Large \textbf{Scuola\\
	  					di Scienze\\ Matematiche,\\ Fisiche e Naturali \\}
	  			}
	  			
	  			{\Large {Corso di Laurea Magistrale \\ }}
	  			\vspace{1mm}
	  			{\Large {in Matematica}
	  			}
	  		\end{flushright}
	  	\end{minipage}
	  \end{figure}
	  \vspace{30mm}
	  \begin{center}

	  	{\LARGE{\bf{ Power Graphs of Finite Groups\\}}}

	  \end{center}
	  \vspace {30mm}
	  
	  \begin{minipage}{0.8\textwidth}
	  	\begin{flushleft}
	  		{\Large \textbf{Candidate:\\}}
	  		\vspace{3mm}
	  		{\LARGE {Nicolas Pinzauti}}
	  	\end{flushleft}
	  	\begin{flushleft}
	  		{\Large \textbf{Advisor:\\}}
	  		\vspace{3mm}
	  		{\LARGE {Professor Daniela Bubboloni}}
	  	\end{flushleft}
	  \end{minipage}
	  \vfill
	  
	  \begin{minipage}{0.8\textwidth}
	  	\begin{flushleft}
	  		{ \large {Academic Year 2020/2021}}
	  	\end{flushleft}
	  \end{minipage}
	  \thispagestyle{empty}
	
	\tableofcontents
	
	\chapter*{Acknowledgement}
	May a not Italian reader forgive me if I write this section in my language.
	
	Dopo cinque anni di studi di questa magnifica materia che è la matematica sono finalmente arrivato a termine di un percorso ricco di esperienze e di incontri, perciò molte sono le persone che devo ringraziare.
	
	Ringrazio la mia relatrice, la professoressa Daniela Bubboloni, non solo per avermi proposto questo argomento, ma anche, e soprattutto, per essere stata in grado di riaccendere in me la stessa curiosità e voglia di scoprire cose nuove che ha caratterizzato il mio primo anno universitario.   
	
	Ringrazio la mia famiglia, che nonostante non voglia più ascoltare i miei vaneggiamenti matematici, mi ha sempre sostenuto.
	
	Un grazie speciale ai \textquotedblleft ragazzi dell'uni". Le partite a Bang, le vacanze e le serate insieme mi hanno regalato dei momenti indimenticabili. E devo anche aggiungere che senza di loro non sarei stato in grado di terminare questo viaggio, o almeno non con questi risultati.
	
	Se con loro forse terminano i ringraziamenti per un aiuto diretto al raggiungimento di questo obiettivo, cominciano adesso quei ringraziamenti verso tutti coloro che hanno reso unici questi cinque anni. Troppi però sarebbero i nomi da citare e poche le parole con cui mi saprei esprimere. Ma non posso dimenticarvi, perciò a voi dedico questa frase: 
	
	\begin{quoting}
		« Non sapremo mai il bene che può fare un semplice sorriso. » \\
		S.ta Madre Teresa di Calcutta.
		
	\end{quoting}
	
	Perchè tanti e spettacolari sono i sorrisi che mi avete dedicato e fatto nascere! Grazie.
	
	Infine, ultimi, ma non per importanza, forse anzi per ringraziarli ancora di più, non uno, bensì mille grazie a Lara e Tommaso, semplicemente per tutto.
	
	Grazie.
	
	\chapter*{Introduction}
	
	Contamination between branches of mathematics is often the starting point of interesting researches and home of beautiful results. The union of two branches such as Group theory and Graph theory is known since 1878 when Cayley's graph was firstly defined. In the following years various graphs have been associated to groups. In 1955, Brauer and Fowler introduced the commuting graph in \cite{1955Commuting}, in early 2000 the directed power graphs have been related to semigroups in \cite{semigroups,semigroups_2}, and only in 2009, Chackrabarty, Gosh and Sen, in \cite{CGS_DefPowerGraph}, defined the undirected power graph, that we will call power graph.
	Approaching a group through a graph associated with it allows to focus on some specific properties of the group. For example, the commuting graph highlights the commutativity among the elements, the generating graph underlines the couples of elements that generate the group, if there is any; finally, as its name suggests, the power graph focuses on the powers of the elements.
	This subject of research it is not only useful for theoretical purpose, but there are clear connections with computer science, see \cite{Automata1, Automata2, DataMining}.

	In this thesis we will focus on directed power graphs and power graphs defined on finite groups.
	Since its appearance, the power graphs, both directed and undirected, stimulates numerous researches. The study of power graphs is still a hot topic, proven by the amount of papers released on the subject recently. 
	Just to give an idea we briefly illustrate some of them released in the years 2020-2021.
	For the reader interested in the field of graphs defined on groups we cite \cite{GraphsOnGroups} where all the graphs mentioned above with the exception of Cayley's graph are discussed. In \cite{ForbiddenGraphs}, Cameron et al. have studied for which groups the power graph is a cograph, a chordal graph or a threshold graph seeing if the power graph contains some particular induced subgraphs. Acharyya and Williams, in \cite{PowerGraphsofPowerGraoups}, have discussed properties of cliques, cycles, paths, and colouring in power graphs of finite groups, together with the construction of the longest directed path in power graphs of cyclic groups. In \cite{ConnectivityandIndependenceNumber2020} characterizations of some groups whose proper power graphs are connected are given. Bo\v{s}njak, Madar\'{a}sz and Zahirovi\'{c}, in \cite{SomeNewResults2020}, have proved that two groups with isomorphic power graphs also have isomorphic enhanced power graphs. Some results regarding finite groups with the same power graph are given in \cite{SamePG2020}. Results on normal subgroups and power graphs are given in \cite{NormalSubgrouspPG2021}. In \cite{LineGraph2021}, Bera has written about line graphs and nilpotent groups, giving a classification of all finite nilpotent groups whose power graphs are line graphs. Finally, in \cite{lambda-number}, Mishra and Sarkar have studied the $\lambda$-number for power graphs of finite $p$-groups. We can see with those recent articles that actual research on power graph has two main fields: the study of groups with particular power graph and the study of properties of the power graphs.  
	
	Among the mathematicians that mostly contributes to the development of the study of power graphs we cannot forget P. J. Cameron. Two of his articles on the subject, precisely \cite{Cameron_2, Cameron_1}, are quite omnipresent in the references of the papers produced later. For example, in our references, the following articles cite at least one of those two Cameron's articles: \cite{KelarevSurvey, PowerGraphsofPowerGraoups, SomeNewResults2020, LineGraph2021, ForbiddenGraphs, GraphsOnGroups, ResultsOnFiniteGroups, Ma et al, CyclicSubgroupGraphofFiniteGroup, SamePG2020, NormalSubgrouspPG2021, lambda-number}. So which better starting point if not this two articles to begin the study of power graph? \\
	We deeply studied the papers \cite{Cameron_2, Cameron_1} and decide to rewrite them in order to produce more detailed proofs. Indeed in many parts the proofs in those papers are just sketched. In this work of rewriting we have corrected some errors and we explained some not clear steps. We put particular effort into the correction of a crucial result in \cite{Cameron_2}. That result is strongly used to get the final conclusion, but it seems to be not completely proven. Indeed a particular case was not covered by Cameron's proof. This new case was not easy to manage until, in the article \cite{Ma et al} (released six years after \cite{Cameron_2} publication) we have found a way to extend Cameron's idea and conclude the proof including the new case.
	Cameron's result focus on the distinction between two distinct type of equivalence classes for a certain equivalence relation using an appropriate argument based on the sizes of the class studied and of an object related to it. The proof provided in the paper shows the existence of what we will refer to as \emph{critical classes} for which the Cameron's argument does not distinguish the type. 
	Also in \cite{Cameron_1}, in the proof of the main theorem, there was a problem. The provided proof claim to cover all possible primes but missed $p=2$.
	These two mistakes have a common nature, they are both originated by a missing formalization of an intuition, but they are completely different in the outcomes.
	In \cite{Cameron_1}, the missing case needed nothing more that an adjustment to make the proof work properly so it will not echoes in the future research. Instead, the introduction of critical classes could be an interesting starting point for some new research related to power graphs of finite groups.
	
	The work of rewriting takes four of the six chapters of this thesis. It allows us a deep study of some equivalence relations defined on the vertex set of a graph. This study was what naturally brings us to quotient graphs, and then to maximal path and cycles.
	Quotient graphs are useful in the research of maximal path and cycles. We extend the idea given in \cite{PowerGraphsofPowerGraoups} and we found a lower bound for the length of a maximum cycle of a power graph searching for a maximal path in a specific quotient graph. The research in this field is far from be over since we simply introduce the players and give some easy results on a subject of research that could offer newsworthy results.  
	
	
	
	After some preliminaries and the setting of a clear notation in Chapter \ref{Preliminaries}, we transcribe the article \cite{Cameron_2, Cameron_1} in a more detailed way, and give the right path to follow to get the main results, in Chapters \ref{UPGeDPG} and \ref{SectionReconstruction}.
	In Chapter \ref{RelatedResults} we announce one of the major result related to the main theorem of the previous chapter. With this chapter we conclude the rewriting of \cite{Cameron_2, Cameron_1}.
	
	We then continue with a brief glance on quotient graphs (Chapter \ref{QuotientGraphs}). 
	For this and the following chapters, some new definitions and notation are needed and we present them at the beginning of the chapter.
	Finally, in Chapter \ref{MaximalCycles}, we conclude the thesis with an introductory study of maximal cycles and paths, both directed and undirected, in the power graphs. Quotient graphs play a crucial role in getting some of these results.  
	In all this chapters some definitions and results are followed by examples and representative images. 
	    
	We hope that this thesis allows a better comprehension on the articles \cite{Cameron_2, Cameron_1} and that it will stimulate new research on this beautiful and rich area. 
	To that purpose we propose in Section \ref{OpenProblems} some open questions about critical classes. Furthermore also maximal path and cycles have to be deeply investigated, we only scratched the surface.

	\newpage
	
	\chapter{Preliminaries and background information}
	\label{Preliminaries}	
		Many graphs have been associated with groups. In this chapter we are going to define the ones on which we focus our research. In particular the power graph and the directed power graph of a group. We also recall some definitions and results and we set the notation. 
		
		\section{Recalls}
		With $\mathbb{N}$ we refer to the set of all positive integers. We denote with $\mathbb{N}_0$ the set $\mathbb{N} \cup \left\lbrace 0 \right\rbrace $.
		For $k\in \mathbb{N}_0$ we set $[k]:= \{n \in \mathbb{N} \, | \, n\leq k\} $. So $[0]=\emptyset$, and then $|[k]|=k$.
		We also use $[k]_0=\{n\in \mathbb{N}_0 \, | \, n \leq k\}$, note that $[0]_0=\{0\}$.
		
		We recall some specific results and notation for groups. The groups considered in this thesis are all finite.\\
		Let $G$ be a group. The identity element of $G$, will be simply denoted with $1$. \\ If $H$ is a subgroup of $G$, then we write $H\leq G$ and if $H$ is a proper subgroup, then we write $H<G$.
		

		Given $X\subseteq G$, $\langle X \rangle$ denotes the subgroup generated by $X$, that is the minimum subgroup of $G$ containing $X$. $\langle X \rangle$ is given by the intersection of all the subgroups of $G$ containing $X$.
		Let $g\in G$. Then we write $\langle g \rangle$ instead of $\langle \{g\} \rangle$. Recall that 
		$$\langle g \rangle=\left\lbrace g^k \, | \, k\in \mathbb{Z}\right\rbrace$$  and that the \emph{order of $g$} is defined by $$o(g):=|\langle g \rangle|.$$ Since $G$ is finite there exists $k\in \mathbb{N}$ such that $g^k=1$ and $o(g)= min \left\lbrace k\in \mathbb{N} \, | \, g^k=1 \right\rbrace $.  As a consequence, if $o(g)=m$, then we have that 
		$$\langle g \rangle=\left\lbrace g^k \, | \, 0\leq k\leq m-1\right\rbrace.$$
		
		If $o(g)=2$, then $g$ is called an \emph{involution}.
		
		$G$ is called  \emph{cyclic} if there exists $g\in G$ such that  $G=\langle g \rangle$. Remember that $G$ is cyclic  if and only if, for all $k \mid |G|$, there exists a unique $H\leq G$ such that $|H|=k$. \\
		If  $G=\langle g \rangle$, then we have that 
		$$\left\lbrace g\in G \, |\, o(g)=|G| \right\rbrace=\left\lbrace  g\in G \, | \, \langle g \rangle = G\right\rbrace. $$ Moreover $g^k$ is a generator  of $G$ if and only if $\gcd(k, o(g))=1$.\\
		If $G$ is cyclic of size $n$ it is denoted $C_n$.
		We also use the notation $D_n$ to refer to a \emph{dihedral group} of size $2n$, that is the group with the following presentation:
		\begin{equation}
			 \label{Dn_presentation}
			 D_n:=\langle a,b \, | \, a^n=1=b^2, a^b=a^{-1}\rangle
		\end{equation}
		Let $p$ be a prime number. If, for all $g\in G$, there exists $k\in \mathbb{N}$ such that $o(g)=p^k$, then $G$ is called a \emph{$p$-group}. Since we are dealing with finite groups, equivalently, $G$ is a $p$-group if $|G|$ is a power of $p$.\\
		Remember also that, for $n \in \mathbb{N}$, the \emph{generalised quaternions} are defined by the presentation:
		
		$$Q_{4n}:=\langle x,y \,| \, x^n=y^2, x^{2n}=y^4=1, y^{-1}xy=x^{-1} \rangle .$$ 
		
		It is easily seen that $|Q_{4n}|=4n$. Hence if $Q_{4n}$ is a $p$-group, then $p=2$ and the group is given by:
		$$ Q_{2^{n+1}}:=\langle x, y \, |\, x^{2^{n-1}}=y^2, x^{2^{n}}=y^4=1, y^{-1}xy=x^{-1}\rangle .$$
		Note that for $n=2$ we obtain the \emph{quaternion group} $Q_8$.  
		
		Remember that, for $g\in G$, the \emph{centralizer of $g$ in $G$} is the set $$C_G(g):=\{h\in G \, | \, gh=hg\}.$$ 
		We now state some classic results about the structure of particular groups.
		
		\begin{theorem}{\rm \cite[Theorem 2.1.3]{GroupTheory}}
			Every Abelian group is the direct product of cyclic groups.
		\end{theorem}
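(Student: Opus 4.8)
The plan is to reduce to $p$-groups and then proceed by induction on the order. Throughout, recall that all groups here are finite, so $G$ denotes a finite Abelian group.

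\textbf{Reduction to $p$-groups.} Write $|G| = p_1^{a_1}\cdots p_r^{a_r}$ with distinct primes $p_i$ and set $G_i := \{\, g \in G : o(g) = p_i^{s}\text{ for some } s \in \mathbb{N}_0 \,\}$; since $G$ is Abelian each $G_i$ is a subgroup. I would check that the multiplication map $G_1\times\cdots\times G_r \to G$ is an isomorphism: it is injective because $G_i \cap \prod_{j\neq i} G_j = 1$ (subgroups of coprime orders meet trivially), and it is surjective by Bézout — from $1 = \sum_i c_i\,(|G|/p_i^{a_i})$ in $\mathbb{Z}$ one gets $g = \prod_i g^{\,c_i|G|/p_i^{a_i}}$ with the $i$-th factor in $G_i$. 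Hence it suffices to prove the theorem when $G$ is an Abelian $p$-group.

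\textbf{The splitting lemma.} The core of the argument, and the step I expect to be the main obstacle, is: \emph{if $G$ is an Abelian $p$-group and $g \in G$ has maximal order, then $\langle g\rangle$ is a direct factor of $G$}, i.e. $G = \langle g\rangle \times H$ for some $H \le G$. I would prove this by induction on $|G|$; if $G = \langle g\rangle$ take $H = 1$. The key subclaim is a lifting statement in which the maximality of $o(g)$ is essential: every $\bar x \in G/\langle g\rangle$ admits a representative $x' \in G$ with $o(x') = o(\bar x)$. Indeed, for any representative $x$, writing $o(\bar x) = p^j$ we have $x^{p^j} = g^{m}$ for some $m$; comparing orders and using $o(g) = \max_{h\in G} o(h)$ forces $p^j \mid m$, so $x' := x\,g^{-m/p^j}$ works. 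Picking $\bar x$ of maximal order in $G/\langle g\rangle$ and lifting it gives $\langle x'\rangle$ with $\langle x'\rangle \cap \langle g\rangle = 1$; in $G/\langle x'\rangle$ the image of $g$ still has maximal order, so by the inductive hypothesis $G/\langle x'\rangle = \langle \overline g\rangle \times \overline H$, and the preimage $H$ of $\overline H$ in $G$ satisfies $\langle g\rangle\cap H = 1$ and $\langle g\rangle H = G$; a count of orders then gives $G = \langle g\rangle\times H$.

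\textbf{Conclusion.} With the splitting lemma in hand, the theorem for an Abelian $p$-group $G$ follows by induction on $|G|$: choose $g$ of maximal order, write $G = \langle g\rangle \times H$, and apply the inductive hypothesis to the smaller Abelian $p$-group $H$ to decompose it as a direct product of cyclic groups; then so is $G$. Combined with the reduction to $p$-groups above, this shows that every finite Abelian group is a direct product of cyclic groups.
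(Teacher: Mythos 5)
Your proposal is correct: the primary decomposition via Bézout, the lifting subclaim (from $x^{p^j}=g^m$ and the maximality of $o(g)$ one indeed gets $p^j\mid m$, so $x'=xg^{-m/p^j}$ has order $o(\bar x)$ and $\langle x'\rangle\cap\langle g\rangle=1$), and the two nested inductions all check out. Note, however, that the paper does not prove this statement at all — it is quoted as classical background from \cite{GroupTheory} — so there is no in-paper argument to compare with; what you have written is the standard proof (reduction to $p$-groups plus the splitting lemma that an element of maximal order generates a direct factor), and it is sound. The only cosmetic remarks: the final ``count of orders'' is unnecessary once $\langle g\rangle\cap H=1$ and $\langle g\rangle H=G$ are established, and the maximality of $o(\bar x)$ in $G/\langle g\rangle$ is never actually used — any nontrivial class with an order-preserving lift would serve.
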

		
		\begin{corollary}
			\label{structureTheoremOfAbelianpGroups} Let $p$ be a prime number. Every finite abelian $p$-group is isomorphic to direct product of cyclic $p$-groups.		
		\end{corollary}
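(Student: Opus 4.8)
The plan is to derive the statement directly from the structure theorem for abelian groups quoted just above. Given a finite abelian $p$-group $G$, that theorem provides an isomorphism $G \cong C_{n_1} \times \cdots \times C_{n_k}$ with each $n_i \in \mathbb{N}$; the only thing left to check is that every $n_i$ is a power of $p$.

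To do that, first I would invoke the characterisation of finite $p$-groups recalled in this section: since $G$ is a $p$-group, $|G| = p^m$ for some $m \in \mathbb{N}_0$. As the order is multiplicative over direct products, $p^m = |G| = \prod_{i=1}^{k} n_i$, so each $n_i$ divides $p^m$. A divisor of the prime power $p^m$ is itself of the form $p^{a}$ with $0 \le a \le m$, hence each factor $C_{n_i}$ is a cyclic $p$-group, with the convention that the trivial group $C_1$ (the case $a=0$) counts as a cyclic $p$-group; alternatively one simply discards any such trivial factors from the decomposition.

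An alternative route, which sidesteps the bookkeeping of trivial factors entirely, is to argue element-wise: the factor $C_{n_i}$ contains an element $g$ of order $n_i$, and, viewed inside $G$, this $g$ has order a power of $p$ by hypothesis; therefore $n_i = o(g)$ is a power of $p$. Either way the conclusion follows.

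I do not expect any genuine obstacle here: the corollary is a direct specialisation of the cited theorem, and the only mild point of care is the treatment of possible trivial cyclic factors. I would present the divisor argument, with a short remark fixing the $C_1$ convention, as the cleanest way to write it up.
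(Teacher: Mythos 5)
Your argument is correct and is exactly the route the paper intends: the corollary is stated as an immediate consequence of the structure theorem for abelian groups (the paper gives no separate proof), and your observation that each cyclic factor must have prime-power order—via the divisor count or the element-order argument—is the standard way to fill in that step.
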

	
		Let $p$ be a prime number and $G$ an abelian $p$-group. By the previous corollary, if we know the number of elements of each order in $G$ we also know $G$ up to group isomorphism.
		
		Another well-known result is the following.
		 
		\begin{theorem}{\rm \cite[Theorem 5.1.4]{GroupTheory}}
			\label{structureTheoremOfNilpotentGroup} Every nilpotent group is isomorphic to a direct product of its Sylows subgroups.
		\end{theorem}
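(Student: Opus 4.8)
The plan is to reduce the statement to two facts about the finite nilpotent group $G$: first, that every Sylow subgroup of $G$ is normal in $G$; second, that a finite group all of whose Sylow subgroups are normal is the internal direct product of those subgroups, hence isomorphic to their external direct product. The nilpotency hypothesis is used only in the first fact, so I expect that to be the main obstacle; the second step is essentially bookkeeping with coprime orders.

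For the first step I would invoke the \emph{normaliser condition}: in a nilpotent group every proper subgroup $H$ satisfies $H < N_G(H)$. This follows by taking the least index $i$ with $Z_i \not\subseteq H$ in the upper central series $1 = Z_0 \le Z_1 \le \dots \le Z_c = G$ and checking that any $z \in Z_i \setminus H$ lies in $N_G(H)$, since for every $h \in H$ one has $[h,z] \in [G, Z_i] \le Z_{i-1} \le H$, whence $z^{-1} h z = h \, [h,z] \in H$. Now let $P$ be a Sylow $p$-subgroup of $G$ and put $N := N_G(P)$. A Frattini-type argument shows that $N$ is self-normalising: if $g \in N_G(N)$ then $g P g^{-1} \le g N g^{-1} = N$, and since $P$ and $g P g^{-1}$ are both Sylow $p$-subgroups of $N$, Sylow's conjugacy theorem inside $N$ gives $n \in N$ with $n g P g^{-1} n^{-1} = P$, so $ng \in N_G(P) = N$ and hence $g \in N$. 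Thus $N_G(P)$ is self-normalising, and by the normaliser condition this forces $N_G(P) = G$, i.e. $P \trianglelefteq G$.

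For the second step, let $p_1, \dots, p_k$ be the distinct primes dividing $|G|$ and let $P_i$ be the Sylow $p_i$-subgroup, which is now unique because it is normal. Since each $P_i$ is normal, every product $P_1 \cdots P_m$ is a subgroup, and an induction on $m$ using that $|P_{m+1}|$ is coprime to $|P_1 \cdots P_m|$ yields $|P_1 \cdots P_m| = \prod_{i \le m} |P_i|$; for $m = k$ this equals $|G|$, so $G = P_1 \cdots P_k$. The same coprimality gives $P_i \cap \prod_{j \ne i} P_j = 1$, and therefore $[P_i, P_j] \le P_i \cap P_j = 1$ for $i \ne j$, so elements of distinct $P_i$ commute. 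Consequently the multiplication map $P_1 \times \cdots \times P_k \to G$ is a group homomorphism, and it is bijective by comparing orders, which gives the desired isomorphism $G \cong P_1 \times \cdots \times P_k$.

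I expect the only delicate point to be the combination of the normaliser condition with the Frattini argument in the first step; once Sylow subgroups are known to be normal the rest is routine. As a by-product this route also re-proves the familiar characterisation that, for finite groups, nilpotency is equivalent to all Sylow subgroups being normal.
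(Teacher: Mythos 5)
Your argument is correct: the normaliser condition via the upper central series, the Frattini-type self-normalisation of $N_G(P)$ forcing $P \trianglelefteq G$, and the coprime-order bookkeeping giving $G \cong P_1 \times \cdots \times P_k$ are all sound, and this is the standard proof of the theorem. Note that the paper itself offers no proof here — it simply quotes the result from the cited reference \cite[Theorem 5.1.4]{GroupTheory} — and your argument is essentially the one found in such textbook treatments, so there is nothing to reconcile.
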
	 
		
		We  are going to freely use  some of the results stated in this section without further references.
		
		\section{Graphs and digraphs}
		In this section we give some very preliminary definitions about graph theory.
		
			\begin{definition}
				\rm We define a \emph{graph} $\Gamma$ as a couple $\Gamma=(V_{\Gamma}, E_{\Gamma})$ where $V_{\Gamma}$ is a not empty finite set of elements called \emph{vertices} and $E_{\Gamma}$ is a subset of subsets of $V_{\Gamma}$ of size $2$. If $e=\left\lbrace x,y\right\rbrace $ for some distinct $x$ and $y$ in $V_{\Gamma}$, we also say that such elements, $x$ and $y$, are \emph{joined} or \emph{adjacent}. The elements of $E_{\Gamma}$ are called \emph{edges}.
			\end{definition}

			\begin{definition}
				\rm We define a \emph{directed graph} $\vec{\Gamma}$, or \emph{digraph}, as a couple $\vec{\Gamma}=(V_{\vec{\Gamma}}, A_{\vec{\Gamma}})$ where $V_{\vec{\Gamma}}$ is a not empty finite set of elements called \emph{vertices} and $A_{\vec{\Gamma}}$, whose elements are called \emph{arcs}, is a subset of $(V_{\vec{\Gamma}}\times V_{\vec{\Gamma}}) \setminus \Delta$ where $\Delta=\left\lbrace (x,x) \, |\, x \in V_{\vec{\Gamma}}\right\rbrace $. Therefore each $a \in A_{\vec{\Gamma}}$ is of the form $a=(x,y) $ for some $x$ and $y$ in $V_{\vec{\Gamma}}$ such that $x \not= y$. 
				We say that, if $(x,y)$ or $(y,x)$ is in $A_{\vec{\Gamma}}$, then $x$ and $y$ are \emph{joined}.
				We also say that $a=(x,y)\in A_{\vec{\Gamma}}$ \emph{is directed} from $x$ to $y$ and also that $a$ has \emph{direction} from $x$ to $y$. If $(x,y)\in A_{\vec{\Gamma}}$, then it said $x$ \emph{dominates} $y$.
				Given $X,Y \subseteq V_{\vec{\Gamma}}$, we say that $B\subseteq A_{\vec{\Gamma}}$ is \emph{composed by arcs from $X$ to $Y$} if, for all $b\in B$, we have $b=(x, y)$ for some $x\in X$ and $y \in Y$.
			\end{definition}
		
			If $e=\left\lbrace x,y \right\rbrace $ is an edge of a graph, then $x$ and $y$ are called the \emph{end vertices} of $e$. In the same way, if $a=(x,y)$ is an arc of a digraph, then $x$ and $y$ are called the \emph{end vertices} of $a$.
			Given two distinct subsets $X$ and $Y$ of $V_{\Gamma}$, for $\Gamma$ a graph, we say that $X$ and $Y$ are \emph{joined} if there is an edge with an end vertex in $X$ and the other in $Y$.
			Given two distinct subsets $X$ and $Y$ of $V_{\vec{\Gamma}}$, for $\vec{\Gamma}$ a digraph, we say that $X$ and $Y$ are \emph{joined} if there is an arc with an end vertex in $X$ and the other in $Y$.
			For graphs or digraphs, if $\Gamma$ or $\vec{\Gamma}$ is clear from the context, then we write $(V,E)$ instead of $(V_{\Gamma}, E_{\Gamma})$ and $(V,A)$ instead of $(V_{\vec{\Gamma}}, A_{\vec{\Gamma}})$.
			
			The cardinality of the vertex set of both graphs and digraphs is called the \emph{order} of the graph/digraph.
			
			A digraph $\vec{\Gamma}$ is \emph{transitive} if given $x,y,z \in V$ such that $(x,y), (y,z)\in A$, then we have that $(x,z)\in A$.
			
			Given a graph $\Gamma$ we call \emph{subgraph} of $\Gamma$ a graph $\Gamma'$ with $V_{\Gamma'}\subseteq V_{\Gamma}$ and $E_{\Gamma'}\subseteq E_{\Gamma}$. A subgraph $\Gamma'$ is \emph{induced} by his vertex set if, for all $x,y \in V_{\Gamma'}$, $\left\lbrace x,y \right\rbrace \in E_{\Gamma'}$ if and only if $\left\lbrace x,y \right\rbrace \in E_{\Gamma}$. Given $X\subseteq V_{\Gamma}$, we denote by $\Gamma_X$ the induced subgraph of $\Gamma$ with $V_{\Gamma_X}=X$.\\
			Given a digraph $\vec{\Gamma}$ we call \emph{subdigraph} a digraph $\vec{\Gamma}'$ with $V_{\vec{\Gamma}'}\subseteq V_{\vec{\Gamma}}$ and $A_{\vec{\Gamma}'}\subseteq A_{\vec{\Gamma}}$. We also say that a subdigraph $\vec{\Gamma}'$ is \emph{induced} by his vertex set if, for all $x,y \in V_{\vec{\Gamma}'}$, $(x,y) \in A_{\vec{\Gamma}'}$ if and only if $(x,y) \in A_{\vec{\Gamma}}$.
			Given $X\subseteq V_{\vec{\Gamma}}$, we denote by $\vec{\Gamma}_X$ the induced subdigraph of $\vec{\Gamma}$ with $V_{\vec{\Gamma}_X}=X$.
			
			It is called \emph{complete graph} of $n$ vertices, denoted by $K_n$, a graph where $\left\lbrace x,y \right\rbrace \in E_{K_n} $ for all $x,y \in V_{K_n}$. We call \emph{complete digraph} of $n$ vertices, denoted by $\vec{K_n}$, the digraph such that $A_{\vec{K_n}}=V_{\vec{K_n}} \times V_{\vec{K_n}} \setminus \Delta$.

			Usually, in graph theory, each vertex of a graph $\Gamma$ is represented as a point; an edge $\{x,y\}\in E$ is a line joining vertex points $x$ and $y$.\\
			For a digraph $\vec{\Gamma}$ the representation is quite similar, with the only exception that an arc $(x,y)\in A_{\vec{\Gamma}}$ it is no more a line between the two end vertices, but an arrow that is directed from $x$ to $y$.\\
			In Figure \ref{Complete_graph_digraph} an example of a complete graph and a complete digraph.
			
			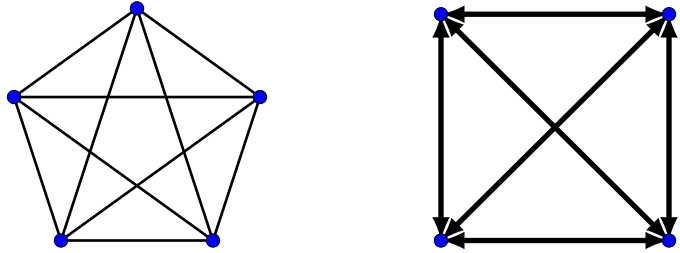
\begin{figure}
				\centering
				\begin{tikzpicture}[line cap=round,line join=round,>=triangle 45,x=1.0cm,y=1.0cm]
					\clip(-4.,-2.) rectangle (7.,3.);
					
					\draw [line width=1.pt] (-2.,-1.)-- (0.,-1.);
					\draw [line width=1.pt] (0.,-1.)-- (0.6180339887498949,0.9021130325903065);
					\draw [line width=1.pt] (0.6180339887498949,0.9021130325903065)-- (-1.,2.077683537175253);
					\draw [line width=1.pt] (-1.,2.077683537175253)-- (-2.618033988749895,0.9021130325903073);
					\draw [line width=1.pt] (-2.618033988749895,0.9021130325903073)-- (-2.,-1.);
					\draw [line width=1.pt] (-1.,2.077683537175253)-- (0.,-1.);
					\draw [line width=1.pt] (0.,-1.)-- (-2.618033988749895,0.9021130325903073);
					\draw [line width=1.pt] (-2.618033988749895,0.9021130325903073)-- (0.6180339887498949,0.9021130325903065);
					\draw [line width=1.pt] (-1.,2.077683537175253)-- (-2.,-1.);
					\draw [line width=1.pt] (-2.,-1.)-- (0.6180339887498949,0.9021130325903065);
					\draw [latex-latex, line width=2.pt] (3.,-1.) -- (3.,2.);
					\draw [latex-latex, line width=2.pt] (3.,-1.) -- (6.,2.);
					\draw [latex-latex, line width=2.pt] (3.,-1.) -- (6.,-1.);
					\draw [latex-latex, line width=2.pt] (6.,-1.) -- (6.,2.);
					\draw [latex-latex, line width=2.pt] (6.,2.) -- (3.,2.);
					\draw [latex-latex, line width=2.pt] (6.,-1.) -- (3.,2.);
					\begin{scriptsize}
						\draw [fill=qqqqff] (-2.,-1.) circle (2.5pt);
						\draw [fill=qqqqff] (0.,-1.) circle (2.5pt);
						\draw [fill=qqqqff] (0.6180339887498949,0.9021130325903065) circle (2.5pt);
						\draw [fill=qqqqff] (-1.,2.077683537175253) circle (2.5pt);
						\draw [fill=qqqqff] (-2.618033988749895,0.9021130325903073) circle (2.5pt);
						\draw [fill=qqqqff] (3.,-1.) circle (2.5pt);
						\draw [fill=qqqqff] (3.,2.) circle (2.5pt);
						\draw [fill=qqqqff] (6.,2.) circle (2.5pt);
						\draw [fill=qqqqff] (6.,-1.) circle (2.5pt);
					\end{scriptsize}
				\end{tikzpicture}
				\caption{$K_5$ and $\vec{K_4}$}
				\label{Complete_graph_digraph}
			\end{figure}
			
			\begin{definition}
				\rm \label{Def_orientation}Let $\Gamma$ be a graph. The digraph obtained by replacing each edge of $\Gamma$ by just one of the two possible arcs with the same ends is called an \emph{orientation  of $\Gamma$}. 
			\end{definition}
			
			In Figure \ref{orientation} there are some possible orientation of $K_4$.
			
			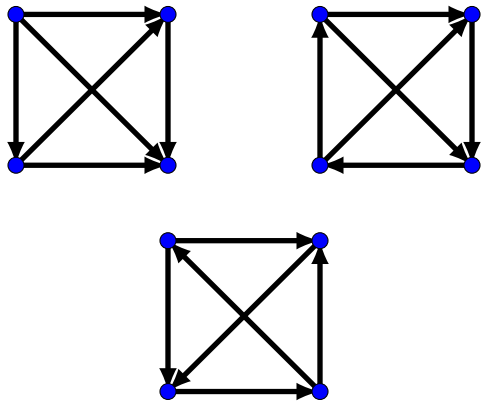
\begin{figure}
				\centering
				\begin{tikzpicture}[line cap=round,line join=round,>=triangle 45,x=1.0cm,y=1.0cm]
					\clip(-1.5,-0.5) rectangle (5.5,5.5);
					\draw [-latex,line width=2.pt] (-1.,5.) -- (1.,5.);
					\draw [-latex,line width=2.pt] (-1.,5.) -- (1.,3.);
					\draw [-latex,line width=2.pt] (-1.,5.) -- (-1.,3.);
					\draw [-latex,line width=2.pt] (-1.,3.) -- (1.,3.);
					\draw [-latex,line width=2.pt] (1.,5.) -- (1.,3.);
					\draw [-latex,line width=2.pt] (-1.,3.) -- (1.,5.);
					\draw [-latex,line width=2.pt] (3.,3.) -- (3.,5.);
					\draw [-latex,line width=2.pt] (3.,5.) -- (5.,5.);
					\draw [-latex,line width=2.pt] (5.,5.) -- (5.,3.);
					\draw [-latex,line width=2.pt] (5.,3.) -- (3.,3.);
					\draw [-latex,line width=2.pt] (3.,3.) -- (5.,5.);
					\draw [-latex,line width=2.pt] (3.,5.) -- (5.,3.);
					\draw [-latex,line width=2.pt] (1.,2.) -- (3.,2.);
					\draw [-latex,line width=2.pt] (3.,2.) -- (1.,0.);
					\draw [-latex,line width=2.pt] (1.,0.) -- (3.,0.);
					\draw [-latex,line width=2.pt] (3.,0.) -- (1.,2.);
					\draw [-latex,line width=2.pt] (1.,2.) -- (1.,0.);
					\draw [-latex,line width=2.pt] (3.,0.) -- (3.,2.);
					\begin{scriptsize}
						\draw [fill=qqqqff] (1.,5.) circle (3.pt);
						\draw [fill=qqqqff] (1.,3.) circle (3.pt);
						\draw [fill=qqqqff] (-1.,3.) circle (3.pt);
						\draw [fill=qqqqff] (-1.,5.) circle (3.pt);
						\draw [fill=qqqqff] (3.,5.) circle (3.pt);
						\draw [fill=qqqqff] (5.,5.) circle (3.pt);
						\draw [fill=qqqqff] (5.,3.) circle (3.pt);
						\draw [fill=qqqqff] (3.,3.) circle (3.pt);
						\draw [fill=qqqqff] (1.,2.) circle (3.pt);
						\draw [fill=qqqqff] (3.,2.) circle (3.pt);
						\draw [fill=qqqqff] (3.,0.) circle (3.pt);
						\draw [fill=qqqqff] (1.,0.) circle (3.pt);
					\end{scriptsize}
				\end{tikzpicture}
				\caption{Some possible orientation of $K_4$.}
				\label{orientation}
			\end{figure}
			
			\begin{definition}
				\label{DefPath}\rm A \emph{path} is a graph whose vertices can be arranged in an ordered sequence in such a way that two vertices are adjacent if and only if they are consecutive in the sequence.
				
				A \emph{directed path} is an orientation of a path in which each vertex dominates its successor in the sequence.
			\end{definition}

			For a path $\mathcal{W}$, $|E_{\mathcal{W}}|$ is the \emph{length} of $\mathcal{W}$, denoted as $\ell(\mathcal{W})$. A \emph{$k$-path} is a path with length $k$. Likewise $\ell(\vec{\mathcal{W}}):=|A_{\vec{\mathcal{W}}}|$ is the \emph{length} of a directed path $\vec{\mathcal{W}}$ and a \emph{directed $k$-path} is a directed path of length $k$.
			
			By definition, the vertices of a path $\mathcal{W}$ can be arranged in an ordered sequence such that two vertices are adjacent if and only if they are consecutive in the sequence. There exist two of such arrangements, both of them determine the path $\mathcal{W}$. If, for $k\in \mathtt{N}$, $V_{\mathcal{W}}=\{x_1, \dots, x_k\}$ and $E_{\mathcal{W}}=\{\{x_i, x_{i+1}\} \, | \, i\in [k-1]\}$, then $\mathcal{W}$ is uniquely determined by both the ordered sequences
			$x_1,x_2,\dots, x_k$ and $x_k, x_{k-1}, \dots, x_1$. So we represent the path $\mathcal{W}$ as one of those arrangements. Hence we write $\mathcal{W}=x_1, x_2 \dots, x_k$.
			In such a representation $x_1$ and $x_k$ are called the \emph{end vertices of $\mathcal{W}$}.
		
			It is similar for directed paths, but there are some differences.
			For a directed path $\vec{\mathcal{W}}$ there exists only one ordered sequence that determine the path. \\ $\vec{\mathcal{W}}=x_1, \dots, x_k$ is a directed path with $V_{\vec{\mathcal{W}}}=\{x_1, \dots, x_k\}$ and $A_{\vec{\mathcal{W}}}=\{(x_i, x_{i+1}) \, | \, i\in [k-1]\}$. $x_1$ and $x_k$ are the \emph{end vertices} of $\vec{\mathcal{W}}$. We also say that $\vec{\mathcal{W}}$ starts in $x_1$ and ends in $x_k$.
			
			In Figures \ref{Esempio3-path} and \ref{Esempi_directed_3-path} there are three examples of path and directed path. Note that $\mathcal{W}_1$ and $\mathcal{W}_2$ are two distinct orientation of $\mathcal{W}$ in Figure \ref{Esempio3-path}.
			
			\begin{figure}
				\centering
				\begin{tikzpicture}[line cap=round,line join=round,>=triangle 45,x=1.0cm,y=1.0cm]
					\clip(-1.5,2.5) rectangle (5.5,3.5);
					\draw [line width=2.pt] (-1.,3.)-- (1.,3.);
					\draw [line width=2.pt] (1.,3.)-- (3.,3.);
					\draw [line width=2.pt] (3.,3.)-- (5.,3.);
					\draw (-1.,3.) node[anchor=north west] {$a$};
					\draw (1.,3.) node[anchor=north west] {$b$};
					\draw (3.,3.) node[anchor=north west] {$c$};
					\draw (5.,3.) node[anchor=north west] {$d$};
					\begin{scriptsize}
						\draw [fill=qqqqff] (-1.,3.) circle (3.pt);
						\draw [fill=qqqqff] (1.,3.) circle (3.pt);
						\draw [fill=qqqqff] (3.,3.) circle (3.pt);
						\draw [fill=qqqqff] (5.,3.) circle (3.pt);
					\end{scriptsize}
				\end{tikzpicture}
				\caption{A $3$-path: $\mathcal{W}=a,b,c,d$}
				\label{Esempio3-path}
			\end{figure}
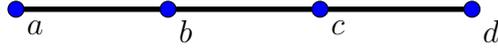
			
			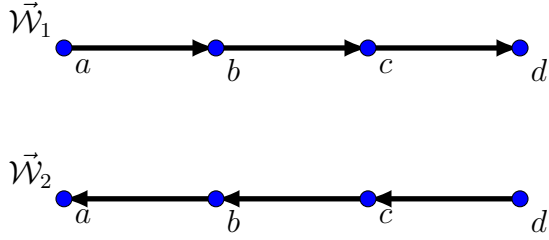
\begin{figure}
				\centering
				\begin{tikzpicture}[line cap=round,line join=round,>=triangle 45,x=1.0cm,y=1.0cm]
					\clip(-2,0.5) rectangle (5.5,3.6);
					\draw [-latex,line width=2.pt] (-1.,3.) -- (1.,3.);
					\draw [-latex,line width=2.pt] (1.,3.) -- (3.,3.);
					\draw [-latex,line width=2.pt] (3.,3.) -- (5.,3.);
					\draw (-1.,3.) node[anchor=north west] {$a$};
					\draw (1.,3.) node[anchor=north west] {$b$};
					\draw (3.,3.) node[anchor=north west] {$c$};
					\draw (5.,3.) node[anchor=north west] {$d$};
					\draw (-1.,3.) node[anchor=south east] {$\vec{\mathcal{W}}_1$};
					
					\draw [latex-,line width=2.pt] (-1.,1.) -- (1.,1.); 
					\draw [latex-,line width=2.pt] (1.,1.) -- (3.,1.);
					\draw [latex-,line width=2.pt] (3.,1.) -- (5.,1.);
					\draw (-1.,1.) node[anchor=north west] {$a$};
					\draw (1.,1.) node[anchor=north west] {$b$};
					\draw (3.,1.) node[anchor=north west] {$c$};
					\draw (5.,1.) node[anchor=north west] {$d$};
					\draw (-1.,1.) node[anchor=south east] {$\vec{\mathcal{W}}_2$};
					\begin{scriptsize}
						\draw [fill=qqqqff] (-1.,3.) circle (3.pt);
						\draw [fill=qqqqff] (1.,3.) circle (3.pt);
						\draw [fill=qqqqff] (3.,3.) circle (3.pt);
						\draw [fill=qqqqff] (5.,3.) circle (3.pt);
						\draw [fill=qqqqff] (-1.,1.) circle (3.pt);
						\draw [fill=qqqqff] (1.,1.) circle (3.pt);
						\draw [fill=qqqqff] (3.,1.) circle (3.pt);
						\draw [fill=qqqqff] (5.,1.) circle (3.pt);
					\end{scriptsize}
				\end{tikzpicture}
				\caption{Two distinct directed $3$-paths: $\vec{\mathcal{W}}_1=a,b,c,d$ and $\vec{\mathcal{W}}_2=d,c,b,a$ }
				\label{Esempi_directed_3-path}
			\end{figure}
			
			\begin{definition}
				\label{DefCycle}\rm A \emph{cycle} on three or more vertices is a graph whose vertices can be arranged in a cyclic sequence so that two vertices are adjacent if and only if they are consecutive in the sequence. 
				
				A \emph{directed cycle} is an orientation of a cycle in which each vertex dominates its successor in the cyclic sequence.
			\end{definition}
			
			Let $\mathcal{C}$ be a cycle and $\vec{\mathcal{C}}$ a directed cycle. $|E_{\mathcal{C}}|$ is the \emph{length} of the cycle and $|A_{\vec{\mathcal{C}}}|$ is the \emph{length} of the directed cycle, denoted, respectively, $\ell(\mathcal{C})$ and $\ell(\vec{\mathcal{C}})$. A \emph{$k$-cycle} and a \emph{directed $k$-cycle} are, respectively, a cycle of length $k$ and a directed cycle of length $k$.
			
			As in the case of paths, we represent a cycle with an ordered list of vertices. Note that, for a cycle $\mathcal{C}$, there exist more then two possible arrangements that determine the same cycle, indeed $\mathcal{C}$ does not have ending points.
			In fact, a cycle $\mathcal{C}$ with $V_{\mathcal{C}}=\{x_1, \dots, x_k\}$ and $E_{\mathcal{C}}=\{\{x_i,x_{i+1}\} \, | \, i\in [k-1]\}\cup \{\{x_1, x_k\}\}$ can be represented, for all $i\in[k]$, by one of the following sequences:
			$$x_i, x_{i+1}, \dots, x_k,x_1, x_2, \dots, x_{i-1},x_i$$
			$$x_i, x_{i-1}, \dots, x_1,x_k,x_{k-1}, \dots, x_{i+1},x_i$$
			Then we write $\mathcal{C}=x_1,x_2 \dots, x_k,x_1$.
			
			Also for a directed cycle $\vec{\mathcal{C}}$ there exist more than two possible ordered sequences of vertices that determine the cycle. Consider $\vec{\mathcal{C}}$ with $V_{\vec{\mathcal{C}}}=\{x_1, \dots, x_k\}$ and $A_{\vec{\mathcal{C}}}=\{(x_i, x_{i+1})\, | \, i\in [k-1]\} \cup \{(x_k,x_1)\}$. The sequences $$x_i, x_{i+1}, \dots, x_k,x_1, x_2, \dots, x_{i-1},x_i$$ for all $i \in [k]$ determine uniquely the directed cycle $\vec{\mathcal{C}}$.\\
			Hence we write $\vec{\mathcal{C}}=x_1,x_2 \dots, x_k,x_1$. 
			
			There are three examples of cycles and directed cycles in Figures \ref{Esempio5-cycle} and \ref{EsempioDirected5-cycle}. Note that $\vec{\mathcal{C}}_1$ and $\vec{\mathcal{C}}_2$ are distinct orientations of $\mathcal{C}$.
		
			\begin{figure}
				\centering
				\begin{tikzpicture}[line cap=round,line join=round,>=triangle 45,x=1.0cm,y=1.0cm]
					\clip(0.,2.5) rectangle (4.,6.5);
					\draw [line width=1.pt] (0.3819660112501053,4.902113032590307)-- (1.,3.);
					\draw [line width=1.pt] (1.,3.)-- (3.,3.);
					\draw [line width=1.pt] (3.,3.)-- (3.618033988749895,4.902113032590306);
					\draw [line width=1.pt] (3.618033988749895,4.902113032590306)-- (2.,6.077683537175253);
					\draw [line width=1.pt] (2.,6.077683537175253)-- (0.3819660112501053,4.902113032590307);
					\draw (1.,3.) node[anchor=north west] {$a$};
					\draw (3.,3.) node[anchor=north west] {$b$};
					\draw (3.618033988749895,4.902113032590306) node[anchor=west] {$c$};
					\draw (2.,6.077683537175253) node[anchor=south west] {$d$};
					\draw (0.3819660112501053,4.902113032590307)
					node[anchor=east] {$e$};
					\begin{scriptsize}
						\draw [fill=qqqqff] (1.,3.) circle (3pt);
						\draw [fill=qqqqff] (3.,3.) circle (3pt);
						\draw [fill=qqqqff] (3.618033988749895,4.902113032590306) circle (2.5pt);
						\draw [fill=qqqqff] (2.,6.077683537175253) circle (3pt);
						\draw [fill=qqqqff] (0.3819660112501053,4.902113032590307) circle (3pt);
						\draw [fill=qqqqff] (3.,2.) circle (3pt);
						\draw [fill=qqqqff] (1.,2.) circle (3pt);
						\draw [fill=qqqqff] (0.3819660112501051,0.09788696740969349) circle (3pt);
						\draw [fill=qqqqff] (2.,-1.0776835371752531) circle (3pt);
						\draw [fill=qqqqff] (3.618033988749895,0.09788696740969272) circle (3pt);
					\end{scriptsize}
				\end{tikzpicture}
				\caption{A $5$-cycle $\mathcal{C}=a,b,c,d,e,a$}
				\label{Esempio5-cycle}
			\end{figure}
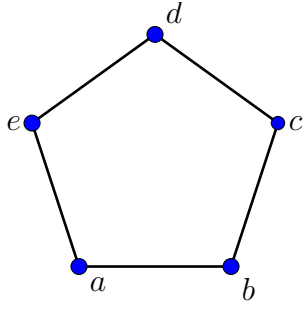
			
			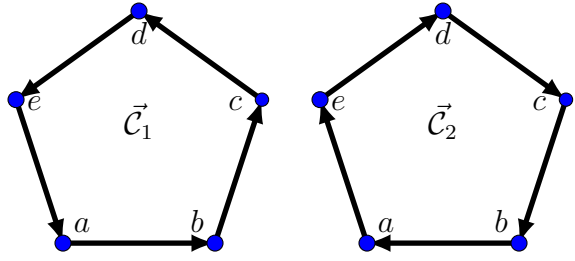
\begin{figure}
				\centering
				\begin{tikzpicture}[line cap=round,line join=round,>=triangle 45,x=1.0cm,y=1.0cm]
					\clip(0.,2.5) rectangle (8.,6.5);
					\draw [-latex, line width=2.pt] (0.3819660112501053,4.902113032590307)-- (1.,3.);
					\draw [-latex, line width=2.pt] (1.,3.)-- (3.,3.);
					\draw [-latex, line width=2.pt] (3.,3.)-- (3.618033988749895,4.902113032590306);
					\draw [-latex, line width=2.pt] (3.618033988749895,4.902113032590306)-- (2.,6.077683537175253);
					\draw [-latex, line width=2.pt] (2.,6.077683537175253)-- (0.3819660112501053,4.902113032590307);
					\draw (1.,3.) node[anchor=south west] {$a$};
					\draw (3.,3.) node[anchor=south east] {$b$};
					\draw (3.5,4.85) node[anchor= east] {$c$};
					\draw (2.,6.077683537175253) node[anchor=north] {$d$};
					\draw (0.3819660112501053,4.85)
					node[anchor= west] {$e$};
					\draw (2., 5.) node[anchor=north] {$\vec{\mathcal{C}}_1$};
					
					\draw [latex-, line width=2.pt] (0.3819660112501053+4,4.902113032590307)-- (1.0+4,3.0);
					\draw [latex-, line width=2.pt] (1.0+4,3.0)-- (3.0+4,3.0);
					\draw [latex-, line width=2.pt] (3.+4,3.)-- (3.618033988749895+4,4.902113032590306);
					\draw [latex-, line width=2.pt] (3.618033988749895+4,4.902113032590306)-- (2.+4,6.077683537175253);
					\draw [latex-, line width=2.pt] (2.+4,6.077683537175253)-- (0.3819660112501053+4,4.902113032590307);
					\draw (1.+4,3.) node[anchor=south west] {$a$};
					\draw (3.+4,3.) node[anchor=south east] {$b$};
					\draw (3.5+4,4.85) node[anchor= east] {$c$};
					\draw (2.+4,6.077683537175253) node[anchor=north] {$d$};
					\draw (0.3819660112501053+4,4.85)
					node[anchor= west] {$e$};
					\draw (2.+4, 5.) node[anchor=north] {$\vec{\mathcal{C}}_2$};
					\begin{scriptsize}
						\draw [fill=qqqqff] (1.,3.) circle (3pt);
						\draw [fill=qqqqff] (3.,3.) circle (3pt);
						\draw [fill=qqqqff] (3.618033988749895,4.902113032590306) circle (2.5pt);
						\draw [fill=qqqqff] (2.,6.077683537175253) circle (3pt);
						\draw [fill=qqqqff] (0.3819660112501053,4.902113032590307) circle (3pt);
						\draw [fill=qqqqff] (3.,2.) circle (3pt);
						\draw [fill=qqqqff] (1.,2.) circle (3pt);
						\draw [fill=qqqqff] (0.3819660112501051,0.09788696740969349) circle (3pt);
						\draw [fill=qqqqff] (2.,-1.0776835371752531) circle (3pt);
						\draw [fill=qqqqff] (3.618033988749895,0.09788696740969272) circle (3pt);
						
						\draw [fill=qqqqff] (1.+4,3.) circle (3pt);
						\draw [fill=qqqqff] (3.+4,3.) circle (3pt);
						\draw [fill=qqqqff] (3.618033988749895+4,4.902113032590306) circle (2.5pt);
						\draw [fill=qqqqff] (2.+4,6.077683537175253) circle (3pt);
						\draw [fill=qqqqff] (0.3819660112501053+4,4.902113032590307) circle (3pt);
						\draw [fill=qqqqff] (3.+4,2.) circle (3pt);
						\draw [fill=qqqqff] (1.+4,2.) circle (3pt);
						\draw [fill=qqqqff] (0.3819660112501051+4,0.09788696740969349) circle (3pt);
						\draw [fill=qqqqff] (2.+4,-1.0776835371752531) circle (3pt);
						\draw [fill=qqqqff] (3.618033988749895+4,0.09788696740969272) circle (3pt);
					\end{scriptsize}
				\end{tikzpicture}
				\caption{Two distinct directed $5$-cycles: $\vec{\mathcal{C}}_1=a,b,c,d,e,a$ and $\vec{\mathcal{C}}_2=a,e,d,c,b,a$}
				\label{EsempioDirected5-cycle}
			\end{figure}
			
			\begin{definition}
				\rm Let $\Gamma_1$ and $\Gamma_2$ be two graphs. A map $\psi:V_{\Gamma_1} \rightarrow V_{\Gamma_2}$ is a \emph{graph homomorphism} if it maps adjacent vertices to adjacent vertices. Formally $\{x,y\}\in E_{\Gamma_1}$ implies $\{\psi(x), \psi(y)\}\in E_{\Gamma_2}$ for all $x,y\in V_{\Gamma_1}$.
				If $\psi$ is a graph homomorphism between $\Gamma_1$ and $\Gamma_2$, then we use the notation $\psi: \Gamma_1 \rightarrow \Gamma_2$
			\end{definition}
		
			\begin{definition}
				\rm Two graphs $\Gamma_1$ and $\Gamma_2$ are said to be \emph{isomorphic}, in symbols $\Gamma_1 \cong \Gamma_2$, if there exists an homomorphism $\psi$ that is a bijection between $V_{\Gamma_1}$ and $V_{\Gamma_2}$ with the property that $\left\lbrace x,y \right\rbrace \in E_{\Gamma_1} $ if and only if $\left\lbrace \psi(x), \psi(y) \right\rbrace \in E_{\Gamma_2} $. Such a $\psi$ is called \emph{graph isomorphism}. If $\Gamma_1=\Gamma_2$, then $\psi$ is also called \emph{graph automorphism}.
			\end{definition}
		
			Analogous definition can be given for digraphs.
		
			\begin{definition}
				\rm Let $\vec{\Gamma}_1$ and $\vec{\Gamma}_2$ be two graphs. A map $\psi:V_{\vec{\Gamma}_1} \rightarrow V_{\vec{\Gamma}_2}$ is a \emph{digraph homomorphism} if, for all $x,y\in V_{\vec{\Gamma}_1}$, $(x,y)\in A_{\vec{\Gamma}_1}$ implies $(\psi(x), \psi(y))\in A_{\vec{\Gamma}_2}$.
				If $\psi$ is a digraph homomorphism between $\vec{\Gamma}_1$ and $\vec{\Gamma}_2$, then we use the notation $\psi: \vec{\Gamma}_1 \rightarrow \vec{\Gamma}_2$
			\end{definition}
		
			\begin{definition}
				\rm Two digraphs $\vec{\Gamma}_1$ and $\vec{\Gamma}_2$ are said to be \emph{isomorphic}, in symbols $\vec{\Gamma}_1 \cong \vec{\Gamma}_2$, if there exists a bijection $\psi$ between $V_{\vec{\Gamma}_1}$ and $V_{\vec{\Gamma}_2}$ with the property that $(x,y)\in A_{\vec{\Gamma}_1} $ if and only if $(\psi(x), \psi(y)) \in A_{\vec{\Gamma}_2} $. Such a $\psi$ is called \emph{digraph isomorphism}. If $\vec{\Gamma}_1=\vec{\Gamma}_2$, then $\psi$ is also called \emph{digraph automorphism}.
			\end{definition}
			
			\begin{definition}
				\rm Given a vertex $x$ of a graph $\Gamma$, the \emph{closed neighbourhood} of $x$ in $\Gamma$ is defined as the set  $N_{\Gamma}[x]=\left\lbrace y \in V_{\Gamma} \, | \, y=x \; \text{or}\; \left\lbrace y, x \right\rbrace \in E_{\Gamma}\right\rbrace $.\\
				The \emph{open neighbourhood} of $x$ in $\Gamma$ is defined as the set $N_{\Gamma}(x)=N_{\Gamma}[x]\setminus \{x\}$.
				
			\end{definition}
			
			
			$N_{\Gamma}[x]$ is never an empty set as it always contains the vertex $x$, indeed $$1\leq |N_{\Gamma}[x]|\leq |V_{\Gamma}|.$$
			If $|N_{\Gamma}[x]|=|V_{\Gamma}|$, and hence $N_{\Gamma}[x]=V_{\Gamma}$, then $x$ is called a \emph{star vertex}. We denote the set of all star vertices of $\Gamma$ by $\mathcal{S}_{\Gamma}$.
			In Figure \ref{Star} the vertex in the centre is the only star vertex.		
			Note that $\Gamma$ is a complete graph if and only if $\mathcal{S}_{\Gamma}=V_{\Gamma}$. Look at Figure \ref{Complete_graph_digraph} to see the case of $K_5$.
			
			\begin{Oss}
				\label{CardinalityS}Let $\Gamma_1$ and $\Gamma_2$ be two graphs. If $\Gamma_1\cong \Gamma_2$, Then $|\mathcal{S}_{\Gamma_1}|=|\mathcal{S}_{\Gamma_2}|$.
			\end{Oss}
			\begin{proof}
				Let $\psi$ be an isomorphism between the two graphs $\Gamma_1$ and $\Gamma_2$.
				Now let $x,y \in V_{\Gamma_1}$, with $y\not= x$.
				Since $\psi$ is a graph isomorphism, then  we have that $\left\lbrace x,y\right\rbrace \in E_{\Gamma_1}$ if and only if $\left\lbrace \psi(x), \psi(y) \right\rbrace \in E_{\Gamma_2} $.
				Hence $x \in \mathcal{S}_{\Gamma_1}$ if and only if $\psi(x)\in \mathcal{S}_{\Gamma_2}$. Thus $|\mathcal{S}_{\Gamma_1}|=|\mathcal{S}_{\Gamma_2}|$, since $\psi$ is a bijection between $V_{\Gamma_1}$ and $V_{\Gamma_2}$.
			\end{proof}
			
			In general $|N_{\Gamma}[x]|-1$ is called the \emph{degree} of $x$, also denoted by $\deg_{\Gamma}(x)$.
			We use $\deg(x)$ when $\Gamma$ is clear from the context.
		
			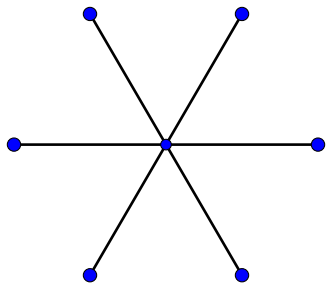
\begin{figure}
				\centering
				\begin{tikzpicture}[line cap=round,line join=round,>=triangle 45,x=1.0cm,y=1.0cm]
					\clip(-1.,-2.) rectangle (5.,3.);
					\draw [line width=1.pt] (1.,2.4641016151377557)-- (3.,-1.);
					\draw [line width=1.pt] (3.,2.4641016151377553)-- (1.,-1.);
					\draw [line width=1.pt] (0.,0.732050807568879)-- (4.,0.7320508075688774);
					\begin{scriptsize}
						\draw [fill=qqqqff] (1.,-1.) circle (2.5pt);
						\draw [fill=qqqqff] (3.,-1.) circle (2.5pt);
						\draw [fill=qqqqff] (4.,0.7320508075688774) circle (2.5pt);
						\draw [fill=qqqqff] (3.,2.4641016151377553) circle (2.5pt);
						\draw [fill=qqqqff] (1.,2.4641016151377557) circle (2.5pt);
						\draw [fill=qqqqff] (0.,0.732050807568879) circle (2.5pt);
						\draw [fill=qqqqff] (2.,0.732050807568878) circle (2.0pt);
					\end{scriptsize}
				\end{tikzpicture}
				\caption{A graph with $|\mathcal{S}_{\Gamma}|=1$}
				\label{Star}
			\end{figure}
			
		\section{The Euler's function}	
			
			Before defining the objects that will be the main players of this thesis, we need to recall the Euler's function $\phi$. 		
			For $n \in \mathbb{N}$, we recall that $$\phi(n):=|\left\lbrace k\in \mathbb{N} \, | \, 1\leq k <n \; \text{with} \; \gcd(k,n)=1\right\rbrace |.$$ 
			
			It is well known that, for $k\in \mathbb{N}$, for $p_i$ a prime number and $a_i$ a positive integer for all $i\in [k]$, with $p_i \not= p_j$ for all $i\not=j$, if  $n=p_1^{a_1}p_2^{a_2}...p_k^{a_k}$, then  $$ \phi(n)= \prod_{i=1}^{k}p_i^{a_i-1}(p_i-1).$$
			
			\begin{Oss}
				$\phi(n)=1$ if and only if $n=1$ or $n=2$. Furthermore $\phi(n)$ is odd if and only if $\phi(n)=1$.
			\end{Oss}
		\begin{proof}
			Clearly $\phi(n)=1$ if $n=1$ or $n=2$.
			Assume that $\phi(n)=1$. Suppose, by contradiction, that $n\ne1$ and $n\ne 2$.\\
			If $n=2^k$ for an integer $k\geq 2$, then $2^{k-1}\mid \phi(n)$, thus $\phi(n)\ne 1$.\\
			If there exists a prime $p\ne 2$ such that $p\mid n$, then $(p-1)\mid \phi(n)$, thus $\phi(n)\ne 1$.\\ 
			This two cases cover all the possible $n\ne 1,2$.
			Note that in both cases $\phi(n)$ is divided by an even number, so it is even.
			Therefore we get both conclusions. 
		\end{proof}
			
			For $n,m \in \mathbb{N}$, it is not true, in general, that if $n\geq m$, then $\phi(n)\geq \phi(m)$. For example pick $n=6$ and $m=5$, then $\phi(6)=2<\phi(5)=4$. The next lemma show us special cases for which $n\geq m$ implies $\phi(n)\geq \phi(m)$.
			
			\begin{lemma}
				\label{phiEulero}Let $m$ and $n$ be two positive integers. If $m \mid n$, then $\phi(m) \mid \phi(n)$ with equality if and only if $m=n$ or $m$ is odd and $n=2m$.
			\end{lemma}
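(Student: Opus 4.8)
The plan is to use the multiplicative formula for $\phi$ together with the prime factorizations of $m$ and $n$. Write $m = p_1^{a_1}\cdots p_k^{a_k}$ with the $p_i$ distinct primes and $a_i \geq 1$. Since $m \mid n$, every prime dividing $m$ divides $n$, so we may write $n = p_1^{b_1}\cdots p_k^{b_k} q_1^{c_1}\cdots q_r^{c_r}$ with $b_i \geq a_i$, the $q_j$ primes distinct from all $p_i$, and $c_j \geq 1$. Using $\phi(p^a) = p^{a-1}(p-1)$ and multiplicativity, one computes
$$\frac{\phi(n)}{\phi(m)} = \left(\prod_{i=1}^{k} p_i^{\,b_i - a_i}\right)\left(\prod_{j=1}^{r} q_j^{\,c_j-1}(q_j-1)\right),$$
where the first product is understood to be over those $i$ with $b_i > a_i$ contributing the factor $p_i^{b_i-a_i}$ and the rest contributing $1$. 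Every factor here is a positive integer, so $\phi(m) \mid \phi(n)$. This establishes divisibility; the work is then to analyze when the quotient equals $1$.

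For the equality case, suppose $\phi(n)/\phi(m) = 1$. First I would dispose of the easy direction: if $m = n$ the quotient is clearly $1$, and if $m$ is odd and $n = 2m$ then, since $2 \nmid m$, we have $\phi(n) = \phi(2)\phi(m) = \phi(m)$, so the quotient is $1$. For the converse, assume the quotient is $1$. Then both products above must equal $1$. The second product forces $r \leq \dots$ — more precisely, each factor $q_j^{c_j-1}(q_j-1)$ is $1$ only when $q_j - 1 = 1$, i.e. $q_j = 2$, and $c_j = 1$; and since the $q_j$ are distinct there can be at most one such $j$. So either $n$ has no prime factors outside those of $m$, or $n = 2m'$ where $m'$ collects the $p_i$-part, with $2 \nmid m$. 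Simultaneously the first product being $1$ forces $b_i = a_i$ for all $i$, i.e. the $p_i$-part of $n$ equals $m$ exactly. Combining: either $n = m$, or $n = 2m$ with $m$ odd.

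The main obstacle — really the only place requiring care — is handling the bookkeeping cleanly so that the two cases "$n$ shares exactly the primes of $m$ with the same exponents" and "$n$ has one extra prime, which must be $2$ to exponent $1$" are correctly separated, and in particular checking that in the second case $m$ must indeed be odd (otherwise $2$ would already be among the $p_i$ and appearing in $n$ to a strictly higher power, contradicting $b_i = a_i$). A subtlety worth flagging explicitly: the factor $q_j - 1$ equals $1$ precisely for $q_j = 2$, which is exactly the arithmetic reason the prime $2$ is special here and mirrors the Remark just proved about $\phi(n) = 1$. I would present the argument directly via the quotient formula rather than by induction, since the factorization makes everything transparent in one pass.
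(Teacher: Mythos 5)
Your proposal is correct and follows essentially the same route as the paper: both write the factorizations of $m$ and $n$, express $\phi(n)/\phi(m)$ as a product of factors $p_i^{b_i-a_i}$ and $q^{c-1}(q-1)$ over the extra primes, and obtain equality exactly when all exponents agree and the only possible extra prime is $2$ to the first power with $m$ odd. No gaps.
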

			\begin{proof}
				Since $m\mid n$, we have that there exist distinct primes $p_1, p_2, ..., p_s$ such that $m=p_1^{a_1}p_2^{a_2}...p_k^{a_k}$ and $n=p_1^{b_1}p_2^{b_2}...p_s^{b_s}$ with $k\leq s$, $1\leq a_i\leq b_i$ for all $i \in [k]$ and $a_i,b_j \in \mathbb{N}$ for all $i \in [k]$ and $j \in [s]$.
				
				So $\phi(m)=\prod_{i=1}^k p_i^{a_i-1}(p_i-1)$ and $\phi(n)=\prod_{j=1}^s p_j^{b_j-1}(p_j-1)$. Then
				$$\phi(n)=\phi(m)\prod_{i=1}^k p_i^{b_i-a_i} \prod_{j=k+1}^sp_j^{b_j-1}(p_j-1).$$
				Hence $\phi(m) \mid \phi(n)$ with equality if and only if $b_i=a_i$ for all $i\in[k]$ and $$\prod_{j=k+1}^sp_j^{b_j-1}(p_j-1)=1.$$
				Now $\prod_{j=k+1}^sp_j^{b_j-1}(p_j-1)=1$ happens in two cases:
				
				\begin{enumerate}
					\item \underline{The product is over an empty set}\footnote{As usual a product of number over an empty set of integer is set to be $1$.}.
					This holds only when $n$ and $m$ are products of the same primes. So $n=m$, as $a_i=b_i$ for all $i \in [k]$.
					
					\item \underline{The factors in the product are all equal to 1}, that implies $p_j^{b_j-1}(p_j-1)=1$ for all $k+1\leq j\leq s$. This happens if and only if $s=k+1$, $b_s=1$ and $p_s=2$. As $n$ is product of distinct primes it follows that $m$ is odd. So here we obtain that $n=2m$.
				\end{enumerate}
			\end{proof}
		
		\chapter{Power graph and directed power graph}
			\label{UPGeDPG}
			
			In this chapter we start to explore the connection between Group theory and Graph theory. Definitions, notation and also the results presented here, will be crucial for the following chapters.
			
			\begin{definition}
			\label{defPG}	\rm Let $G$ be a group. The \emph{power graph} of $G$, denoted as $\mathcal{P}(G)$, is defined by $V_{\mathcal{P}(G)}=G$ and $\left\lbrace x,y \right\rbrace \in E_{\mathcal{P}(G)} $ if $x\not=y$ and there exists a positive integer $m$ such that $x=y^m$ or $y=x^m$.
			\end{definition}
			
			By Lagrange's Theorem if $\left\lbrace x,y\right\rbrace $ is an edge of a power graph, then $o(x)\mid o(y)$ or $o(y)\mid o(x)$, in particular $\langle x \rangle \leq \langle y \rangle $ or $\langle y \rangle \leq \langle x \rangle $.
			Moreover, since in a group $G$ every element has a finite order, then, in $\mathcal{P}(G)$, the identity element of $G$ is a star vertex. Hence we have $1\in S_{\mathcal{P}(G)}$. Thus $\mathcal{P}(G)$ is connected.
			
			In Figure \ref{UPG_S_4} there is a representation of $\mathcal{P}(S_4)$. We can recognize the only star vertex as the identity.

			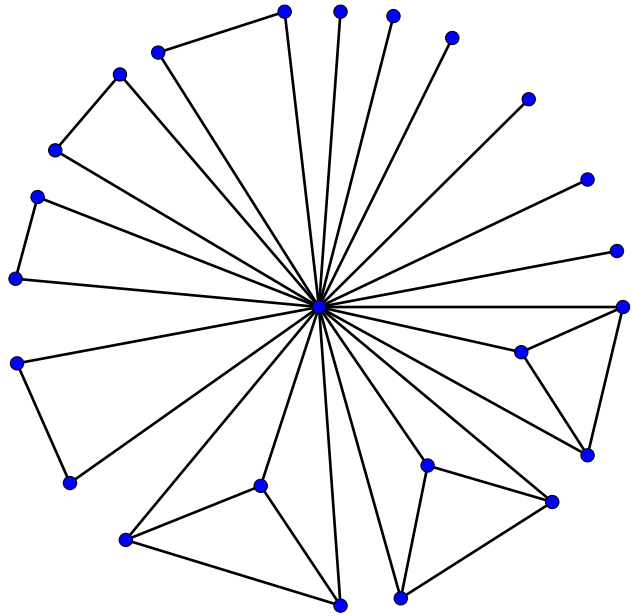
\begin{figure}
				\centering
				\begin{tikzpicture}[line cap=round,line join=round,>=triangle 45,x=1.0cm,y=1.0cm]
					\clip(-6.,2.5) rectangle (4.,12.);
					\draw [line width=1.pt] (-1.4520218260437137,10.915549859085147)-- (-3.1158884964800677,10.373825826850057);
					\draw [line width=1.pt] (-3.6189179549840818,10.083616523866972)-- (-4.470198577067798,9.077557606858948);
					\draw [line width=1.pt] (-4.702366019454266,8.458444427161703)-- (-4.992575322437351,7.374996362691523);
					\draw [line width=1.pt] (-4.276725708412408,4.666376201516074)-- (-4.973228035571812,6.252853724490266);
					\draw [line width=1.pt] (1.661554594802546,6.400870768344779)-- (3.,7.);
					\draw [line width=1.pt] (1.661554594802546,6.400870768344779)-- (2.533519268257322,5.033974651961314);
					\draw [line width=1.pt] (2.533519268257322,5.033974651961314)-- (3.,7.);
					\draw [line width=1.pt] (0.07641383633386878,3.1379405391384996)-- (2.069184383484386,4.414861472264069);
					\draw [line width=1.pt] (0.07641383633386878,3.1379405391384996)-- (0.428937407628609,4.898618571476549);
					\draw [line width=1.pt] (2.069184383484386,4.414861472264069)-- (0.428937407628609,4.898618571476549);
					\draw [line width=1.pt] (-3.541528807521925,3.9118320137600566)-- (-0.7168249251532304,3.041204104810805);
					\draw [line width=1.pt] (-3.541528807521925,3.9118320137600566)-- (-1.7666619570249664,4.628983561782251);
					\draw [line width=1.pt] (-1.7666619570249664,4.628983561782251)-- (-0.7168249251532304,3.041204104810805);
					\draw [line width=1.pt] (-1.4520218260437137,10.915549859085147)-- (-1.,7.);
					\draw [line width=1.pt] (-3.1158884964800677,10.373825826850057)-- (-1.,7.);
					\draw [line width=1.pt] (-3.6189179549840818,10.083616523866972)-- (-1.,7.);
					\draw [line width=1.pt] (-4.470198577067798,9.077557606858948)-- (-1.,7.);
					\draw [line width=1.pt] (-4.702366019454266,8.458444427161703)-- (-1.,7.);
					\draw [line width=1.pt] (-4.992575322437351,7.374996362691523)-- (-1.,7.);
					\draw [line width=1.pt] (-4.276725708412408,4.666376201516074)-- (-1.,7.);
					\draw [line width=1.pt] (-4.973228035571812,6.252853724490266)-- (-1.,7.);
					\draw [line width=1.pt] (-0.7168249251532315,10.915549859085147)-- (-1.,7.);
					\draw [line width=1.pt] (-0.02032259799382734,10.85750799848853)-- (-1.,7.);
					\draw [line width=1.pt] (0.7535688766277328,10.567298695505446)-- (-1.,7.);
					\draw [line width=1.pt] (1.759627793635761,9.754712647152811)-- (-1.,7.);
					\draw [line width=1.pt] (2.533519268257322,8.69061186954817)-- (-1.,7.);
					\draw [line width=1.pt] (2.920465005568102,7.742594813136765)-- (-1.,7.);
					\draw [line width=1.pt] (1.661554594802546,6.400870768344779)-- (-1.,7.);
					\draw [line width=1.pt] (2.533519268257322,5.033974651961314)-- (-1.,7.);
					\draw [line width=1.pt] (3.,7.)-- (-1.,7.);
					\draw [line width=1.pt] (0.07641383633386878,3.1379405391384996)-- (-1.,7.);
					\draw [line width=1.pt] (2.069184383484386,4.414861472264069)-- (-1.,7.);
					\draw [line width=1.pt] (0.428937407628609,4.898618571476549)-- (-1.,7.);
					\draw [line width=1.pt] (-0.7168249251532304,3.041204104810805)-- (-1.,7.);
					\draw [line width=1.pt] (-3.541528807521925,3.9118320137600566)-- (-1.,7.);
					\draw [line width=1.pt] (-1.7666619570249664,4.628983561782251)-- (-1.,7.);
					\begin{scriptsize}
						\draw [fill=qqqqff] (-1.4520218260437137,10.915549859085147) circle (2.5pt);
						\draw [fill=qqqqff] (-3.1158884964800677,10.373825826850057) circle (2.5pt);
						\draw [fill=qqqqff] (-4.470198577067798,9.077557606858948) circle (2.5pt);
						\draw [fill=qqqqff] (-3.6189179549840818,10.083616523866972) circle (2.5pt);
						\draw [fill=qqqqff] (-4.702366019454266,8.458444427161703) circle (2.5pt);
						\draw [fill=qqqqff] (-4.992575322437351,7.374996362691523) circle (2.5pt);
						\draw [fill=qqqqff] (-4.973228035571812,6.252853724490266) circle (2.5pt);
						\draw [fill=qqqqff] (-4.276725708412408,4.666376201516074) circle (2.5pt);
						\draw [fill=qqqqff] (2.533519268257322,5.033974651961314) circle (2.5pt);
						\draw [fill=qqqqff] (1.661554594802546,6.400870768344779) circle (2.5pt);
						\draw [fill=qqqqff] (2.069184383484386,4.414861472264069) circle (2.5pt);
						\draw [fill=qqqqff] (3.,7.) circle (2.5pt);
						\draw [fill=qqqqff] (-0.7168249251532315,10.915549859085147) circle (2.5pt);
						\draw [fill=qqqqff] (-0.7168249251532304,3.041204104810805) circle (2.5pt);
						\draw [fill=qqqqff] (-3.541528807521925,3.9118320137600566) circle (2.5pt);
						\draw [fill=qqqqff] (-0.02032259799382734,10.85750799848853) circle (2.5pt);
						\draw [fill=qqqqff] (2.920465005568102,7.742594813136765) circle (2.5pt);
						\draw [fill=qqqqff] (0.07641383633386878,3.1379405391384996) circle (2.5pt);
						\draw [fill=qqqqff] (-1.7666619570249664,4.628983561782251) circle (2.5pt);
						\draw [fill=qqqqff] (0.428937407628609,4.898618571476549) circle (2.5pt);
						\draw [fill=qqqqff] (1.759627793635761,9.754712647152811) circle (2.5pt);
						\draw [fill=qqqqff] (2.533519268257322,8.69061186954817) circle (2.5pt);
						\draw [fill=qqqqff] (-1.,7.) circle (2.5pt);
						\draw [fill=qqqqff] (0.7535688766277328,10.567298695505446) circle (2.5pt);
					\end{scriptsize}
				\end{tikzpicture}
				\caption{Power graph of $S_4$}
				\label{UPG_S_4}
			\end{figure}
			
			Let observe that, for all $x\in V_{\mathcal{P}(G)}$, we have that $\deg(x)\geq o(x)-1$. This holds because, given $x\in V_{\mathcal{P}(G)}$, $\left\lbrace x,y\right\rbrace \in E_{\mathcal{P}(G)} $ for all $y \in \langle x \rangle \setminus \{x\}$ and we have $|\langle x \rangle|=o(x)$.
			
			In some cases, it is convenient to consider also the subgraph $\mathcal{P}^*(G)$ of $\mathcal{P}(G)$ induced by $G \setminus \left\lbrace 1\right\rbrace $. Such a graph is called the \emph{proper power graph} of $G$. In Figure \ref{PPG_S_4} we see how $\mathcal{P}^*(S_4)$ appears. Look to Figure \ref{UPG_S_4}, that represents $\mathcal{P}(S_4)$, to appreciate the differences. Note that, for all $x\in V_{\mathcal{P}(G)}\setminus \left\lbrace 1\right\rbrace $, $\deg_{\mathcal{P}(G)}(x)=\deg_{\mathcal{P}^*(G)}(x)+1$. Now, in $\mathcal{P}^*(G)$, all the vertices of degree $0$ are the transpositions, the vertices of degree $1$ are the $3$-cycles. The remaining vertices are the $4$-cycles and the product of two transpositions. In each triangle we have two $4$-cycle and a product of two transpositions. For example the vertices of one of the triangles in $\mathcal{P}^*(G)$ are $(1234)$, $(4321)$ and $(13)(24)$.

			\begin{Oss}
			\label{InducedPG}	Let $G$ be a group and $H\leq G$. Then $\mathcal{P}(G)_H=\mathcal{P}(H)$.
			\end{Oss}
			\begin{proof}
				By definition of power graph and induced subgraph we immediately have $V_{\mathcal{P}(H)}=H=V_{\mathcal{P}(G)_H}$. Now $e=\left\lbrace x,y \right\rbrace \in E_{\mathcal{P}(H)} $ if and only if $e\in E_{\mathcal{P}(G)_H}$ since, by definitions, both are equivalent to $x\not=y$, $x,y \in H$ and there exists a positive integer $m$ such that $x=y^m$ or $y=x^m$. Thus $\mathcal{P}(H)=\mathcal{P}(G)_H$.
			\end{proof}
			
			\begin{figure}
				\centering
				\begin{tikzpicture}[line cap=round,line join=round,>=triangle 45,x=1.0cm,y=1.0cm]
					\clip(-6.,2.5) rectangle (4.,12.);
					\draw [line width=1.pt] (-1.4520218260437137,10.915549859085147)-- (-3.1158884964800677,10.373825826850057);
					\draw [line width=1.pt] (-3.6189179549840818,10.083616523866972)-- (-4.470198577067798,9.077557606858948);
					\draw [line width=1.pt] (-4.702366019454266,8.458444427161703)-- (-4.992575322437351,7.374996362691523);
					\draw [line width=1.pt] (-4.276725708412408,4.666376201516074)-- (-4.973228035571812,6.252853724490266);
					\draw [line width=1.pt] (1.661554594802546,6.400870768344779)-- (3.,7.);
					\draw [line width=1.pt] (1.661554594802546,6.400870768344779)-- (2.533519268257322,5.033974651961314);
					\draw [line width=1.pt] (2.533519268257322,5.033974651961314)-- (3.,7.);
					\draw [line width=1.pt] (0.07641383633386878,3.1379405391384996)-- (2.069184383484386,4.414861472264069);
					\draw [line width=1.pt] (0.07641383633386878,3.1379405391384996)-- (0.428937407628609,4.898618571476549);
					\draw [line width=1.pt] (2.069184383484386,4.414861472264069)-- (0.428937407628609,4.898618571476549);
					\draw [line width=1.pt] (-3.541528807521925,3.9118320137600566)-- (-0.7168249251532304,3.041204104810805);
					\draw [line width=1.pt] (-3.541528807521925,3.9118320137600566)-- (-1.7666619570249664,4.628983561782251);
					\draw [line width=1.pt] (-1.7666619570249664,4.628983561782251)-- (-0.7168249251532304,3.041204104810805);
					\begin{scriptsize}
						\draw [fill=qqqqff] (-1.4520218260437137,10.915549859085147) circle (2.5pt);
						\draw [fill=qqqqff] (-3.1158884964800677,10.373825826850057) circle (2.5pt);
						\draw [fill=qqqqff] (-4.470198577067798,9.077557606858948) circle (2.5pt);
						\draw [fill=qqqqff] (-3.6189179549840818,10.083616523866972) circle (2.5pt);
						\draw [fill=qqqqff] (-4.702366019454266,8.458444427161703) circle (2.5pt);
						\draw [fill=qqqqff] (-4.992575322437351,7.374996362691523) circle (2.5pt);
						\draw [fill=qqqqff] (-4.973228035571812,6.252853724490266) circle (2.5pt);
						\draw [fill=qqqqff] (-4.276725708412408,4.666376201516074) circle (2.5pt);
						\draw [fill=qqqqff] (2.533519268257322,5.033974651961314) circle (2.5pt);
						\draw [fill=qqqqff] (1.661554594802546,6.400870768344779) circle (2.5pt);
						\draw [fill=qqqqff] (2.069184383484386,4.414861472264069) circle (2.5pt);
						\draw [fill=qqqqff] (3.,7.) circle (2.5pt);
						\draw [fill=qqqqff] (-0.7168249251532315,10.915549859085147) circle (2.5pt);
						\draw [fill=qqqqff] (-0.7168249251532304,3.041204104810805) circle (2.5pt);
						\draw [fill=qqqqff] (-3.541528807521925,3.9118320137600566) circle (2.5pt);
						\draw [fill=qqqqff] (-0.02032259799382734,10.85750799848853) circle (2.5pt);
						\draw [fill=qqqqff] (2.920465005568102,7.742594813136765) circle (2.5pt);
						\draw [fill=qqqqff] (0.07641383633386878,3.1379405391384996) circle (2.5pt);
						\draw [fill=qqqqff] (-1.7666619570249664,4.628983561782251) circle (2.5pt);
						\draw [fill=qqqqff] (0.428937407628609,4.898618571476549) circle (2.5pt);
						\draw [fill=qqqqff] (1.759627793635761,9.754712647152811) circle (2.5pt);
						\draw [fill=qqqqff] (2.533519268257322,8.69061186954817) circle (2.5pt);
						\draw [fill=qqqqff] (0.7535688766277328,10.567298695505446) circle (2.5pt);
					\end{scriptsize}
				\end{tikzpicture}
				\caption{Proper power graph of $S_4$}
				\label{PPG_S_4}
			\end{figure}
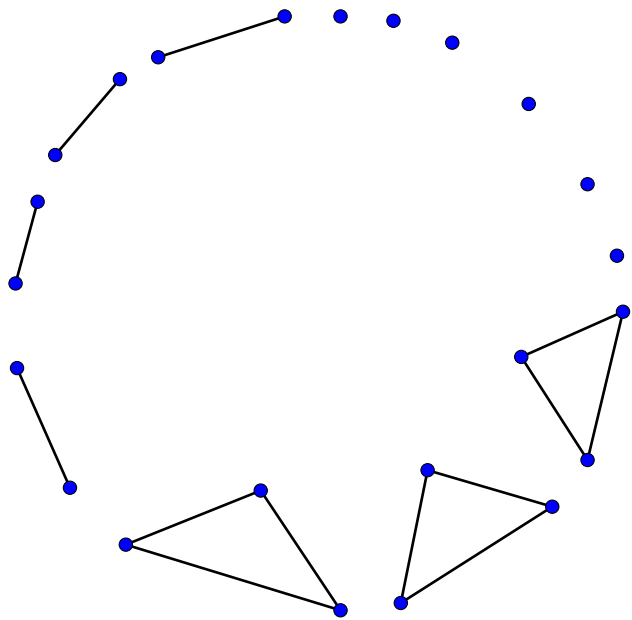
			
			Whenever is clear which group we are referring to, we write $E$ instead of $E_{\mathcal{P}(G)}$.  
			
			We can easily find some simple results related to the power graph of cyclic groups.
			
			\begin{lemma}
				\label{nonGeneratoriInPG}Let $G$ be a cyclic group. If $G$ is not a $p$-group, then for all $x\in G\setminus \left\lbrace 1\right\rbrace $ with $\langle x\rangle\not= G$ there exists $y\in G $ such that $x$ and $y$ are not joined in $\mathcal{P}(G)$.
			\end{lemma}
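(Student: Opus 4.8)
The plan is to translate the statement entirely into a statement about orders and then reduce to an elementary divisibility fact. As noted right after Definition~\ref{defPG}, if $\{x,y\}$ is an edge of $\mathcal{P}(G)$, then $o(x)\mid o(y)$ or $o(y)\mid o(x)$; so, contrapositively, it suffices to produce $y\in G$ whose order neither divides $o(x)$ nor is divisible by it. Since $G$ is cyclic, for every divisor $e$ of $n:=|G|$ there is an element of order exactly $e$, namely a generator of the unique subgroup of order $e$ (this is exactly the recalled characterisation of cyclic groups). Hence the whole lemma reduces to the following claim: writing $d:=o(x)$, which by hypothesis satisfies $d\mid n$ and $1<d<n$, and knowing that $n$ is not a prime power because $G$ is not a $p$-group, there is a divisor $e$ of $n$ with $e\nmid d$ and $d\nmid e$.

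To establish the claim I would split on the shape of $d$. If $d$ is a power of a single prime $p$, say $d=p^{a}$ with $a\ge 1$, then $n$, not being a prime power, has a prime divisor $q\ne p$, and $e:=q$ works: $q\nmid p^{a}$ and $p^{a}\nmid q$. If instead $d$ has at least two distinct prime divisors, then from $d\mid n$ and $d\ne n$ I pick a prime $p$ whose exponent in $d$ is strictly smaller than its exponent in $n$, and let $e$ be the largest power of $p$ dividing $n$; then $e\mid n$, the exponent inequality forces $e\nmid d$, and the existence of a prime divisor of $d$ other than $p$ forces $d\nmid e$. In either case, taking $y$ to be an element of order $e$ and invoking the observation above yields $\{x,y\}\notin E_{\mathcal{P}(G)}$, i.e.\ $x$ and $y$ are not joined.

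I do not expect a genuine obstacle here: the mathematical content is just the elementary divisibility claim, and the case split makes each choice of $e$ a one-liner. If a uniform argument is preferred, write $n=p_1^{n_1}\cdots p_k^{n_k}$ and $d=p_1^{d_1}\cdots p_k^{d_k}$ with $0\le d_i\le n_i$; since $d<n$ some index $i$ has $d_i<n_i$, since $d>1$ some index $j$ has $d_j\ge 1$, and because $k\ge 2$ and every $n_l\ge 1$ these two indices can be taken distinct, so that $e:=p_i^{n_i}$ satisfies $e\mid n$, $e\nmid d$ (strictly larger exponent at $p_i$), and $d\nmid e$ (the prime $p_j$ divides $d$ but not $e$). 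The only point that needs to be stated with care is that a finite cyclic group really does contain an element of each order dividing $|G|$, which is precisely the recalled fact about unique subgroups of cyclic groups.
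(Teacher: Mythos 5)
Your proof is correct and follows essentially the same route as the paper: reduce adjacency to divisibility of orders and, writing $|G|=p_1^{a_1}\cdots p_k^{a_k}$ with $k\geq 2$, exhibit $y$ of prime-power order $p_i^{a_i}$ (or of prime order $q\neq p$) incomparable with $o(x)$ under divisibility, using that a cyclic group has an element of every order dividing $|G|$. Your explicit case split, with the two indices taken distinct, is in fact slightly more careful than the paper's wording, which picks an arbitrary index $i$ with $b_i<a_i$ and asserts $o(x)\nmid p_i^{a_i}$ without remarking that the index must be chosen so that $p_i$ is not the only prime dividing $o(x)$.
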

			\begin{proof}
				Let $G=\langle g \rangle$  and let $o(g)=p_1^{a_1}p_2^{a_2}...p_k^{a_k}$ for an integer $k\geq 2$, $p_1, p_2, ...,p_k$ distinct primes and $a_i\in \mathbb{N}$ for all $i \in [k]$.
				Now let $x\in G$ that satisfies the hypothesis. Then, since $o(x) \mid o(g)$, $o(x)=p_1^{b_1}p_2^{b_2}...p_k^{b_k}$ with, for all $i \in [k]$, $b_i$ an integer such that $0\leq b_i\leq a_i$. Thus, as $\langle x\rangle\not= G$, there is an index $i$ such that $b_i<a_i$. Take $y\in G$ with $o(y)=p_i^{a_i}$ then $o(y) \nmid o(x)$ and $o(x)\nmid o(y)$; therefore $y$ is not joined to $x$ in $\mathcal{P}(G)$.
			\end{proof}
						
			\begin{corollary}
				\label{PGcompleto_1}Let $G$ be a cyclic group. Then $\mathcal{P}(G)$ is a complete graph if and only if $G$ is cyclic of prime power order.
			\end{corollary}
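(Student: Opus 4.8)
The plan is to prove the two implications separately. For the implication ``$\mathcal{P}(G)$ complete $\Rightarrow$ $G$ cyclic of prime power order'' I would argue by contraposition, and here Lemma~\ref{nonGeneratoriInPG} does essentially all the work. For the converse I would exploit the fact, recalled in Chapter~\ref{Preliminaries}, that a cyclic group has exactly one subgroup of each order dividing its order, so that the subgroup lattice of a cyclic $p$-group is a chain.

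\emph{Only if.} Suppose $G$ is cyclic but not a $p$-group, say $|G| = p_1^{a_1}\cdots p_k^{a_k}$ with $k \geq 2$ and the $p_i$ distinct primes. Since $G$ is cyclic it has an element $x$ of order $p_1$; then $x \neq 1$ and $\langle x\rangle$ has order $p_1 < |G|$, so $\langle x\rangle \neq G$. By Lemma~\ref{nonGeneratoriInPG} there is $y \in G$ not joined to $x$ in $\mathcal{P}(G)$, hence $\mathcal{P}(G)$ is not complete. Taking the contrapositive: if $\mathcal{P}(G)$ is complete and $G$ is cyclic, then $G$ is a $p$-group, i.e.\ cyclic of prime power order.

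\emph{If.} Suppose $G = \langle g\rangle$ with $|G| = p^n$, $p$ prime. The divisors of $p^n$ are $1, p, p^2, \dots, p^n$, which form a chain under divisibility; since in a cyclic group each divisor $d$ of the order corresponds to a unique subgroup of order $d$, and this correspondence is inclusion-preserving, the subgroups of $G$ are totally ordered by inclusion. Now let $x, y \in G$ be distinct. Then $\langle x\rangle$ and $\langle y\rangle$ are comparable, say $\langle x\rangle \leq \langle y\rangle$, so $x \in \langle y\rangle = \{y^m \mid m \in \mathbb{Z}\}$; writing $x = y^m$ and, if necessary, adding a multiple of $o(y)$ to $m$ to make it positive (using $y^{o(y)} = 1$), we obtain $x = y^m$ with $m$ a positive integer, hence $\{x,y\} \in E_{\mathcal{P}(G)}$. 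As $x$ and $y$ were arbitrary, $\mathcal{P}(G)$ is complete.

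The argument is essentially routine; the only points requiring a little care are the degenerate case $|G| = 1$ (where $\mathcal{P}(G) = K_1$ is vacuously complete and one reads $1 = p^0$ as a prime power), which I would dispatch in one line, and the passage from $x \in \langle y\rangle$ to $x = y^m$ with $m > 0$, which is immediate from $y^{o(y)} = 1$. I do not anticipate a genuine obstacle beyond these bookkeeping remarks.
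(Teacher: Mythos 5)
Your proof is correct, and it splits the same way the paper does in one half but not the other. For the ``only if'' direction you and the paper both rely on Lemma~\ref{nonGeneratoriInPG}; your contrapositive phrasing, with an explicit choice of the element $x$ of order $p_1$, is just a slightly more spelled-out version of the paper's one-line appeal to that lemma. For the ``if'' direction the routes genuinely differ: the paper proceeds by induction on $|G|$, taking the unique subgroup $\langle y\rangle$ of order $p^{n-1}$, invoking the inductive hypothesis on $\mathcal{P}(\langle y\rangle)$, and using that every element of $G\setminus\langle y\rangle$ is a generator; you instead argue directly that the subgroup lattice of a cyclic $p$-group is a chain, so any two elements generate comparable cyclic subgroups and one is a power of the other. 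Your argument is more direct and avoids the induction entirely; its only compressed step is the claim that the order-to-subgroup correspondence is inclusion-preserving, which deserves one sentence (for $|H|=p^a\leq p^b=|K|$, the cyclic group $K$ contains a subgroup of order $p^a$, and by uniqueness of the order-$p^a$ subgroup of $G$ this must be $H$) --- note this uses exactly the same uniqueness fact that drives the paper's induction, so neither approach buys extra generality; they are two standard packagings of the same structural fact about cyclic $p$-groups. Your treatment of the degenerate case $|G|=1$ and of turning $x\in\langle y\rangle$ into $x=y^m$ with $m>0$ matches the conventions the paper uses elsewhere.
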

			\begin{proof}
				Assume that $G\cong C_{p^n}$ for some prime $p$ and $n\in \mathbb{N}_0$.
				We proceed by induction on the order of $G$.
				For $|G|=1$ and for $|G|=p$ we have that $\mathcal{P}(G)$ is a complete graph since $G$ contains only the identity and the generators of the group, that are clearly joined to all others elements.
				Now let $|G|=p^n$ for some $n \in \mathbb{N}$. Surely there exists $y\in G$ with order $p^{n-1}$. Then, by inductive hypothesis, $\mathcal{P}(\langle y \rangle)$ is complete.
				Therefore consider two elements $x, z \in G$. If one of them is a generator, then they are joined. Otherwise, since the elements of $G\setminus \langle y \rangle$ are exactly the generators of $G$, we have that $x, z \in \langle y \rangle$, and hence they are joined. Thus $\mathcal{P}(G)$ is a complete graph.
				
				Assume next that $\mathcal{P}(G)$ is complete. By Lemma \ref{nonGeneratoriInPG}, since $G$ is cyclic, it must be of prime power order otherwise there exist at least two vertices that are not joined.
			\end{proof}
					
				In particular Lemma \ref{nonGeneratoriInPG} tells us something about the closed neighbourhoods: \\ Let $G$ be a group. If $x, y \in G\setminus \{1\}$ and $\langle x \rangle < \langle y \rangle$ with $o(y)$ not a prime power, then surely $N_{\mathcal{P}(G)}[x]\not= N_{\mathcal{P}(G)}[y]$. Indeed, by Lemma \ref{nonGeneratoriInPG}, in $\langle y \rangle $ there exists an element not joined to $x$, and such an element is clearly joined to $y$.
				
				With the next proposition we study the star vertices of a power graph.
					
			\begin{prop}{\rm \cite[Proposition 4]{Cameron_2}}
				\label{S>1}Let $G$ be a group. Consider $\mathcal{S}=\mathcal{S}_{\mathcal{P}(G)}$. Suppose that $|\mathcal{S}|>1$. Then one of the following occurs:
				\begin{itemize}
					\item[\rm (a)] $G$ is cyclic of prime power order, and $\mathcal{S}=G$;
					\item[\rm (b)] $G$ is cyclic of non-prime-power order $n$, and $\mathcal{S}$ consists of the identity and the generators of $G$, so that $|\mathcal{S}|=1 + \phi(n)$;
					\item[\rm (c)] $G$ is $2$-group generalised quaternion, and $\mathcal{S}$ contains the identity and the unique involution of $G$, so that $|\mathcal{S}|=2$.
				\end{itemize}
			\end{prop}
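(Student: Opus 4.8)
The plan is to use that $1\in\mathcal{S}$ always, so $|\mathcal{S}|>1$ gives a star vertex $x\neq 1$, and to exploit that for every $y\in G$ either $x\in\langle y\rangle$ or $y\in\langle x\rangle$. First I would prove that every prime dividing $|G|$ divides $o(x)$: if a prime $q$ divided $|G|$ but not $o(x)$, then an element $w$ of order $q$ (which exists by Cauchy's theorem) satisfies neither $x\in\langle w\rangle$ (since $q\nmid o(x)$) nor $w\in\langle x\rangle$ (since $\langle x\rangle$ has no element of order $q$), so $x$ and $w$ would be non-adjacent in $\mathcal{P}(G)$, contradicting $x\in\mathcal{S}$.

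Next I would split on whether $G$ is a $p$-group. If $G$ is \emph{not} a $p$-group, the observation above forces $o(x)$ to be a non-prime-power. I claim $G=\langle x\rangle$: if some $y$ lay outside $\langle x\rangle$, the star condition would give $\langle x\rangle<\langle y\rangle$, and $o(y)$, being a multiple of $o(x)$, would also be a non-prime-power; then Lemma \ref{nonGeneratoriInPG} applied inside the cyclic group $\langle y\rangle$ — which is an induced subgraph of $\mathcal{P}(G)$ by Remark \ref{InducedPG} — would produce a vertex of $\langle y\rangle$ not joined to $x$, a contradiction. Hence $G$ is cyclic of non-prime-power order $n$. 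The identity and every generator of $G$ are visibly star vertices, while any $s\in\mathcal{S}\setminus\{1\}$ with $\langle s\rangle\neq G$ is again separated from some vertex by Lemma \ref{nonGeneratoriInPG}; therefore $\mathcal{S}$ consists exactly of the identity and the $\phi(n)$ generators, which is case (b).

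If $G$ \emph{is} a $p$-group and is cyclic, then it has prime-power order, $\mathcal{P}(G)$ is complete by Corollary \ref{PGcompleto_1}, and $\mathcal{S}=G$, which is case (a). If $G$ is a non-cyclic $p$-group, I would first show $G$ has a unique subgroup of order $p$: given any $w$ of order $p$, the star condition forces $\langle w\rangle$ to coincide with the unique order-$p$ subgroup of the cyclic group $\langle x\rangle$ (the degenerate case $o(x)=p$ being dispatched by a direct variant, since there $\langle x\rangle=\langle w\rangle$ for every $w$ of order $p$). The classical structure theorem — a finite $p$-group with a unique subgroup of order $p$ is cyclic or generalised quaternion, the latter only when $p=2$ — then forces $G\cong Q_{2^{n+1}}$ with $n\geq 2$. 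Its unique involution $z$ lies in every non-trivial cyclic subgroup, hence is joined to all other vertices, so $\{1,z\}\subseteq\mathcal{S}$. To finish I would exclude any $s\in\mathcal{S}$ with $o(s)\geq 4$ by examining $Q_{2^{n+1}}$: if $s$ lies in the cyclic maximal subgroup it is not joined to an order-$4$ element outside it; otherwise $o(s)=4$ and $s$ is not joined either to an order-$8$ element inside the maximal subgroup (when $n\geq 3$) or, in the case $Q_8$, to an order-$4$ element generating a different cyclic subgroup. Hence $\mathcal{S}=\{1,z\}$, which is case (c).

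The main obstacle I expect is the non-cyclic $p$-group case: it depends on the external structure theorem for $p$-groups with a unique subgroup of order $p$, and the concluding step ``no star vertex of order $\geq 4$'' requires a careful case analysis of element orders and of pairwise intersections of cyclic subgroups in $Q_{2^{n+1}}$, with $Q_8$ behaving as a genuine boundary case. By contrast, the non-$p$-group case reduces, once the prime-divisor observation is in hand, to two applications of Lemma \ref{nonGeneratoriInPG}, and the cyclic case is immediate from Corollary \ref{PGcompleto_1}.
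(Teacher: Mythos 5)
Your proposal is correct and follows essentially the same route as the paper: the prime-divisor claim for a non-identity star vertex, the unique subgroup of order $p$ combined with Burnside's classification of $p$-groups with a unique subgroup of order $p$, and Lemma \ref{nonGeneratoriInPG} (via Remark \ref{InducedPG}) to force $G=\langle x\rangle$ in the non-prime-power case. You additionally verify the exact description of $\mathcal{S}$ in cases (b) and (c) (e.g.\ excluding star vertices of order at least $4$ in $Q_{2^{n+1}}$), details the paper leaves implicit, and these checks are sound.
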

			\begin{proof}
				Clearly $\mathcal{S}$ contains the identity.
				\begin{claim}
					\label{p|g}
					\rm	For all $g\in \mathcal{S}\setminus \{1\}$, the order of $g$ is divisible by every prime divisor of $|G|$.
				\end{claim}
				Suppose, by contradiction that there exists a prime $q$ such that $q\mid |G|$ and $q\nmid o(g)$. In particular $o(g)\ne q$. Let $h\in G$ be an element with $o(h)=q$. Then, since $g \in \mathcal{S}$, we have that $\{g,h\}\in E$. Hence $o(g)\mid o(h)$ or $o(h)\mid o(g)$. Now $o(h)=q\nmid o(g)$, then we have that $o(g)\mid o(h)=q$. Therefore $o(g)=1$, that implies $g=1$, against the assumption on $g\in \mathcal{S}\setminus \{1\}$.  
				
				Consider $g\in \mathcal{S}\setminus\{1\}$.
				\begin{claim}
					\rm	Every $h\in G$ with order a prime is a power of $g$. Then $G$ has a unique subgroup of order $p$ for all primes $p \mid |G|$.
				\end{claim}
				Let $p$ be a prime such that $p\mid |G|$. Suppose, by contradiction, that there exists $h$, an element of order $p$, which is not a power of $g$. Since $g\in \mathcal{S}$ then $h$ and $g$ are joined in the power graph. So, for the assumption on $h$, $g$ must be power of $h$; but it is a contradiction: if $o(g)>p$ then $g$ cannot be a power of an element of order $p$, which is $h$, since we would have $o(g)\mid o(h)$, and if $o(g)=p$ then $g$ is power of $h$ if and only if $h$ is power of $g$ against the assumption. Note that $o(g)$ cannot be less than $p$ as $p\mid o(g)$. Hence the claim is true.
				
				Now a $p$-group with a unique subgroup of order $p$ is cyclic or $2$-group generalised quaternion, by \cite[Theorems 8.5, 8.6]{Burnside}. Therefore if $G$ is a $p$-group we obtain conclusions (a) or (c) of the Proposition.
				
				So suppose that the order of $G$ is divisible by at least two distinct primes.
				
				\begin{claim}
					\rm	$G=\langle g \rangle$.
				\end{claim}
				
				Let $z \in G$, since $g \in \mathcal{S}$ holds, we have $\left\lbrace g,z \right\rbrace \in E$ thus we also have $z\in \langle g \rangle$ or $g\in \langle z \rangle$.
				Suppose that $g \in \langle z \rangle$ holds and that $\langle g \rangle \not= \langle z \rangle$; in all the other cases we have $z \in \langle g \rangle$.
				Now $o(g)\mid o(z)$ and if $p$ is a prime such that $p \mid |G|$ then, by Claim \ref{p|g}, $p \mid o(g)$. Hence $o(z)$ is not a prime power and, since $\langle g\rangle \not= 1$, by Lemma \ref{nonGeneratoriInPG}, there exists an element $w\in \langle z \rangle$ not joined to $g$, against the fact that $g \in \mathcal{S}$ holds. Therefore such a $z$ does not exist and then $z\in \langle g \rangle $ thus $G=\langle g \rangle$. 	
				We have obtained the conclusion (b) proving the proposition.
			\end{proof}
			
			We propose two results that follow from this proposition. We omit the proof for both of them because they are straightforward.
						
			\begin{corollary}
				\label{abelUPG_ciclico} If $G$ is an abelian group, then $\mathcal{P}(G)$ has more than one star vertex if and only if $G$ is a non trivial cyclic group. 
			\end{corollary}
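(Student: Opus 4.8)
The plan is to deduce this corollary directly from Proposition \ref{S>1}, the only extra ingredient being that a generalised quaternion group is never abelian (already the defining relation $y^{-1}xy=x^{-1}$ with $o(x)\geq 4$ shows $xy\neq yx$).

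First I would handle the forward implication. Assume $\mathcal{P}(G)$ has more than one star vertex, that is $|\mathcal{S}_{\mathcal{P}(G)}|>1$. Then Proposition \ref{S>1} applies, so one of the cases (a), (b), (c) holds. In case (a), $G$ is cyclic of prime power order, and in case (b), $G$ is cyclic of non-prime-power order; in either situation $G$ is cyclic, and it is non-trivial because $|\mathcal{S}_{\mathcal{P}(G)}|>1$ forces $|G|>1$. It remains to exclude case (c): there $G$ would be a generalised quaternion $2$-group, hence non-abelian, contradicting the hypothesis. Therefore $G$ is a non-trivial cyclic group.

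Next I would prove the converse. Suppose $G$ is a non-trivial cyclic group. If $|G|$ is a prime power, then by Corollary \ref{PGcompleto_1} the graph $\mathcal{P}(G)$ is complete, so $\mathcal{S}_{\mathcal{P}(G)}=G$ and $|\mathcal{S}_{\mathcal{P}(G)}|=|G|>1$. If $|G|=n$ is not a prime power, then $n$ has at least two distinct prime divisors, so $n\geq 6$ and $\phi(n)\geq 2$; the identity is a star vertex of every power graph, and each generator $g$ of $G$ satisfies $\langle g\rangle=G$, so every element of $G$ is a power of $g$ and thus joined to $g$, making $g$ a star vertex too. Hence $|\mathcal{S}_{\mathcal{P}(G)}|\geq 1+\phi(n)\geq 3>1$, which completes the proof. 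There is essentially no hard step here: the only thing that genuinely needs to be observed is that case (c) of Proposition \ref{S>1} is incompatible with $G$ being abelian, which is exactly why the statement restricts the trichotomy to the cyclic cases.
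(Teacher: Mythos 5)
Your proof is correct and follows exactly the route the paper intends: the paper omits the proof as "straightforward," meaning precisely this derivation from Proposition \ref{S>1}, with case (c) ruled out because generalised quaternion groups are non-abelian, and the converse handled by noting that the identity and the generators of a non-trivial cyclic group are star vertices (or, in the prime-power case, by Corollary \ref{PGcompleto_1}). Nothing is missing.
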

		
			\begin{prop}
				\label{PGcompleto}For a group $G$, $\mathcal{P}(G)$ is a complete graph if and only if $G$ is cyclic of prime power order.
			\end{prop}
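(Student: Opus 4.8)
The plan is to treat the two implications separately, leaning on results already proved. The implication ``$\Leftarrow$'' requires no work at all: if $G$ is cyclic of prime power order, then Corollary \ref{PGcompleto_1} already asserts that $\mathcal{P}(G)$ is a complete graph.

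For the implication ``$\Rightarrow$'', assume $\mathcal{P}(G)$ is complete. The crux is to observe that completeness forces $G$ to be cyclic, after which Corollary \ref{PGcompleto_1} finishes the argument. To get cyclicity I would pick an element $x\in G$ of maximal order. For every $y\in G$ with $y\ne x$ the pair $\{x,y\}$ is an edge of $\mathcal{P}(G)$, so (by the remark recorded just after Definition \ref{defPG}) either $\langle x\rangle\le\langle y\rangle$ or $\langle y\rangle\le\langle x\rangle$; since $o(y)=|\langle y\rangle|\le|\langle x\rangle|=o(x)$, the first alternative would give $\langle x\rangle=\langle y\rangle$, so in every case $y\in\langle x\rangle$. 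Hence $G=\langle x\rangle$ is cyclic, and now Corollary \ref{PGcompleto_1}, applied to the cyclic group $G$ whose power graph is complete, yields that $G$ is cyclic of prime power order.

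An alternative route for ``$\Rightarrow$'' is to go through Proposition \ref{S>1}: completeness means $\mathcal{S}_{\mathcal{P}(G)}=G$; for $|G|=1$ the conclusion is immediate, while for $|G|\ge 2$ we have $|\mathcal{S}_{\mathcal{P}(G)}|=|G|>1$, so one of the cases (a), (b), (c) holds. Case (b) would force $|G|=1+\phi(|G|)$, which fails whenever $|G|$ is not a prime power, and case (c) would force $|G|=2$ whereas a generalised quaternion $2$-group has order at least $8$; thus only case (a) survives. I do not expect a genuine obstacle here — the statement is essentially a repackaging of Corollary \ref{PGcompleto_1} once the reduction to the cyclic case is made — and the only point demanding a little care is the trivial group, which is covered by the convention $n\in\mathbb{N}_0$ already used in the proof of Corollary \ref{PGcompleto_1}.
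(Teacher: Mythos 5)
Your proof is correct, and in fact you give two valid routes. The paper itself omits the argument, saying only that the proposition follows from Proposition \ref{S>1}; that intended derivation is exactly your alternative route, and your case analysis there is sound: case (b) cannot give $\mathcal{S}=G$ because $|G|=1+\phi(|G|)$ forces $|G|$ prime, contradicting the non-prime-power hypothesis of that case, and case (c) cannot occur because generalised quaternion $2$-groups have order at least $8$ while $\mathcal{S}=G$ would force $|G|=2$. Your primary route is genuinely different and arguably more elementary: by taking $x$ of maximal order and using only the observation after Definition \ref{defPG} that adjacency means $\langle x\rangle\le\langle y\rangle$ or $\langle y\rangle\le\langle x\rangle$, you deduce $G=\langle x\rangle$ directly (the containment $\langle x\rangle\le\langle y\rangle$ together with $o(y)\le o(x)$ forces $\langle x\rangle=\langle y\rangle$), and then Corollary \ref{PGcompleto_1} finishes both implications. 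This avoids invoking Proposition \ref{S>1}, whose proof rests on the Burnside classification of $p$-groups with a unique subgroup of order $p$, at the price of redoing a small piece of reasoning already implicit in that proposition; the paper's route buys brevity once Proposition \ref{S>1} is available. Your handling of the trivial group via the convention $n\in\mathbb{N}_0$ in Corollary \ref{PGcompleto_1} is also consistent with the paper.
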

			
			Note that this last Corollary extends the information of Corollary \ref{PGcompleto_1} to all groups.

			\begin{figure}
				\centering
			\begin{tikzpicture}[line cap=round,line join=round,>=triangle 45,x=1.0cm,y=1.0cm]
				\clip(-1.,-1.5) rectangle (5.,4.5);
				\draw [line width=1.pt] (1.,3.8284271247461903)-- (-0.4142135623730949,2.4142135623730954);
				\draw [line width=1.pt] (1.,3.8284271247461903)-- (-0.41421356237309537,0.4142135623730956);
				\draw [line width=1.pt] (1.,3.8284271247461903)-- (1.,-1.);
				\draw [line width=1.pt] (1.,3.8284271247461903)-- (3.,-1.);
				\draw [line width=1.pt] (1.,3.8284271247461903)-- (4.414213562373095,0.4142135623730949);
				\draw [line width=1.pt] (1.,3.8284271247461903)-- (3.,3.82842712474619);
				\draw [line width=1.pt] (1.,3.8284271247461903)-- (4.414213562373095,2.4142135623730945);
				\draw [line width=1.pt] (3.,3.82842712474619)-- (-0.4142135623730949,2.4142135623730954);
				\draw [line width=1.pt] (3.,3.82842712474619)-- (-0.41421356237309537,0.4142135623730956);
				\draw [line width=1.pt] (3.,3.82842712474619)-- (1.,-1.);
				\draw [line width=1.pt] (3.,3.82842712474619)-- (3.,-1.);
				\draw [line width=1.pt] (3.,3.82842712474619)-- (4.414213562373095,0.4142135623730949);
				\draw [line width=1.pt] (3.,3.82842712474619)-- (4.414213562373095,2.4142135623730945);
				\draw [line width=1.pt] (4.414213562373095,2.4142135623730945)-- (-0.4142135623730949,2.4142135623730954);
				\draw [line width=1.pt] (4.414213562373095,2.4142135623730945)-- (-0.41421356237309537,0.4142135623730956);
				\draw [line width=1.pt] (4.414213562373095,2.4142135623730945)-- (1.,-1.);
				\draw [line width=1.pt] (4.414213562373095,2.4142135623730945)-- (3.,-1.);
				\draw [line width=1.pt] (4.414213562373095,2.4142135623730945)-- (4.414213562373095,0.4142135623730949);
				\draw [line width=1.pt] (4.414213562373095,0.4142135623730949)-- (-0.4142135623730949,2.4142135623730954);
				\draw [line width=1.pt] (4.414213562373095,0.4142135623730949)-- (-0.41421356237309537,0.4142135623730956);
				\draw [line width=1.pt] (4.414213562373095,0.4142135623730949)-- (1.,-1.);
				\draw [line width=1.pt] (4.414213562373095,0.4142135623730949)-- (3.,-1.);
				\draw [line width=1.pt] (3.,-1.)-- (-0.4142135623730949,2.4142135623730954);
				\draw [line width=1.pt] (3.,-1.)-- (-0.41421356237309537,0.4142135623730956);
				\draw [line width=1.pt] (3.,-1.)-- (1.,-1.);
				\draw [line width=1.pt] (1.,-1.)-- (-0.41421356237309537,0.4142135623730956);
				\draw [line width=1.pt] (-0.4142135623730949,2.4142135623730954)-- (1.,-1.);
				\draw [line width=1.pt] (-0.4142135623730949,2.4142135623730954)-- (-0.41421356237309537,0.4142135623730956);
				\draw (1,-1) node[anchor=north east] {$1$};
				\draw (3,-1) node[anchor=north west] {$a$};
				\draw (4.414213562373095,0.4142135623730949) node[anchor=north west] {$a^2$};
				\draw (4.414213562373095,2.4142135623730945) node[anchor=west] {$a^3$};
				\draw ((3.,3.82842712474619) node[anchor=south west] {$a^4$};
				\draw (1.,3.8284271247461903) node[anchor=south east] {$a^5$};
				\draw (-0.4142135623730949,2.4142135623730954) node[anchor= east] {$a^6$};
				\draw (-0.41421356237309537,0.4142135623730956) node[anchor=north east] {$a^7$};
				\begin{scriptsize}
					\draw [fill=qqqqff] (1.,-1.) circle (2.5pt);
					\draw [fill=qqqqff] (3.,-1.) circle (2.5pt);
					\draw [fill=qqqqff] (4.414213562373095,0.4142135623730949) circle (2.5pt);
					\draw [fill=qqqqff] (4.414213562373095,2.4142135623730945) circle (2.5pt);
					\draw [fill=qqqqff] (3.,3.82842712474619) circle (2.5pt);
					\draw [fill=qqqqff] (1.,3.8284271247461903) circle (2.5pt);
					\draw [fill=qqqqff] (-0.4142135623730949,2.4142135623730954) circle (2.5pt);
					\draw [fill=qqqqff] (-0.41421356237309537,0.4142135623730956) circle (2.5pt);
				\end{scriptsize}
			\end{tikzpicture}
				\caption{$\mathcal{P}(C_8)$}
				\label{UPG_C_8}
			\end{figure}
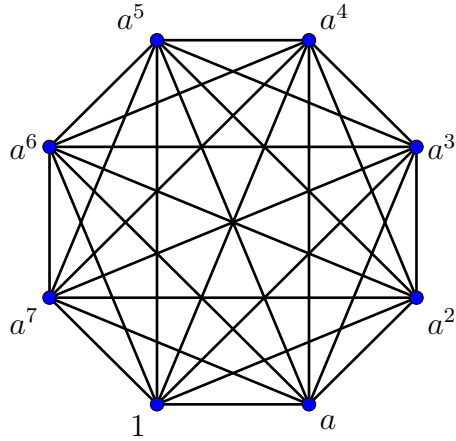
			
			\begin{figure}
				\centering
				\begin{tikzpicture}[line cap=round,line join=round,>=triangle 45,x=1.0cm,y=1.0cm]
					\clip(-1.,-1.5) rectangle (5.,3.);
					\draw [line width=1.pt] (1.,-1.)-- (0.,0.732050807568879);
					\draw [line width=1.pt] (1.,-1.)-- (1.,2.4641016151377557);
					\draw [line width=1.pt] (1.,-1.)-- (3.,2.4641016151377553);
					\draw [line width=1.pt] (1.,-1.)-- (4.,0.7320508075688774);
					\draw [line width=1.pt] (1.,-1.)-- (3.,-1.);
					\draw [line width=1.pt] (0.,0.732050807568879)-- (1.,2.4641016151377557);
					\draw [line width=1.pt] (0.,0.732050807568879)-- (3.,-1.);
					\draw [line width=1.pt] (0.,0.732050807568879)-- (4.,0.7320508075688774);
					\draw [line width=1.pt] (0.,0.732050807568879)-- (3.,2.4641016151377553);
					\draw [line width=1.pt] (3.,-1.)-- (1.,2.4641016151377557);
					\draw [line width=1.pt] (3.,-1.)-- (3.,2.4641016151377553);
					\draw [line width=1.pt] (3.,-1.)-- (4.,0.7320508075688774);
					\draw [line width=1.pt] (1.,2.4641016151377557)-- (4.,0.7320508075688774);
					\draw (0.9,-0.9086580230955188) node[anchor=north east] {$1$};
					\draw (3.0041230403309376,-0.8146375163736965) node[anchor=north west] {$a$};
					\draw (4.16370928990008,1.1754632092382078) node[anchor=north west] {$a^2$};
					\draw (3.145153800413671,3.0245331747673787) node[anchor=north west] {$a^3$};
					\draw (0.9200018079972094,3.118553681489201) node[anchor=north west] {$a^4$};
					\draw (-0.4589656239228511,1.26948371596003) node[anchor=north west] {$a^5$};
					\begin{scriptsize}
						\draw [fill=qqqqff] (1.,-1.) circle (2.5pt);
						\draw [fill=qqqqff] (3.,-1.) circle (2.5pt);
						\draw [fill=qqqqff] (4.,0.7320508075688774) circle (2.5pt);
						\draw [fill=qqqqff] (3.,2.4641016151377553) circle (2.5pt);
						\draw [fill=qqqqff] (1.,2.4641016151377557) circle (2.5pt);
						\draw [fill=qqqqff] (0.,0.732050807568879) circle (2.5pt);
					\end{scriptsize}
				\end{tikzpicture}
				\caption{$\mathcal{P}(C_6)$}
				\label{UPG_C_6}
			\end{figure}
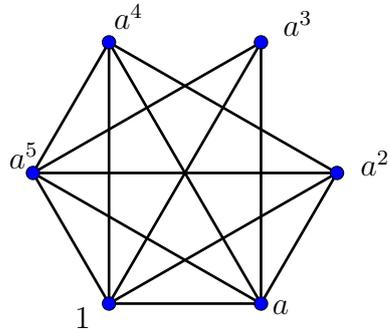
		
			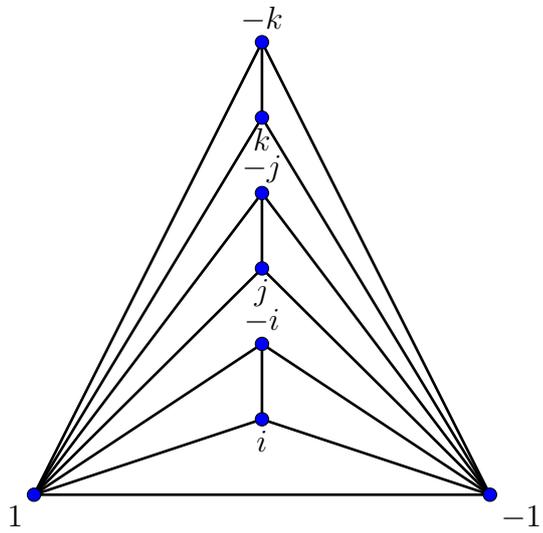
\begin{figure}
				\centering
				\begin{tikzpicture}[line cap=round,line join=round,>=triangle 45,x=1.0cm,y=1.0cm]
					\clip(-6.5,-1.) rectangle (1.,6.5);
					\draw [line width=1.pt] (-3.,2.)-- (0.,0.);
					\draw [line width=1.pt] (-3.,4.)-- (0.,0.);
					\draw [line width=1.pt] (-3.,6.)-- (0.,0.);
					\draw [line width=1.pt] (-3.,5.)-- (0.,0.);
					\draw [line width=1.pt] (-6.,0.)-- (0.,0.);
					\draw [line width=1.pt] (-3.,1.)-- (0.,0.);
					\draw [line width=1.pt] (-3.,3.)-- (0.,0.);
					\draw [line width=1.pt] (-3.,2.)-- (-6.,0.);
					\draw [line width=1.pt] (-3.,4.)-- (-6.,0.);
					\draw [line width=1.pt] (-3.,6.)-- (-6.,0.);
					\draw [line width=1.pt] (-3.,2.)-- (-3.,1.);
					\draw [line width=1.pt] (-3.,4.)-- (-3.,3.);
					\draw [line width=1.pt] (-3.,6.)-- (-3.,5.);
					\draw [line width=1.pt] (-3.,3.)-- (-6.,0.);
					\draw [line width=1.pt] (-3.,1.)-- (-6.,0.);
					\draw [line width=1.pt] (-3.,5.)-- (-6.,0.);
					\draw (-6.,0.) node[anchor=north east] {$1$};
					\draw (0.,0.) node[anchor=north west] {$-1$};
					\draw (-3.,1.) node[anchor=north] {$i$};
					\draw (-3.,2.) node[anchor=south] {$-i$};
					\draw (-3.,3.) node[anchor=north ] {$j$};
					\draw (-3.,4) node[anchor=south] {$-j$};
					\draw (-3.,5) node[anchor=north ] {$k$};
					\draw (-3.,6) node[anchor=south] {$-k$};
					\begin{scriptsize}
						\draw [fill=qqqqff] (-3.,2.) circle (2.5pt);
						\draw [fill=qqqqff] (-3.,4.) circle (2.5pt);
						\draw [fill=qqqqff] (-3.,6.) circle (2.5pt);
						\draw [fill=qqqqff] (-3.,5.) circle (2.5pt);
						\draw [fill=qqqqff] (-3.,3.) circle (2.5pt);
						\draw [fill=qqqqff] (-6.,0.) circle (2.5pt);
						\draw [fill=qqqqff] (-3.,1.) circle (2.5pt);
						\draw [fill=qqqqff] (0.,0.) circle (2.5pt);
					\end{scriptsize}
				\end{tikzpicture}
				\caption{$\mathcal{P}(Q_8)$}
				\label{UPG_Q_8}
			\end{figure}		
					
			From now on we always refer to $\mathcal{S}_{\mathcal{P}(G)}$ as $\mathcal{S}$, if the group is clear from the context.
					
			We propose three examples of power graph in Figures \ref{UPG_C_8}, \ref{UPG_C_6} and \ref{UPG_Q_8}; one for each case of the Proposition \ref{S>1}. Let's analyse briefly these three examples.
			In Figure \ref{UPG_C_8} it is represented the power graph of $C_8$, a cyclic group of prime power order. As we expected from Proposition \ref{PGcompleto}, $\mathcal{P}(C_8)$ is a complete graph. Here, every vertex has the same closed neighbourhood as all the others, in particular $\mathcal{S}=G$.
			
			In the case of Figure $\ref{UPG_C_6}$, where is represented $C_6,$ the hypothesis of Lemma \ref{nonGeneratoriInPG} are satisfied. We have that $a^2, a^3, a^4$ are not star vertices, and, all the generators and the identity are in $\mathcal{S}$. Note that, as stated in Proposition \ref{S>1}, $|\mathcal{S}|=1+\phi(|G|)$.
			 
			Finally, in Figure \ref{UPG_Q_8}, we see an example of a not abelian group, $Q_8$, with a star vertex different to the identity, that is the unique involution of this group. In this case $|\mathcal{S}|=2$.
			
			Let's abandon for a moment the power graph to talk about its directed counterpart.
			
			\begin{definition}
			\label{defDPG}	\rm Let $G$ be a group. The \emph{directed power graph} of $G$, denoted $\vec{\mathcal{P}}(G)$, is defined by $V_{\vec{\mathcal{P}}(G)}=G$ and $(x,y) \in A_{\vec{\mathcal{P}}(G)}$ if $x\not= y$ and there exists a positive integer $m$ such that $y=x^m$.
			\end{definition}
		
			For an example of such a digraph take a look at Figure \ref{DPG_D_4_labeled} where  the group is the dihedral group of $8$ elements: $$D_4=\left\lbrace 1, a, a^2, a^3, b, ab, a^2b, a^3b  \right\rbrace $$ and $$A_{\vec{\mathcal{P}}(D_4)}=\left\lbrace (a,1), (a^2,1), (a^3,1), (b,1), (ab,1), (a^2b,1), (a^3b,1), (a,a^3), (a^3,a), (a,a^2), (a^2,a) \right\rbrace.   $$
			
			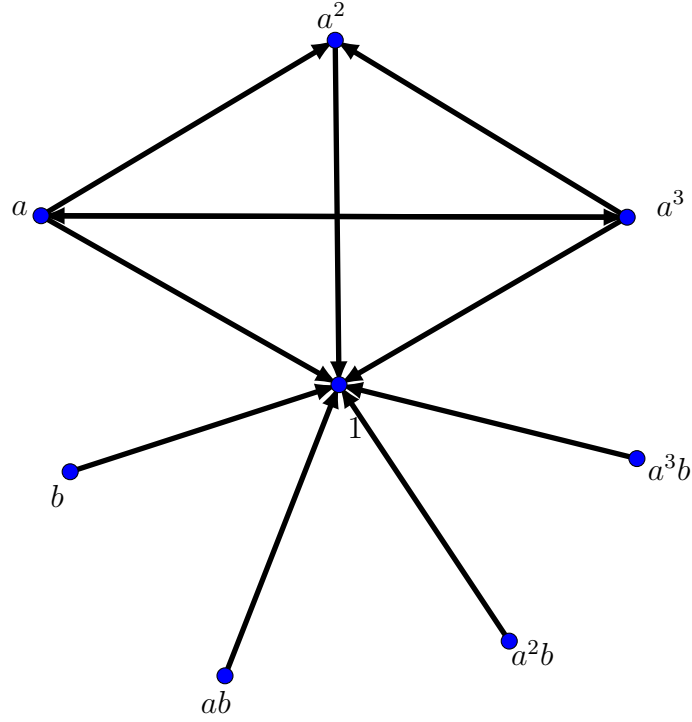
\begin{figure}
				\centering
				\begin{tikzpicture}[line cap=round,line join=round,>=triangle 45,x=1.0cm,y=1.0cm]
					\clip(-6.,-6.) rectangle (4.,4.);
					\draw (-1.0336131263136343,-1.366590941457429) node[anchor=north west] {1};
					\draw (-5.453247351358401,1.4941177569351773) node[anchor=north west] {$a$};
					\draw (-1.4353980558631587,4.1298268947800505) node[anchor=north west] {$a^2$};
					\draw (3.032450360727551,1.6387603315730057) node[anchor=north west] {$a^3$};
					\draw (-4.93896264153501,-2.2665891836483616) node[anchor=north west] {$b$};
					\draw (-2.962180788151351,-4.9987267045851205) node[anchor=north west] {$ab$};
					\draw (1.1199540960718155,-4.323728022941921) node[anchor=north west] {$a^2b$};
					\draw (2.9199505804536843,-1.864804254098838) node[anchor=north west] {$a^3b$};
					\draw [-latex,line width=2.pt] (-4.92,1.16) -- (2.7913794029978334,1.1405470189315967);
					\draw [-latex,line width=2.pt] (2.791379402997834,1.1405470189315967) -- (-4.92,1.16);
					\draw [-latex,line width=2.pt] (-1.049684523495616,3.4869710075008133) -- (-1.,-1.08);
					\draw [-latex,line width=2.pt] (-4.92,1.16) -- (-1.0496845234956158,3.486971007500814);
					\draw [-latex,line width=2.pt] (2.791379402997834,1.1405470189315967) -- (-1.0496845234956163,3.486971007500813);
					\draw [-latex,line width=2.pt] (-4.92,1.16) -- (-1.,-1.08);
					\draw [-latex,line width=2.pt] (2.791379402997834,1.1405470189315967) -- (-1.,-1.08);
					\draw [-latex,line width=2.pt] (-4.537177711985486,-2.2344463892843995) -- (-1.,-1.08);
					\draw [-latex,line width=2.pt] (-2.5,-4.94) -- (-1.,-1.08);
					\draw [-latex,line width=2.pt] (1.24,-4.48) -- (-1.,-1.08);
					\draw [-latex,line width=2.pt] (2.92,-2.06) -- (-1.,-1.08);
					\begin{scriptsize}
						\draw [fill=qqqqff] (-4.92,1.16) circle (3pt);
						\draw [fill=qqqqff] (-1.,-1.08) circle (3pt);
						\draw [fill=qqqqff] (-1.049684523495616,3.4869710075008133) circle (3pt);
						\draw [fill=qqqqff] (2.791379402997834,1.1405470189315967) circle (3pt);
						\draw [fill=qqqqff] (-4.537177711985486,-2.2344463892843995) circle (3pt);
						\draw [fill=qqqqff] (-2.5,-4.94) circle (3pt);
						\draw [fill=qqqqff] (1.24,-4.48) circle (3pt);
						\draw [fill=qqqqff] (2.92,-2.06) circle (3pt);
					\end{scriptsize}
				\end{tikzpicture}
				\caption{The directed power graph of $D_4$}
				\label{DPG_D_4_labeled}
			\end{figure}
			
			From now on we always refer to $A_{\vec{\mathcal{P}}(G)}$ with $A$, if it is clear which group we are referring to.
			\begin{Oss}
				\label{PG_transitive}Let $G$ be a group. Then $\vec{\mathcal{P}}(G)$ is transitive.
			\end{Oss}
			\begin{proof}
				Let $x,y,z \in G$ such that $(x,y), (y,z)\in A$. Then there exist two integers $m$ and $n$ such that $y=x^m$ and $z=y^n$. Hence we have $z=x^{mn}$, it follows that $(x,z)\in A$ holds.
			\end{proof}
					
			\begin{Oss}
				\label{isoGroup-isoGraph}If $\psi$ is a group isomorphism between two groups $G_1$ and $G_2$, then $\psi$ is also a graph isomorphism between $\mathcal{P}(G_1)$ and $\mathcal{P}(G_2)$ and also a digraph isomorphism between $\vec{\mathcal{P}}(G_1)$ and $\vec{\mathcal{P}}(G_2)$.
			\end{Oss}
			\begin{proof}
				By definition of group isomorphism, $\psi$ is a bijection between $G_1$ and $G_2$, so we only need to prove that the condition on the edges/arcs is satisfied.
				For $x\not=y$:
				\begin{itemize}
					\item $\{x,y\}  \in E_{\mathcal{P}(G_1)}\Leftrightarrow x=y^k \text{ or } y=x^k \text{ for some } k \in \mathbb{N} \Leftrightarrow \psi(x)=\psi(y^k)=\psi(y)^k \text{ or } \psi(y)=\psi(x^k)=\psi(x)^k \Leftrightarrow \{ \psi(x), \psi(y) \} \in E_{\mathcal{P}(G_2)} .$
					
					\item $(x,y)\in A_{\vec{\mathcal{P}}(G_1)} \Leftrightarrow \text{ there exists } k\in \mathbb{N} \text{ such that } y=x^k \Leftrightarrow \psi(y)=\psi(x^k)=\psi(x)^k \Leftrightarrow (\psi (x), \psi(y))\in A_{\vec{\mathcal{P}}(G_2)}.$
				\end{itemize}
			\end{proof}
			
			Note that by Definitions \ref{defPG} and \ref{defDPG} we can associate a graph and a digraph to the same group $G$. 
			$\mathcal{P}(G)$ and $\vec{\mathcal{P}}(G)$ are surely related in some way since they came from the same group $G$. Let's see if we are able to analyse better this relation.
			$\mathcal{P}(G)$ and $\vec{\mathcal{P}}(G)$ have clearly the same vertex set, that is $G$. So what can we say about $E$ and $A$?
			
			\begin{Oss}
				\label{Edge-Arcs}	Let $G$ be a group. Then $\left\lbrace x,y\right\rbrace \in E$ if and only if $(x,y)\in A$ or $(y,x)\in A$.
			\end{Oss}
		
			The Remark above tells us that we are able to present $E$ if we know $A$. Indeed $\vec{\mathcal{P}}(G)$ shows us which elements of $G$ are joined in $\mathcal{P}(G)$.
			Instead, starting from $\mathcal{P}(G)$, we have to decide between two possible arcs for all the edges, and, at first, we do not know which one we have to choose.
			
			We say that we can \emph{reconstruct} $\vec{\mathcal{P}}(G)$ from $\mathcal{P}(G)$, if we are able to show $A$ only thanks to the information given by the power graph $\mathcal{P}(G)$.
			We say that we can \emph{reconstruct} $\mathcal{P}(G)$ from $\vec{\mathcal{P}}(G)$, if we are able to show $E$ only thanks to the information given by the directed power graph $\vec{\mathcal{P}}(G)$. 
			
			With this new notation, Remark \ref{Edge-Arcs}, tells us that we can reconstruct $\mathcal{P}(G)$ from $\vec{\mathcal{P}}(G)$. 
			Let's try to reconstruct $\vec{\mathcal{P}}(G)$ from $\mathcal{P}(G)$. If it is not always possible (we expect that), at some point, we find which groups fails and why.
			The problem is the following:
			
			\begin{Prob}
				\label{edge-arcs?}Given $\left\lbrace x,y\right\rbrace \in E$, can we decide which of $(x,y)$ and $(y,x)$ is an arc of $\vec{\mathcal{P}}(G)$? If the answer is yes: How can we choose between $(x,y)$ and $(y,x)$?
			\end{Prob}	
			
			\section{Three equivalence relations}
			\label{equivalenceRelations}
			
			To answer Question \ref{edge-arcs?} we will study three distinct equivalence relations defined on the elements of $G$. In this section we are going to present these three equivalence relations with the related results that we will use. 
			In particular all the definitions and results below are crucial for all that will follow in Section \ref{3}.
			
			\begin{definition}
				\rm Given two elements $x$ and $y$ of a group $G$ we say that $x\diamond y$ if $x=y$ or $(x,y)$ and $(y,x)$ are both arcs of $\vec{\mathcal{P}}(G)$.
			\end{definition}
		
			The relation $\diamond$ is clearly an equivalence relation and hence its classes form a partition of $G$. The $\diamond$-class of an element $x$ is denoted by $[x]_{\diamond}$. Despite the definition of the $\diamond$ relation is given through the directed power graph, this relation can also be seen directly from the group. This is a consequence of the next result.  
			
			\begin{lemma}
				\label{OssDiamondRelation}Let $x$ and $y$ be two elements of a group $G$. Then $x\diamond y$ if and only if $\langle x\rangle=\langle y\rangle $. Moreover, for all $x\in G$, $|[x]_{\diamond}|=\phi(o(x))$.
			\end{lemma}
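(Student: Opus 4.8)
The plan is to prove the two assertions separately, each by reducing to elementary facts about cyclic subgroups recalled in Chapter \ref{Preliminaries}, so there is no heavy machinery involved.

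For the characterisation of $\diamond$ I would simply unwind the definitions. Suppose first that $x\diamond y$. If $x=y$ there is nothing to prove, so assume $x\neq y$; then $(x,y),(y,x)\in A$, and by Definition \ref{defDPG} there are positive integers $m,n$ with $y=x^m$ and $x=y^n$. The first equality yields $\langle y\rangle\leq\langle x\rangle$ and the second $\langle x\rangle\leq\langle y\rangle$, whence $\langle x\rangle=\langle y\rangle$. Conversely, assume $\langle x\rangle=\langle y\rangle$ and $x\neq y$. Then $y\in\langle x\rangle$, and since $G$ is finite $\langle x\rangle=\{x^k\,|\,0\leq k\leq o(x)-1\}$. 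Note that $y\neq 1$ (otherwise $\langle x\rangle=\langle y\rangle=\{1\}$ would force $x=1=y$), so $y=x^m$ for some $m$ with $1\leq m\leq o(x)-1$, giving $(x,y)\in A$; symmetrically $x=y^n$ for a suitable positive $n$, giving $(y,x)\in A$. Hence $x\diamond y$.

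For the count, the first part gives $[x]_{\diamond}=\{\,y\in G\,|\,\langle y\rangle=\langle x\rangle\,\}$, that is, the set of generators of the cyclic group $\langle x\rangle$, which has order $o(x)$. Using the recalled fact that $x^k$ generates $\langle x\rangle$ if and only if $\gcd(k,o(x))=1$, and that distinct exponents $k$ in the range $1\leq k\leq o(x)$ give distinct powers of $x$, the generators of $\langle x\rangle$ are exactly the $x^k$ with $1\leq k\leq o(x)$ and $\gcd(k,o(x))=1$; there are $\phi(o(x))$ such $k$, so $|[x]_{\diamond}|=\phi(o(x))$.

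I do not expect a genuine obstacle here; the only point needing a little care is the passage from an arbitrary integer exponent to a \emph{positive} one in Definition \ref{defDPG}, i.e.\ ruling out the exponent-$0$ (identity) case, which is exactly what the observation $y\neq 1$ above takes care of.
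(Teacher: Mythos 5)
Your proof is correct and follows essentially the same route as the paper: mutual positive powers give mutual containment of $\langle x\rangle$ and $\langle y\rangle$, and the class $[x]_{\diamond}$ is then identified with the set of generators of $\langle x\rangle$, counted by $\phi(o(x))$. The only difference is that you spell out the converse (which the paper dismisses as trivial) and the positivity of the exponent, which is harmless extra care.
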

			\begin{proof}
				If $x\diamond y$ then there exist $n$ and $m$ in $\mathbb{N}$ such that $x=y^n$ and $y=x^m$. Thus $\langle x\rangle=\langle y^n\rangle\subseteq \langle y\rangle$ and $\langle y\rangle=\langle x^m\rangle\subseteq \langle x\rangle $; then $\langle x\rangle =\langle y\rangle$. The converse is trivial. 
				
				Finally note that 	$$|[x]_{\diamond}|= |\left\lbrace y \in G \, | \, \langle x \rangle = \langle y \rangle  \right\rbrace |= $$ $$=|\left\lbrace y \in G \, | \, y=x^k \; \text{for some}\; 1\leq k < o(x) \; \text{with} \;  \gcd(k,o(x))=1\right\rbrace|=$$ $$=|\left\lbrace k\in \mathbb{N} \, | \,  1\leq k < o(x) \; \text{with} \; \gcd(k,o(x))=1\right\rbrace |= \phi(o(x)) $$
			\end{proof}
			
			\begin{corollary}
			\label{TrivialDiamondClasses}	Let $G$ be a group and $x\in G$. Then $|[x]_{\diamond}|=1$ if and only if $x=1$ or $x$ is an involution.
			\end{corollary}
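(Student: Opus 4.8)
The plan is to reduce the statement directly to arithmetic facts already established in the excerpt. By Lemma~\ref{OssDiamondRelation} we know that $|[x]_{\diamond}| = \phi(o(x))$ for every $x \in G$, so the condition $|[x]_{\diamond}| = 1$ is equivalent to $\phi(o(x)) = 1$.

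Next I would invoke the Remark stating that $\phi(n) = 1$ if and only if $n = 1$ or $n = 2$. Applying it with $n = o(x)$, we get that $\phi(o(x)) = 1$ holds precisely when $o(x) = 1$ or $o(x) = 2$. Finally, translating the order condition back into group-theoretic language: $o(x) = 1$ means $x = 1$ (since the identity is the unique element of order $1$), and $o(x) = 2$ means, by the definition given in the Preliminaries, that $x$ is an involution. Chaining these equivalences yields the claim.

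There is essentially no obstacle here: the corollary is a one-line consequence of Lemma~\ref{OssDiamondRelation} combined with the characterization of when Euler's function equals $1$. The only thing to be careful about is to state the chain of "if and only if" steps cleanly, making sure each biconditional is genuinely reversible (which it is, since $\phi(o(x)) = 1 \iff o(x) \in \{1,2\}$ and $o(x) \in \{1,2\} \iff x = 1 \text{ or } x$ is an involution).
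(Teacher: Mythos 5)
Your proposal is correct and follows exactly the paper's own argument: apply Lemma \ref{OssDiamondRelation} to get $|[x]_{\diamond}|=\phi(o(x))$, then use the remark that $\phi(n)=1$ if and only if $n\in\{1,2\}$, and translate $o(x)=1$ or $o(x)=2$ back to $x=1$ or $x$ an involution. Nothing to add.
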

			\begin{proof}
				By the previous lemma we have $|[x]_{\diamond}|=\phi(o(x))$. Now $\phi(o(x))=1$ if and only if $o(x)=1$ or $o(x)=2$. Therefore $|[x]_{\diamond}|=1$ if and only if $x=1$ or $x$ is an involution.
			\end{proof}
		
			It is useful to highlight that we have $[y]_{\diamond}\subseteq \langle y \rangle$ for all $y \in G$. In particular $[y]_{\diamond}=\langle y \rangle$ if and only if $y=1$.
			
			To know all the $\diamond$-classes is a strong information about the group whenever we want to construct the directed power graph of that same group since holds the following crucial lemma.   
			
			\begin{lemma}
				\label{lemmaClassiDiamond} Let $G$ be a group. In $\vec{\mathcal{P}}(G)$ all the following facts hold:
				\begin{enumerate}
					
					\item[\rm i)] The subdigraph induced by a $\diamond$-class is a complete digraph.
					
					\item[\rm ii)] Given two distinct $\diamond$-classes $X$ and $Y$, if there is at least one arc directed from $X$ to $Y$, then all the vertices of $X$ are joined to all the vertices of $Y$ by arcs directed from $X$ to $Y$ and there are no arcs directed from $Y$ to $X$.
					
					\item[\rm iii)] If $X$ and $Y$ are two distinct joined $\diamond$-classes, with $|X|>|Y|$, then all the arcs between them are directed from $X$ to $Y$.
					
					\item[\rm iv)] If $X$ and $Y$ are two distinct joined $\diamond$-classes, with $1\not=|X|=|Y|$, then exactly one of them, let say $X$, has all its elements joined with an involution with arcs that are directed from $X$ to the involution. In this case, all the arcs between $X$ and $Y$ are directed from $X$ to $Y$.
					
					\item[\rm v)] If $X$ and $Y$ are two distinct joined $\diamond$-classes, with $1=|X|=|Y|$, then one is the class of the identity and the other the class of an involution. So, between them, there is exactly one arc directed from the involution to the identity.
				\end{enumerate}
			\end{lemma}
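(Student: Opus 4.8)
The plan is to reduce every assertion to a statement about cyclic subgroups, using Lemma~\ref{OssDiamondRelation}. Recall that $x\diamond y$ holds iff $\langle x\rangle=\langle y\rangle$, so a $\diamond$-class is exactly the set of generators of a fixed cyclic subgroup of $G$, with distinct classes corresponding to distinct subgroups; moreover $|[x]_\diamond|=\phi(o(x))$. Also, for $x\neq y$ we have $(x,y)\in A$ iff $y\in\langle x\rangle$, equivalently iff $\langle y\rangle\leq\langle x\rangle$. Thus, writing $X$ and $Y$ for the two classes with representatives $x_0$ and $y_0$, the subgroup $\langle x\rangle$ is the same for every $x\in X$, and likewise for $Y$; all five parts then become elementary facts about $\langle x_0\rangle$, $\langle y_0\rangle$ together with the arithmetic of $\phi$ supplied by Lemma~\ref{phiEulero}.

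First I would dispatch i) and ii). For i), two distinct $x,y$ in one $\diamond$-class satisfy $\langle x\rangle=\langle y\rangle$, so each is a power of the other and both $(x,y),(y,x)\in A$: the induced subdigraph is complete. For ii), suppose $(x_0,y_0)\in A$ with $x_0\in X$, $y_0\in Y$ and $X\neq Y$; then $\langle y_0\rangle\leq\langle x_0\rangle$, and the containment is strict since the two classes are distinct. Given arbitrary $x\in X$ and $y\in Y$ we get $\langle y\rangle=\langle y_0\rangle<\langle x_0\rangle=\langle x\rangle$, so $y\in\langle x\rangle\setminus\{x\}$, whence $(x,y)\in A$. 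A reverse arc $(y,x)\in A$ would force $\langle x\rangle\leq\langle y\rangle$, contradicting the strict containment; so no arcs are directed from $Y$ to $X$.

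Next, parts iii) and iv), both of which start from ii): since $X$ and $Y$ are joined, all arcs between them run in one direction; say from $X$ to $Y$, so that $o(y_0)\mid o(x_0)$ with $o(y_0)\neq o(x_0)$. For iii): if $|X|>|Y|$ but the arcs ran from $Y$ to $X$, then $o(x_0)\mid o(y_0)$ and Lemma~\ref{phiEulero} gives $\phi(o(x_0))\mid\phi(o(y_0))$, hence $|X|=\phi(o(x_0))\leq\phi(o(y_0))=|Y|$, a contradiction; so the arcs run from the larger class to the smaller one. For iv), with $1\neq|X|=|Y|$ and arcs directed from $X$ to $Y$, the equality $\phi(o(x_0))=\phi(o(y_0))$ together with $o(y_0)\mid o(x_0)$ and $o(y_0)\neq o(x_0)$ forces, by the equality clause of Lemma~\ref{phiEulero}, that $o(y_0)$ is odd and $o(x_0)=2\,o(y_0)$. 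Then $x_0^{\,o(y_0)}$ has order $2$, so every $x\in X$ is joined, by an arc directed from $x$, to the unique involution of the cyclic group $\langle x_0\rangle$. On the other hand $\langle y_0\rangle$ has odd order, which is $>2$ since $\phi(o(y_0))=|Y|\neq 1$ excludes $o(y_0)\in\{1,2\}$; hence $\langle y_0\rangle$ contains no involution, and no $y\in Y$ is a power of an involution either, as that would force $o(y)\leq 2$. So no element of $Y$ is joined to an involution, and exactly one of the two classes — the one, here $X$, out of which the arcs point — has the stated property, giving iv).

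Finally v): if $1=|X|=|Y|$, then by Corollary~\ref{TrivialDiamondClasses} the single element of each class is the identity or an involution. Two distinct involutions $s,t$ are not joined, since $\langle s\rangle=\{1,s\}$ and $s\in\langle t\rangle$ would force $s=t$; and two classes both equal to $\{1\}$ would coincide. Hence one class is $\{1\}$ and the other is $\{t\}$ with $t$ an involution; since $t^2=1$ we have $(t,1)\in A$, while $(1,t)\notin A$ because $1^k=1\neq t$ for all $k$, so there is exactly one arc, directed from the involution to the identity. I expect part iv) to be the main obstacle: the delicate point is not the size bookkeeping but ensuring that ``$Y$ is joined to an involution'' is ruled out in \emph{both} possible directions, which is precisely where one invokes $\phi(o(y_0))\neq 1$ to forbid $o(y_0)\in\{1,2\}$, and the uniqueness of the involution in $\langle x_0\rangle$ to make the adjacency of $X$ unambiguous.
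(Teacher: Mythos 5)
Your proof is correct and follows essentially the same route as the paper: i) and ii) from the definition of $\diamond$ via cyclic-subgroup containments, iii) and iv) via Lemma \ref{OssDiamondRelation} together with the equality clause of Lemma \ref{phiEulero}, and v) via Corollary \ref{TrivialDiamondClasses}. In fact in iv) you go slightly further than the paper's argument, which only exhibits the involution $x^{o(y)}$ dominated by $X$: you also show that no element of $Y$ can be joined to an involution (odd order at least $3$), thereby justifying the ``exactly one'' clause explicitly rather than leaving it implicit.
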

			\begin{proof}
				\begin{enumerate}
										
					\item[i)] For all $\bar{x},\hat{x} \in [x]_{\diamond}$ with $\bar{x}\not= \hat{x}$, by the definition of the relation $\diamond$, $(\bar{x},\hat{x})$ and $(\hat{x}, \bar{x})$ are both arcs of $\vec{\mathcal{P}}(G)$ and hence also of the subdigraph induced by $[x]_{\diamond}$. This means that $\vec{\mathcal{P}}(G)_{[x]_{\diamond}}$ is a complete digraph.
					
					\item[ii)] Let $X$ and $Y$ be two distinct $\diamond$-classes.
					For $x \in X$ and $y\in Y$ we cannot have  both arcs $(x,y)$ and $(y,x)$, otherwise $x\diamond y$ and then $X=Y$.
					Suppose that there is an arc from $x$ to $y$. It follows that $y=x^m$ for some positive integer $m$.
					Let $\bar{y}$ be an element of $Y$, then we have $\bar{y}=y^n$ for some positive integer $n$.
					Hence $\bar{y}=x^{mn}$ and so there is an arc from $x$ to $\bar{y}$.
					Let $\bar{x}$ be an element of $X$, then $x=\bar{x}^k$ for some positive integer $k$.
					Hence $y=\bar{x}^{km}$ and so there is an arc from $\bar{x}$ to $y$.
					In conclusion we proved that if there is at least an arc from $X$ to $Y$ all the elements of $X$ are joined to all the elements of $Y$ by arcs directed from $X$ to $Y$.
					
					\item[iii)] By ii) it is enough to show that there is an arc directed from $X$ to $Y$ to prove that all the arcs between the two classes do the same. Since the classes are joined there exist $x \in X$ and $y \in Y$ two joined vertices. Suppose, by contradiction, that the arc between this two elements is $(y,x)$. Then $x=y^m$, for some integer $m\not= 1$. Thus $o(x)\mid o(y)$ and by Lemma \ref{phiEulero} and by Lemma \ref{OssDiamondRelation}, we have $|X|=\phi (o(x)) \mid \phi(o(y))=|Y|$ that is absurd since $|X|>|Y|$.

					\item[iv)] Let $X=[x]_{\diamond}$ and $Y=[y]_{\diamond}$ for some $x,y \in G$ with $(x,y) \in A$.
					Then $o(y) \mid o(x)$. Now $|X|=|Y|$, so, by Lemma \ref{OssDiamondRelation}, $\phi(o(x))=\phi(o(y))$.
					Moreover by Lemma \ref{phiEulero}, we have that there are only two possibilities: $o(x)=o(y)$ or $o(x)=2o(y)$ with $o(y)$ odd.
					Assume that $o(x)=o(y)$. Then $X=Y$ against the assumption.
					Hence we have $o(x)=2o(y)$ and, since $|Y|\not=1$, we have $o(y)\not=1$. So $x \not= x^{o(y)}$ since otherwise:
					$$x=x^{o(y)} \Rightarrow x^2=x^{2o(y)}=x^{o(x)}=1 $$
					hence $o(x)=2$, then $o(y)=1$ that is a contradiction.					
					It follows that $(x, x^{o(y)}) \in A$. Therefore, since by Lemma \ref{TrivialDiamondClasses} $\{x^{o(y)}\}$ is a $\diamond$-class, by ii) all the vertices of $X$ are joined to the involution $x^{o(y)}$ by arcs that are directed from $X$ to the involution. 
					
					\item[v)] Since $|X|=|Y|=1$, by Corollary \ref{TrivialDiamondClasses}, we have $X=\left\lbrace x\right\rbrace $ and $Y=\left\lbrace y\right\rbrace $ with $x,y$ given by $1$ or involutions. Assume, by contradiction, that $X=\left\lbrace x\right\rbrace $ and $Y=\left\lbrace y\right\rbrace$ for $x$ and $y$ both involutions with $x\not=y$. Now since $X$ and $Y$ are joined, then up to renaming $X$ and $Y$, we have $(x,y)\in A$ so $y=x^m$ for some $m \in \mathbb{N}$. But $x^m\in \left\lbrace 1,x\right\rbrace $ while $y\notin \left\lbrace 1,x\right\rbrace$.
				\end{enumerate}
			\end{proof}
			
			We emphasize that we extrapolate this result from the proof of \cite[Theorem 2]{Cameron_2} where it is used without a formal proof.
			In Figure \ref{SchemaLemmacClassiDiamond} we propose a schematic representation of some of the points of Lemma \ref{lemmaClassiDiamond}. 
			 
			\begin{figure}
				\centering
				\includegraphics[width=0.7\textwidth]{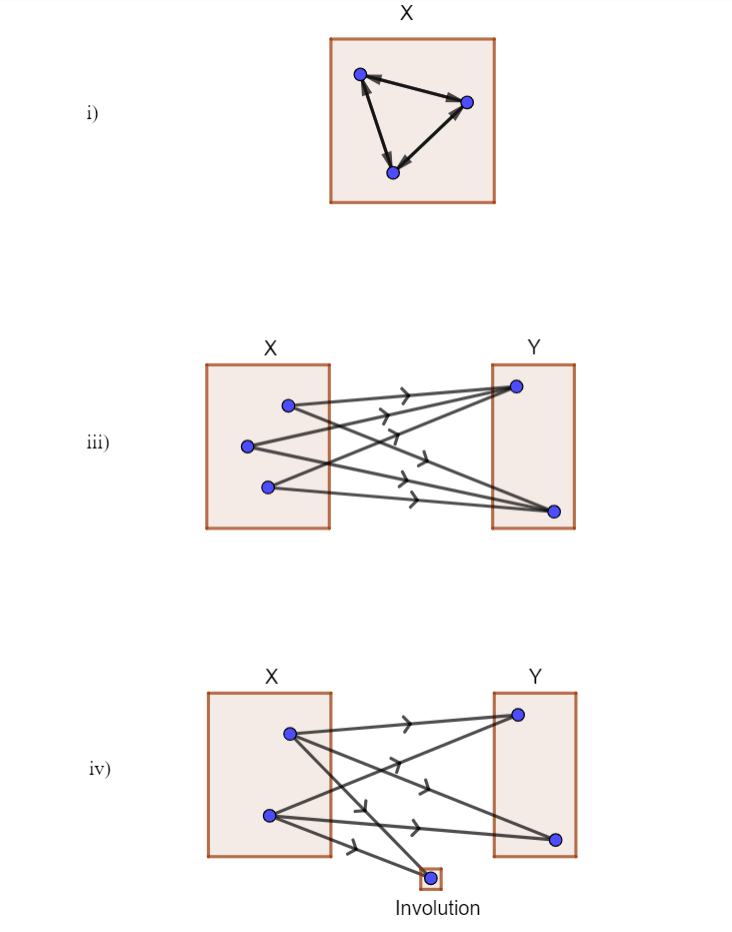}
				\caption{A schematic representation of points i), iii) and iv) of Lemma \ref{lemmaClassiDiamond}.}
				\label{SchemaLemmacClassiDiamond}
			\end{figure}
			
			Now we can begin to understand why Lemma \ref{lemmaClassiDiamond} is said to be crucial.
			Suppose that $\left\lbrace x, y \right\rbrace $ is an edge of $\mathcal{P}(G)$. Suppose also to know all the $\diamond$-classes of $G$. In order to manage Question \ref{edge-arcs?} we distinguish the following cases:
			
			\begin{itemize}
				\item \underline{There exists a $\diamond$-class $X$ such that $x,y \in X$}.
				
				Then both $(x,y)$ and $(y,x)$ are arcs of $\vec{\mathcal{P}}(G)$ by Lemma \ref{lemmaClassiDiamond} i). This is the only case where we take both arcs.
				
				\item \underline{There exist $X$ and $Y$ two distinct $\diamond$-classes such that $x \in X$ and $y\in Y$}.
				
				Then one of them, say $X$, has size greater or equal to the other one, that is $|X|\geq |Y|$.
				We consider two subcases:
				\begin{enumerate}
					\item[a)] \underline{$|X|=1$}. Then we also have $|Y|=1$. So $X=\left\lbrace x\right\rbrace $ and $Y=\left\lbrace y\right\rbrace$ with $x\not=y$. Since $\left\lbrace x,y\right\rbrace \in E$ holds, then $X$ and $Y$ are joined in $\vec{\mathcal{P}}(G)$. By Lemma \ref{lemmaClassiDiamond} v) one of them is the class of the identity and the other one is the class of an involution. Renaming if necessary we can assume $y=1$ and $x$ an involution so that $(x,y)\in A$.
					
					\item[b)] \underline{$|X|>1$}. Then there are two possibilities: $|X|\not=|Y|$ or $|X|=|Y|$.
					In the first case we have $|X|>|Y|$ and by Lemma \ref{lemmaClassiDiamond} iii), we get $(x,y)\in A$. In the second case we have $|X|=|Y|\not=1$ and one of the two vertices, let say $x$, must be joined to an involution. Then $(x,y)\in A$ by Lemma \ref{lemmaClassiDiamond} iv).
				\end{enumerate}
				
			\end{itemize}
			
			In conclusion, in order to reconstruct $\vec{\mathcal{P}}(G)$ from $\mathcal{P}(G)$ \emph{it is sufficient to recognize, in $\mathcal{P}(G)$, all the $\diamond$-classes, the identity and the involutions.}
			Note that we only need a way to determine the identity and the $\diamond$-classes since, by Corollary \ref{TrivialDiamondClasses}, once we have this two information the involutions are the vertices $x\not=1$ such that $|[x]_{\diamond}|=1$.

			\begin{figure}
				\centering
				\begin{tikzpicture}[line cap=round,line join=round,>=triangle 45,x=1.0cm,y=1.0cm]
					\clip(-6.,2.) rectangle (4.,12.);
					\fill[line width=2.pt,color=zzttqq,fill=zzttqq,fill opacity=0.10000000149011612] (-3.00711852963106,9.809911400437345) -- (-1.1933294275569224,10.487257783623239) -- (-1.3240850694054276,11.397934907263139) -- (-3.6127226668446086,10.710096634610261) -- cycle;
					\fill[line width=2.pt,color=zzttqq,fill=zzttqq,fill opacity=0.10000000149011612] (-5.110910354913101,9.090550324695746) -- (-4.265322159020893,8.56129342866397) -- (-3.00711852963106,9.809911400437345) -- (-3.6127226668446086,10.710096634610261) -- cycle;
					\fill[line width=2.pt,color=zzttqq,fill=zzttqq,fill opacity=0.10000000149011612] (-5.110910354913101,9.090550324695746) -- (-5.490950168512438,6.796738592614046) -- (-4.58366337471664,6.826333803122147) -- (-4.265322159020893,8.56129342866397) -- cycle;
					\fill[line width=2.pt,color=zzttqq,fill=zzttqq,fill opacity=0.10000000149011612] (-3.7241420923381203,4.629779414821492) -- (-4.255820774314594,3.7428472333336757) -- (-5.490950168512438,6.796738592614046) -- (-4.58366337471664,6.826333803122147) -- cycle;
					\fill[line width=2.pt,color=zzttqq,fill=zzttqq,fill opacity=0.10000000149011612] (2.604291484014996,7.284127314600211) -- (2.400341814808526,8.181505859108677) -- (3.22633797509473,8.528220296759674) -- (3.389497710459906,7.284127314600211) -- cycle;
					\fill[line width=2.pt,color=zzttqq,fill=zzttqq,fill opacity=0.10000000149011612] (2.400341814808526,8.181505859108677) -- (1.9006651252526743,8.92592215171229) -- (2.5838965170943493,9.5377711593317) -- (3.22633797509473,8.528220296759674) -- cycle;
					\fill[line width=2.pt,color=zzttqq,fill=zzttqq,fill opacity=0.10000000149011612] (1.9006651252526743,8.92592215171229) -- (0.9013117461409711,9.762115795458815) -- (1.5669320840466672,10.502126775207563) -- (2.5838965170943493,9.5377711593317) -- cycle;
					\fill[line width=2.pt,color=zzttqq,fill=zzttqq,fill opacity=0.10000000149011612] (0.9013117461409711,9.762115795458815) -- (0.13650048661670827,10.1904101007924) -- (0.5625411481055639,11.112905047063636) -- (1.5669320840466672,10.502126775207563) -- cycle;
					\fill[line width=2.pt,color=zzttqq,fill=zzttqq,fill opacity=0.10000000149011612] (0.13650048661670827,10.1904101007924) -- (-0.5248128745958504,10.407672479699302) -- (-0.2789755820072525,11.384362056777448) -- (0.5625411481055639,11.112905047063636) -- cycle;
					\fill[line width=2.pt,color=zzttqq,fill=zzttqq,fill opacity=0.10000000149011612] (-0.2789755820072525,11.384362056777448) -- (-1.3240850694054276,11.397934907263139) -- (-1.1933294275569224,10.487257783623239) -- (-0.5248128745958504,10.407672479699302) -- cycle;
					\fill[line width=2.pt,color=zzttqq,fill=zzttqq,fill opacity=0.10000000149011612] (-2.,8.) -- (8.905437690318596E-4,7.999107867261943) -- (0.,6.) -- (-2.,6.) -- cycle;
					\fill[line width=2.pt,color=zzttqq,fill=zzttqq,fill opacity=0.10000000149011612] (-4.255820774314594,3.7428472333336757) -- (-0.20647165890010305,2.544644452014348) -- (-0.3808547962166728,3.589809124493197) -- (-3.7241420923381203,4.629779414821492) -- cycle;
					\fill[line width=2.pt,color=zzttqq,fill=zzttqq,fill opacity=0.10000000149011612] (-0.3808547962166728,3.589809124493197) -- (1.7884928071891157,5.08147088717034) -- (2.655278901316614,4.388042011868344) -- (-0.20647165890010305,2.544644452014348) -- cycle;
					\fill[line width=2.pt,color=zzttqq,fill=zzttqq,fill opacity=0.10000000149011612] (1.7884928071891157,5.08147088717034) -- (2.604291484014996,7.284127314600211) -- (3.389497710459906,7.284127314600211) -- (2.655278901316614,4.388042011868344) -- cycle;
					\fill[line width=2.pt,color=zzttqq,fill=zzttqq,fill opacity=0.10000000149011612] (2.604291484014996,7.284127314600211) -- (1.1171708774262428,6.54424864751115) -- (1.7884928071891157,5.08147088717034) -- cycle;
					\fill[line width=2.pt,color=zzttqq,fill=zzttqq,fill opacity=0.10000000149011612] (1.7884928071891157,5.08147088717034) -- (0.027873487448356438,5.488419264647737) -- (-0.3808547962166728,3.589809124493197) -- cycle;
					\fill[line width=2.pt,color=zzttqq,fill=zzttqq,fill opacity=0.10000000149011612] (-0.3808547962166728,3.589809124493197) -- (-1.580788935019527,5.203823602504077) -- (-3.7241420923381203,4.629779414821492) -- cycle;
					
					\draw [line width=1.pt] (-3.6189179549840818,10.083616523866972)-- (-1.,7.);
					\draw [line width=1.pt] (-4.470198577067798,9.077557606858948)-- (-1.,7.);
					\draw [line width=1.pt] (-4.702366019454266,8.458444427161703)-- (-1.,7.);
					\draw [line width=1.pt] (-4.992575322437351,7.374996362691523)-- (-1.,7.);
					\draw [line width=1.pt] (-4.973228035571812,6.252853724490266)-- (-1.,7.);
					\draw [line width=1.pt] (-4.276725708412408,4.666376201516074)-- (-1.,7.);
					\draw [line width=1.pt] (-3.541528807521925,3.9118320137600566)-- (-1.,7.);
					\draw [line width=1.pt] (-1.7666619570249664,4.628983561782251)-- (-1.,7.);
					\draw [line width=1.pt] (-0.7168249251532304,3.041204104810805)-- (-1.,7.);
					\draw [line width=1.pt] (0.07641383633386878,3.1379405391384996)-- (-1.,7.);
					\draw [line width=1.pt] (0.428937407628609,4.898618571476549)-- (-1.,7.);
					\draw [line width=1.pt] (2.069184383484386,4.414861472264069)-- (-1.,7.);
					\draw [line width=1.pt] (2.533519268257322,5.033974651961314)-- (-1.,7.);
					\draw [line width=1.pt] (1.661554594802546,6.400870768344779)-- (-1.,7.);
					\draw [line width=1.pt] (3.,7.)-- (-1.,7.);
					\draw [line width=1.pt] (2.920465005568102,7.742594813136765)-- (-1.,7.);
					\draw [line width=1.pt] (2.533519268257322,8.69061186954817)-- (-1.,7.);
					\draw [line width=1.pt] (1.759627793635761,9.754712647152811)-- (-1.,7.);
					\draw [line width=1.pt] (0.7535688766277328,10.567298695505446)-- (-1.,7.);
					\draw [line width=1.pt] (-0.02032259799382734,10.85750799848853)-- (-1.,7.);
					\draw [line width=1.pt] (-0.7168249251532315,10.915549859085147)-- (-1.,7.);
					\draw [line width=1.pt] (-1.4520218260437137,10.915549859085147)-- (-1.,7.);
					\draw [line width=1.pt] (-3.1158884964800677,10.373825826850057)-- (-1.,7.);
					
					\draw [line width=1.pt,color=zzttqq] (-3.00711852963106,9.809911400437345)-- (-1.1933294275569224,10.487257783623239);
					\draw [line width=1.pt,color=zzttqq] (-1.1933294275569224,10.487257783623239)-- (-1.3240850694054276,11.397934907263139);
					\draw [line width=1.pt,color=zzttqq] (-1.3240850694054276,11.397934907263139)-- (-3.6127226668446086,10.710096634610261);
					\draw [line width=1.pt,color=zzttqq] (-3.6127226668446086,10.710096634610261)-- (-3.00711852963106,9.809911400437345);
					\draw [line width=1.pt,color=zzttqq] (-5.110910354913101,9.090550324695746)-- (-4.265322159020893,8.56129342866397);
					\draw [line width=1.pt,color=zzttqq] (-4.265322159020893,8.56129342866397)-- (-3.00711852963106,9.809911400437345);
					\draw [line width=1.pt,color=zzttqq] (-3.00711852963106,9.809911400437345)-- (-3.6127226668446086,10.710096634610261);
					\draw [line width=1.pt,color=zzttqq] (-3.6127226668446086,10.710096634610261)-- (-5.110910354913101,9.090550324695746);
					\draw [line width=1.pt,color=zzttqq] (-5.110910354913101,9.090550324695746)-- (-5.490950168512438,6.796738592614046);
					\draw [line width=1.pt,color=zzttqq] (-5.490950168512438,6.796738592614046)-- (-4.58366337471664,6.826333803122147);
					\draw [line width=1.pt,color=zzttqq] (-4.58366337471664,6.826333803122147)-- (-4.265322159020893,8.56129342866397);
					\draw [line width=1.pt,color=zzttqq] (-4.265322159020893,8.56129342866397)-- (-5.110910354913101,9.090550324695746);
					\draw [line width=1.pt,color=zzttqq] (-3.7241420923381203,4.629779414821492)-- (-4.255820774314594,3.7428472333336757);
					\draw [line width=1.pt,color=zzttqq] (-4.255820774314594,3.7428472333336757)-- (-5.490950168512438,6.796738592614046);
					\draw [line width=1.pt,color=zzttqq] (-5.490950168512438,6.796738592614046)-- (-4.58366337471664,6.826333803122147);
					\draw [line width=1.pt,color=zzttqq] (-4.58366337471664,6.826333803122147)-- (-3.7241420923381203,4.629779414821492);
					\draw [line width=1.pt,color=zzttqq] (2.604291484014996,7.284127314600211)-- (2.400341814808526,8.181505859108677);
					\draw [line width=1.pt,color=zzttqq] (2.400341814808526,8.181505859108677)-- (3.22633797509473,8.528220296759674);
					\draw [line width=1.pt,color=zzttqq] (3.22633797509473,8.528220296759674)-- (3.389497710459906,7.284127314600211);
					\draw [line width=1.pt,color=zzttqq] (3.389497710459906,7.284127314600211)-- (2.604291484014996,7.284127314600211);
					\draw [line width=1.pt,color=zzttqq] (2.400341814808526,8.181505859108677)-- (1.9006651252526743,8.92592215171229);
					\draw [line width=1.pt,color=zzttqq] (1.9006651252526743,8.92592215171229)-- (2.5838965170943493,9.5377711593317);
					\draw [line width=1.pt,color=zzttqq] (2.5838965170943493,9.5377711593317)-- (3.22633797509473,8.528220296759674);
					\draw [line width=1.pt,color=zzttqq] (3.22633797509473,8.528220296759674)-- (2.400341814808526,8.181505859108677);
					\draw [line width=1.pt,color=zzttqq] (1.9006651252526743,8.92592215171229)-- (0.9013117461409711,9.762115795458815);
					\draw [line width=1.pt,color=zzttqq] (0.9013117461409711,9.762115795458815)-- (1.5669320840466672,10.502126775207563);
					\draw [line width=1.pt,color=zzttqq] (1.5669320840466672,10.502126775207563)-- (2.5838965170943493,9.5377711593317);
					\draw [line width=1.pt,color=zzttqq] (2.5838965170943493,9.5377711593317)-- (1.9006651252526743,8.92592215171229);
					\draw [line width=1.pt,color=zzttqq] (0.9013117461409711,9.762115795458815)-- (0.13650048661670827,10.1904101007924);
					\draw [line width=1.pt,color=zzttqq] (0.13650048661670827,10.1904101007924)-- (0.5625411481055639,11.112905047063636);
					\draw [line width=1.pt,color=zzttqq] (0.5625411481055639,11.112905047063636)-- (1.5669320840466672,10.502126775207563);
					\draw [line width=1.pt,color=zzttqq] (1.5669320840466672,10.502126775207563)-- (0.9013117461409711,9.762115795458815);
					\draw [line width=1.pt,color=zzttqq] (0.13650048661670827,10.1904101007924)-- (-0.5248128745958504,10.407672479699302);
					\draw [line width=1.pt,color=zzttqq] (-0.5248128745958504,10.407672479699302)-- (-0.2789755820072525,11.384362056777448);
					\draw [line width=1.pt,color=zzttqq] (-0.2789755820072525,11.384362056777448)-- (0.5625411481055639,11.112905047063636);
					\draw [line width=1.pt,color=zzttqq] (0.5625411481055639,11.112905047063636)-- (0.13650048661670827,10.1904101007924);
					\draw [line width=1.pt,color=zzttqq] (-0.2789755820072525,11.384362056777448)-- (-1.3240850694054276,11.397934907263139);
					\draw [line width=1.pt,color=zzttqq] (-1.3240850694054276,11.397934907263139)-- (-1.1933294275569224,10.487257783623239);
					\draw [line width=1.pt,color=zzttqq] (-1.1933294275569224,10.487257783623239)-- (-0.5248128745958504,10.407672479699302);
					\draw [line width=1.pt,color=zzttqq] (-0.5248128745958504,10.407672479699302)-- (-0.2789755820072525,11.384362056777448);
					\draw [line width=1.pt,color=zzttqq] (-2.,8.)-- (8.905437690318596E-4,7.999107867261943);
					\draw [line width=1.pt,color=zzttqq] (8.905437690318596E-4,7.999107867261943)-- (0.,6.);
					\draw [line width=1.pt,color=zzttqq] (0.,6.)-- (-2.,6.);
					\draw [line width=1.pt,color=zzttqq] (-2.,6.)-- (-2.,8.);
					\draw [line width=1.pt,color=zzttqq] (-4.255820774314594,3.7428472333336757)-- (-0.20647165890010305,2.544644452014348);
					\draw [line width=1.pt,color=zzttqq] (-0.20647165890010305,2.544644452014348)-- (-0.3808547962166728,3.589809124493197);
					\draw [line width=1.pt,color=zzttqq] (-0.3808547962166728,3.589809124493197)-- (-3.7241420923381203,4.629779414821492);
					\draw [line width=1.pt,color=zzttqq] (-3.7241420923381203,4.629779414821492)-- (-4.255820774314594,3.7428472333336757);
					\draw [line width=1.pt,color=zzttqq] (-0.3808547962166728,3.589809124493197)-- (1.7884928071891157,5.08147088717034);
					\draw [line width=1.pt,color=zzttqq] (1.7884928071891157,5.08147088717034)-- (2.655278901316614,4.388042011868344);
					\draw [line width=1.pt,color=zzttqq] (2.655278901316614,4.388042011868344)-- (-0.20647165890010305,2.544644452014348);
					\draw [line width=1.pt,color=zzttqq] (-0.20647165890010305,2.544644452014348)-- (-0.3808547962166728,3.589809124493197);
					\draw [line width=1.pt,color=zzttqq] (1.7884928071891157,5.08147088717034)-- (2.604291484014996,7.284127314600211);
					\draw [line width=1.pt,color=zzttqq] (2.604291484014996,7.284127314600211)-- (3.389497710459906,7.284127314600211);
					\draw [line width=1.pt,color=zzttqq] (3.389497710459906,7.284127314600211)-- (2.655278901316614,4.388042011868344);
					\draw [line width=1.pt,color=zzttqq] (2.655278901316614,4.388042011868344)-- (1.7884928071891157,5.08147088717034);
					\draw [line width=1.pt,color=zzttqq] (2.604291484014996,7.284127314600211)-- (1.1171708774262428,6.54424864751115);
					\draw [line width=1.pt,color=zzttqq] (1.1171708774262428,6.54424864751115)-- (1.7884928071891157,5.08147088717034);
					\draw [line width=1.pt,color=zzttqq] (1.7884928071891157,5.08147088717034)-- (2.604291484014996,7.284127314600211);
					\draw [line width=1.pt,color=zzttqq] (1.7884928071891157,5.08147088717034)-- (0.027873487448356438,5.488419264647737);
					\draw [line width=1.pt,color=zzttqq] (0.027873487448356438,5.488419264647737)-- (-0.3808547962166728,3.589809124493197);
					\draw [line width=1.pt,color=zzttqq] (-0.3808547962166728,3.589809124493197)-- (1.7884928071891157,5.08147088717034);
					\draw [line width=1.pt,color=zzttqq] (-0.3808547962166728,3.589809124493197)-- (-1.580788935019527,5.203823602504077);
					\draw [line width=1.pt,color=zzttqq] (-1.580788935019527,5.203823602504077)-- (-3.7241420923381203,4.629779414821492);
					\draw [line width=1.pt,color=zzttqq] (-3.7241420923381203,4.629779414821492)-- (-0.3808547962166728,3.589809124493197);
					
					\draw [line width=1.pt] (-4.470198577067798,9.077557606858948)-- (-3.6189179549840818,10.083616523866972);
					\draw [line width=1.pt] (-3.1158884964800677,10.373825826850057)-- (-1.4520218260437137,10.915549859085147);
					\draw [line width=1.pt] (-4.702366019454266,8.458444427161703)-- (-4.992575322437351,7.374996362691523);
					\draw [line width=1.pt] (-4.973228035571812,6.252853724490266)-- (-4.276725708412408,4.666376201516074);
					\draw [line width=1.pt] (-3.541528807521925,3.9118320137600566)-- (-1.7666619570249664,4.628983561782251);
					\draw [line width=1.pt] (-3.541528807521925,3.9118320137600566)-- (-0.7168249251532304,3.041204104810805);
					\draw [line width=1.pt] (-1.7666619570249664,4.628983561782251)-- (-0.7168249251532304,3.041204104810805);
					\draw [line width=1.pt] (0.428937407628609,4.898618571476549)-- (0.07641383633386878,3.1379405391384996);
					\draw [line width=1.pt] (0.428937407628609,4.898618571476549)-- (2.069184383484386,4.414861472264069);
					\draw [line width=1.pt] (0.07641383633386878,3.1379405391384996)-- (2.069184383484386,4.414861472264069);
					\draw [line width=1.pt] (1.661554594802546,6.400870768344779)-- (2.533519268257322,5.033974651961314);
					\draw [line width=1.pt] (1.661554594802546,6.400870768344779)-- (3.,7.);
					\draw [line width=1.pt] (3.,7.)-- (2.533519268257322,5.033974651961314);
					\begin{scriptsize}
						\draw [fill=qqqqff] (-1.4520218260437137,10.915549859085147) circle (3pt);
						\draw [fill=qqqqff] (-3.1158884964800677,10.373825826850057) circle (3pt);
						\draw [fill=qqqqff] (-4.470198577067798,9.077557606858948) circle (3pt);
						\draw [fill=qqqqff] (-3.6189179549840818,10.083616523866972) circle (3pt);
						\draw [fill=qqqqff] (-4.702366019454266,8.458444427161703) circle (3pt);
						\draw [fill=qqqqff] (-4.992575322437351,7.374996362691523) circle (3pt);
						\draw [fill=qqqqff] (-4.973228035571812,6.252853724490266) circle (3pt);
						\draw [fill=qqqqff] (-4.276725708412408,4.666376201516074) circle (3pt);
						\draw [fill=qqqqff] (2.533519268257322,5.033974651961314) circle (3pt);
						\draw [fill=qqqqff] (1.661554594802546,6.400870768344779) circle (3pt);
						\draw [fill=qqqqff] (2.069184383484386,4.414861472264069) circle (3pt);
						\draw [fill=qqqqff] (3.,7.) circle (3pt);
						\draw [fill=qqqqff] (-0.7168249251532315,10.915549859085147) circle (3pt);
						\draw [fill=qqqqff] (-0.7168249251532304,3.041204104810805) circle (3pt);
						\draw [fill=qqqqff] (-3.541528807521925,3.9118320137600566) circle (3pt);
						\draw [fill=qqqqff] (-0.02032259799382734,10.85750799848853) circle (3pt);
						\draw [fill=qqqqff] (2.920465005568102,7.742594813136765) circle (3pt);
						\draw [fill=qqqqff] (0.07641383633386878,3.1379405391384996) circle (3pt);
						\draw [fill=qqqqff] (-1.7666619570249664,4.628983561782251) circle (3pt);
						\draw [fill=qqqqff] (0.428937407628609,4.898618571476549) circle (3pt);
						\draw [fill=qqqqff] (1.759627793635761,9.754712647152811) circle (3pt);
						\draw [fill=qqqqff] (2.533519268257322,8.69061186954817) circle (3pt);
						\draw [fill=qqqqff] (-1.,7.) circle (3pt);
						\draw [fill=qqqqff] (0.7535688766277328,10.567298695505446) circle (3pt);
					\end{scriptsize}
				\end{tikzpicture}
				\caption{$\mathcal{P}(S_4)$ where all the elements in the same box are also in the same $\diamond$-class.}
				\label{UPG_S_4_diamond}
			\end{figure}
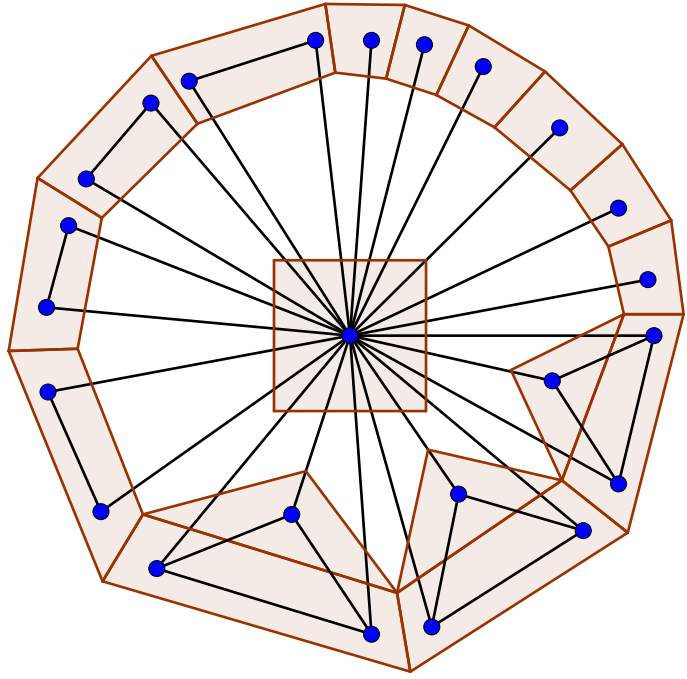
			
			Let's have a look at  Figure \ref{UPG_S_4_diamond}, that represents $\mathcal{P}(S_4)$ with all the $\diamond$-classes highlighted. Note that we clearly recognize the identity as the only star vertex.
			So now we have enough information to reconstruct $\vec{\mathcal{P}}(S_4)$ from $\mathcal{P}(S_4)$.
			
			For all the edges of the form $\{x,1\}$ we know that $(x,1)\in A$ and $(1,x)\notin A$.
			So we can just examine the edges of $\mathcal{P}^*(S_4)$, that appear in Figure \ref{PPG_S_4}.
			Now, except for the edges between the vertices in the triangles, for all the edges we have to choose both possible arcs since the vertices are in the same $\diamond$-class.
			Instead, in the triangles, two vertices are in the same $\diamond$-class and the remaining one is in another $\diamond$-class.
			For the edge between the two vertices in the same $\diamond$-class we have to choose both arcs.
			Finally, for the two edges remaining in the triangles, we have to choose the arc that is directed from the bigger $\diamond$-class to the smaller one.
			In Figure \ref{DPG_S_4} the result of this procedure: $\vec{\mathcal{P}}(S_4)$.

		\begin{figure}
			\centering
			\begin{tikzpicture}[line cap=round,line join=round,>=triangle 45,x=1.0cm,y=1.0cm]
				\clip(-6.,2.) rectangle (4.,12.);
				\draw [-latex,line width=2.pt] (-3.1158884964800677,10.373825826850057) -- (-1.4520218260437137,10.915549859085147);
				\draw [-latex,line width=2.pt] (-1.4520218260437137,10.915549859085147) -- (-3.1158884964800677,10.373825826850057);
				\draw [-latex,line width=2.pt] (-3.6189179549840818,10.083616523866972) -- (-4.470198577067798,9.077557606858948);
				\draw [-latex,line width=2.pt] (-4.470198577067798,9.077557606858948) -- (-3.6189179549840818,10.083616523866972);
				\draw [-latex,line width=2.pt] (-4.702366019454266,8.458444427161703) -- (-4.992575322437351,7.374996362691523);
				\draw [-latex,line width=2.pt] (-4.992575322437351,7.374996362691523) -- (-4.702366019454266,8.458444427161703);
				\draw [-latex,line width=2.pt] (-4.973228035571812,6.252853724490266) -- (-4.276725708412408,4.666376201516074);
				\draw [-latex,line width=2.pt] (-4.276725708412408,4.666376201516074) -- (-4.973228035571812,6.252853724490266);
				\draw [-latex,line width=2.pt] (-3.541528807521925,3.9118320137600566) -- (-1.7666619570249664,4.628983561782251);
				\draw [-latex,line width=2.pt] (-0.7168249251532304,3.041204104810805) -- (-1.7666619570249664,4.628983561782251);
				\draw [-latex,line width=2.pt] (-3.541528807521925,3.9118320137600566) -- (-0.7168249251532304,3.041204104810805);
				\draw [-latex,line width=2.pt] (-0.7168249251532304,3.041204104810805) -- (-3.541528807521925,3.9118320137600566);
				\draw [-latex,line width=2.pt] (0.07641383633386878,3.1379405391384996) -- (0.428937407628609,4.898618571476549);
				\draw [-latex,line width=2.pt] (2.069184383484386,4.414861472264069) -- (0.428937407628609,4.898618571476549);
				\draw [-latex,line width=2.pt] (0.07641383633386878,3.1379405391384996) -- (2.069184383484386,4.414861472264069);
				\draw [-latex,line width=2.pt] (2.069184383484386,4.414861472264069) -- (0.07641383633386889,3.1379405391384996);
				\draw [-latex,line width=2.pt] (2.533519268257322,5.033974651961314) -- (1.661554594802546,6.400870768344779);
				\draw [-latex,line width=2.pt] (2.533519268257322,5.033974651961314) -- (3.,7.);
				\draw [-latex,line width=2.pt] (3.,7.) -- (1.661554594802546,6.400870768344779);
				\draw [-latex,line width=2.pt] (3.,7.) -- (2.533519268257322,5.033974651961314);
				\draw [line width=2.pt] (-3.6189179549840818,10.083616523866972)-- (-1.,7.);
				\draw [line width=1.5pt] (-2.2084453947227316,8.422870915167943) -- (-2.4521837936106934,8.420591962610315);
				\draw [line width=1.5pt] (-2.2084453947227316,8.422870915167943) -- (-2.166734161373389,8.663024561256657);
				\draw [line width=2.pt] (-4.470198577067798,9.077557606858948)-- (-1.,7.);
				\draw [line width=1.5pt] (-2.601214790057432,7.9586240941029205) -- (-2.8312849397257627,7.878117405257711);
				\draw [line width=1.5pt] (-2.601214790057432,7.9586240941029205) -- (-2.6389136373420383,8.199440201601233);
				\draw [line width=2.pt] (-4.702366019454266,8.458444427161703)-- (-1.,7.);
				\draw [line width=1.5pt] (-2.7059971974046944,7.672030288803024) -- (-2.9198133194605265,7.554999238793923);
				\draw [line width=1.5pt] (-2.7059971974046944,7.672030288803024) -- (-2.782552699993743,7.903445188367777);
				\draw [line width=2.pt] (-4.992575322437351,7.374996362691523)-- (-1.,7.);
				\draw [line width=1.5pt] (-2.8409270808149505,7.172906182985797) -- (-3.013798059250633,7.001065484861294);
				\draw [line width=1.5pt] (-2.8409270808149505,7.172906182985797) -- (-2.9787772631867178,7.37393087783023);
				\draw [line width=2.pt] (-4.973228035571812,6.252853724490266)-- (-1.,7.);
				\draw [line width=1.5pt] (-2.8332575394987596,6.655264804732126) -- (-2.952008486801514,6.442399088300557);
				\draw [line width=1.5pt] (-2.8332575394987596,6.655264804732126) -- (-3.0212195487702993,6.810454636189707);
				\draw [line width=2.pt] (-4.276725708412408,4.666376201516074)-- (-1.,7.);
				\draw [line width=1.5pt] (-2.511257981778891,5.923709853750097) -- (-2.5297367506157316,5.68066225384526);
				\draw [line width=1.5pt] (-2.511257981778891,5.923709853750097) -- (-2.7469889577966784,5.985713947670812);
				\draw [line width=2.pt] (-3.541528807521925,3.9118320137600566)-- (-1.,7.);
				\draw [line width=1.5pt] (-2.1716046765011137,5.576403287741003) -- (-2.126179666727793,5.336924334168208);
				\draw [line width=1.5pt] (-2.1716046765011137,5.576403287741003) -- (-2.4153491407941337,5.574907679591847);
				\draw [line width=2.pt] (-1.7666619570249664,4.628983561782251)-- (-1.,7.);
				\draw [line width=1.5pt] (-1.335321831690352,5.962967227281868) -- (-1.2051604428435905,5.7568808047045685);
				\draw [line width=1.5pt] (-1.335321831690352,5.962967227281868) -- (-1.5615015141813748,5.872102757077682);
				\draw [line width=2.pt] (-0.7168249251532304,3.041204104810805)-- (-1.,7.);
				\draw [line width=1.5pt] (-0.8695459624196558,5.176248708459312) -- (-0.6716364753119225,5.03396225221705);
				\draw [line width=1.5pt] (-0.8695459624196558,5.176248708459312) -- (-1.0451884498413087,5.007241852593754);
				\draw [line width=2.pt] (0.07641383633386878,3.1379405391384996)-- (-1.,7.);
				\draw [line width=1.5pt] (-0.503688161070532,5.219285400860831) -- (-0.2814149242831663,5.119244364654209);
				\draw [line width=1.5pt] (-0.503688161070532,5.219285400860831) -- (-0.6421712393829652,5.0186961744842895);
				\draw [line width=2.pt] (0.428937407628609,4.898618571476549)-- (-1.,7.);
				\draw [line width=1.5pt] (-0.37327651045814075,6.078346542884425) -- (-0.13068658761031388,6.054603542865207);
				\draw [line width=1.5pt] (-0.37327651045814075,6.078346542884425) -- (-0.4403760047610772,5.844015028611341);
				\draw [line width=2.pt] (2.069184383484386,4.414861472264069)-- (-1.,7.);
				\draw [line width=1.5pt] (0.41524290018099735,5.807957264786713) -- (0.6552240261278082,5.850649886005468);
				\draw [line width=1.5pt] (0.41524290018099735,5.807957264786713) -- (0.4139603573565776,5.5642115862586);
				\draw [line width=2.pt] (2.533519268257322,5.033974651961314)-- (-1.,7.);
				\draw [line width=1.5pt] (0.6304008434492131,6.092856401129611) -- (0.857802524307408,6.180617874795993);
				\draw [line width=1.5pt] (0.6304008434492131,6.092856401129611) -- (0.6757167439499148,5.853356777165319);
				\draw [line width=2.pt] (1.661554594802546,6.400870768344779)-- (-1.,7.);
				\draw [line width=1.5pt] (0.1785423397538285,6.734704233432324) -- (0.3718999165131968,6.88311733334932);
				\draw [line width=1.5pt] (0.1785423397538285,6.734704233432324) -- (0.2896546782893492,6.517753434995456);
				\draw [line width=2.pt] (3.,7.)-- (-1.,7.);
				\draw [line width=1.5pt] (0.8439556589955292,7.) -- (1.,7.1872532092053625);
				\draw [line width=1.5pt] (0.8439556589955292,7.) -- (1.,6.812746790794636);
				\draw [line width=2.pt] (2.920465005568102,7.742594813136765)-- (-1.,7.);
				\draw [line width=1.5pt] (0.8069143030405113,7.342256642340844) -- (0.925383585711005,7.555279246260629);
				\draw [line width=1.5pt] (0.8069143030405113,7.342256642340844) -- (0.9950814198570981,7.187315566876136);
				\draw [line width=2.pt] (2.533519268257322,8.69061186954817)-- (-1.,7.);
				\draw [line width=1.5pt] (0.6259970513931011,7.7779580939688024) -- (0.6859422251623214,8.014221034056757);
				\draw [line width=1.5pt] (0.6259970513931011,7.7779580939688024) -- (0.8475770430950026,7.676390835491413);
				\draw [line width=2.pt] (1.759627793635761,9.754712647152811)-- (-1.,7.);
				\draw [line width=1.5pt] (0.26937557864717276,8.267114706030467) -- (0.247523955762754,8.509882305381252);
				\draw [line width=1.5pt] (0.26937557864717276,8.267114706030467) -- (0.5121038378730068,8.244830341771557);
				\draw [line width=2.pt] (0.7535688766277328,10.567298695505446)-- (-1.,7.);
				\draw [line width=1.5pt] (-0.19205443219021545,8.643609902355214) -- (-0.2912628961631437,8.866255992357617);
				\draw [line width=1.5pt] (-0.19205443219021545,8.643609902355214) -- (0.044831772790876454,8.701042703147824);
				\draw [line width=2.pt] (-0.02032259799382734,10.85750799848853)-- (-1.,7.);
				\draw [line width=1.5pt] (-0.548571949657045,8.777510955518661) -- (-0.6916529514676375,8.97484678003642);
				\draw [line width=1.5pt] (-0.548571949657045,8.777510955518661) -- (-0.3286696465261894,8.882661218452107);
				\draw [line width=2.pt] (-0.7168249251532315,10.915549859085147)-- (-1.,7.);
				\draw [line width=1.5pt] (-0.8696682921555857,8.802137072129897) -- (-1.0451778914718264,8.971281925037339);
				\draw [line width=1.5pt] (-0.8696682921555857,8.802137072129897) -- (-0.6716470336814035,8.944267934047808);
				\draw [line width=2.pt] (-1.4520218260437137,10.915549859085147)-- (-1.,7.);
				\draw [line width=1.5pt] (-1.2081155770148475,8.802760113125833) -- (-1.4120286927219452,8.936300526334163);
				\draw [line width=1.5pt] (-1.2081155770148475,8.802760113125833) -- (-1.039993133321768,8.979249332750983);
				\draw [line width=2.pt] (-3.1158884964800677,10.373825826850057)-- (-1.,7.);
				\draw [line width=1.5pt] (-1.9750368045792581,8.554715363730812) -- (-2.216581307873093,8.587423981032098);
				\draw [line width=1.5pt] (-1.9750368045792581,8.554715363730812) -- (-1.8993071886069746,8.786401845817958);
				\begin{scriptsize}
					\draw [fill=qqqqff] (-1.4520218260437137,10.915549859085147) circle (3pt);
					\draw [fill=qqqqff] (-3.1158884964800677,10.373825826850057) circle (3pt);
					\draw [fill=qqqqff] (-4.470198577067798,9.077557606858948) circle (3pt);
					\draw [fill=qqqqff] (-3.6189179549840818,10.083616523866972) circle (3pt);
					\draw [fill=qqqqff] (-4.702366019454266,8.458444427161703) circle (3pt);
					\draw [fill=qqqqff] (-4.992575322437351,7.374996362691523) circle (3pt);
					\draw [fill=qqqqff] (-4.973228035571812,6.252853724490266) circle (3pt);
					\draw [fill=qqqqff] (-4.276725708412408,4.666376201516074) circle (3pt);
					\draw [fill=qqqqff] (2.533519268257322,5.033974651961314) circle (3pt);
					\draw [fill=qqqqff] (1.661554594802546,6.400870768344779) circle (3pt);
					\draw [fill=qqqqff] (2.069184383484386,4.414861472264069) circle (3pt);
					\draw [fill=qqqqff] (3.,7.) circle (3pt);
					\draw [fill=qqqqff] (-0.7168249251532315,10.915549859085147) circle (3pt);
					\draw [fill=qqqqff] (-0.7168249251532304,3.041204104810805) circle (3pt);
					\draw [fill=qqqqff] (-3.541528807521925,3.9118320137600566) circle (3pt);
					\draw [fill=qqqqff] (-0.02032259799382734,10.85750799848853) circle (3pt);
					\draw [fill=qqqqff] (2.920465005568102,7.742594813136765) circle (3pt);
					\draw [fill=qqqqff] (0.07641383633386878,3.1379405391384996) circle (3pt);
					\draw [fill=qqqqff] (-1.7666619570249664,4.628983561782251) circle (3pt);
					\draw [fill=qqqqff] (0.428937407628609,4.898618571476549) circle (3pt);
					\draw [fill=qqqqff] (1.759627793635761,9.754712647152811) circle (3pt);
					\draw [fill=qqqqff] (2.533519268257322,8.69061186954817) circle (3pt);
					\draw [fill=qqqqff] (-1.,7.) circle (3pt);
					\draw [fill=qqqqff] (0.7535688766277328,10.567298695505446) circle (3pt);
				\end{scriptsize}
			\end{tikzpicture}
			\caption{The representation of the directed power graph of $S_4$}
			\label{DPG_S_4}
		\end{figure}
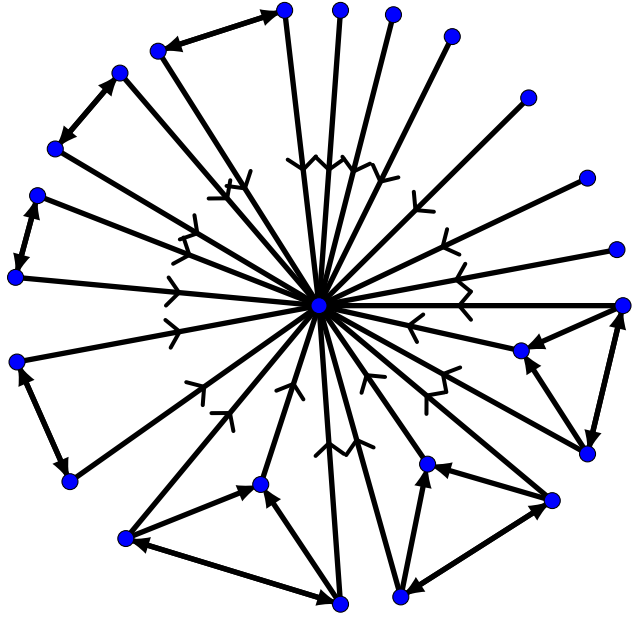
					
			\begin{definition}
				\label{Ndef}\rm Given two elements $x$ and $y$ of a group $G$ we write $x \mathtt{N}y$ if $N_{\mathcal{P}(G)}[x]=N_{\mathcal{P}(G)}[y]$
			\end{definition}
			
			$\mathtt{N}$ is clearly an equivalence relation. The $\mathtt{N}$-class of an element $x$ is denoted by $[x]_{\mathtt{N}}$. It follows immediately by the definition that two elements of the same $\mathtt{N}$-class are joined. Then, in particular, $[y]_{\mathtt{N}}\subseteq N_{\mathcal{P}(G)}[y]$ for all $y \in G$, and we also have that $\mathcal{P}(G)_{[y]_{\mathtt{N}}}$ is a complete graph.

			In Figure \ref{C_6xC_2Ncolored} there is an example of a power graph with the $\mathtt{N}$-classes highlighted by colouring the vertices of the same $\mathtt{N}$-class with the same colour. The figure represents the power graph of $C_6 \times C_2$. In the centre, in blue, there is the identity. \\ In Figure \ref{PPG_S_4}, where is represented $\mathcal{P}^*(S_4)$, the $\mathtt{N}$-classes are not highlighted with different colours, but they are easily found as the connected components of the graph. It is only missing the $\mathtt{N}$-class of $1$ that is composed by exactly the identity. The case of $S_4$ is not a general case because the proper power graph is not always the union of connected components that are the $\mathtt{N}$-classes. Indeed the proper power graph can be also connected, even if there exists more than one $\mathtt{N}$-class. Take as example the power graph of $C_6\times C_2$ in Figure \ref{C_6xC_2Ncolored}. The cyan vertex and the magentas are connected also in the proper power graph; then their $\mathtt{N}$-classes are not of two distinct components as happens for $\mathtt{N}$-classes in $\mathcal{P}^*(S_4)$. We remark again, after this examples, that the subgraph induced by an $\mathtt{N}$-class is always a complete graph. 
			
			\begin{figure}
				\centering
				\begin{tikzpicture}[line cap=round,line join=round,>=triangle 45,x=1.0cm,y=1.0cm]
					\clip(-2.,-2.) rectangle (6.,6.5);
					\draw [line width=1.pt] (-1.207135638670726,3.871916365273084)-- (2.005153099527752,2.4041290017516808);
					\draw [line width=1.pt] (-1.4958806937896902,1.8867941113302027)-- (2.005153099527752,2.4041290017516808);
					\draw [line width=1.pt] (-1.4958806937896902,1.8867941113302027)-- (-0.737924924102409,0.10619960476325441);
					\draw [line width=1.pt] (-1.4958806937896902,1.8867941113302027)-- (1.,-1.);
					\draw [line width=1.pt] (-1.4958806937896902,1.8867941113302027)-- (3.,-1.);
					\draw [line width=1.pt] (-1.4958806937896902,1.8867941113302027)-- (4.664013815414881,0.07010647287338384);
					\draw [line width=1.pt] (3.,-1.)-- (2.005153099527752,2.4041290017516808);
					\draw [line width=1.pt] (3.,-1.)-- (-0.737924924102409,0.10619960476325441);
					\draw [line width=1.pt] (4.664013815414881,0.07010647287338384)-- (2.005153099527752,2.4041290017516808);
					\draw [line width=1.pt] (4.664013815414881,0.07010647287338384)-- (-0.737924924102409,0.10619960476325441);
					\draw [line width=1.pt] (4.664013815414881,0.07010647287338384)-- (1.,-1.);
					\draw [line width=1.pt] (4.664013815414881,0.07010647287338384)-- (3.,-1.);
					\draw [line width=1.pt] (1.,-1.)-- (2.005153099527752,2.4041290017516808);
					\draw [line width=1.pt] (0.08018606540132314,5.363765816721068)-- (2.005153099527752,2.4041290017516808);
					\draw [line width=1.pt] (0.08018606540132314,5.363765816721068)-- (-0.737924924102409,0.10619960476325441);
					\draw [line width=1.pt] (0.08018606540132314,5.363765816721068)-- (3.89402700176431,5.375796860684358);
					\draw [line width=1.pt] (0.08018606540132314,5.363765816721068)-- (3.,-1.);
					\draw [line width=1.pt] (0.08018606540132314,5.363765816721068)-- (5.506186892845194,1.8988251552934927);
					\draw [line width=1.pt] (2.,6.)-- (2.005153099527752,2.4041290017516808);
					\draw [line width=1.pt] (2.,6.)-- (3.,-1.);
					\draw [line width=1.pt] (2.,6.)-- (-1.207135638670726,3.871916365273084);
					\draw [line width=1.pt] (2.,6.)-- (-0.737924924102409,0.10619960476325441);
					\draw [line width=1.pt] (2.,6.)-- (5.22947288168952,3.8959784531996644);
					\draw [line width=1.pt] (3.89402700176431,5.375796860684358)-- (2.005153099527752,2.4041290017516808);
					\draw [line width=1.pt] (5.22947288168952,3.8959784531996644)-- (-0.737924924102409,0.10619960476325441);
					\draw [line width=1.pt] (5.22947288168952,3.8959784531996644)-- (-1.207135638670726,3.871916365273084);
					\draw [line width=1.pt] (5.22947288168952,3.8959784531996644)-- (3.,-1.);
					\draw [line width=1.pt] (5.506186892845194,1.8988251552934927)-- (2.005153099527752,2.4041290017516808);
					\draw [line width=1.pt] (5.506186892845194,1.8988251552934927)-- (3.,-1.);
					\draw [line width=1.pt] (5.506186892845194,1.8988251552934927)-- (3.89402700176431,5.375796860684358);
					\draw [line width=1.pt] (5.506186892845194,1.8988251552934927)-- (-0.737924924102409,0.10619960476325441);
					\draw [line width=1.pt] (2.005153099527752,2.4041290017516808)-- (-0.737924924102409,0.10619960476325441);
					\draw[line width=1.pt]					(2.005153099527752,2.4041290017516808)--
					(5.22947288168952,3.8959784531996644);
					\begin{scriptsize}
						\draw [fill=qqffff] (-1.207135638670726,3.871916365273084) circle (3pt);
						\draw [fill=ffffww] (0.08018606540132314,5.363765816721068) circle (3pt);
						\draw [fill=ffqqff] (2.,6.) circle (3pt);
						\draw [fill=yqqqqq] (3.89402700176431,5.375796860684358) circle (3pt);
						\draw [fill=ffqqff] (5.22947288168952,3.8959784531996644) circle (3pt);
						\draw [fill=qqffqq] (-1.4958806937896902,1.8867941113302027) circle (3pt);
						\draw [fill=ffqqqq] (-0.737924924102409,0.10619960476325441) circle (3pt);
						\draw [fill=ffzztt] (1.,-1.) circle (3pt);
						\draw [fill=ffqqqq] (3.,-1.) circle (3pt);
						\draw [fill=qqffqq] (4.664013815414881,0.07010647287338384) circle (3pt);
						\draw [fill=ffffww] (5.506186892845194,1.8988251552934927) circle (3pt);
						\draw [fill=qqqqff] (2.005153099527752,2.4041290017516808) circle (3pt);
					\end{scriptsize}
				\end{tikzpicture}
				\caption{The power graph of $C_6 \times C_2$ where vertex of the same colour are in the same $\mathtt{N}$-class. }
				\label{C_6xC_2Ncolored}
			\end{figure}
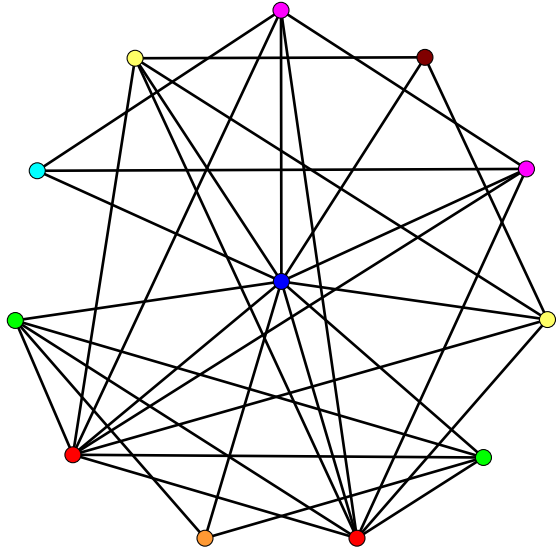

			Note that in the power graph the elements of the same $\mathtt{N}$-class are indistinguishable in the following sense: \emph{all the maps that permutate the vertices of an $\mathtt{N}$-class and fix all the others vertices are graph isomorphism for $\mathcal{P}(G)$.}
			In fact, if  we have $x,y\in G$ such that $x\mathtt{N}y$ holds, then the map $\psi : G \rightarrow G$, defined by $\psi(x)=y$, $\psi(y)=x$ and $\psi(z)=z$ for all $z \in G\setminus \left\lbrace x,y\right\rbrace $, is a graph automorphism for $\mathcal{P}(G)$:
			
			Let $v,w$ be two distinct vertices of $\mathcal{P}(G)$ with $\left\lbrace v,w\right\rbrace \in E$. 
			We distinguish two cases:
			\begin{enumerate}
				\item \underline{Both $v,w$ are neither $x$ nor $y$}. Then $\left\lbrace \psi(v),\psi(w) \right\rbrace=\left\lbrace v,w\right\rbrace \in E$.
				
				\item \underline{At least one of $v,w$ is equal to $x$ or $y$}. Up to renaming, if necessary, we have $\left\lbrace x,w\right\rbrace \in E$, that implies $w\in N_{\mathcal{P}(G)}[x]=N_{\mathcal{P}(G)}[y]$. Now, if $w=y$, then $\psi(w)=\psi(y)=x$, so $\left\lbrace \psi(x), \psi(w) \right\rbrace= \left\lbrace y,x\right\rbrace  $ that is in $E$ since $x\mathtt{N}y$ holds. Otherwise, if $w\not=y$, then $\left\lbrace \psi(x), \psi(w) \right\rbrace=\left\lbrace y,w\right\rbrace $ that is in $E$ since we have $w \in N_{\mathcal{P}(G)}[y]$.
			\end{enumerate}
			
			This proves that $\psi$ is a graph isomorphism since it is clearly a bijection from $G$ to $G$.
			Note that the graph isomorphism explained above is not induced by a group isomorphism. So it is an example of the fact that \emph{not all the graph isomorphisms between two power graphs are induced by a group isomorphism} (see Remark \ref{isoGroup-isoGraph} for the converse).
			
			Speaking of isomorphisms is crucial to underline the following result that will be used later in Section \ref{Case I}.
			\begin{lemma}
				\label{Niso}Let be $G_1$ and $G_2$ groups, $x \in G_1$ and $\psi$ a graph isomorphism. If $\psi(\mathcal{P}(G_1))=\mathcal{P}(G_2)$, then all the following are true:
				\begin{enumerate}
					\item[\rm i)] $N_{\mathcal{P}(G_2)}[\psi(x)]=\psi(N_{\mathcal{P}(G_1)}[x])$, in particular $|N_{\mathcal{P}(G_2)}[\psi(x)]|=|N_{\mathcal{P}(G_1)}[x]|$;
					
					\item[\rm ii)] $[\psi(x)]_{\mathtt{N}}=\psi([x]_{\mathtt{N}})$, and, in particular we have that $\mathcal{P}(G)_{[x]_{\mathtt{N}}}\cong \mathcal{P}(G)_{[\psi(x)]_{\mathtt{N}}}$;
					
					\item[\rm iii)] $G_1$ and $G_2$ has the same amount of $\mathtt{N}$-classes.
				\end{enumerate}  
			\end{lemma}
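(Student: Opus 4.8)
The plan is to derive all three parts from part (i), which is the single substantive observation: a graph isomorphism carries closed neighbourhoods to closed neighbourhoods.

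First I would prove (i) straight from the definitions. Fix $x\in G_1$ and let $y\in G_1$ be arbitrary. By definition $y\in N_{\mathcal{P}(G_1)}[x]$ means $y=x$ or $\{x,y\}\in E_{\mathcal{P}(G_1)}$. Since $\psi$ is a bijection, $y=x$ if and only if $\psi(y)=\psi(x)$; and since $\psi$ is a graph isomorphism, $\{x,y\}\in E_{\mathcal{P}(G_1)}$ if and only if $\{\psi(x),\psi(y)\}\in E_{\mathcal{P}(G_2)}$. Hence $y\in N_{\mathcal{P}(G_1)}[x]$ if and only if $\psi(y)\in N_{\mathcal{P}(G_2)}[\psi(x)]$. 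Running this equivalence over all $y$ and using once more that $\psi$ is a bijection gives $\psi(N_{\mathcal{P}(G_1)}[x])=N_{\mathcal{P}(G_2)}[\psi(x)]$, and in particular the two sets have the same cardinality.

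For (ii) I would apply (i) twice. For $y\in G_1$, the statement $y\in[x]_{\mathtt{N}}$ unfolds to $N_{\mathcal{P}(G_1)}[y]=N_{\mathcal{P}(G_1)}[x]$; pushing this equality through the injection $\psi$ and rewriting each side by (i) as $N_{\mathcal{P}(G_2)}[\psi(y)]$ and $N_{\mathcal{P}(G_2)}[\psi(x)]$ respectively, it is equivalent to $\psi(y)\in[\psi(x)]_{\mathtt{N}}$. Therefore $\psi([x]_{\mathtt{N}})=[\psi(x)]_{\mathtt{N}}$. Since the restriction of a graph isomorphism to an induced subgraph is a graph isomorphism onto the induced subgraph on the image, $\psi$ restricted to $[x]_{\mathtt{N}}$ witnesses $\mathcal{P}(G_1)_{[x]_{\mathtt{N}}}\cong\mathcal{P}(G_2)_{[\psi(x)]_{\mathtt{N}}}$.

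For (iii) I would note that by (ii) the assignment $[x]_{\mathtt{N}}\mapsto[\psi(x)]_{\mathtt{N}}$ is a well-defined map from the set of $\mathtt{N}$-classes of $G_1$ to the set of $\mathtt{N}$-classes of $G_2$; it is injective because $\psi$ is, and surjective because $\psi$ maps onto $G_2$, so the two sets of classes are equinumerous. I do not anticipate a genuine obstacle: the whole lemma is bookkeeping around the fact that $\psi$ respects adjacency, and the only care required is to use that $\psi$ is a \emph{bijection} (not merely a homomorphism) each time an equality of sets is transported through $\psi$.
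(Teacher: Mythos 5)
Your proposal is correct and follows essentially the same route as the paper: (i) is read off directly from the bijectivity and adjacency-preservation of $\psi$, (ii) is obtained by transporting the equality of closed neighbourhoods through the injective map $\psi$ and applying (i) twice, and (iii) is the induced bijection on $\mathtt{N}$-classes. The only cosmetic differences are that the paper justifies $\mathcal{P}(G_1)_{[x]_{\mathtt{N}}}\cong\mathcal{P}(G_2)_{[\psi(x)]_{\mathtt{N}}}$ by noting both are complete graphs of equal order rather than by restricting $\psi$, and it phrases (iii) via a system of class representatives and a contradiction instead of your direct well-defined bijection; both variants are equally valid.
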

			
			\begin{proof}
				\begin{enumerate}
					\item[\rm i)] Let $y'\in N_{\mathcal{P}(G_2)}[\psi(x)]$. Since $\psi$ is a graph isomorphism, there exists $y \in G$ such that $\psi(y)=y'$. Now we have that $\{\psi(y), \psi(x)\}\in E_{\mathcal{P}(G_2)}$, this happens if and only if $\{y,x\}\in E_{\mathcal{P}(G_1)}$ holds that is equivalent to say that $y\in N_{\mathcal{P}(G_1)}[x]$. Therefore $N_{\mathcal{P}(G_2)}[\psi(x)]=\psi(N_{\mathcal{P}(G_1)}[x])$.
					
					\item[\rm ii)] We want to prove that $[\psi(x)]_{\mathtt{N}}=\psi([x]_{\mathtt{N}})$. If this holds, then $|[x]_{\mathtt{N}}|=|[\psi(x)]_{\mathtt{N}}|$, and hence $\mathcal{P}(G)_{[x]_{\mathtt{N}}}\cong \mathcal{P}(G)_{[\psi(x)]_{\mathtt{N}}}$ because are complete graphs of the same order.
					So take $y' \in [\psi(x)]_{\mathtt{N}}$. Since $\psi$ is a bijection, there exists $y\in G_1$ such that $y'=\psi(y)$. We want to prove that $y \in [x]_{\mathtt{N}}$. Indeed we have, by i), that  $$\psi(N_{\mathcal{P}(G_1)}[y])=N_{\mathcal{P}(G_2)}[\psi(y)]=N_{\mathcal{P}(G_2)}[\psi(x)]=\psi(N_{\mathcal{P}(G_1)}[x]),$$ and hence, since $\psi$ is injective, we obtain that $N_{\mathcal{P}(G_1)}[y]=N_{\mathcal{P}(G_1)}[x]$. It follows that $y \in [x]_{\mathtt{N}}$. 
					For the converse we simply follow the argument above backwards. 
					
					\item[\rm iii)] Let $k$ be a positive integer and let $x_1, x_2, ..., x_k$ be a set of representatives for the $\mathtt{N}$-classes of $G_1$. By ii), we have that $\psi(x_1), \psi(x_2), ..., \psi(x_k)$ are a set of representatives for the  $\mathtt{N}$-classes of $G_2$. Indeed suppose, by contradiction, that there exists $y'\in G_2$ such that $[y']_{\mathtt{N}}\not= [\psi(x_i)]_{\mathtt{N}}$ for all $i\in [k]$. There exists $y \in G_1$ such that $\psi(y)=y'$, and there exists $i \in [k]$ such that $y \in [x_i]_{\mathtt{N}}$. Therefore $$[y']_{\mathtt{N}}=[\psi(y)]_{\mathtt{N}}=\psi([y]_{\mathtt{N}})=\psi([x_i]_{\mathtt{N}})=[\psi(x_i)]_{\mathtt{N}},$$ a contradiction. Note also that $[\psi(x_i)]_{\mathtt{N}}\not=[\psi(x_j)]_{\mathtt{N}}$ for all $i\not=j$ in $[k]$ otherwise we would have $\psi([x_i]_{\mathtt{N}})=\psi([x_j]_{\mathtt{N}})$ and hence, since $\psi$ is injective, we would have $[x_i]_{\mathtt{N}}=[x_j]_{\mathtt{N}}$, a contradiction.
				\end{enumerate}
			\end{proof} 
			
			Before continuing with the study of $\mathtt{N}$-classes note that it is easy to understand how the closed neighbourhood of an element behaves whenever we move to a subgroup.
			Let $G$ be a group. If $H\leq G$, then, for all $h\in H$, we have that $$N_{\mathcal{P}(H)}[h]=H \cap N_{\mathcal{P}(G)}[h].$$ Indeed we have that if $x\in H$ and $x\in N_{\mathcal{P}(G)}[h]$ hold, then it follows clearly that $x\in N_{\mathcal{P}(H)}[h]$; and if we have $x \in N_{\mathcal{P}(H)}[h]$, then we have $x \in H$ and $\{x,h\}\in E_{\mathcal{P}(H)}\subseteq E_{\mathcal{P}(G)}$, and hence $x\in H \cap N_{\mathcal{P}(G)}[h]$ holds.
			
			From now on we always refer to $N_{\mathcal{P}(G)}[x]$ with $N[x]$ if the group is clear from the context.
				
			Note that the $\mathtt{N}$-class of the identity is composed by all the star vertices, since 1 itself is a star vertex. Then $[1]_{\mathtt{N}}=\mathcal{S}_{\mathcal{P}(G)}$. So we call the class of the identity the \emph{star class} and we simply write $\mathcal{S}$ whenever there is no possible misunderstanding on which group we are referring to.
			
			\begin{Oss}
				\label{Nclassi-centralizzante}Let $y \in G$. Then $N[y] \subseteq C_G(y)$ holds. We also have that $[y]_{\mathtt{N}}\subseteq C_G(y)$.  
			\end{Oss}
		 	\begin{proof}
		 		Let $x\in N[y]$. Then we have that $x \in \langle y \rangle$ or $y \in \langle x \rangle$. So, in both cases, $x\in C_G(y)$. Now remember that we have $[y]_{\mathtt{N}}\subseteq N[y]$, thus we also have $[y]_{\mathtt{N}}\subseteq C_G(y)$.
		 	\end{proof}

			\begin{Oss}
				\label{DiamondRefinementOfN} The relation $\diamond$ is a refinement of the relation $\mathtt{N}$. In particular each $\mathtt{N}$-class is union of $\diamond$-classes.
			\end{Oss}
			\begin{proof}
				We need to show that $x\diamond y$ implies $x\mathtt{N}y$.
				So let $x,y \in G$ such that $x\diamond y$.
				We have $x=y^k$ and $y=x^s$ for some $k,s \in \mathbb{N}$ such that $\gcd(k,o(y))=1$ and $\gcd(s,o(x))=1$. Let $z\in N[x]$. Then there exists a positive integer $n$ such that $x=z^n$ or $z=x^n$. In the first case $y=x^s=z^{ns}$; in the other case $z=x^n=y^{kn}$.
				In any case we get $z\in N[y]$, thus $N[x]\subseteq N[y]$.
				In the same way, starting with $z\in N[y]$ we obtain $z\in N[x]$, and then $N[x]=N[y]$.
			\end{proof}
		
			For example, in Figure \ref{PPG_S_4}, that is $\mathcal{P}^*(S_4)$, the elements of a triangle form an $\mathtt{N}$-class. But we know that these elements are of order $2$ and $4$. So each of these three $\mathtt{N}$-classes is union of two $\diamond$-classes: the class of the only element of order $2$ and the class of the $2$ remaining elements, that have order $4$.
			\emph{So, in general, in an $\mathtt{N}$-class is possible to find elements of different orders.}
			Note that, since we know how many elements of each order are in those specific $\mathtt{N}$-classes, is up to us the division in $\diamond$-classes since, as we already stated, the elements in each $\mathtt{N}$-class are indistinguishable: In each triangle we can choose any couple of vertices as the two elements of order $4$. 
			
			In Figure \ref{S_4NeDiamond} we represent $\vec{\mathcal{P}}(S_4)$ with both relations $\mathtt{N}$ and $\diamond$ highlighted. The choice of the colours is not entirely random. The white vertex is the identity. The ones in shades of green are all the elements of order $3$. The vertices in shades of red/pink are all involutions. And, finally, the vertices coloured in a yellow shade composed, with the identity, the three $4$-cycles of $S_4$.
		
			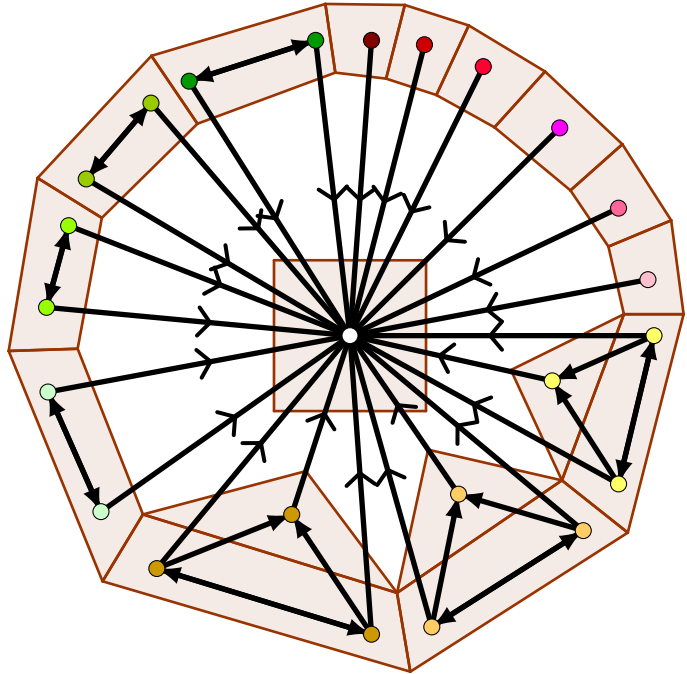
\begin{figure}
				\centering
				\begin{tikzpicture}[line cap=round,line join=round,>=triangle 45,x=1.0cm,y=1.0cm]
					\clip(-6.,2.) rectangle (4.,12.);
					\fill[line width=2.pt,color=zzttqq,fill=zzttqq,fill opacity=0.10000000149011612] (-3.00711852963106,9.809911400437345) -- (-1.1933294275569224,10.487257783623239) -- (-1.3240850694054276,11.397934907263139) -- (-3.6127226668446086,10.710096634610261) -- cycle;
					\fill[line width=2.pt,color=zzttqq,fill=zzttqq,fill opacity=0.10000000149011612] (-5.110910354913101,9.090550324695746) -- (-4.265322159020893,8.56129342866397) -- (-3.00711852963106,9.809911400437345) -- (-3.6127226668446086,10.710096634610261) -- cycle;
					\fill[line width=2.pt,color=zzttqq,fill=zzttqq,fill opacity=0.10000000149011612] (-5.110910354913101,9.090550324695746) -- (-5.490950168512438,6.796738592614046) -- (-4.58366337471664,6.826333803122147) -- (-4.265322159020893,8.56129342866397) -- cycle;
					\fill[line width=2.pt,color=zzttqq,fill=zzttqq,fill opacity=0.10000000149011612] (-3.7241420923381203,4.629779414821492) -- (-4.255820774314594,3.7428472333336757) -- (-5.490950168512438,6.796738592614046) -- (-4.58366337471664,6.826333803122147) -- cycle;
					\fill[line width=2.pt,color=zzttqq,fill=zzttqq,fill opacity=0.10000000149011612] (2.604291484014996,7.284127314600211) -- (2.400341814808526,8.181505859108677) -- (3.22633797509473,8.528220296759674) -- (3.389497710459906,7.284127314600211) -- cycle;
					\fill[line width=2.pt,color=zzttqq,fill=zzttqq,fill opacity=0.10000000149011612] (2.400341814808526,8.181505859108677) -- (1.9006651252526743,8.92592215171229) -- (2.5838965170943493,9.5377711593317) -- (3.22633797509473,8.528220296759674) -- cycle;
					\fill[line width=2.pt,color=zzttqq,fill=zzttqq,fill opacity=0.10000000149011612] (1.9006651252526743,8.92592215171229) -- (0.9013117461409711,9.762115795458815) -- (1.5669320840466672,10.502126775207563) -- (2.5838965170943493,9.5377711593317) -- cycle;
					\fill[line width=2.pt,color=zzttqq,fill=zzttqq,fill opacity=0.10000000149011612] (0.9013117461409711,9.762115795458815) -- (0.13650048661670827,10.1904101007924) -- (0.5625411481055639,11.112905047063636) -- (1.5669320840466672,10.502126775207563) -- cycle;
					\fill[line width=2.pt,color=zzttqq,fill=zzttqq,fill opacity=0.10000000149011612] (0.13650048661670827,10.1904101007924) -- (-0.5248128745958504,10.407672479699302) -- (-0.2789755820072525,11.384362056777448) -- (0.5625411481055639,11.112905047063636) -- cycle;
					\fill[line width=2.pt,color=zzttqq,fill=zzttqq,fill opacity=0.10000000149011612] (-0.2789755820072525,11.384362056777448) -- (-1.3240850694054276,11.397934907263139) -- (-1.1933294275569224,10.487257783623239) -- (-0.5248128745958504,10.407672479699302) -- cycle;
					\fill[line width=2.pt,color=zzttqq,fill=zzttqq,fill opacity=0.10000000149011612] (-2.,8.) -- (8.905437690318596E-4,7.999107867261943) -- (0.,6.) -- (-2.,6.) -- cycle;
					\fill[line width=2.pt,color=zzttqq,fill=zzttqq,fill opacity=0.10000000149011612] (-4.255820774314594,3.7428472333336757) -- (-0.20647165890010305,2.544644452014348) -- (-0.3808547962166728,3.589809124493197) -- (-3.7241420923381203,4.629779414821492) -- cycle;
					\fill[line width=2.pt,color=zzttqq,fill=zzttqq,fill opacity=0.10000000149011612] (-0.3808547962166728,3.589809124493197) -- (1.7884928071891157,5.08147088717034) -- (2.655278901316614,4.388042011868344) -- (-0.20647165890010305,2.544644452014348) -- cycle;
					\fill[line width=2.pt,color=zzttqq,fill=zzttqq,fill opacity=0.10000000149011612] (1.7884928071891157,5.08147088717034) -- (2.604291484014996,7.284127314600211) -- (3.389497710459906,7.284127314600211) -- (2.655278901316614,4.388042011868344) -- cycle;
					\fill[line width=2.pt,color=zzttqq,fill=zzttqq,fill opacity=0.10000000149011612] (2.604291484014996,7.284127314600211) -- (1.1171708774262428,6.54424864751115) -- (1.7884928071891157,5.08147088717034) -- cycle;
					\fill[line width=2.pt,color=zzttqq,fill=zzttqq,fill opacity=0.10000000149011612] (1.7884928071891157,5.08147088717034) -- (0.027873487448356438,5.488419264647737) -- (-0.3808547962166728,3.589809124493197) -- cycle;
					\fill[line width=2.pt,color=zzttqq,fill=zzttqq,fill opacity=0.10000000149011612] (-0.3808547962166728,3.589809124493197) -- (-1.580788935019527,5.203823602504077) -- (-3.7241420923381203,4.629779414821492) -- cycle;
					\draw [line width=1.pt,color=zzttqq] (-3.00711852963106,9.809911400437345)-- (-1.1933294275569224,10.487257783623239);
					\draw [line width=1.pt,color=zzttqq] (-1.1933294275569224,10.487257783623239)-- (-1.3240850694054276,11.397934907263139);
					\draw [line width=1.pt,color=zzttqq] (-1.3240850694054276,11.397934907263139)-- (-3.6127226668446086,10.710096634610261);
					\draw [line width=1.pt,color=zzttqq] (-3.6127226668446086,10.710096634610261)-- (-3.00711852963106,9.809911400437345);
					\draw [line width=1.pt,color=zzttqq] (-5.110910354913101,9.090550324695746)-- (-4.265322159020893,8.56129342866397);
					\draw [line width=1.pt,color=zzttqq] (-4.265322159020893,8.56129342866397)-- (-3.00711852963106,9.809911400437345);
					\draw [line width=1.pt,color=zzttqq] (-3.00711852963106,9.809911400437345)-- (-3.6127226668446086,10.710096634610261);
					\draw [line width=1.pt,color=zzttqq] (-3.6127226668446086,10.710096634610261)-- (-5.110910354913101,9.090550324695746);
					\draw [line width=1.pt,color=zzttqq] (-5.110910354913101,9.090550324695746)-- (-5.490950168512438,6.796738592614046);
					\draw [line width=1.pt,color=zzttqq] (-5.490950168512438,6.796738592614046)-- (-4.58366337471664,6.826333803122147);
					\draw [line width=1.pt,color=zzttqq] (-4.58366337471664,6.826333803122147)-- (-4.265322159020893,8.56129342866397);
					\draw [line width=1.pt,color=zzttqq] (-4.265322159020893,8.56129342866397)-- (-5.110910354913101,9.090550324695746);
					\draw [line width=1.pt,color=zzttqq] (-3.7241420923381203,4.629779414821492)-- (-4.255820774314594,3.7428472333336757);
					\draw [line width=1.pt,color=zzttqq] (-4.255820774314594,3.7428472333336757)-- (-5.490950168512438,6.796738592614046);
					\draw [line width=1.pt,color=zzttqq] (-5.490950168512438,6.796738592614046)-- (-4.58366337471664,6.826333803122147);
					\draw [line width=1.pt,color=zzttqq] (-4.58366337471664,6.826333803122147)-- (-3.7241420923381203,4.629779414821492);
					\draw [line width=1.pt,color=zzttqq] (2.604291484014996,7.284127314600211)-- (2.400341814808526,8.181505859108677);
					\draw [line width=1.pt,color=zzttqq] (2.400341814808526,8.181505859108677)-- (3.22633797509473,8.528220296759674);
					\draw [line width=1.pt,color=zzttqq] (3.22633797509473,8.528220296759674)-- (3.389497710459906,7.284127314600211);
					\draw [line width=1.pt,color=zzttqq] (3.389497710459906,7.284127314600211)-- (2.604291484014996,7.284127314600211);
					\draw [line width=1.pt,color=zzttqq] (2.400341814808526,8.181505859108677)-- (1.9006651252526743,8.92592215171229);
					\draw [line width=1.pt,color=zzttqq] (1.9006651252526743,8.92592215171229)-- (2.5838965170943493,9.5377711593317);
					\draw [line width=1.pt,color=zzttqq] (2.5838965170943493,9.5377711593317)-- (3.22633797509473,8.528220296759674);
					\draw [line width=1.pt,color=zzttqq] (3.22633797509473,8.528220296759674)-- (2.400341814808526,8.181505859108677);
					\draw [line width=1.pt,color=zzttqq] (1.9006651252526743,8.92592215171229)-- (0.9013117461409711,9.762115795458815);
					\draw [line width=1.pt,color=zzttqq] (0.9013117461409711,9.762115795458815)-- (1.5669320840466672,10.502126775207563);
					\draw [line width=1.pt,color=zzttqq] (1.5669320840466672,10.502126775207563)-- (2.5838965170943493,9.5377711593317);
					\draw [line width=1.pt,color=zzttqq] (2.5838965170943493,9.5377711593317)-- (1.9006651252526743,8.92592215171229);
					\draw [line width=1.pt,color=zzttqq] (0.9013117461409711,9.762115795458815)-- (0.13650048661670827,10.1904101007924);
					\draw [line width=1.pt,color=zzttqq] (0.13650048661670827,10.1904101007924)-- (0.5625411481055639,11.112905047063636);
					\draw [line width=1.pt,color=zzttqq] (0.5625411481055639,11.112905047063636)-- (1.5669320840466672,10.502126775207563);
					\draw [line width=1.pt,color=zzttqq] (1.5669320840466672,10.502126775207563)-- (0.9013117461409711,9.762115795458815);
					\draw [line width=1.pt,color=zzttqq] (0.13650048661670827,10.1904101007924)-- (-0.5248128745958504,10.407672479699302);
					\draw [line width=1.pt,color=zzttqq] (-0.5248128745958504,10.407672479699302)-- (-0.2789755820072525,11.384362056777448);
					\draw [line width=1.pt,color=zzttqq] (-0.2789755820072525,11.384362056777448)-- (0.5625411481055639,11.112905047063636);
					\draw [line width=1.pt,color=zzttqq] (0.5625411481055639,11.112905047063636)-- (0.13650048661670827,10.1904101007924);
					\draw [line width=1.pt,color=zzttqq] (-0.2789755820072525,11.384362056777448)-- (-1.3240850694054276,11.397934907263139);
					\draw [line width=1.pt,color=zzttqq] (-1.3240850694054276,11.397934907263139)-- (-1.1933294275569224,10.487257783623239);
					\draw [line width=1.pt,color=zzttqq] (-1.1933294275569224,10.487257783623239)-- (-0.5248128745958504,10.407672479699302);
					\draw [line width=1.pt,color=zzttqq] (-0.5248128745958504,10.407672479699302)-- (-0.2789755820072525,11.384362056777448);
					\draw [line width=1.pt,color=zzttqq] (-2.,8.)-- (8.905437690318596E-4,7.999107867261943);
					\draw [line width=1.pt,color=zzttqq] (8.905437690318596E-4,7.999107867261943)-- (0.,6.);
					\draw [line width=1.pt,color=zzttqq] (0.,6.)-- (-2.,6.);
					\draw [line width=1.pt,color=zzttqq] (-2.,6.)-- (-2.,8.);
					\draw [line width=1.pt,color=zzttqq] (-4.255820774314594,3.7428472333336757)-- (-0.20647165890010305,2.544644452014348);
					\draw [line width=1.pt,color=zzttqq] (-0.20647165890010305,2.544644452014348)-- (-0.3808547962166728,3.589809124493197);
					\draw [line width=1.pt,color=zzttqq] (-0.3808547962166728,3.589809124493197)-- (-3.7241420923381203,4.629779414821492);
					\draw [line width=1.pt,color=zzttqq] (-3.7241420923381203,4.629779414821492)-- (-4.255820774314594,3.7428472333336757);
					\draw [line width=1.pt,color=zzttqq] (-0.3808547962166728,3.589809124493197)-- (1.7884928071891157,5.08147088717034);
					\draw [line width=1.pt,color=zzttqq] (1.7884928071891157,5.08147088717034)-- (2.655278901316614,4.388042011868344);
					\draw [line width=1.pt,color=zzttqq] (2.655278901316614,4.388042011868344)-- (-0.20647165890010305,2.544644452014348);
					\draw [line width=1.pt,color=zzttqq] (-0.20647165890010305,2.544644452014348)-- (-0.3808547962166728,3.589809124493197);
					\draw [line width=1.pt,color=zzttqq] (1.7884928071891157,5.08147088717034)-- (2.604291484014996,7.284127314600211);
					\draw [line width=1.pt,color=zzttqq] (2.604291484014996,7.284127314600211)-- (3.389497710459906,7.284127314600211);
					\draw [line width=1.pt,color=zzttqq] (3.389497710459906,7.284127314600211)-- (2.655278901316614,4.388042011868344);
					\draw [line width=1.pt,color=zzttqq] (2.655278901316614,4.388042011868344)-- (1.7884928071891157,5.08147088717034);
					\draw [line width=1.pt,color=zzttqq] (2.604291484014996,7.284127314600211)-- (1.1171708774262428,6.54424864751115);
					\draw [line width=1.pt,color=zzttqq] (1.1171708774262428,6.54424864751115)-- (1.7884928071891157,5.08147088717034);
					\draw [line width=1.pt,color=zzttqq] (1.7884928071891157,5.08147088717034)-- (2.604291484014996,7.284127314600211);
					\draw [line width=1.pt,color=zzttqq] (1.7884928071891157,5.08147088717034)-- (0.027873487448356438,5.488419264647737);
					\draw [line width=1.pt,color=zzttqq] (0.027873487448356438,5.488419264647737)-- (-0.3808547962166728,3.589809124493197);
					\draw [line width=1.pt,color=zzttqq] (-0.3808547962166728,3.589809124493197)-- (1.7884928071891157,5.08147088717034);
					\draw [line width=1.pt,color=zzttqq] (-0.3808547962166728,3.589809124493197)-- (-1.580788935019527,5.203823602504077);
					\draw [line width=1.pt,color=zzttqq] (-1.580788935019527,5.203823602504077)-- (-3.7241420923381203,4.629779414821492);
					\draw [line width=1.pt,color=zzttqq] (-3.7241420923381203,4.629779414821492)-- (-0.3808547962166728,3.589809124493197);
					\draw [-latex,line width=2.pt] (-3.1158884964800677,10.373825826850057) -- (-1.4520218260437137,10.915549859085147);
					\draw [-latex,line width=2.pt] (-1.4520218260437137,10.915549859085147) -- (-3.1158884964800677,10.373825826850057);
					\draw [-latex,line width=2.pt] (-3.6189179549840818,10.083616523866972) -- (-4.470198577067798,9.077557606858948);
					\draw [-latex,line width=2.pt] (-4.470198577067798,9.077557606858948) -- (-3.6189179549840818,10.083616523866972);
					\draw [-latex,line width=2.pt] (-4.702366019454266,8.458444427161703) -- (-4.992575322437351,7.374996362691523);
					\draw [-latex,line width=2.pt] (-4.992575322437351,7.374996362691523) -- (-4.702366019454266,8.458444427161703);
					\draw [-latex,line width=2.pt] (-4.973228035571812,6.252853724490266) -- (-4.276725708412408,4.666376201516074);
					\draw [-latex,line width=2.pt] (-4.276725708412408,4.666376201516074) -- (-4.973228035571812,6.252853724490266);
					\draw [-latex,line width=2.pt] (-3.541528807521925,3.9118320137600566) -- (-1.7666619570249664,4.628983561782251);
					\draw [-latex,line width=2.pt] (-0.7168249251532304,3.041204104810805) -- (-1.7666619570249664,4.628983561782251);
					\draw [-latex,line width=2.pt] (-3.541528807521925,3.9118320137600566) -- (-0.7168249251532304,3.041204104810805);
					\draw [-latex,line width=2.pt] (-0.7168249251532304,3.041204104810805) -- (-3.541528807521925,3.9118320137600566);
					\draw [-latex,line width=2.pt] (0.07641383633386878,3.1379405391384996) -- (0.428937407628609,4.898618571476549);
					\draw [-latex,line width=2.pt] (2.069184383484386,4.414861472264069) -- (0.428937407628609,4.898618571476549);
					\draw [-latex,line width=2.pt] (0.07641383633386878,3.1379405391384996) -- (2.069184383484386,4.414861472264069);
					\draw [-latex,line width=2.pt] (2.069184383484386,4.414861472264069) -- (0.07641383633386889,3.1379405391384996);
					\draw [-latex,line width=2.pt] (2.533519268257322,5.033974651961314) -- (1.661554594802546,6.400870768344779);
					\draw [-latex,line width=2.pt] (2.533519268257322,5.033974651961314) -- (3.,7.);
					\draw [-latex,line width=2.pt] (3.,7.) -- (1.661554594802546,6.400870768344779);
					\draw [-latex,line width=2.pt] (3.,7.) -- (2.533519268257322,5.033974651961314);
					\draw [line width=2.pt] (-3.6189179549840818,10.083616523866972)-- (-1.,7.);
					\draw [line width=1.5pt] (-2.2084453947227316,8.422870915167943) -- (-2.4521837936106934,8.420591962610315);
					\draw [line width=1.5pt] (-2.2084453947227316,8.422870915167943) -- (-2.166734161373389,8.663024561256657);
					\draw [line width=2.pt] (-4.470198577067798,9.077557606858948)-- (-1.,7.);
					\draw [line width=1.5pt] (-2.601214790057432,7.9586240941029205) -- (-2.8312849397257627,7.878117405257711);
					\draw [line width=1.5pt] (-2.601214790057432,7.9586240941029205) -- (-2.6389136373420383,8.199440201601233);
					\draw [line width=2.pt] (-4.702366019454266,8.458444427161703)-- (-1.,7.);
					\draw [line width=1.5pt] (-2.7059971974046944,7.672030288803024) -- (-2.9198133194605265,7.554999238793923);
					\draw [line width=1.5pt] (-2.7059971974046944,7.672030288803024) -- (-2.782552699993743,7.903445188367777);
					\draw [line width=2.pt] (-4.992575322437351,7.374996362691523)-- (-1.,7.);
					\draw [line width=1.5pt] (-2.8409270808149505,7.172906182985797) -- (-3.013798059250633,7.001065484861294);
					\draw [line width=1.5pt] (-2.8409270808149505,7.172906182985797) -- (-2.9787772631867178,7.37393087783023);
					\draw [line width=2.pt] (-4.973228035571812,6.252853724490266)-- (-1.,7.);
					\draw [line width=1.5pt] (-2.8332575394987596,6.655264804732126) -- (-2.952008486801514,6.442399088300557);
					\draw [line width=1.5pt] (-2.8332575394987596,6.655264804732126) -- (-3.0212195487702993,6.810454636189707);
					\draw [line width=2.pt] (-4.276725708412408,4.666376201516074)-- (-1.,7.);
					\draw [line width=1.5pt] (-2.511257981778891,5.923709853750097) -- (-2.5297367506157316,5.68066225384526);
					\draw [line width=1.5pt] (-2.511257981778891,5.923709853750097) -- (-2.7469889577966784,5.985713947670812);
					\draw [line width=2.pt] (-3.541528807521925,3.9118320137600566)-- (-1.,7.);
					\draw [line width=1.5pt] (-2.1716046765011137,5.576403287741003) -- (-2.126179666727793,5.336924334168208);
					\draw [line width=1.5pt] (-2.1716046765011137,5.576403287741003) -- (-2.4153491407941337,5.574907679591847);
					\draw [line width=2.pt] (-1.7666619570249664,4.628983561782251)-- (-1.,7.);
					\draw [line width=1.5pt] (-1.335321831690352,5.962967227281868) -- (-1.2051604428435905,5.7568808047045685);
					\draw [line width=1.5pt] (-1.335321831690352,5.962967227281868) -- (-1.5615015141813748,5.872102757077682);
					\draw [line width=2.pt] (-0.7168249251532304,3.041204104810805)-- (-1.,7.);
					\draw [line width=1.5pt] (-0.8695459624196558,5.176248708459312) -- (-0.6716364753119225,5.03396225221705);
					\draw [line width=1.5pt] (-0.8695459624196558,5.176248708459312) -- (-1.0451884498413087,5.007241852593754);
					\draw [line width=2.pt] (0.07641383633386878,3.1379405391384996)-- (-1.,7.);
					\draw [line width=1.5pt] (-0.503688161070532,5.219285400860831) -- (-0.2814149242831663,5.119244364654209);
					\draw [line width=1.5pt] (-0.503688161070532,5.219285400860831) -- (-0.6421712393829652,5.0186961744842895);
					\draw [line width=2.pt] (0.428937407628609,4.898618571476549)-- (-1.,7.);
					\draw [line width=1.5pt] (-0.37327651045814075,6.078346542884425) -- (-0.13068658761031388,6.054603542865207);
					\draw [line width=1.5pt] (-0.37327651045814075,6.078346542884425) -- (-0.4403760047610772,5.844015028611341);
					\draw [line width=2.pt] (2.069184383484386,4.414861472264069)-- (-1.,7.);
					\draw [line width=1.5pt] (0.41524290018099735,5.807957264786713) -- (0.6552240261278082,5.850649886005468);
					\draw [line width=1.5pt] (0.41524290018099735,5.807957264786713) -- (0.4139603573565776,5.5642115862586);
					\draw [line width=2.pt] (2.533519268257322,5.033974651961314)-- (-1.,7.);
					\draw [line width=1.5pt] (0.6304008434492131,6.092856401129611) -- (0.857802524307408,6.180617874795993);
					\draw [line width=1.5pt] (0.6304008434492131,6.092856401129611) -- (0.6757167439499148,5.853356777165319);
					\draw [line width=2.pt] (1.661554594802546,6.400870768344779)-- (-1.,7.);
					\draw [line width=1.5pt] (0.1785423397538285,6.734704233432324) -- (0.3718999165131968,6.88311733334932);
					\draw [line width=1.5pt] (0.1785423397538285,6.734704233432324) -- (0.2896546782893492,6.517753434995456);
					\draw [line width=2.pt] (3.,7.)-- (-1.,7.);
					\draw [line width=1.5pt] (0.8439556589955292,7.) -- (1.,7.1872532092053625);
					\draw [line width=1.5pt] (0.8439556589955292,7.) -- (1.,6.812746790794636);
					\draw [line width=2.pt] (2.920465005568102,7.742594813136765)-- (-1.,7.);
					\draw [line width=1.5pt] (0.8069143030405113,7.342256642340844) -- (0.925383585711005,7.555279246260629);
					\draw [line width=1.5pt] (0.8069143030405113,7.342256642340844) -- (0.9950814198570981,7.187315566876136);
					\draw [line width=2.pt] (2.533519268257322,8.69061186954817)-- (-1.,7.);
					\draw [line width=1.5pt] (0.6259970513931011,7.7779580939688024) -- (0.6859422251623214,8.014221034056757);
					\draw [line width=1.5pt] (0.6259970513931011,7.7779580939688024) -- (0.8475770430950026,7.676390835491413);
					\draw [line width=2.pt] (1.759627793635761,9.754712647152811)-- (-1.,7.);
					\draw [line width=1.5pt] (0.26937557864717276,8.267114706030467) -- (0.247523955762754,8.509882305381252);
					\draw [line width=1.5pt] (0.26937557864717276,8.267114706030467) -- (0.5121038378730068,8.244830341771557);
					\draw [line width=2.pt] (0.7535688766277328,10.567298695505446)-- (-1.,7.);
					\draw [line width=1.5pt] (-0.19205443219021545,8.643609902355214) -- (-0.2912628961631437,8.866255992357617);
					\draw [line width=1.5pt] (-0.19205443219021545,8.643609902355214) -- (0.044831772790876454,8.701042703147824);
					\draw [line width=2.pt] (-0.02032259799382734,10.85750799848853)-- (-1.,7.);
					\draw [line width=1.5pt] (-0.548571949657045,8.777510955518661) -- (-0.6916529514676375,8.97484678003642);
					\draw [line width=1.5pt] (-0.548571949657045,8.777510955518661) -- (-0.3286696465261894,8.882661218452107);
					\draw [line width=2.pt] (-0.7168249251532315,10.915549859085147)-- (-1.,7.);
					\draw [line width=1.5pt] (-0.8696682921555857,8.802137072129897) -- (-1.0451778914718264,8.971281925037339);
					\draw [line width=1.5pt] (-0.8696682921555857,8.802137072129897) -- (-0.6716470336814035,8.944267934047808);
					\draw [line width=2.pt] (-1.4520218260437137,10.915549859085147)-- (-1.,7.);
					\draw [line width=1.5pt] (-1.2081155770148475,8.802760113125833) -- (-1.4120286927219452,8.936300526334163);
					\draw [line width=1.5pt] (-1.2081155770148475,8.802760113125833) -- (-1.039993133321768,8.979249332750983);
					\draw [line width=2.pt] (-3.1158884964800677,10.373825826850057)-- (-1.,7.);
					\draw [line width=1.5pt] (-1.9750368045792581,8.554715363730812) -- (-2.216581307873093,8.587423981032098);
					\draw [line width=1.5pt] (-1.9750368045792581,8.554715363730812) -- (-1.8993071886069746,8.786401845817958);
					
					\begin{scriptsize}
						\draw [fill=qqzzqq] (-1.4520218260437137,10.915549859085147) circle (3pt);
						\draw [fill=qqzzqq] (-3.1158884964800677,10.373825826850057) circle (3pt);
						\draw [fill=zzccqq] (-4.470198577067798,9.077557606858948) circle (3pt);
						\draw [fill=zzccqq] (-3.6189179549840818,10.083616523866972) circle (3pt);
						\draw [fill=zzffqq] (-4.702366019454266,8.458444427161703) circle (3pt);
						\draw [fill=zzffqq] (-4.992575322437351,7.374996362691523) circle (3pt);
						\draw [fill=ccffcc] (-4.973228035571812,6.252853724490266) circle (3pt);
						\draw [fill=ccffcc] (-4.276725708412408,4.666376201516074) circle (3pt);
						\draw [fill=ffffww] (2.533519268257322,5.033974651961314) circle (3pt);
						\draw [fill=ffffww] (1.661554594802546,6.400870768344779) circle (3pt);
						\draw [fill=ffccww] (2.069184383484386,4.414861472264069) circle (3pt);
						\draw [fill=ffffww] (3.,7.) circle (3pt);
						\draw [fill=yqqqqq] (-0.7168249251532315,10.915549859085147) circle (3pt);
						\draw [fill=cczzqq] (-0.7168249251532304,3.041204104810805) circle (3pt);
						\draw [fill=cczzqq] (-3.541528807521925,3.9118320137600566) circle (3pt);
						\draw [fill=ccqqqq] (-0.02032259799382734,10.85750799848853) circle (3pt);
						\draw [fill=ffcqcb] (2.920465005568102,7.742594813136765) circle (3pt);
						\draw [fill=ffccww] (0.07641383633386878,3.1379405391384996) circle (3pt);
						\draw [fill=cczzqq] (-1.7666619570249664,4.628983561782251) circle (3pt);
						\draw [fill=ffccww] (0.428937407628609,4.898618571476549) circle (3pt);
						\draw [fill=ffttww] (1.759627793635761,9.754712647152811) circle (3pt);
						\draw [fill=ffwwzz] (2.533519268257322,8.69061186954817) circle (3pt);
						\draw [fill=ffffff] (-1.,7.) circle (3pt);
						\draw [fill=ffqqtt] (0.7535688766277328,10.567298695505446) circle (3pt);
					\end{scriptsize}
				\end{tikzpicture}
				\caption{$\vec{\mathcal{P}}(S_4)$ where all the vertices with the same colour are in the same $\mathtt{N}$-class, and vertices in the same box are in the same $\diamond$-class.}
				\label{S_4NeDiamond}
			\end{figure}
			
			\begin{definition}
			\rm	Given two elements $x$ and $y$ of a group $G$ we write $x\circ y$ if $o(x)=o(y)$.
			\end{definition}
		
			As the relations previously defined, also $\circ$ is an equivalence relation. The $\circ$-class of an element $x$ is denoted by $[x]_{\circ}$.
			
			Let $x,y$ elements of a group $G$.
			If $x\diamond y$, then $x\circ y$ and $x\mathtt{N}y$ both hold. Hence $\diamond$ is a refinement of both $\mathtt{N}$ and $\circ$, thus an $\mathtt{N}$-class and a $\circ$-class are both union of $\diamond$-classes.
			Instead if we have $x\mathtt{N}y$ neither $x\diamond y$ nor $x\circ y$ must necessary hold; and the same is true if we have $x \circ y$.
			But there is a specific connection between these three classes that the next lemma clarifies. This was a result of the article \cite{Cameron_2} that Cameron did not enclose in a lemma. 
			
			\begin{lemma}
				\label{PrimoLegameDueClassi}Let $x$ and $y$ be two distinct elements of $G$ such that $x \circ y$. Then the following conditions are equivalent:
				
				\begin{itemize}
					\item[\rm (a)] $x$ is joined to $y$ in the power graph;
					
					\item[\rm (b)]  $x\diamond y$;
					
					\item[\rm (c)] $x \mathtt{N} y$.
				\end{itemize}
			\end{lemma}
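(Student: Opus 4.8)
The plan is to prove the cycle of implications $\mathrm{(a)}\Rightarrow\mathrm{(b)}\Rightarrow\mathrm{(c)}\Rightarrow\mathrm{(a)}$, so that the three conditions are pairwise equivalent. Two of the three implications are already available essentially for free: $\mathrm{(b)}\Rightarrow\mathrm{(c)}$ is exactly Remark \ref{DiamondRefinementOfN} (the relation $\diamond$ is a refinement of $\mathtt{N}$), and $\mathrm{(c)}\Rightarrow\mathrm{(a)}$ follows from the observation made right after Definition \ref{Ndef}: if $x\mathtt{N}y$ then $x\in[y]_{\mathtt{N}}\subseteq N_{\mathcal{P}(G)}[y]$, and since $x\neq y$ this means $\{x,y\}\in E$, i.e.\ $x$ is joined to $y$. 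So the real content is the implication $\mathrm{(a)}\Rightarrow\mathrm{(b)}$, and this is the only place where the hypothesis $x\circ y$ is actually used.

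For $\mathrm{(a)}\Rightarrow\mathrm{(b)}$: assume $\{x,y\}\in E$. By Definition \ref{defPG} there is a positive integer $m$ with $x=y^m$ or $y=x^m$. In the first case $\langle x\rangle=\langle y^m\rangle\subseteq\langle y\rangle$, and in the second case $\langle y\rangle=\langle x^m\rangle\subseteq\langle x\rangle$; in either case one of the two cyclic subgroups is contained in the other. Now invoke the hypothesis $o(x)=o(y)$, i.e.\ $|\langle x\rangle|=|\langle y\rangle|$: a containment of finite sets of equal cardinality is an equality, so $\langle x\rangle=\langle y\rangle$. By Lemma \ref{OssDiamondRelation} this is equivalent to $x\diamond y$, which is $\mathrm{(b)}$.

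There is essentially no hard step here; the one point worth flagging is \emph{why} the hypothesis $x\circ y$ cannot be dropped. Without it, $\mathrm{(a)}$ does not imply $\mathrm{(b)}$ or $\mathrm{(c)}$ in general: for instance the identity (or any star vertex) is joined to every element of $G$, but it is $\diamond$-equivalent only to itself. The role of $o(x)=o(y)$ is precisely to upgrade the subgroup inclusion forced by an edge into an equality of subgroups, and this is the whole mechanism of the lemma. I would also remark, for the reader, that the cyclic arrangement of the implications could equally be replaced by proving $\mathrm{(a)}\Leftrightarrow\mathrm{(b)}$ directly and then citing Remark \ref{DiamondRefinementOfN} together with the $[y]_{\mathtt{N}}\subseteq N[y]$ observation for $\mathrm{(b)}\Leftrightarrow\mathrm{(c)}$; the cyclic presentation is just slightly more economical.
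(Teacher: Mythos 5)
Your proposal is correct and follows essentially the same route as the paper: the cycle $\mathrm{(a)}\Rightarrow\mathrm{(b)}\Rightarrow\mathrm{(c)}\Rightarrow\mathrm{(a)}$, with the edge plus $o(x)=o(y)$ forcing $\langle x\rangle=\langle y\rangle$ for the first implication, Remark \ref{DiamondRefinementOfN} for the second, and the closed-neighbourhood observation for the third. No gaps.
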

			\begin{proof}
				$\text{(a)}\Rightarrow \text{(b)}$: If $x$ is joined to $y$ in the power graph, then $x \in \langle y \rangle $ or $y \in \langle x \rangle$; in both cases, since $o(x)=o(y)$, we have that $\langle x \rangle= \langle y \rangle$, that is $x\diamond y$.
				
				$\text{(b)}\Rightarrow \text{(c)}$: This implication comes from the Remark \ref{DiamondRefinementOfN}.
				
				$\text{(c)} \Rightarrow \text{(a)}$  If $x\mathtt{N}y$, then $N[x]=N[y]$. Hence $y \in N[y]=N[x]$ gives $\left\lbrace x,y \right\rbrace \in E$ since $x\not= y$.
			\end{proof}
			
			\begin{corollary}
				\label{o-partition}Let $X$ be an $\mathtt{N}$-class. The partition of $X$ obtained by considering all the $\circ$-classes in $X$ is the same partition obtained by considering all the $\diamond$-classes in $X$.
			\end{corollary}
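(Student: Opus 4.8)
The plan is to show that, when restricted to the $\mathtt{N}$-class $X$, the relations $\circ$ and $\diamond$ have exactly the same equivalence classes. Since we already know (from the discussion preceding Lemma \ref{PrimoLegameDueClassi}, which records that $\diamond$ refines both $\mathtt{N}$ and $\circ$) that $\diamond$ restricted to $X$ is a refinement of $\circ$ restricted to $X$, it suffices to prove the reverse: two elements of $X$ with the same order are already $\diamond$-equivalent.

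First I would record the trivial inclusion explicitly: if $x,y\in X$ and $x\diamond y$, then $\langle x\rangle=\langle y\rangle$ by Lemma \ref{OssDiamondRelation}, hence $o(x)=o(y)$, so $x\circ y$. Thus every $\diamond$-class contained in $X$ is contained in a single $\circ$-class contained in $X$, which is one half of ``the partitions coincide''.

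For the other half, take $x,y\in X$ with $x\circ y$. If $x=y$ there is nothing to prove, so assume $x\neq y$. Since $x$ and $y$ lie in the same $\mathtt{N}$-class $X$, we have $x\,\mathtt{N}\,y$. Now apply Lemma \ref{PrimoLegameDueClassi} to the distinct elements $x,y$, whose orders agree: condition (c) holds, hence condition (b) holds, that is $x\diamond y$. Therefore every $\circ$-class contained in $X$ is contained in a single $\diamond$-class contained in $X$.

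Combining the two inclusions, the $\circ$-classes inside $X$ and the $\diamond$-classes inside $X$ coincide as sets, so they induce the same partition of $X$, which is the claim. I do not expect a genuine obstacle here: the substance has been pushed into Lemma \ref{PrimoLegameDueClassi}, and the only point to watch is the harmless case $x=y$ when invoking that lemma, since it is stated only for distinct elements.
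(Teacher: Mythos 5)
Your proof is correct and follows essentially the same route as the paper: both arguments reduce the claim to Lemma \ref{PrimoLegameDueClassi} (using that for distinct elements of equal order, $x\mathtt{N}y$ implies $x\diamond y$) together with the trivial observation that $x\diamond y$ forces $o(x)=o(y)$. Your explicit handling of the case $x=y$ is a harmless extra precaution the paper omits.
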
			
			
			\begin{proof}
				Let, for a $k \in \mathbb{N}$, $[x_1]_{\circ}, [x_2]_{\circ}, ..., [x_k]_{\circ}$ the $\circ$-classes that partitioned $X$.
						
				For all $i\in [k]$, if $y \in [x_i]_{\circ}$, then $o(x_i)=o(y)$ and also $x_i \mathtt{N} y$, since we are in $X$. By Lemma \ref{PrimoLegameDueClassi}, we have $x_i \diamond y$. Then $y \in [x_i]_{\diamond}$.
				Thus $[x_i]_{\circ} \subseteq [x_i]_{\diamond}$ for all $i\in [k]$.
				Vice versa, for all $i \in [k]$, if $y \in [x_i]_{\diamond}$, then we have $x_i \diamond y$ that implies $o(x_i)=o(y)$. So, by definition of the $\circ$ relation, $y \in [x_i]_{\circ}$. Thus $[x_i]_{\diamond}\subseteq [x_i]_{\circ}$ for all $i \in [k]$. Therefore $[x_i]_{\circ}=[x_i]_{\diamond}$, thus $[x_1]_{\circ},[x_2]_{\circ},...[x_k]_{\circ}$ is the partition of $X$ in $\diamond$-classes.
			\end{proof}
			
			For example,  take $G\cong C_{p^n}$, a cyclic group of prime power order. $\mathcal{P}(G)$ is a complete graph, so all its vertices belong to the same $\mathtt{N}$-class. Then the partition of $G$ in $\circ$-classes is also the partition of $G$ in $\diamond$-classes.
			
			In Figure \ref{S_4Partitioned} is represented $\vec{\mathcal{P}}(S_4)$ with the partitions given by the three  relations highlighted. In this figure we can clearly see the complex interconnections among the three equivalence relations. Pick an $\mathtt{N}$-class, for example the one of the yellow vertices. Since $\diamond$ is a refinement of $\mathtt{N}$, then the entire $\diamond$-class of each vertex is contained in the $\mathtt{N}$-class. The same does not happen for the $\circ$-classes of the vertices in the $\mathtt{N}$-class. However we have proved, in Lemma \ref{PrimoLegameDueClassi}, and here we can see it, that the intersection between an $\mathtt{N}$-class and a $\circ$-class is exactly a $\diamond$-class, even if, at the beginning, it could have been union of $\diamond$-classes.   
			
			\begin{figure}
				\centering
				\begin{tikzpicture}[line cap=round,line join=round,>=triangle 45,x=1.0cm,y=1.0cm]
					\clip(-6.,2.) rectangle (4.,12.);
					\fill[line width=2.pt,color=zzttqq,fill=zzttqq,fill opacity=0.10000000149011612] (-3.00711852963106,9.809911400437345) -- (-1.1933294275569224,10.487257783623239) -- (-1.3240850694054276,11.397934907263139) -- (-3.6127226668446086,10.710096634610261) -- cycle;
					\fill[line width=2.pt,color=zzttqq,fill=zzttqq,fill opacity=0.10000000149011612] (-5.110910354913101,9.090550324695746) -- (-4.265322159020893,8.56129342866397) -- (-3.00711852963106,9.809911400437345) -- (-3.6127226668446086,10.710096634610261) -- cycle;
					\fill[line width=2.pt,color=zzttqq,fill=zzttqq,fill opacity=0.10000000149011612] (-5.110910354913101,9.090550324695746) -- (-5.490950168512438,6.796738592614046) -- (-4.58366337471664,6.826333803122147) -- (-4.265322159020893,8.56129342866397) -- cycle;
					\fill[line width=2.pt,color=zzttqq,fill=zzttqq,fill opacity=0.10000000149011612] (-3.7241420923381203,4.629779414821492) -- (-4.255820774314594,3.7428472333336757) -- (-5.490950168512438,6.796738592614046) -- (-4.58366337471664,6.826333803122147) -- cycle;
					\fill[line width=2.pt,color=zzttqq,fill=zzttqq,fill opacity=0.10000000149011612] (2.604291484014996,7.284127314600211) -- (2.400341814808526,8.181505859108677) -- (3.22633797509473,8.528220296759674) -- (3.389497710459906,7.284127314600211) -- cycle;
					\fill[line width=2.pt,color=zzttqq,fill=zzttqq,fill opacity=0.10000000149011612] (2.400341814808526,8.181505859108677) -- (1.9006651252526743,8.92592215171229) -- (2.5838965170943493,9.5377711593317) -- (3.22633797509473,8.528220296759674) -- cycle;
					\fill[line width=2.pt,color=zzttqq,fill=zzttqq,fill opacity=0.10000000149011612] (1.9006651252526743,8.92592215171229) -- (0.9013117461409711,9.762115795458815) -- (1.5669320840466672,10.502126775207563) -- (2.5838965170943493,9.5377711593317) -- cycle;
					\fill[line width=2.pt,color=zzttqq,fill=zzttqq,fill opacity=0.10000000149011612] (0.9013117461409711,9.762115795458815) -- (0.13650048661670827,10.1904101007924) -- (0.5625411481055639,11.112905047063636) -- (1.5669320840466672,10.502126775207563) -- cycle;
					\fill[line width=2.pt,color=zzttqq,fill=zzttqq,fill opacity=0.10000000149011612] (0.13650048661670827,10.1904101007924) -- (-0.5248128745958504,10.407672479699302) -- (-0.2789755820072525,11.384362056777448) -- (0.5625411481055639,11.112905047063636) -- cycle;
					\fill[line width=2.pt,color=zzttqq,fill=zzttqq,fill opacity=0.10000000149011612] (-0.2789755820072525,11.384362056777448) -- (-1.3240850694054276,11.397934907263139) -- (-1.1933294275569224,10.487257783623239) -- (-0.5248128745958504,10.407672479699302) -- cycle;
					\fill[line width=2.pt,color=zzttqq,fill=zzttqq,fill opacity=0.10000000149011612] (-2.,8.) -- (8.905437690318596E-4,7.999107867261943) -- (0.,6.) -- (-2.,6.) -- cycle;
					\fill[line width=2.pt,color=zzttqq,fill=zzttqq,fill opacity=0.10000000149011612] (-4.255820774314594,3.7428472333336757) -- (-0.20647165890010305,2.544644452014348) -- (-0.3808547962166728,3.589809124493197) -- (-3.7241420923381203,4.629779414821492) -- cycle;
					\fill[line width=2.pt,color=zzttqq,fill=zzttqq,fill opacity=0.10000000149011612] (-0.3808547962166728,3.589809124493197) -- (1.7884928071891157,5.08147088717034) -- (2.655278901316614,4.388042011868344) -- (-0.20647165890010305,2.544644452014348) -- cycle;
					\fill[line width=2.pt,color=zzttqq,fill=zzttqq,fill opacity=0.10000000149011612] (1.7884928071891157,5.08147088717034) -- (2.604291484014996,7.284127314600211) -- (3.389497710459906,7.284127314600211) -- (2.655278901316614,4.388042011868344) -- cycle;
					\fill[line width=2.pt,color=zzttqq,fill=zzttqq,fill opacity=0.10000000149011612] (2.604291484014996,7.284127314600211) -- (1.1171708774262428,6.54424864751115) -- (1.7884928071891157,5.08147088717034) -- cycle;
					\fill[line width=2.pt,color=zzttqq,fill=zzttqq,fill opacity=0.10000000149011612] (1.7884928071891157,5.08147088717034) -- (0.027873487448356438,5.488419264647737) -- (-0.3808547962166728,3.589809124493197) -- cycle;
					\fill[line width=2.pt,color=zzttqq,fill=zzttqq,fill opacity=0.10000000149011612] (-0.3808547962166728,3.589809124493197) -- (-1.580788935019527,5.203823602504077) -- (-3.7241420923381203,4.629779414821492) -- cycle;
					\draw [-latex,line width=2.pt] (-3.1158884964800677,10.373825826850057) -- (-1.4520218260437137,10.915549859085147);
					\draw [-latex,line width=2.pt] (-1.4520218260437137,10.915549859085147) -- (-3.1158884964800677,10.373825826850057);
					\draw [-latex,line width=2.pt] (-3.6189179549840818,10.083616523866972) -- (-4.470198577067798,9.077557606858948);
					\draw [-latex,line width=2.pt] (-4.470198577067798,9.077557606858948) -- (-3.6189179549840818,10.083616523866972);
					\draw [-latex,line width=2.pt] (-4.702366019454266,8.458444427161703) -- (-4.992575322437351,7.374996362691523);
					\draw [-latex,line width=2.pt] (-4.992575322437351,7.374996362691523) -- (-4.702366019454266,8.458444427161703);
					\draw [-latex,line width=2.pt] (-4.973228035571812,6.252853724490266) -- (-4.276725708412408,4.666376201516074);
					\draw [-latex,line width=2.pt] (-4.276725708412408,4.666376201516074) -- (-4.973228035571812,6.252853724490266);
					\draw [-latex,line width=2.pt] (-3.541528807521925,3.9118320137600566) -- (-1.7666619570249664,4.628983561782251);
					\draw [-latex,line width=2.pt] (-0.7168249251532304,3.041204104810805) -- (-1.7666619570249664,4.628983561782251);
					\draw [-latex,line width=2.pt] (-3.541528807521925,3.9118320137600566) -- (-0.7168249251532304,3.041204104810805);
					\draw [-latex,line width=2.pt] (-0.7168249251532304,3.041204104810805) -- (-3.541528807521925,3.9118320137600566);
					\draw [-latex,line width=2.pt] (0.07641383633386878,3.1379405391384996) -- (0.428937407628609,4.898618571476549);
					\draw [-latex,line width=2.pt] (2.069184383484386,4.414861472264069) -- (0.428937407628609,4.898618571476549);
					\draw [-latex,line width=2.pt] (0.07641383633386878,3.1379405391384996) -- (2.069184383484386,4.414861472264069);
					\draw [-latex,line width=2.pt] (2.069184383484386,4.414861472264069) -- (0.07641383633386889,3.1379405391384996);
					\draw [-latex,line width=2.pt] (2.533519268257322,5.033974651961314) -- (1.661554594802546,6.400870768344779);
					\draw [-latex,line width=2.pt] (2.533519268257322,5.033974651961314) -- (3.,7.);
					\draw [-latex,line width=2.pt] (3.,7.) -- (1.661554594802546,6.400870768344779);
					\draw [-latex,line width=2.pt] (3.,7.) -- (2.533519268257322,5.033974651961314);
					\draw [line width=2.pt] (-3.6189179549840818,10.083616523866972)-- (-1.,7.);
					\draw [line width=1.5pt] (-2.247967907618133,8.469406269052197) -- (-2.396341368949588,8.468018978084796);
					\draw [line width=1.5pt] (-2.247967907618133,8.469406269052197) -- (-2.2225765860344926,8.615597545782174);
					\draw [line width=2.pt] (-4.470198577067798,9.077557606858948)-- (-1.,7.);
					\draw [line width=1.5pt] (-2.653598357420002,7.989985376874375) -- (-2.7936514004000177,7.940977686092797);
					\draw [line width=1.5pt] (-2.653598357420002,7.989985376874375) -- (-2.676547176667781,8.136579920766149);
					\draw [line width=2.pt] (-4.702366019454266,8.458444427161703)-- (-1.,7.);
					\draw [line width=1.5pt] (-2.7628025099421927,7.694407166472106) -- (-2.8929610662576266,7.623165613838923);
					\draw [line width=1.5pt] (-2.7628025099421927,7.694407166472106) -- (-2.8094049531966405,7.835278813322779);
					\draw [line width=2.pt] (-4.992575322437351,7.374996362691523)-- (-1.,7.);
					\draw [line width=1.5pt] (-2.9017133669173476,7.178615439380241) -- (-3.006946951577299,7.07400902818417);
					\draw [line width=1.5pt] (-2.9017133669173476,7.178615439380241) -- (-2.98562837086005,7.300987334507353);
					\draw [line width=2.pt] (-4.973228035571812,6.252853724490266)-- (-1.,7.);
					\draw [line width=1.5pt] (-2.893259701827158,6.643981688911731) -- (-2.965548225785986,6.514401683094636);
					\draw [line width=1.5pt] (-2.893259701827158,6.643981688911731) -- (-3.0076798097858255,6.738452041395628);
					\draw [line width=2.pt] (-4.276725708412408,4.666376201516074)-- (-1.,7.);
					\draw [line width=1.5pt] (-2.5609889567810296,5.888292367785723) -- (-2.57223773377298,5.740339423847828);
					\draw [line width=1.5pt] (-2.5609889567810296,5.888292367785723) -- (-2.704487974639428,5.926036777668245);
					\draw [line width=2.pt] (-3.541528807521925,3.9118320137600566)-- (-1.,7.);
					\draw [line width=1.5pt] (-2.2104018510185446,5.529261507586197) -- (-2.182749802913557,5.383480943589127);
					\draw [line width=1.5pt] (-2.2104018510185446,5.529261507586197) -- (-2.3587790046083668,5.528351070170927);
					\draw [line width=2.pt] (-1.7666619570249664,4.628983561782251)-- (-1.,7.);
					\draw [line width=1.5pt] (-1.354105861247133,5.904874814521594) -- (-1.2748713381559207,5.779421640172703);
					\draw [line width=1.5pt] (-1.354105861247133,5.904874814521594) -- (-1.4917906188690468,5.849561921609546);
					\draw [line width=2.pt] (-0.7168249251532304,3.041204104810805)-- (-1.,7.);
					\draw [line width=1.5pt] (-0.8651898760493459,5.11535049257219) -- (-0.7447143343764686,5.028734948572678);
					\draw [line width=1.5pt] (-0.8651898760493459,5.11535049257219) -- (-0.9721105907767633,5.012469156238125);
					\draw [line width=2.pt] (0.07641383633386878,3.1379405391384996)-- (-1.,7.);
					\draw [line width=1.5pt] (-0.487296317874571,5.160473194106289) -- (-0.351989572388617,5.099574152819057);
					\draw [line width=1.5pt] (-0.487296317874571,5.160473194106289) -- (-0.5715965912775124,5.038366386319442);
					\draw [line width=2.pt] (0.428937407628609,4.898618571476549)-- (-1.,7.);
					\draw [line width=1.5pt] (-0.33894537134279434,6.027859504213665) -- (-0.19127103401522508,6.013406175926794);
					\draw [line width=1.5pt] (-0.33894537134279434,6.027859504213665) -- (-0.37979155835616357,5.885212395549755);
					\draw [line width=2.pt] (2.069184383484386,4.414861472264069)-- (-1.,7.);
					\draw [line width=1.5pt] (0.4619394313351985,5.768625316387673) -- (0.60802568804896,5.794614048620427);
					\draw [line width=1.5pt] (0.4619394313351985,5.768625316387673) -- (0.4611586954354257,5.620247423643641);
					\draw [line width=2.pt] (2.533519268257322,5.033974651961314)-- (-1.,7.);
					\draw [line width=1.5pt] (0.6837525008727287,6.063171912977312) -- (0.8221811385246505,6.116595885887776);
					\draw [line width=1.5pt] (0.6837525008727287,6.063171912977312) -- (0.7113381297326743,5.917378766073536);
					\draw [line width=2.pt] (1.661554594802546,6.400870768344779)-- (-1.,7.);
					\draw [line width=1.5pt] (0.23810569659954114,6.721296224334385) -- (0.35581030559566845,6.811641305134467);
					\draw [line width=1.5pt] (0.23810569659954114,6.721296224334385) -- (0.3057442892068792,6.5892294632103106);
					\draw [line width=2.pt] (3.,7.)-- (-1.,7.);
					\draw [line width=1.5pt] (0.9050094728542887,7.) -- (1.,7.113988632574852);
					\draw [line width=1.5pt] (0.9050094728542887,7.) -- (1.,6.886011367425147);
					\draw [line width=2.pt] (2.920465005568102,7.742594813136765)-- (-1.,7.);
					\draw [line width=1.5pt] (0.8669014885292443,7.353619114071962) -- (0.9390185517883466,7.48329462367415);
					\draw [line width=1.5pt] (0.8669014885292443,7.353619114071962) -- (0.9814464537797566,7.259300189462613);
					\draw [line width=2.pt] (2.533519268257322,8.69061186954817)-- (-1.,7.);
					\draw [line width=1.5pt] (0.6810717338153811,7.804308569159679) -- (0.7175627953913748,7.948131415150022);
					\draw [line width=1.5pt] (0.6810717338153811,7.804308569159679) -- (0.8159564728659486,7.74248045439815);
					\draw [line width=2.pt] (1.759627793635761,9.754712647152811)-- (-1.,7.);
					\draw [line width=1.5pt] (0.3125856077777757,8.310247774194377) -- (0.2992836375594462,8.45803027042453);
					\draw [line width=1.5pt] (0.3125856077777757,8.310247774194377) -- (0.46034415607631485,8.29668237672828);
					\draw [line width=2.pt] (0.7535688766277328,10.567298695505446)-- (-1.,7.);
					\draw [line width=1.5pt] (-0.16512057762715113,8.698401650502868) -- (-0.22551279838596006,8.833935366881944);
					\draw [line width=1.5pt] (-0.16512057762715113,8.698401650502868) -- (-0.020918324986306584,8.7333633286235);
					\draw [line width=2.pt] (-0.02032259799382734,10.85750799848853)-- (-1.,7.);
					\draw [line width=1.2pt] (-0.5335434218648701,8.836686216727209) -- (-0.6206426380173825,8.956812546685809);
					\draw [line width=1.2pt] (-0.5335434218648701,8.836686216727209) -- (-0.39967995997644795,8.900695451802719);
					\draw [line width=2.pt] (-0.7168249251532315,10.915549859085147)-- (-1.,7.);
					\draw [line width=1.5pt] (-0.8652643431272725,8.863031845466006) -- (-0.9721041634684966,8.965997186203362);
					\draw [line width=1.5pt] (-0.8652643431272725,8.863031845466006) -- (-0.744720761684734,8.949552672881785);
					\draw [line width=2.pt] (-1.4520218260437137,10.915549859085147)-- (-1.,7.);
					\draw [line width=1.5pt] (-1.2151172952859413,8.86341111582119) -- (-1.3392474894875146,8.944702588259476);
					\draw [line width=1.5pt] (-1.2151172952859413,8.86341111582119) -- (-1.1127743365561966,8.97084727082567);
					\draw [line width=2.pt] (-3.1158884964800677,10.373825826850057)-- (-1.,7.);
					\draw [line width=1.5pt] (-2.007475120510404,8.606438896539885) -- (-2.154513068502206,8.626349960149474);
					\draw [line width=1.5pt] (-2.007475120510404,8.606438896539885) -- (-1.9613754279778606,8.747475866700583);
					
					\draw [line width=1.pt,color=zzttqq] (-3.00711852963106,9.809911400437345)-- (-1.1933294275569224,10.487257783623239);
					\draw [line width=1.pt,color=zzttqq] (-1.1933294275569224,10.487257783623239)-- (-1.3240850694054276,11.397934907263139);
					\draw [line width=1.pt,color=zzttqq] (-1.3240850694054276,11.397934907263139)-- (-3.6127226668446086,10.710096634610261);
					\draw [line width=1.pt,color=zzttqq] (-3.6127226668446086,10.710096634610261)-- (-3.00711852963106,9.809911400437345);
					\draw [line width=1.pt,color=zzttqq] (-5.110910354913101,9.090550324695746)-- (-4.265322159020893,8.56129342866397);
					\draw [line width=1.pt,color=zzttqq] (-4.265322159020893,8.56129342866397)-- (-3.00711852963106,9.809911400437345);
					\draw [line width=1.pt,color=zzttqq] (-3.00711852963106,9.809911400437345)-- (-3.6127226668446086,10.710096634610261);
					\draw [line width=1.pt,color=zzttqq] (-3.6127226668446086,10.710096634610261)-- (-5.110910354913101,9.090550324695746);
					\draw [line width=1.pt,color=zzttqq] (-5.110910354913101,9.090550324695746)-- (-5.490950168512438,6.796738592614046);
					\draw [line width=1.pt,color=zzttqq] (-5.490950168512438,6.796738592614046)-- (-4.58366337471664,6.826333803122147);
					\draw [line width=1.pt,color=zzttqq] (-4.58366337471664,6.826333803122147)-- (-4.265322159020893,8.56129342866397);
					\draw [line width=1.pt,color=zzttqq] (-4.265322159020893,8.56129342866397)-- (-5.110910354913101,9.090550324695746);
					\draw [line width=1.pt,color=zzttqq] (-3.7241420923381203,4.629779414821492)-- (-4.255820774314594,3.7428472333336757);
					\draw [line width=1.pt,color=zzttqq] (-4.255820774314594,3.7428472333336757)-- (-5.490950168512438,6.796738592614046);
					\draw [line width=1.pt,color=zzttqq] (-5.490950168512438,6.796738592614046)-- (-4.58366337471664,6.826333803122147);
					\draw [line width=1.pt,color=zzttqq] (-4.58366337471664,6.826333803122147)-- (-3.7241420923381203,4.629779414821492);
					\draw [line width=1.pt,color=zzttqq] (2.604291484014996,7.284127314600211)-- (2.400341814808526,8.181505859108677);
					\draw [line width=1.pt,color=zzttqq] (2.400341814808526,8.181505859108677)-- (3.22633797509473,8.528220296759674);
					\draw [line width=1.pt,color=zzttqq] (3.22633797509473,8.528220296759674)-- (3.389497710459906,7.284127314600211);
					\draw [line width=1.pt,color=zzttqq] (3.389497710459906,7.284127314600211)-- (2.604291484014996,7.284127314600211);
					\draw [line width=1.pt,color=zzttqq] (2.400341814808526,8.181505859108677)-- (1.9006651252526743,8.92592215171229);
					\draw [line width=1.pt,color=zzttqq] (1.9006651252526743,8.92592215171229)-- (2.5838965170943493,9.5377711593317);
					\draw [line width=1.pt,color=zzttqq] (2.5838965170943493,9.5377711593317)-- (3.22633797509473,8.528220296759674);
					\draw [line width=1.pt,color=zzttqq] (3.22633797509473,8.528220296759674)-- (2.400341814808526,8.181505859108677);
					\draw [line width=1.pt,color=zzttqq] (1.9006651252526743,8.92592215171229)-- (0.9013117461409711,9.762115795458815);
					\draw [line width=1.pt,color=zzttqq] (0.9013117461409711,9.762115795458815)-- (1.5669320840466672,10.502126775207563);
					\draw [line width=1.pt,color=zzttqq] (1.5669320840466672,10.502126775207563)-- (2.5838965170943493,9.5377711593317);
					\draw [line width=1.pt,color=zzttqq] (2.5838965170943493,9.5377711593317)-- (1.9006651252526743,8.92592215171229);
					\draw [line width=1.pt,color=zzttqq] (0.9013117461409711,9.762115795458815)-- (0.13650048661670827,10.1904101007924);
					\draw [line width=1.pt,color=zzttqq] (0.13650048661670827,10.1904101007924)-- (0.5625411481055639,11.112905047063636);
					\draw [line width=1.pt,color=zzttqq] (0.5625411481055639,11.112905047063636)-- (1.5669320840466672,10.502126775207563);
					\draw [line width=1.pt,color=zzttqq] (1.5669320840466672,10.502126775207563)-- (0.9013117461409711,9.762115795458815);
					\draw [line width=1.pt,color=zzttqq] (0.13650048661670827,10.1904101007924)-- (-0.5248128745958504,10.407672479699302);
					\draw [line width=1.pt,color=zzttqq] (-0.5248128745958504,10.407672479699302)-- (-0.2789755820072525,11.384362056777448);
					\draw [line width=1.pt,color=zzttqq] (-0.2789755820072525,11.384362056777448)-- (0.5625411481055639,11.112905047063636);
					\draw [line width=1.pt,color=zzttqq] (0.5625411481055639,11.112905047063636)-- (0.13650048661670827,10.1904101007924);
					\draw [line width=1.pt,color=zzttqq] (-0.2789755820072525,11.384362056777448)-- (-1.3240850694054276,11.397934907263139);
					\draw [line width=1.pt,color=zzttqq] (-1.3240850694054276,11.397934907263139)-- (-1.1933294275569224,10.487257783623239);
					\draw [line width=1.pt,color=zzttqq] (-1.1933294275569224,10.487257783623239)-- (-0.5248128745958504,10.407672479699302);
					\draw [line width=1.pt,color=zzttqq] (-0.5248128745958504,10.407672479699302)-- (-0.2789755820072525,11.384362056777448);
					\draw [line width=1.pt,color=zzttqq] (-2.,8.)-- (8.905437690318596E-4,7.999107867261943);
					\draw [line width=1.pt,color=zzttqq] (8.905437690318596E-4,7.999107867261943)-- (0.,6.);
					\draw [line width=1.pt,color=zzttqq] (0.,6.)-- (-2.,6.);
					\draw [line width=1.pt,color=zzttqq] (-2.,6.)-- (-2.,8.);
					\draw [line width=1.pt,color=zzttqq] (-4.255820774314594,3.7428472333336757)-- (-0.20647165890010305,2.544644452014348);
					\draw [line width=1.pt,color=zzttqq] (-0.20647165890010305,2.544644452014348)-- (-0.3808547962166728,3.589809124493197);
					\draw [line width=1.pt,color=zzttqq] (-0.3808547962166728,3.589809124493197)-- (-3.7241420923381203,4.629779414821492);
					\draw [line width=1.pt,color=zzttqq] (-3.7241420923381203,4.629779414821492)-- (-4.255820774314594,3.7428472333336757);
					\draw [line width=1.pt,color=zzttqq] (-0.3808547962166728,3.589809124493197)-- (1.7884928071891157,5.08147088717034);
					\draw [line width=1.pt,color=zzttqq] (1.7884928071891157,5.08147088717034)-- (2.655278901316614,4.388042011868344);
					\draw [line width=1.pt,color=zzttqq] (2.655278901316614,4.388042011868344)-- (-0.20647165890010305,2.544644452014348);
					\draw [line width=1.pt,color=zzttqq] (-0.20647165890010305,2.544644452014348)-- (-0.3808547962166728,3.589809124493197);
					\draw [line width=1.pt,color=zzttqq] (1.7884928071891157,5.08147088717034)-- (2.604291484014996,7.284127314600211);
					\draw [line width=1.pt,color=zzttqq] (2.604291484014996,7.284127314600211)-- (3.389497710459906,7.284127314600211);
					\draw [line width=1.pt,color=zzttqq] (3.389497710459906,7.284127314600211)-- (2.655278901316614,4.388042011868344);
					\draw [line width=1.pt,color=zzttqq] (2.655278901316614,4.388042011868344)-- (1.7884928071891157,5.08147088717034);
					\draw [line width=1.pt,color=zzttqq] (2.604291484014996,7.284127314600211)-- (1.1171708774262428,6.54424864751115);
					\draw [line width=1.pt,color=zzttqq] (1.1171708774262428,6.54424864751115)-- (1.7884928071891157,5.08147088717034);
					\draw [line width=1.pt,color=zzttqq] (1.7884928071891157,5.08147088717034)-- (2.604291484014996,7.284127314600211);
					\draw [line width=1.pt,color=zzttqq] (1.7884928071891157,5.08147088717034)-- (0.027873487448356438,5.488419264647737);
					\draw [line width=1.pt,color=zzttqq] (0.027873487448356438,5.488419264647737)-- (-0.3808547962166728,3.589809124493197);
					\draw [line width=1.pt,color=zzttqq] (-0.3808547962166728,3.589809124493197)-- (1.7884928071891157,5.08147088717034);
					\draw [line width=1.pt,color=zzttqq] (-0.3808547962166728,3.589809124493197)-- (-1.580788935019527,5.203823602504077);
					\draw [line width=1.pt,color=zzttqq] (-1.580788935019527,5.203823602504077)-- (-3.7241420923381203,4.629779414821492);
					\draw [line width=1.pt,color=zzttqq] (-3.7241420923381203,4.629779414821492)-- (-0.3808547962166728,3.589809124493197);
					\draw [line width=1.pt,dash pattern=on 3pt off 3pt,color=ffwwzz] (2.0189432790032513,7.3505036400956865)-- (3.6575822816348698,7.540751976067993);
					\draw [line width=1.pt,dash pattern=on 3pt off 3pt,color=ffwwzz] (-1.0147315312303216,9.312138053157463)-- (-1.0298739977485987,11.68877307555815);
					\draw [line width=1.pt,dash pattern=on 3pt off 3pt,color=ffwwzz] (-2.6291030481784734,5.359200605410045)-- (-3.141353792134679,4.843272094011473);
					\draw [shift={(-1.,7.)},line width=1.pt,dash pattern=on 3pt off 3pt,color=ffwwzz]  plot[domain=-2.352617534386248:0.11558394689465694,variable=\t]({1.*3.039222157651399*cos(\t r)+0.*3.039222157651399*sin(\t r)},{0.*3.039222157651399*cos(\t r)+1.*3.039222157651399*sin(\t r)});
					\draw [shift={(-1.,7.)},line width=1.pt,dash pattern=on 3pt off 3pt,color=ffwwzz]  plot[domain=0.11558394689465691:1.5771676296999535,variable=\t]({1.*4.688868244024406*cos(\t r)+0.*4.688868244024406*sin(\t r)},{0.*4.688868244024406*cos(\t r)+1.*4.688868244024406*sin(\t r)});
					\draw [shift={(-1.,7.)},line width=1.pt,dash pattern=on 3pt off 3pt,color=ffwwzz]  plot[domain=-2.352617534386248:1.5771676296999535,variable=\t]({1.*2.3121849828400745*cos(\t r)+0.*2.3121849828400745*sin(\t r)},{0.*2.3121849828400745*cos(\t r)+1.*2.3121849828400745*sin(\t r)});
					\draw [line width=1.pt,dash pattern=on 3pt off 3pt,color=qqffqq] (-3.2454172414909825,4.738461508276163)-- (-4.303649840102669,3.6726311981071107);
					\draw [line width=1.pt,dash pattern=on 3pt off 3pt,color=qqffqq] (3.683874522932582,7.216344316411591)-- (2.183525228099331,7.147044415020107);
					\draw [shift={(-1.,7.)},line width=1.pt,dash pattern=on 3pt off 3pt,color=qqffqq]  plot[domain=-2.352617534386248:0.04615637319914843,variable=\t]({1.*3.1869193491416596*cos(\t r)+0.*3.1869193491416596*sin(\t r)},{0.*3.1869193491416596*cos(\t r)+1.*3.1869193491416596*sin(\t r)});
					\draw [shift={(-1.,7.)},line width=1.pt,dash pattern=on 3pt off 3pt,color=qqffqq]  plot[domain=-2.352617534386248:0.04615637319914822,variable=\t]({1.*4.688868244024405*cos(\t r)+0.*4.688868244024405*sin(\t r)},{0.*4.688868244024405*cos(\t r)+1.*4.688868244024405*sin(\t r)});
					\draw [line width=1.pt,dash pattern=on 3pt off 3pt,color=qqffff] (-3.463966576347653,4.978781642529653)-- (-4.625198308606618,4.026210728196055);
					\draw [line width=1.pt,dash pattern=on 3pt off 3pt,color=qqffff] (-1.256149807868846,10.176608602560634)-- (-1.376869499428239,11.67369819203393);
					\draw [shift={(-1.,7.)},line width=1.pt,dash pattern=on 3pt off 3pt,color=qqffff]  plot[domain=1.6512584851084127:3.8285961070177326,variable=\t]({1.*3.186919349141655*cos(\t r)+0.*3.186919349141655*sin(\t r)},{0.*3.186919349141655*cos(\t r)+1.*3.186919349141655*sin(\t r)});
					\draw [shift={(-1.,7.)},line width=1.pt,dash pattern=on 3pt off 3pt,color=qqffff]  plot[domain=1.6512584851084127:3.8285961070177326,variable=\t]({1.*4.688868244024406*cos(\t r)+0.*4.688868244024406*sin(\t r)},{0.*4.688868244024406*cos(\t r)+1.*4.688868244024406*sin(\t r)});
					\draw [line width=1.pt,dash pattern=on 3pt off 3pt,color=ffqqqq] (-1.,7.) circle (0.7817718698171685cm);
					\begin{scriptsize}
						\draw [fill=qqzzqq] (-1.4520218260437137,10.915549859085147) circle (2.5pt);
						\draw [fill=qqzzqq] (-3.1158884964800677,10.373825826850057) circle (2.5pt);
						\draw [fill=zzccqq] (-4.470198577067798,9.077557606858948) circle (2.5pt);
						\draw [fill=zzccqq] (-3.6189179549840818,10.083616523866972) circle (2.5pt);
						\draw [fill=zzffqq] (-4.702366019454266,8.458444427161703) circle (2.5pt);
						\draw [fill=zzffqq] (-4.992575322437351,7.374996362691523) circle (2.5pt);
						\draw [fill=ccffcc] (-4.973228035571812,6.252853724490266) circle (2.5pt);
						\draw [fill=ccffcc] (-4.276725708412408,4.666376201516074) circle (2.5pt);
						\draw [fill=ffffww] (2.533519268257322,5.033974651961314) circle (2.5pt);
						\draw [fill=ffffww] (1.661554594802546,6.400870768344779) circle (2.5pt);
						\draw [fill=ffccww] (2.069184383484386,4.414861472264069) circle (2.5pt);
						\draw [fill=ffffww] (3.,7.) circle (2.5pt);
						\draw [fill=yqqqqq] (-0.7168249251532315,10.915549859085147) circle (2.5pt);
						\draw [fill=cczzqq] (-0.7168249251532304,3.041204104810805) circle (2.5pt);
						\draw [fill=cczzqq] (-3.541528807521925,3.9118320137600566) circle (2.5pt);
						\draw [fill=ccqqqq] (-0.02032259799382734,10.85750799848853) circle (2.5pt);
						\draw [fill=ffcqcb] (2.920465005568102,7.742594813136765) circle (2.5pt);
						\draw [fill=ffccww] (0.07641383633386878,3.1379405391384996) circle (2.5pt);
						\draw [fill=cczzqq] (-1.7666619570249664,4.628983561782251) circle (2.5pt);
						\draw [fill=ffccww] (0.428937407628609,4.898618571476549) circle (2.5pt);
						\draw [fill=ffttww] (1.759627793635761,9.754712647152811) circle (2.5pt);
						\draw [fill=ffwwzz] (2.533519268257322,8.69061186954817) circle (2.5pt);
						\draw [fill=ffffff] (-1.,7.) circle (2.5pt);
						\draw [fill=ffqqtt] (0.7535688766277328,10.567298695505446) circle (2.5pt);
					\end{scriptsize}
				\end{tikzpicture}
				\caption{$\vec{\mathcal{P}}(S_4)$ where all the vertices with the same colour are in the same $\mathtt{N}$-class, all the vertices in the same box are in the same $\diamond$-class and all the vertices in the same dashed sides figure are in the same $\circ$-class.}
				\label{S_4Partitioned}
			\end{figure}
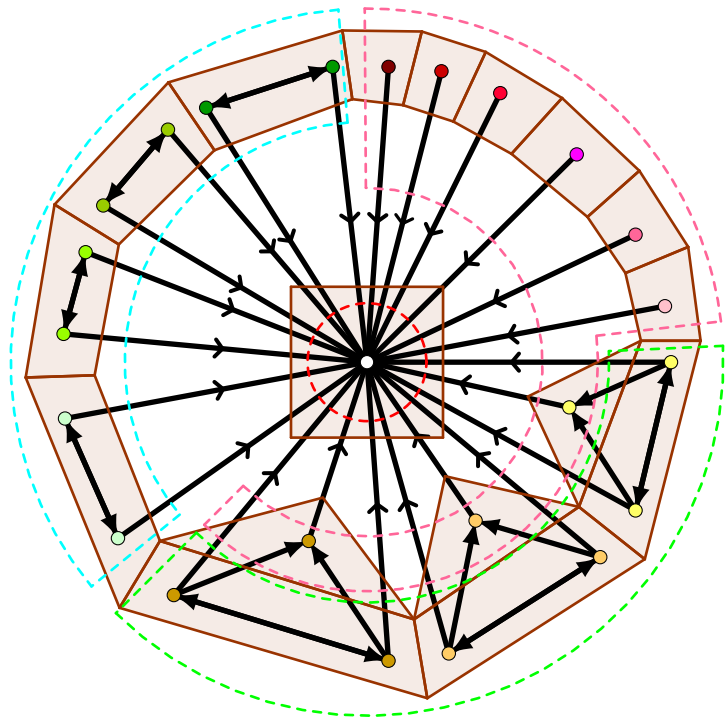
		
			We were interested to recognize the $\diamond$-classes. Now we have that, once we get the $\mathtt{N}$-classes, partitioning this classes by elements with the same order, we obtain exactly all the $\diamond$-classes.
			In conclusion: from the power graph of a group we get all the $\mathtt{N}$-classes; so we only need to \emph{find a way to break the $\mathtt{N}$-classes in $\diamond$-classes.}
			To do that we will need the following lemma.
			
			\begin{lemma}
				\label{NdipotenzeDip}Let $x$ and $y$ be two elements whose orders are powers of a prime $p$, with $o(x)\leq o(y)$. Then $N[x]\supseteq N[y]$ if and only if $x$ is power of $y$.
			\end{lemma}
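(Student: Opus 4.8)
The plan is to strip the problem down to a statement about cyclic subgroups and then use the single structural fact that distinguishes $p$-groups: the subgroup lattice of a cyclic $p$-group is a chain. First I would record the elementary reformulation, immediate from Definition~\ref{defPG} and the observation following it, that for \emph{any} $z\in G$,
$$z\in N[x]\iff \langle x\rangle\subseteq\langle z\rangle\ \text{ or }\ \langle z\rangle\subseteq\langle x\rangle$$
(the case $z=x$ being subsumed, since $\langle x\rangle\subseteq\langle x\rangle$). I would also note that, since $o(y)$ is a power of $p$ and $|\langle y\rangle|=o(y)$, the group $\langle y\rangle$ is a cyclic $p$-group, so by the characterization of cyclic groups recalled in the Preliminaries its subgroups (one for each of $1,p,\dots,p^{b}$) are linearly ordered by inclusion.

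For the implication ``$N[x]\supseteq N[y]\Rightarrow x$ is a power of $y$'', the argument is almost immediate and does not use the $p$-power hypothesis: since $y\in N[y]\subseteq N[x]$, the reformulation gives $\langle x\rangle\subseteq\langle y\rangle$ or $\langle y\rangle\subseteq\langle x\rangle$. In the second case $o(y)\le o(x)$, which together with the hypothesis $o(x)\le o(y)$ forces $o(x)=o(y)$ and hence $\langle x\rangle=\langle y\rangle$; so in either case $x\in\langle y\rangle$, i.e. $x$ is a power of $y$. I would remark that the hypothesis $o(x)\le o(y)$ is genuinely needed here, since for $x=g$, $y=g^{2}$ in $C_{4}$ one has $N[x]=N[y]$ while $x$ is not a power of $y$.

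For the converse I would assume $x=y^{k}$, so $\langle x\rangle\subseteq\langle y\rangle$, fix an arbitrary $z\in N[y]$, and show $z\in N[x]$. By the reformulation there are two cases. If $\langle y\rangle\subseteq\langle z\rangle$, then $\langle x\rangle\subseteq\langle y\rangle\subseteq\langle z\rangle$ and we are done. If instead $\langle z\rangle\subseteq\langle y\rangle$, then both $\langle x\rangle$ and $\langle z\rangle$ are subgroups of the cyclic $p$-group $\langle y\rangle$, hence comparable; either way $\langle x\rangle\subseteq\langle z\rangle$ or $\langle z\rangle\subseteq\langle x\rangle$, so $z\in N[x]$. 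This yields $N[y]\subseteq N[x]$.

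The lemma is short, so there is no serious obstacle; the one place that requires care is the second case of the converse, which is exactly where the prime-power hypothesis enters — it is precisely the linear ordering of the subgroups of $\langle y\rangle$ that lets us conclude $\langle x\rangle$ and $\langle z\rangle$ are comparable. I would make sure to flag that this hypothesis cannot be dropped (the generator $g$ and the element $g^{2}$ of $C_{6}$ give a counterexample to the converse: $g^{2}$ is a power of $g$, but $g^{3}\in N[g]\setminus N[g^{2}]$). Beyond Definition~\ref{defPG}, the edge/generated-subgroup remark, and the standard structure of cyclic $p$-groups, nothing else is needed.
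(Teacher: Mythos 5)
Your proof is correct and follows essentially the same route as the paper: the forward direction via $y\in N[y]\subseteq N[x]$ together with the order hypothesis, and the converse by splitting on whether $z$ dominates $y$ or lies in $\langle y\rangle$. The only cosmetic difference is that where you invoke the chain structure of the subgroups of the cyclic $p$-group $\langle y\rangle$ directly, the paper cites Proposition \ref{PGcompleto} (completeness of the power graph of a cyclic $p$-group), which encodes the same fact.
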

			\begin{proof}
				The forward implication is clear since $N[x]\supseteq N[y]$ implies that $x$ and $y$ are joined in the power graph, and hence, as $o(x)\leq o(y)$, it must be $x$ a power of $y$. So suppose that $x$ is a power of $y$, and take $z \in N[y]$. If $y$ is a power of $z$, then so is $x$; if $z$ is a power of $y$, then $x$ and $z$ both lie in the cyclic $p$-group generated by $y$, and, by Proposition \ref{PGcompleto}, the power graph of a cyclic $p$-group is a complete graph. So $z\in N[x]$ in either cases.
			\end{proof}
			We emphasize that Lemma \ref{NdipotenzeDip} is extrapolated from the proof of \cite[Proposition 5]{Cameron_2}. 
		
	\chapter{Reconstruction of $\vec{\mathcal{P}}(G)$ from $\mathcal{P}(G)$}
			\label{SectionReconstruction}
		\section{Case I: Abelian groups}
		\label{Case I}
			This section contains the proof that, for an abelian group, power graph determines the group up to group isomorphism. From this result follows that, for $G$ abelian group, we can reconstruct $\vec{\mathcal{P}}(G)$ from $\mathcal{P}(G)$.
			
			
			\begin{theorem}{\rm \cite[Theorem 1]{Cameron_1}}
				\label{abelUPG}Let $G_1$ and $G_2$ be abelian groups with $\mathcal{P}(G_1)\cong \mathcal{P}(G_2)$. Then $G_1\cong G_2$. In particular $\vec{\mathcal{P}}(G_1)\cong \vec{\mathcal{P}}(G_2)$.
			\end{theorem}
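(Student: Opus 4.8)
\emph{Plan.} The plan is to show that the isomorphism type of $\mathcal{P}(G)$ determines, for every $d\in\mathbb{N}$, the number $n_d(G)$ of elements of order $d$ in the abelian group $G$; granting this, the theorem follows quickly. Write $G\cong\prod_p G_p$ as the direct product of its Sylow subgroups (Theorem~\ref{structureTheoremOfNilpotentGroup}). An element of $G$ has order $p^k$ if and only if it lies in $G_p$ and has order $p^k$ there, so $n_{p^k}(G)$ is exactly the number of elements of order $p^k$ in $G_p$; by Corollary~\ref{structureTheoremOfAbelianpGroups} and the observation following it, the family $\{n_{p^k}(G)\}_k$ determines $G_p$ up to isomorphism, and hence $\{n_d(G)\}_d$ determines $G$. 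Thus if $\mathcal{P}(G_1)\cong\mathcal{P}(G_2)$ then $n_d(G_1)=n_d(G_2)$ for all $d$, whence $G_1\cong G_2$. The last assertion is then free: any group isomorphism $G_1\to G_2$ is simultaneously a digraph isomorphism $\vec{\mathcal{P}}(G_1)\to\vec{\mathcal{P}}(G_2)$ by Remark~\ref{isoGroup-isoGraph}.

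It remains to extract each $n_d(G)$ from the graph. Two easy reductions come first. If $G$ is trivial, $|V_{\mathcal{P}(G)}|=1$ settles everything; and by Corollary~\ref{abelUPG_ciclico} together with Remark~\ref{CardinalityS}, the property ``$G$ is a nontrivial cyclic group'' depends only on $\mathcal{P}(G)$, while such a $G$ is determined up to isomorphism by $|G|=|V_{\mathcal{P}(G)}|$. So we may assume $G_1,G_2$ are noncyclic; then Proposition~\ref{S>1} forces $\mathcal{S}_{\mathcal{P}(G)}=\{1\}$, so the identity is recognizable as the unique star vertex, and the involutions are then recognizable as the vertices $x\neq 1$ with $|[x]_{\diamond}|=1$ (Corollary~\ref{TrivialDiamondClasses}). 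Next, the $\mathtt{N}$-classes are read directly off $\mathcal{P}(G)$ (Definition~\ref{Ndef} is purely graph-theoretic), and by Corollary~\ref{o-partition} refining each $\mathtt{N}$-class according to the orders of its elements produces precisely the $\diamond$-classes; each $\diamond$-class is the set of generators of a unique cyclic subgroup and, by Lemma~\ref{OssDiamondRelation}, has cardinality $\phi(d)$ for $d$ the common order of its members. Hence $n_d(G)=\phi(d)\,c_d(G)$, where $c_d(G)$ is the number of cyclic subgroups of order $d$, and everything reduces to reconstructing, from the graph, for each $\mathtt{N}$-class the multiset of orders occurring in it --- equivalently, to splitting each $\mathtt{N}$-class into $\diamond$-classes and attaching the correct order to each.

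The heart of the argument --- and where I expect the real difficulty --- is this last step, since the vertices of an $\mathtt{N}$-class are graph-theoretically indistinguishable and $\phi$ is far from injective (e.g.\ $\phi(3)=\phi(4)=\phi(6)=2$), so one must exploit the adjacency pattern among the classes. The approach is to treat one prime $p$ at a time: using Lemma~\ref{NdipotenzeDip} (for elements of $p$-power order, inclusion of closed neighbourhoods is equivalent to one being a power of the other), Proposition~\ref{PGcompleto} (cyclic $p$-groups have complete power graph), and Lemma~\ref{nonGeneratoriInPG} (inside $\langle y\rangle$ with $o(y)$ not a prime power some element fails to be joined to a proper power of $y$, so that the subgroups below a prime-power-order element form a chain while those below an element of non-prime-power order do not), one first isolates the $\diamond$-classes consisting of $p$-power-order elements, then reconstructs from the neighbourhood-containment pattern the poset of cyclic $p$-subgroups, reads off the order of each (as $\sum$ of the sizes of the $\diamond$-classes in its down-set), and finally recovers the $c_{p^k}(G)$ by an induction on the exponent. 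I expect the main obstacle to be exactly this combinatorial extraction of element orders from the bare graph; and, as flagged in the Introduction, the prime $p=2$ is genuinely exceptional, because $\phi(2)=1$ makes every $\diamond$-class of an involution a singleton (Corollary~\ref{TrivialDiamondClasses}), so the counting tailored to odd primes needs a separate adjustment --- this is the case the original argument in \cite{Cameron_1} overlooked, and closing it requires only a minor modification of the final counting step.
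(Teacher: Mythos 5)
Your overall skeleton coincides with the paper's: dispose of the cyclic case via Remark \ref{CardinalityS} and Corollary \ref{abelUPG_ciclico}, reduce to noncyclic groups where $\mathcal{S}=\{1\}$, recover from the graph the number of elements of each prime-power order one prime at a time, and conclude with Corollary \ref{structureTheoremOfAbelianpGroups} and Theorem \ref{structureTheoremOfNilpotentGroup}. But the step you yourself identify as ``the heart of the argument'' is exactly the part you do not supply, and the mechanism you sketch for it does not work as stated. You propose to ``isolate the $\diamond$-classes consisting of $p$-power-order elements'' by testing whether the cyclic subgroups below an element form a chain, and then to read off $o(x)$ as the sum of the sizes of the $\diamond$-classes in the down-set of $[x]_{\diamond}$. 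Neither of these is a graph-visible operation from $\mathcal{P}(G)$ alone: to speak of the subgroups ``below'' $x$, or of the down-set of $[x]_{\diamond}$, you must already know which neighbours of $x$ are powers of $x$ and which have $x$ as a power, i.e.\ you need the orientation (equivalently the element orders) that you are trying to reconstruct. Lemma \ref{NdipotenzeDip} only converts neighbourhood containment into the power relation \emph{after} you know both elements have $p$-power order, and $N$-containments also occur between elements of $p$-power and of composite order (e.g.\ in $C_6\times C_2$ one has $N[a^3]\subseteq N[a]$ with $o(a)=6$), so the containment pattern does not by itself single out the $p$-power classes. The paper closes this gap with a concrete arithmetic criterion (its Claims 1 and 2): anchoring at the \emph{smallest} prime divisor $p$ of $|G|$, it characterizes $o(x)=p^i$ by $|[x]_{\diamond}|=p^{i-1}(p-1)$ together with a count of neighbours among the already-recognized elements of order dividing $p^{i-1}$, the minimality of $p$ being what rules out the other solutions of $\phi(n)=p^{i-1}(p-1)$. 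Nothing in your proposal plays this role.

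Two further points. First, your reduction quietly assumes the $\diamond$-classes are recognizable in the graph; Corollary \ref{o-partition}, which you cite, does not give this, since refining an $\mathtt{N}$-class by orders presupposes the orders. What the paper actually uses is Proposition \ref{abelianxNy} (equivalently Corollary \ref{abeldiamond_N}): for a noncyclic abelian group every $\mathtt{N}$-class \emph{is} a $\diamond$-class. That is a substantive result, proved in the paper via the Sylow analysis and the Burnside classification of $p$-groups with a unique subgroup of order $p$, and your argument needs it (or a substitute) before any counting of class sizes can begin. Second, the case $p=2$ is not ``a minor modification of the final counting step'': since $2^{i-1}\cdot 1=\phi(2^i)=\phi(3\cdot 2^{i-1})$ for $i\ge 2$, the size-plus-neighbour-count criterion genuinely fails, and the paper repairs it by adding the condition that $x$ has no neighbours of order $3$ --- which in turn requires first recognizing the order-$3$ elements (as non-identity classes of size $2$ with no neighbours among involutions, using $\phi(n)=2\Leftrightarrow n\in\{3,4,6\}$). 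So the exceptional prime needs a different recognition criterion, not just an adjusted count. As it stands, then, the proposal reproduces the paper's outline but leaves its essential content --- the graph-theoretic recognition of elements of order $p^i$, including the $p=2$ case, and the identification $\mathtt{N}=\diamond$ for noncyclic abelian groups --- unproved.
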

			
			To ensue the proof of Cameron et al. in \cite{Cameron_1} it is necessary the following Proposition and its Corollary.

			\begin{prop}{\rm \cite[Proposition 2]{Cameron_1}}
				\label{abelianxNy} Let $x$ and $y$ be distinct elements of an abelian group $G$. Then $x\mathtt{N} y$ if and only if one of the following holds:
				\begin{itemize}
					\item[\rm (a)] $x\diamond y$;
					
					\item[\rm (b)] $G$ is cyclic, and one between $x$ and $y$ is a generator of $G$ and the other the identity;
					
					\item[\rm (c)] $G$ is cyclic of prime power order. 
				\end{itemize}
			\end{prop}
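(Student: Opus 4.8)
The plan is to prove the two implications separately; the ``if'' direction is routine and the ``only if'' direction is the substantive part. For the ``if'' direction: if $x\diamond y$ then $x\mathtt{N}y$ by Remark \ref{DiamondRefinementOfN}; if $G$ is cyclic with, say, $x$ a generator of $G$ and $y=1$, then a generator of $G$ is joined to every vertex, so $N[x]=N[y]=G$ and $x\mathtt{N}y$; and if $G$ is cyclic of prime power order then $\mathcal{P}(G)$ is complete by Proposition \ref{PGcompleto}, so any two vertices share the full closed neighbourhood $G$.

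For the ``only if'' direction, assume $x\mathtt{N}y$ with $x\ne y$ and that (a) fails, i.e.\ $\langle x\rangle\ne\langle y\rangle$; I will derive (b) or (c). Since $y\in N[y]=N[x]$, the vertices $x$ and $y$ are joined, so $x\in\langle y\rangle$ or $y\in\langle x\rangle$; as the three conclusions are symmetric in $x,y$, I may assume $\langle x\rangle\subsetneq\langle y\rangle$. Now split on whether $x=1$. If $x=1$, then $N[y]=N[1]=G$, so $y$ is a star vertex; since $y\ne 1$ this forces $|\mathcal{S}|>1$, and Proposition \ref{S>1} applies. The generalised-quaternion alternative there cannot occur because $G$ is abelian, so $G$ is cyclic; if it has prime power order we are in case (c), and otherwise Proposition \ref{S>1}(b) says the non-identity star vertices are exactly the generators, so $y$ is a generator while $x=1$, which is case (b). If instead $x\ne 1$, then $x$ is a non-identity element of the cyclic group $\langle y\rangle$ with $\langle x\rangle\subsetneq\langle y\rangle$; were $o(y)$ not a prime power, Lemma \ref{nonGeneratoriInPG} applied inside $\langle y\rangle$ would yield an element of $\langle y\rangle$ joined to $y$ (a generator of $\langle y\rangle$) but not to $x$, contradicting $N[x]=N[y]$. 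Hence $o(y)=p^{k}$ is a prime power and $o(x)=p^{j}$ with $1\le j<k$.

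It remains to show that in this last case $G$ is a cyclic $p$-group, which is exactly case (c). First, $G$ is a $p$-group: if a prime $q\ne p$ divides $|G|$, take $w$ of order $q$ (Cauchy); since $G$ is abelian, $z:=xw$ has order $p^{j}q$, and the unique subgroup of order $p^{j}$ of the cyclic group $\langle z\rangle$ is $\langle x\rangle$, so $x\in\langle z\rangle$ and $z\in N[x]$; but $z\notin\langle y\rangle$ because $o(z)=p^{j}q$ is not a power of $p$, and $y\notin\langle z\rangle$ because $p^{k}\nmid p^{j}q$, so $z\in N[x]\setminus N[y]$, contradicting $x\mathtt{N}y$. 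Next, $G$ is cyclic. Suppose not; by Corollary \ref{structureTheoremOfAbelianpGroups} write $G$ as a direct product of $r\ge 2$ cyclic $p$-groups, so the $p$-power endomorphism $g\mapsto g^{p}$ of $G$ has kernel $K:=\{g\in G:g^{p}=1\}$ of order $p^{r}\ge p^{2}$, and all its nonempty fibres are cosets of $K$. Writing $x=y^{s}$ with $s=p^{k-j}s'$, $\gcd(s',p)=1$, the element $z_{0}:=y^{p^{k-j-1}s'}\in\langle y\rangle$ satisfies $z_{0}^{p}=x$, so the fibre over $x$ is $z_{0}K$, of size $p^{r}\ge p^{2}$; but inside the cyclic group $\langle y\rangle$ the equation $z^{p}=x$ has exactly $p$ solutions, so there is a solution $z\notin\langle y\rangle$. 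This $z$ has order $p^{j+1}$ and $x=z^{p}\in\langle z\rangle$, hence $z\in N[x]$, while $z\notin\langle y\rangle$ and $y\notin\langle z\rangle$ (compare orders: $o(z)=p^{j+1}\le p^{k}=o(y)$, with equality forcing $\langle z\rangle=\langle y\rangle$, impossible). Thus $z\in N[x]\setminus N[y]$, the final contradiction, so $G$ is a cyclic $p$-group and (c) holds.

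The main obstacle is the very last step, excluding non-cyclic abelian $p$-groups; the key device is that in an abelian group the $p$-power map is a homomorphism whose fibres are cosets of the elementary abelian subgroup $\{g:g^{p}=1\}$, so counting $p$-th roots of $x$ inside $G$ versus inside $\langle y\rangle$ produces a root of $x$ lying outside $\langle y\rangle$, and such a root is immediately seen to belong to $N[x]\setminus N[y]$. The rest is bookkeeping with element orders, together with the elementary observation that $N[y]$ is the union of all cyclic subgroups containing $\langle y\rangle$, which makes the containment $N[y]\subseteq N[x]$ (forced by $\langle x\rangle\subseteq\langle y\rangle$) transparent and pinpoints exactly where strictness must be produced.
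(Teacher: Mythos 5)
Your proof is correct, and while its skeleton parallels the paper's, it diverges in the key subcase. The reduction to the prime-power situation is essentially the same: the paper applies Proposition \ref{S>1} to the induced subgraph on $\langle y\rangle$ to force $x=1$ when $o(y)$ is not a prime power, whereas you split on $x=1$ (star vertices of $G$ plus Proposition \ref{S>1}) versus $x\ne 1$ (Lemma \ref{nonGeneratoriInPG} inside $\langle y\rangle$); and your argument that $G$ must be a $p$-group is the same device as the paper's ``Sylow $p$-subgroup cyclic'' subcase, namely the element $xw$ of order $p^{j}q$ lying in $N[x]\setminus N[y]$. The genuine difference is the non-cyclic abelian $p$-group subcase. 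The paper invokes \cite[Theorems 8.5, 8.6]{Burnside} to obtain a second subgroup of order $p$, picks $z$ of order $p$ outside $\langle y\rangle$, and then needs a somewhat delicate check that $yz$ is not joined to $y$ while $x$ is joined to $yz$ via $y^{p^{l}}=(yz)^{p^{l}}$. You instead count $p$-th roots of $x$: the $p$-power map is an endomorphism whose kernel has order $p^{r}\ge p^{2}$ when $G$ is non-cyclic, while $\langle y\rangle$ contains only $p$ such roots, so some root $z$, necessarily of order $p^{j+1}$, lies outside $\langle y\rangle$ and is immediately seen to lie in $N[x]\setminus N[y]$. This yields a shorter, easier-to-verify witness and replaces the Burnside citation by Corollary \ref{structureTheoremOfAbelianpGroups}, at the cost of leaning on commutativity (the $p$-power map being a homomorphism), which is harmless since the statement concerns abelian groups. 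One cosmetic point: in your $p$-group step, the inclusion $\langle x\rangle\le\langle z\rangle$ for $z=xw$ is most cleanly justified by observing that $z^{q}=x^{q}$ generates $\langle x\rangle$, rather than by speaking of ``the unique subgroup of order $p^{j}$ of $\langle z\rangle$'' before $x\in\langle z\rangle$ is known.
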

			
			\begin{proof}
				If (a) holds, then, since $\diamond$ is a refinement of $\mathtt{N}$, we have that $x\mathtt{N}y$.
				
				If (b) holds we have that $N[x]=G=N[y]$, then clearly $x \mathtt{N}y$.
				
				If (c) holds we have that $N[x]=G=N[y]$ since, by Proposition \ref{PGcompleto}, $\mathcal{P}(G)$ is a complete graph; then we obtain $x\mathtt{N}y$.
				
				Suppose next that $x\mathtt{N}y$ holds and assume that (a) does not hold. Note that $x$ and $y$ are not both $1$ as they are supposed to be different.
				We have that $x$ and $y$ are joined and so, after renaming, if necessary, we have $x\in \langle y \rangle$. Then $y\not=1$ otherwise also $x=1$. In particular the order of $x$ properly divides the order of $y$.
				Now consider the subgraph of $\mathcal{P}(G)$ induced by $\langle y \rangle$. By Remark \ref{InducedPG}, this subgraph is $\mathcal{P}(\langle y \rangle)$.
				As $x\mathtt{N}y$ and $x,y \in \langle y \rangle$ hold, we have that $N_{\mathcal{P}(\langle y \rangle)}[x]= N_{\mathcal{P}(\langle y \rangle)}[y]$. And since $N_{\mathcal{P}(\langle y \rangle)}[y]=\langle y \rangle$, then $y$ is a star vertex in $\mathcal{P}(\langle y \rangle)$. Therefore also $x$ is a star vertex in $\mathcal{P}(\langle y \rangle)$. 
				
				Assume first that $ y $ has not prime power order. By Proposition \ref{S>1}, we have $x=1$ since $x$ is a star vertex in $\mathcal{P}(\langle y \rangle)$ and it cannot be a generator, otherwise we would have $x \diamond y$. Then $x=1\in \mathcal{S}_{\mathcal{P}(G)}$ and, since $x\mathtt{N}y$, also $y \in \mathcal{S}_{\mathcal{P}(G)}$. So we have $|\mathcal{S}_{\mathcal{P}(G)}|>1$, and, by Proposition \ref{S>1}, it follows that $G$ must be cyclic because $G$ is abelian. Hence we have (b).
				
				Assume next that $ y $ has prime power order. In this case we have that there exist a prime $p$, and two integers $k \in \mathbb{N}_0$ and $l \in \mathbb{N}$ such that $o(x)=p^k$ and $o(y)=p^{k+l}$.
				Suppose, by contradiction, that $G$ is not a cyclic $p$-group. Consider the $p$-Sylow subgroup $P$ of $G$. We want to prove that having $P\not=G$ brings us to a contradiction, so we get that (c) holds.\\
				If $P$ is cyclic, since $G$ is supposed not to be a cyclic $p$-group, then there exists $z\in G$ of prime order $q\not=p$. Let $\bar{q}$ be a positive integer such that $q\bar{q}\equiv 1 (\text{mod} \, p^k)$. Such a $\bar{q}$ exists as $\gcd(q,p^k)=1$. Now $(xz)^{q\bar{q}}=x$ implies that $x$ is joined to $xz$. But $y$ is not joined to $xz$ as $o(xz)=p^kq\nmid p^{k+l}=o(y)$ and $o(y)=p^{k+l} \nmid p^kq=o(xz)$. A contradiction.\\
				If $P$ is not cyclic, then, by \cite[Theorems 8.5, 8.6]{Burnside}, there are at least two subgroups $B=\langle b \rangle$ and $C=\langle c \rangle \not= B$ of order $p$. 
				Suppose $y$ joined to both $b,c$. Since $o(y)=p^{k+l}$ we have that $o(y)\geq o(b)=o(c)=p$. It follows that $b,c \in \langle y \rangle$. Then we reach the contradiction $B=C$ because in $\langle y \rangle $ there is a unique subgroup of order $p$. 
				Therefore the vertex $y$ is not joined to both $b$ and $c$. Hence there exists an element $z\in G$ of order $p$ which is not a power of $y$. 
				Suppose, by contradiction, that $y$ and $yz$ are joined. Since $z\notin \langle y \rangle$ we must have that there exists a positive integer $t$ such that $(yz)^t=y$. If $z^t=1$, then $p\mid t$. So $o(y^t)$ properly divides $o(y)$ and then $(yz)^t=y^t\not=y$, a contradiction. Hence we have $z^t$ a generator of $\langle z \rangle$. Since $(yz)^t=y$ we obtain that $z^t=y^{1-t}\in \langle y \rangle$. Then $\langle z^t \rangle = \langle z \rangle \subseteq \langle y \rangle$, so we reach the contradiction $ z \in \langle y \rangle$.
				Now we have $y^{p^l}=(yz)^{p^l}$ and we also have that $y^{p^l}$ is joined to $x$ because both lay in $\langle y \rangle$ that has complete power graph. In particular, since $o(y^{p^l})=o(x)=p^k$, we have that there exists a positive integer $s$ such that $(y^{p^l})^s=x$. So $x$ is joined to $yz$, since $x= (yz)^{p^ls}$.
				Therefore we have $yz\in N[x]$, but $yz \notin N[y]$ against the hypothesis that $x\mathtt{N}y$.				
				
			\end{proof}
		
			A crucial result follows immediately from the previous proposition. 
		
			\begin{corollary}
				\label{abeldiamond_N_1} Let $G$ be an abelian group and $x,y \in G$. Assume $[x]_{\mathtt{N}}\not=\mathcal{S}$. Then $x\mathtt{N}y$ if and only if $x\diamond y$.
			\end{corollary}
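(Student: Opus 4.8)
The plan is to read the corollary off Proposition \ref{abelianxNy}, with the hypothesis $[x]_{\mathtt{N}}\neq\mathcal{S}$ used only to exclude the two ``sporadic'' alternatives (b) and (c) of that proposition. One direction needs nothing new: if $x\diamond y$ then $x\mathtt{N}y$ by Remark \ref{DiamondRefinementOfN} (and trivially so if $x=y$), and this holds for every $x,y$ regardless of $[x]_{\mathtt{N}}$. So the whole content is the converse implication under the stated assumption.

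First I would dispose of the case $x=y$, where $x\diamond y$ holds by definition of $\diamond$. So assume $x\neq y$ and $x\mathtt{N}y$. Then Proposition \ref{abelianxNy} applies, and exactly one of its alternatives (a), (b), (c) occurs. Alternative (a) is precisely $x\diamond y$, which is the desired conclusion; hence it suffices to rule out (b) and (c) using $[x]_{\mathtt{N}}\neq\mathcal{S}$, recalling that $\mathcal{S}=[1]_{\mathtt{N}}$ is the star class.

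If (c) holds, $G$ is cyclic of prime power order, so by Proposition \ref{PGcompleto} the graph $\mathcal{P}(G)$ is complete; then every vertex is a star vertex, so $[x]_{\mathtt{N}}=\mathcal{S}=G$, contradicting the hypothesis. If (b) holds, $G$ is cyclic and $\{x,y\}$ is a generator together with the identity; by Proposition \ref{S>1} (case (a) or (b) of that statement, according to whether $|G|$ is a prime power or not) the set $\mathcal{S}$ contains the identity and all the generators of $G$, so both $x$ and $y$ lie in $\mathcal{S}=[1]_{\mathtt{N}}$, whence again $[x]_{\mathtt{N}}=\mathcal{S}$, a contradiction. Therefore (a) must hold, i.e. $x\diamond y$, which finishes the proof.

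I do not expect any genuine obstacle: all the real work sits in Proposition \ref{abelianxNy}. The only point requiring a little care is noticing that in alternative (b) the generator, not merely the identity, must be counted as a star vertex — which is why the argument routes through Proposition \ref{S>1} (covering both the prime-power and the non-prime-power cyclic cases) rather than through a one-line observation.
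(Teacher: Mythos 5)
Your proof is correct and follows essentially the same route as the paper's: the forward direction is Remark \ref{DiamondRefinementOfN}, and the converse applies Proposition \ref{abelianxNy} and uses $[x]_{\mathtt{N}}\neq\mathcal{S}$ to exclude alternatives (b) and (c). Your detour through Proposition \ref{S>1} in case (b) is fine but not needed — since $x\mathtt{N}y$ and one of $x,y$ equals $1$, one has $[x]_{\mathtt{N}}=[1]_{\mathtt{N}}=\mathcal{S}$ directly — and your explicit treatment of $x=y$ is a small point of care the paper leaves implicit.
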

			\begin{proof}
				Assume first that $x\mathtt{N}y$ holds. Since $[x]_\mathtt{N}\not=\mathcal{S}$ we have neither that $x$ is the identity or a generator of $G$, nor that $G$ is a cyclic group of prime power order. Then, by Proposition \ref{abelianxNy}, we get $x\diamond y$.
				
				Assume next $x\diamond y$, it follows immediately $x\mathtt{N}y$.
			\end{proof}
		
			If we focus on not cyclic abelian groups, the above corollary could be written in the following form. 
		
			\begin{corollary}
				\label{abeldiamond_N} Let $G$ be a not cyclic abelian group and let $x,y \in G$. Then $x\mathtt{N} y$ if and only if $x\diamond y$.
			\end{corollary}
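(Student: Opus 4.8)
The statement is a direct consequence of Proposition \ref{abelianxNy} once the structural hypotheses are taken into account, so the plan is short. First I would dispose of the trivial case $x=y$: both $x\mathtt{N}y$ and $x\diamond y$ hold by reflexivity, so nothing is to be proved. Hence I may assume $x\neq y$, which is exactly the setting in which Proposition \ref{abelianxNy} applies.

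For the forward implication, suppose $x\mathtt{N}y$. Proposition \ref{abelianxNy} then tells us that one of the alternatives (a), (b), (c) occurs. Alternatives (b) and (c) both require $G$ to be cyclic, which is ruled out by the hypothesis that $G$ is not cyclic. Therefore (a) must hold, that is, $x\diamond y$. For the converse, if $x\diamond y$ then $x\mathtt{N}y$ follows immediately, since $\diamond$ is a refinement of $\mathtt{N}$ by Remark \ref{DiamondRefinementOfN} (this is also precisely alternative (a) of Proposition \ref{abelianxNy}).

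There is essentially no obstacle: all the real work is already contained in Proposition \ref{abelianxNy}, whose proof carries out the delicate case analysis on the Sylow subgroups of $G$. As an alternative route I could instead deduce the corollary from Corollary \ref{abeldiamond_N_1}: for a not cyclic abelian group one has $\mathcal{S}=\{1\}$ by Corollary \ref{abelUPG_ciclico}, so $[x]_{\mathtt{N}}\neq\mathcal{S}$ whenever $x\neq 1$ and Corollary \ref{abeldiamond_N_1} applies verbatim, while the remaining case $x=1$ with $x\neq y$ forces $y$ not to be a star vertex, so that both $x\mathtt{N}y$ and $x\diamond y$ fail and the equivalence still holds. Either presentation is a few lines; I would favour the first as the cleaner one.
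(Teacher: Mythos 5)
Your proof is correct and follows essentially the paper's route: the paper states this corollary without a separate proof, presenting it as a reformulation of Corollary \ref{abeldiamond_N_1} (itself a direct application of Proposition \ref{abelianxNy}), which is exactly the content of both of your arguments. Your handling of the cases $x=y$ and $x=1$ is sound, so nothing further is needed.
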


			We are now able to give a proof of the main theorem of this section. Note that this proof ensue Cameron's proof in article \cite{Cameron_1}, but we had to add missing details. In particular Cameron's proof left the case of $p=2$ that we instead cover precisely.
			
			\begin{proof}[ Proof of Theorem \ref{abelUPG}]
				By $\mathcal{P}(G_1)\cong \mathcal{P}(G_2)$, it follows $|G_1|=|G_2|$.
				
				Moreover, by Remark \ref{CardinalityS}, we have $|\mathcal{S}_1|=|\mathcal{S}_2|$. If now $G_1$ is cyclic, then $|\mathcal{S}_1|>1$ so that also $|\mathcal{S}_2|>1$ and, by Corollary \ref{abelUPG_ciclico}, $G_2$ is also cyclic. Thus $G_1$ and $G_2$ are both cyclic of the same order and $G_1\cong G_2$ holds. 
				
				Suppose next that $G_1$ and $G_2$ are both not cyclic.
				
				Let's focus on $G:=G_1$.
				By the power graph $\mathcal{P}(G)$ we are able to determine the $\mathtt{N}$-classes of $G$ and, by Corollary \ref{abeldiamond_N}, they are exactly its $\diamond$-classes. By Corollary \ref{abelUPG_ciclico}, since we are supposing $G$ not cyclic, we recognize the identity as the only star vertex of the power graph.

					
				Now, let $p$ be the smallest prime divisor of $|G|$.
				For the following we have to distinguish the case of $p\ne2$ and of $p=2$.
				\setcounter{claim}{0}
				\begin{claim}
					\label{claim1}
					\rm For $p\ne 2$, let $x\in G$ and $i\in \mathbb{N}$. Then $o(x)=p^i$ if and only if $|[x]_{\diamond}|=p^{i-1}(p-1)$ and $x$ has $p^{i-1}$ neighbours among the elements $z$ such that $o(z)\mid p^{i-1}$.
				\end{claim}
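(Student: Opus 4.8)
The plan is to restate each side of the equivalence arithmetically, in terms of $o(x)$, and then exploit that $p$ is the smallest prime divisor of $|G|$, so that by Lagrange's Theorem every prime dividing $o(x)$ is at least $p$.

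For the forward implication, assume $o(x)=p^{i}$. Then $|[x]_{\diamond}|=\phi(p^{i})=p^{i-1}(p-1)$ by Lemma~\ref{OssDiamondRelation}. For the neighbour count, note that if $z\neq x$ is joined to $x$ and $o(z)\mid p^{i-1}$, then (since an edge of the power graph forces one of $\langle x\rangle,\langle z\rangle$ to contain the other) we cannot have $\langle x\rangle\le\langle z\rangle$, as $o(x)=p^{i}\nmid p^{i-1}$; hence $z$ lies in the unique subgroup of order $p^{i-1}$ of the cyclic group $\langle x\rangle$. Conversely every element of that subgroup is a power of $x$, hence joined to $x$, and is distinct from $x$. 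So the neighbours of $x$ of order dividing $p^{i-1}$ are exactly the $p^{i-1}$ elements of that subgroup.

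For the converse, assume both conditions and set $n=o(x)$; note $x\neq 1$, since otherwise $\phi(n)=1=p^{i-1}(p-1)$ would force $p=2$. The crux is to prove that $n$ is a power of $p$: granted this, $\phi(n)=p^{i-1}(p-1)$ forces $n=p^{i}$ (the exponent $0$ being excluded by $n>1$). Suppose not. As above, every neighbour $z$ of $x$ with $o(z)\mid p^{i-1}$ lies in $\langle x\rangle$, because the alternative would make $o(x)$ a power of $p$; and conversely every element of $\langle x\rangle$ of order dividing $p^{i-1}$ is a power of $x$ and is distinct from $x$ (as $n=o(x)$ is not a $p$-power). These elements form the subgroup of order $p^{\min(v_p(n),\,i-1)}$ of $\langle x\rangle$, so the neighbour hypothesis gives $p^{\min(v_p(n),\,i-1)}=p^{i-1}$, i.e.\ $v_p(n)\ge i-1$. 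Writing $n=p^{a}m$ with $p\nmid m$ (so $a=v_p(n)\ge i-1$, $m>1$, and every prime factor of $m$ greater than $p$), one compares $\phi(n)=p^{a-1}(p-1)\phi(m)$ with $p^{i-1}(p-1)$ to get $\phi(m)=p^{\,i-a}$, whence $a\in\{i-1,i\}$ when $a\ge1$. If $a=i$ then $\phi(m)=1$ forces $m=2$, contradicting $2<p$ and $2\mid|G|$; if $a=i-1$ then $\phi(m)=p$, impossible since $\phi(m)$ is even for $m\ge3$ while $p$ is odd; and the case $a=0$ can occur only when $i=1$, where $\phi(n)=p-1$ together with all prime factors of $n$ greater than $p$ forces $n=1$, a contradiction.

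I expect the middle of the converse — extracting $v_p(o(x))\ge i-1$ from the neighbour count and then running the small case analysis on $\phi(m)$ — to be the delicate point. The $\diamond$-class size alone does \emph{not} determine $o(x)$: for $p=3$, an element of order $7$ has $|[x]_{\diamond}|=\phi(7)=6=\phi(9)$, so the neighbour condition is genuinely needed, and the argument must make visible exactly how it, combined with the minimality of $p$, rules out every order other than $p^{i}$.
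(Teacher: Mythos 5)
Your proof is correct, and its skeleton matches the paper's: read $|[x]_{\diamond}|$ as $\phi(o(x))$, use the neighbour count to force $p^{i-1}$ into $o(x)$, then finish by arithmetic on $\phi$ driven by the minimality of $p$ (hence $2\nmid |G|$). The difference lies in how the middle step is executed. The paper defines $X=\{z\in G \,:\, \{z,x\}\in E,\ o(z)\mid p^{i-1}\}$, shows $X\subseteq\langle x\rangle$ by using the $\diamond$-class size to exclude $x\in\langle z\rangle$, and then proves that $X$ is a subgroup (invoking that $G$ is abelian), so that $|X|=p^{i-1}$ produces a subgroup of order $p^{i-1}$ properly contained in $\langle x\rangle$; it treats $i=1$ and $i\geq 2$ separately and closes the case $p^i\mid o(x)$ by Lemma \ref{phiEulero}. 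You instead argue by contradiction on the assertion that $o(x)$ is a $p$-power, identify the neighbour set with $\{z\in\langle x\rangle \,:\, o(z)\mid p^{i-1}\}$, and count it as $\gcd(o(x),p^{i-1})=p^{\min(v_p(o(x)),\,i-1)}$, which yields $v_p(o(x))\geq i-1$ with no subgroup-generation step, no use of the abelian hypothesis, and no case split on $i$; your endgame ($\phi(m)=p^{i-a}$, hence $a\in\{i-1,i\}$ when $a\geq 1$, or $a=0$ forcing $i=1$, each possibility killed by $p$ odd and minimal) replaces the appeal to Lemma \ref{phiEulero}. Both routes are sound; the paper's version records the structural fact that the relevant neighbours form a subgroup of $\langle x\rangle$, while your gcd count is more elementary and, as your closing remark about $\phi(7)=\phi(9)$ makes explicit, isolates exactly which hypotheses (the neighbour condition together with $p$ smallest and odd) carry the converse.
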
 
				If $i=1$, then the claim become: $o(x)=p$ if and only if $|[x]_{\diamond}|=p-1$. Indeed, for all $x\in G$, it is true that $x$ has $p^0=1$ neighbour among the elements $z$ such that $o(z)\mid 1$, because $1$ is the only element $z$ with $o(z)\mid 1$.\\
				Assume first that $o(x)=p$. Then, by Lemma \ref{OssDiamondRelation}, $|[x]_{\diamond}|=\phi(p)=p-1$.\\
				Assume next that $|[x]_{\diamond}|=p-1$. Then, by Lemma \ref{OssDiamondRelation}, we have that $\phi(o(x))=p-1$.
				Suppose, by contradiction, that $o(x)\not= p$. We distinguish three cases:
				\begin{itemize}
					\item $o(x)=q$ for some prime $q\ne p$. Then $\phi(o(x))=q-1$, thus $q-1=p-1$, so $q=p$, a contradiction.
					
					\item $o(x)=q^l$, with $l\geq2$ and $q$ prime. Then we have $\phi(o(x))=q^{l-1}(q-1)$. Hence $p=q^{l-1}(q-1)+1>q$ holds. Against the fact that $p$ is the smallest prime divisor of $|G|$.
					
					\item  $o(x)=q^lm$, with $m\in \mathbb{N}\setminus 1$ such that $q\nmid m$, $l \in \mathbb{N}$ and $q$ a prime. Remember that $o(x)\mid |G|$, hence, since $m\mid o(x)$, we have that $m\ne 2$, otherwise $p=2$. Thus $m\geq 3$. We have that $\phi(o(x))=q^{l-1}(q-1)\phi(m)$ and thus $q^{l-1}(q-1)\phi(m)=p-1$. Note that $\phi(m)>1$ since $m\geq3$. In particular we have $p-1> q^{l-1}(q-1)$. Now if $l=1$ we have $p> q$. If instead $l\geq 2$ we have $p\geq q^{l-1}(q-1)+1>q$; both cases contradict the minimality of $p$.
				\end{itemize}
				Hence the claim for $i=1$ has been proved.
				
				Now we prove the claim for $i\geq 2$.\\
				Assume first $x\in G$ such that $o(x)=p^i$.  Then, by Lemma \ref{OssDiamondRelation}, $|[x]_{\diamond}|=\phi(p^i)=p^{i-1}(p-1)$. We also have that there exists a unique subgroup $X$ of $\langle x \rangle$ of size $p^{i-1}$ since $\langle x \rangle $ is cyclic of order $p^i$. So clearly $x$ has $p^{i-1}$ neighbours among the elements $z$ such that $o(z)\mid p^{i-1}$.\\
				Assume next that $|[x]_{\diamond}|=p^{i-1}(p-1)$ and $x$ has $p^{i-1}$ neighbours among the elements $z$ such that $o(z)\mid p^{i-1}$.
				Suppose, by contradiction, that $o(x)\not=p^{i-1}$.
				Let be $$X:=\{z\in G \, | \, \{z,x\}\in E \text{ and }o(z)\mid p^{i-1} \}.$$ Now $X\subset \langle x \rangle$. Indeed $y\in X$ implies $y\in \langle x \rangle $ or $x\in \langle y \rangle$ because $\{y,x\}\in E$ holds. If $x \in \langle y \rangle$ holds, we also have that $o(x)\mid o(y)\mid p^{i-1}$. Then $o(x)=p^k$ with $k\leq i-1$. Thus $\phi(o(x))=p^{k-1}(p-1)$, but also $\phi(o(x))=p^{i-1}(p-1)$. Therefore $p^{k-1}=p^{i-1}$ that implies $k=i$, against $k\leq i-1$. In particular we have reached a contradiction since $o(x)\mid p^{i-1}$, then we also have $x\notin X$, otherwise we would have $o(x)\mid p^{i-1}$. \\ We now want to prove that $X$ is a subgroup of $G$. Remember that $X\leq G$ if and only if $X=\langle X \rangle$. Suppose, by contradiction, that there exists $y\in \langle X \rangle\setminus X$. We have that $\langle X \rangle \leq \langle x \rangle$ since $X\subset \langle x \rangle$. Hence $y\in \langle x \rangle$ holds. We also have that, since $X$ has only elements of order a power of $p$ at most $p^{i-1}$ and $G$ is abelian, $o(y)$ is a power of $p$ at most $p^{i-1}$.  Therefore $y\in X$, a contradiction. Then $\langle x \rangle$ contains a subgroup of order $p^{i-1}$ because $|X|=p^{i-1}$. In particular $p^{i-1}$ properly divides $o(x)$ since $ X  \ne \langle x \rangle$. Note that, we did not use the hypothesis of $p\ne 2$ yet in this part of the proof with $i\geq2$.
				Now if $p^{i}\nmid o(x)$, then $o(x)=p^{i-1}m$ with $m\in \mathbb{N}\setminus \{1\}$ coprime with $p$. Note that $m\ne 2$ because $m\mid |G|$ and $2\nmid |G|$ otherwise $p=2$. Since $\phi(o(x))=p^{i-1}(p-1)$ but also $\phi(o(x))=p^{i-2}(p-1)\phi(m)$, we have that $\phi(m)=p$.
				Note that $\phi(m)$ is even because $m\ne 1$ and $m\ne 2$ hold. Therefore, by $\phi(m)=p$, we have $p=2$; a contradiction. So $p^i\mid o(x)$ and then, by Lemma \ref{phiEulero}, since $\phi(o(x))=\phi(p^i)$, we have that either $o(x)=p^i$, that is what we want, or $o(x)=2p^i$. We exclude the second case because, otherwise, $2\mid |G|$ and then $p=2$.
				
				
				\begin{claim}
					\label{claim2}
					\rm Let $x\in G$ and $i\in \mathbb{N}$. Then $o(x)=2^i$ if and only if $[x]_{\diamond}\ne [1]_{\diamond}$, $|[x]_{\diamond}|=2^{i-1}$, $x$ has $2^{i-1}$ neighbours among the elements $z$ such that $o(z)\mid 2^{i-1}$ and $x$ has no neighbours among the elements of order $3$. 
				\end{claim}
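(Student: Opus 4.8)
The plan is to follow the blueprint of Claim~\ref{claim1}, adapted to the prime $p=2$; the one structural novelty is that $\phi$ takes the value $2^{i-1}$ at both $2^i$ and $3\cdot 2^{i-1}$, which is exactly why the statement now carries the extra requirements $[x]_{\diamond}\ne[1]_{\diamond}$ and ``$x$ has no neighbours of order $3$''. The forward implication is the routine half: if $o(x)=2^i$, then $x\ne 1$, so $\langle x\rangle\ne\langle 1\rangle$ and hence $[x]_{\diamond}\ne[1]_{\diamond}$ by Lemma~\ref{OssDiamondRelation}, and the same lemma gives $|[x]_{\diamond}|=\phi(2^i)=2^{i-1}$. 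Since $\langle x\rangle$ is cyclic of order $2^i$ it has a unique subgroup of order $2^{i-1}$, and I would check that the neighbours $z$ of $x$ with $o(z)\mid 2^{i-1}$ are exactly its $2^{i-1}$ elements: such a $z$ cannot have $x\in\langle z\rangle$ (that would force $2^i=o(x)\mid o(z)\mid 2^{i-1}$), so $z\in\langle x\rangle$, while conversely every element of $\langle x\rangle$ of order dividing $2^{i-1}$ is a power of $x$ and thus a neighbour. Finally $x$ has no neighbour $z$ of order $3$, since $\{x,z\}\in E$ would force $3\mid 2^i$ or $2^i\mid 3$.

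For the converse, set $n=o(x)$ and assume the four conditions. From $[x]_{\diamond}\ne[1]_{\diamond}$ one gets $n\ge 2$, and from $|[x]_{\diamond}|=2^{i-1}$ together with Lemma~\ref{OssDiamondRelation} one gets $\phi(n)=2^{i-1}$. From the absence of neighbours of order $3$ I would deduce $3\nmid n$: if $3\mid n$ then $\langle x\rangle$ contains two elements of order $3$, at least one of which differs from $x$ and is therefore a neighbour of $x$. Next I would note that $n\nmid 2^{i-1}$ — otherwise $n=2^{j}$ with $1\le j\le i-1$ and $\phi(n)=2^{j-1}=2^{i-1}$ forces $j=i$, a contradiction (for $i=1$ this is immediate from $n\ge 2$). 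Hence every neighbour $z$ of $x$ with $o(z)\mid 2^{i-1}$ has $x\notin\langle z\rangle$, so $z$ runs over $\langle x\rangle\setminus\{x\}$, and conversely all such elements of $\langle x\rangle$ are neighbours; the number of these neighbours therefore equals the number of elements of $\langle x\rangle$ of order dividing $2^{i-1}$, namely $\gcd(n,2^{i-1})$. The third hypothesis now forces $\gcd(n,2^{i-1})=2^{i-1}$, i.e.\ $2^{i-1}\mid n$.

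To conclude, write $n=2^{a}r$ with $r$ odd, so $a\ge i-1$. If $a=0$ then $i=1$ and $n=r$ is odd with $\phi(n)=1$, hence $n=1$, contradicting $n\ge 2$. If $a\ge 1$ then $2^{a-1}\phi(r)=\phi(n)=2^{i-1}$, so $a\le i$ and thus $a\in\{i-1,i\}$; the case $a=i$ gives $\phi(r)=1$, hence $r=1$ and $n=2^i$, while the case $a=i-1$ gives $\phi(r)=2$, hence $r=3$ (as $r$ is odd), contradicting $3\nmid n$. I expect the only real difficulty to be bookkeeping: keeping the degenerate value $i=1$ (where $2^{i-1}=1$) consistent at each step, and nailing down the neighbour counts — in particular, verifying that neighbours of $x$ of prescribed order all lie in $\langle x\rangle$, which is the point where $3\nmid n$ (equivalently, $n\ne 3\cdot 2^{i-1}$) does its work of excluding the only competitor to $n=2^i$.
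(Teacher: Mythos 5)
Your proof is correct, and it differs from the paper's in the one step that carries real content. The paper (arguing exactly as in Claim~\ref{claim1}) extracts the divisibility $2^{i-1}\mid o(x)$ from the neighbour count by showing that $X=\{z\in G \mid \{z,x\}\in E,\ o(z)\mid 2^{i-1}\}$ is a \emph{subgroup} of $\langle x\rangle$ of order $2^{i-1}$ — an argument that explicitly uses the ambient hypothesis that $G$ is abelian — and then finishes by splitting on whether $2^i\mid o(x)$, using $\phi(m)=2\Rightarrow m\in\{3,4,6\}$ for the bad case and Lemma~\ref{phiEulero} for the good one. You instead first note $o(x)\nmid 2^{i-1}$, deduce that the neighbours of $x$ of order dividing $2^{i-1}$ are precisely the elements of the cyclic group $\langle x\rangle$ of order dividing $2^{i-1}$, and count them as $\gcd(o(x),2^{i-1})$; equating this to $2^{i-1}$ gives $2^{i-1}\mid o(x)$ with no subgroup construction and no use of commutativity of $G$, and your closing factorization $o(x)=2^a r$ with $a\in\{i-1,i\}$ replaces the appeal to Lemma~\ref{phiEulero} by elementary arithmetic (the excluded case $r=3$ being killed, as in the paper, by the no-order-$3$-neighbour hypothesis). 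The net effect is the same conclusion by the same overall strategy, but your version of the key step is more elementary and slightly more general, while the paper's buys uniformity with Claim~\ref{claim1} and reuse of its lemma; you also spell out the forward direction, which the paper dismisses as clear.
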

				If $i=1$ the claim become: $o(x)=2$ if and only if $[x]_{\diamond}\ne [1]_{\diamond}$, $|[x]_{\diamond}|=1$ and $x$ has no neighbours among the elements of order $3$. Indeed, for all $x\in G$, it is true that $x$ has $1$ neighbour among the elements $z$ such that $o(z)\mid 1$, because $1$ is the only element $z$ with $o(z)\mid 1$.\\
				Assume first that $o(x)=2$. Then clearly $[x]_{\diamond}\ne [1]_{\diamond}$, $|[x]_{\diamond}|=1$ and $x$ has no neighbours among the elements of order $3$.\\
				Assume next that $[x]_{\diamond}\ne [1]_{\diamond}$, $|[x]_{\diamond}|=1$ and $x$ has no neighbours among the elements of order $3$. Then, by Lemma \ref{OssDiamondRelation}, $\phi(o(x))=1$. Thus $o(x)=2$ or $o(x)=1$. If $o(x)=1$, then $x=1$ against the assumption that $[x]_{\diamond}\ne[1]_{\diamond}$. 
				
				Now we prove the claim for $i\geq2$.\\
				Assume first that $o(x)=2^i$. Then clearly $[x]_{\diamond}\ne [1]_{\diamond}$, $|[x]_{\diamond}|=2^{i-1}$, $x$ has $2^{i-1}$ neighbours among the elements $z$ such that $o(z)\mid 2^{i-1}$ and $x$ has no neighbours among the elements of order $3$.\\
				Assume next that $[x]_{\diamond}\ne [1]_{\diamond}$, $|[x]_{\diamond}|=2^{i-1}$, $x$ has $2^{i-1}$ neighbours among the elements $z$ such that $o(z)\mid 2^{i-1}$ and $x$ has no neighbours among the elements of order $3$. 
				Then, as for the previous claim, $\langle x \rangle $ contains a proper subgroup of order $2^{i-1}$, that is the subgroup $$X:=\{z\in G \, | \, \{z,x\}\in E \text{ and }o(z)\mid 2^{i-1} \}.$$ In particular $2^{i-1}$ properly divides $o(x)$. If $2^i\nmid o(x)$, then $o(x)=2^{i-1}m$ with $m\in \mathbb{N}\setminus \{1\}$ odd. So we have $\phi(o(x))=2^{i-1}$ but also $\phi(o(x))=2^{i-2}\phi(m)$; thus $\phi(m)=2$ holds. Note that $\phi(n)=2$ if and only if $n\in \{3,4,6\}$.
				Therefore, since $m$ is odd, we have that $m=3$ and then $x$ must have to be joined to an element of order $3$, against the assumptions.
				Hence $2^i\mid o(x)$ holds. By Lemma \ref{phiEulero}, since $\phi(o(x))=\phi(2^i)$, we have that $o(x)=2^i$ as we wanted.
				
				\begin{claim}
					\rm We recognize all the vertices that correspond to elements of order $p^i$ for all $i \in \mathbb{N}$.
				\end{claim}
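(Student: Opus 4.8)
The plan is to turn Claims \ref{claim1} and \ref{claim2} into a recognition procedure driven by induction on $i$, using only data visible in $\mathcal{P}(G)$. Throughout we exploit that, $G$ being abelian and not cyclic, the $\mathtt{N}$-classes of $\mathcal{P}(G)$ coincide with the $\diamond$-classes by Corollary \ref{abeldiamond_N}; hence for every vertex $x$ the size $|[x]_{\diamond}|$, the class $[1]_{\diamond}=\{1\}$, and all adjacencies can be read off directly from the power graph. Moreover the identity is pinned down as the unique star vertex by Corollary \ref{abelUPG_ciclico}.

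First I would settle the small orders that seed the induction. By Corollary \ref{TrivialDiamondClasses}, the involutions of $G$ are exactly the vertices $x\ne 1$ with $|[x]_{\diamond}|=1$; in particular, when $p=2$ this already recognizes the elements of order $p$. If $p=2$ we also need the elements of order $3$: since $\phi(n)=2$ forces $n\in\{3,4,6\}$, the vertices with $|[x]_{\diamond}|=2$ have order $3$, $4$, or $6$, and among these the order-$3$ elements are precisely those with no involution as a neighbour (an element of order $4$ or $6$ is joined to its unique involution $x^{o(x)/2}$, whereas an element of order $3$ is joined to none). As the involutions are already known, the set of order-$3$ elements is recognized. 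If $p\ne 2$, Claim \ref{claim1} with $i=1$ says directly that $o(x)=p$ iff $|[x]_{\diamond}|=p-1$, so the order-$p$ elements are recognized.

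For the inductive step fix $i\ge 2$ and suppose the vertices of order $p^{j}$ have been recognized for every $j<i$. Then the set $\{z\in G:o(z)\mid p^{i-1}\}=\{1\}\cup\bigcup_{j=1}^{i-1}\{z:o(z)=p^{j}\}$ is recognized, so for each vertex $x$ the number of its neighbours lying in this set is a quantity visible in $\mathcal{P}(G)$. Combining this count with $|[x]_{\diamond}|$ — and, when $p=2$, with the already available information of whether $[x]_{\diamond}\ne[1]_{\diamond}$ and whether $x$ has a neighbour of order $3$ — Claim \ref{claim1} (for $p\ne 2$) or Claim \ref{claim2} (for $p=2$) decides exactly whether $o(x)=p^{i}$. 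This recognizes all vertices of order $p^{i}$, completing the induction and hence the claim; since every datum used is graph-theoretic, the recognition is invariant under graph isomorphism.

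The one genuine subtlety — and the step I expect to be the main obstacle — is the apparent circularity in the case $p=2$: Claim \ref{claim2} phrases the recognition of order-$2^{i}$ elements in terms of the order-$3$ elements, and for $i=1$ even in terms of themselves. Both issues are resolved by \emph{not} invoking Claim \ref{claim2} at the base of the induction: the order-$2$ elements are fixed purely by Corollary \ref{TrivialDiamondClasses}, and the order-$3$ elements by the observation that $\phi^{-1}(2)=\{3,4,6\}$ together with non-adjacency to the (already recognized) involutions. Everything else is bookkeeping of adjacency and $\diamond$-class sizes plus an appeal to the two claims proved above.
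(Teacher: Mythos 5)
Your proposal is correct and follows essentially the same route as the paper: induction on $i$ separately for $p\ne 2$ and $p=2$, using Claims \ref{claim1} and \ref{claim2}, with the involutions pinned down via the size-one $\diamond$-classes distinct from $[1]_{\diamond}$ and the order-$3$ elements identified among the size-two classes by the absence of involution neighbours (i.e.\ $\phi(n)=2$ iff $n\in\{3,4,6\}$). Your explicit remark that the $p=2$ base case avoids circularity by not invoking Claim \ref{claim2} is a fair clarification of what the paper does implicitly, but it is not a different argument.
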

				We distinguish the cases $p\ne 2$ and $p=2$ and, for both, we proceed by induction on $i$.\\
				Assume $p\ne2$. 
				By Claim \ref{claim1}, we recognize the elements of order $p$ as those elements lying in $\diamond$-classes of size $p-1$.
				So, suppose the claim true for all $j\leq i-1 \geq 1$. By inductive hypothesis we recognize all the elements of order a divisor of $p^{i-1}$. Then, by Claim \ref{claim1}, we recognize the elements of order $p^i$ as those vertices lying in $\diamond$-classes of size $p^{i-1}(p-1)$ and having $p^{i-1}$ among the elements of order a divisor of $p^{i-1}$.\\
				Assume next $p=2$.
				By Claim \ref{claim2}, we recognize the elements of order $2$ as those vertices lying in $\diamond$-classes of size $1$ distinct from the class $[1]_{\diamond}$.
				Note that we can also recognize the elements of order $3$ since they are the elements in $\diamond$-classes of order $2$ and such that they not have neighbours among the elements of order $2$. Indeed $\phi(n)=2$ if and only if $n\in \{3,4,6\}$.
				Then, suppose the claim true for all $j\leq i-1\geq 1$. By inductive hypothesis we recognize all the elements of order a divisor of $2^{i-1}$. Then, by Claim \ref{claim2}, we recognize the elements of order $2^i$ as those vertices lying in $\diamond$-classes of size $2^{i-1}$, having $2^{i-1}$ neighbours among the elements of order a divisor of $2^{i-1}$ and not having neighbours among the elements of order $3$.
				
				Therefore we recognize the Sylow $p$-subgroup $P$ of $G$, but, currently, not its theoretical structure. By Corollary \ref{structureTheoremOfAbelianpGroups}, we have that each Sylow subgroup is direct product of cyclic groups. Then since we know the number of elements of each order in $P$ we know how $P$ decomposes into cyclic $p$-groups. By Theorem \ref{structureTheoremOfNilpotentGroup}, $G$ is direct product of its Sylow subgroups, in particular $G\cong P\times H$ for $H$ the direct product of all Sylows distinct from $P$. Note that $H$ is also the unique complement of $P$. Now $H$ consists of those elements which have no neighbours in $P$ apart from the identity. By Remark \ref{InducedPG}, we have that the subgraph induced by $H$ is $\mathcal{P}(H)$. By induction on the order of $G$, we can determine $H$ up to group isomorphism.
				
				Note that all we did above on $G_1$ is invariant by graph isomorphism.
				Indeed we only use the structure of the $\mathtt{N}$-classes of $\mathcal{P}(G_1)$ that, by Lemma \ref{Niso}, is maintained by graphs isomorphism.
				So, since $\mathcal{P}(G_1)\cong \mathcal{P}(G_2)$, we have $G_1\cong G_2$.
				In conclusion, by Remark \ref{isoGroup-isoGraph}, since $G_1\cong G_2$, we also have $\vec{\mathcal{P}}(G_1)\cong \vec{\mathcal{P}}(G_2)$.
			\end{proof}
			
			So if $G$ is an abelian group we not only can reconstruct $\vec{\mathcal{P}}(G)$ from $\mathcal{P}(G)$ but we can also determine $G$, up to group isomorphism from $\mathcal{P}(G)$. 
			
		\section{Case II: Generic groups}
	\label{3}
			
			If in Theorem \ref{abelUPG} we delete the hypothesis that the two groups are abelian the theorem fails. Indeed take a group of exponent $3$, which means that $x^3=1$ for all $x \in G$. Then $\mathcal{P}(G)$ consists of $(|G|-1)/2$ triangles sharing a common vertex (the identity). Then two non isomorphic of such groups have the same power graph. For example let $G_1=C_3\times C_3 \times C_3$ and $G_2$ be the unique non abelian group of order $27$ of exponent $3$.
			
			So, in general, $\mathcal{P}(G)$ does not uniquely determine the group $G$ up to group isomorphism. But our starting question remains:
			
			\emph{Are we able to reconstruct $\vec{\mathcal{P}}(G)$ from $\mathcal{P}(G)$?}
			
			We recall that we can reconstruct $\vec{\mathcal{P}}(G)$ from $\mathcal{P}(G)$ if we recognize the identity and all the $\diamond$-classes in $\mathcal{P}(G)$.
			 
			Let's focus on the research of the $\diamond$-classes.
			To find them, we also recall that we only need a way to break $\mathtt{N}$-classes since they are union of $\diamond$-classes.
			The proposition that follows takes a step in that direction.

				\begin{prop}{\rm \cite[Proposition 5]{Cameron_2}}
					\label{propC_y}Let $C$ be an $\mathtt{N}$-class different from the star class. Then exactly one of the following holds:
					
					\begin{enumerate}
						\item $C$ is a $\diamond$-class;
						
						\item If $y$ is an element in $C$ of maximum order, then $o(y)=p^r$ for some prime $p$ and some integer $r \geq 2$ and there exists $s\in [r-2]_{0} $ such that 
						
						$$C= \left\lbrace z \in \langle y\rangle \, |\, p^{s+1}\leq o(z)\leq p^r \right\rbrace. $$ 
						
						In particular the number of $\diamond$-classes into which $C$ splits is $r-s \geq 2$. The orders of the elements in those $\diamond$-classes are $p^{s+1},p^{s+2}, \dots, p^r$.
					\end{enumerate} 	
				\end{prop}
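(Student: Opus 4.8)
The plan is to show that whenever $C$ is not a $\diamond$-class the second alternative holds; the two alternatives are clearly mutually exclusive, since in case 2 the class $C$ splits into $r-s\geq 2$ distinct $\diamond$-classes. So assume from now on that $C$ is not a $\diamond$-class, fix an element $y\in C$ of maximum order, and write $H=\langle y\rangle$. First I would prove $C\subseteq H$: given $z\in C$ with $z\neq y$, from $z\,\mathtt{N}\,y$ we get $z\in N[z]=N[y]$, hence $z$ and $y$ are joined, so $z\in\langle y\rangle$ or $y\in\langle z\rangle$; in the latter case $o(y)\mid o(z)\leq o(y)$ forces $o(z)=o(y)$ and $\langle z\rangle=\langle y\rangle$, so in all cases $z\in H$. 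Combining this with Remark \ref{DiamondRefinementOfN} and Lemma \ref{OssDiamondRelation} (every element of $H$ generates the unique cyclic subgroup of its order, hence is $\diamond$-equivalent, a fortiori $\mathtt{N}$-equivalent, to any other element of $H$ of the same order), it follows that $C$ is a union of ``order shells'' $\{z\in H:o(z)=d\}$ taken over the divisors $d$ of $|H|$ for which one (equivalently every) element of $H$ of order $d$ lies in $[y]_{\mathtt{N}}$.

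Next I would show $o(y)$ is a prime power. Since $C$ is not a $\diamond$-class, by Corollary \ref{o-partition} it contains elements of at least two distinct orders, so there is $z\in C$ with $o(z)$ a proper divisor of $o(y)$; moreover $z\neq 1$ because $C\neq\mathcal{S}=[1]_{\mathtt{N}}$. If $o(y)$ were not a prime power, then $H=\langle y\rangle$ would be cyclic but not a $p$-group, so Lemma \ref{nonGeneratoriInPG}, applied inside $H$ (whose power graph is the subgraph of $\mathcal{P}(G)$ induced by $H$, by Remark \ref{InducedPG}), would produce $w\in H$ not joined to $z$; but $w$ is a power of $y$, hence $w\in N[y]$, contradicting $N[z]=N[y]$. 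Therefore $o(y)=p^{r}$ for some prime $p$. Also $r\geq 2$: if $r=0$ then $y=1\in\mathcal{S}$, and if $r=1$ then every element of $C\subseteq H$ would have order $p$, making $C$ a single $\circ$-class, hence by Corollary \ref{o-partition} a single $\diamond$-class — both excluded.

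Finally, with $o(y)=p^{r}$ and $r\geq 2$, for each $j\in\{0,1,\dots,r\}$ choose $z_j\in H$ with $o(z_j)=p^{j}$ (so $z_0=1$, and we may take $z_r=y$). Whenever $j\leq k$ we have $z_j\in\langle z_k\rangle$, so $z_j$ is a power of $z_k$, and since both have $p$-power order with $o(z_j)\leq o(z_k)$, Lemma \ref{NdipotenzeDip} gives $N[z_j]\supseteq N[z_k]$; hence $N[z_0]\supseteq N[z_1]\supseteq\cdots\supseteq N[z_r]$. Consequently the set $S=\{\,j:\,N[z_j]=N[z_r]\,\}$ is upward closed in $\{0,\dots,r\}$, so $S=\{s+1,\dots,r\}$ with $s=\min S-1$, and by the shell description of $C$ this yields $C=\{z\in\langle y\rangle:\,p^{s+1}\leq o(z)\leq p^{r}\}$. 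Here $0\notin S$ — otherwise $N[z_r]=N[z_0]=N[1]=G$, making $y$ a star vertex and $C=\mathcal{S}$ — so $s\geq 0$, and $|S|\geq 2$ because $C$ is not a single $\diamond$-class, so $r-s\geq 2$ and $s\in[r-2]_0$. The $\diamond$-classes into which $C$ splits are, by Corollary \ref{o-partition}, exactly the $\circ$-classes of orders $p^{s+1},p^{s+2},\dots,p^{r}$ inside $H$, which are $r-s$ in number. I expect the main obstacle to be the second step — forcing $o(y)$ to be a prime power — together with the idea of descending into $\langle y\rangle$ and exploiting the monotonicity of the closed neighbourhoods along the chain of cyclic $p$-subgroups via Lemma \ref{NdipotenzeDip}; once that chain is in place the upper-interval structure of $S$, and hence of $C$, is immediate.
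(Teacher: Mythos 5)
Your proof is correct, and its skeleton matches the paper's: show $C\subseteq\langle y\rangle$ for $y$ of maximum order, force prime-power order, then use Lemma \ref{NdipotenzeDip} to identify $C$ as an upper interval of orders in $\langle y\rangle$. The genuinely different sub-step is the prime-power argument: the paper takes $x,y\in C$ with $o(x)=k$ properly dividing $o(y)=kl$ and, assuming primes $p\mid k$, $q\mid l$ with $q\neq p$, manufactures $z\in\langle y\rangle$ of order $(k/p)q$ whose order is incomparable with $o(x)$, contradicting $N[x]=N[y]$; you instead prove $C\subseteq\langle y\rangle$ first and invoke Lemma \ref{nonGeneratoriInPG} inside $\langle y\rangle$ (exactly the observation the paper records after Corollary \ref{PGcompleto_1}), which disposes of the non-prime-power case in one stroke and then makes every element of $C$ automatically of $p$-power order. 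Your chain $N[z_0]\supseteq\cdots\supseteq N[z_r]$ with the upward-closed set $S$ repackages the paper's two-inclusion argument (minimum order $p^{s+1}$, maximum order $p^r$, Lemma \ref{NdipotenzeDip} applied twice) but yields the interval structure, $s\geq 0$ and $r-s\geq 2$ rather cleanly, at the price of first establishing the ``order shell'' decomposition via uniqueness of subgroups in cyclic groups. One clause worth adding: when you apply Lemma \ref{nonGeneratoriInPG}, the non-neighbour $w$ it supplies has order incomparable with $o(z)$ (this is what its proof produces), so $w\neq z$ and hence $w\in N[y]\setminus N[z]$ is a genuine contradiction; as stated, the lemma alone does not formally exclude $w=z$.
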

				
				\begin{proof}
					If $C$ has all elements with the same order then, by Lemma \ref{PrimoLegameDueClassi}, $C$ is a $\diamond$-class.
					
					Assume next that there exist $x,y\in C$ such that $o(x)<o(y)$. Then $x\neq y$. Note also that both $x$ and $y$ are different from $1$ as $C \not= \mathcal{S}$.
					Since $x\mathtt{N}y$, we have $N[x]=N[y]$. In particular, we deduce that $x$ and $y$ are joined and $o(x)$ is a proper divisor of $o(y)$. Let then $k:=o(x)$ and $kl:=o(y)$ for some $k,l\geq 2$. We show that both $k$ and $l$ are power of the same prime. It suffices to show that if $p$ is a prime dividing $k$, then the only prime dividing $l$ is $p$ itself. Assume, by contradiction,  that  we have a  prime $p\mid k$ and a prime $q\neq p$ such that $q\mid l.$ Then $(kq)/p=(k/p)q\mid kl=o(y)$ and thus there exists an element $z\in \langle y\rangle$ with $o(z)=(k/p)q$. Now $z\in N[y]=N[x]$. But we do not have $o(z)\mid o(x)$ nor $o(x)\mid o(z)$. Indeed assume that $o(z)\mid o(x)$. Then $(k/p)q\mid k=p(k/p)$ which implies the contradiction $q\mid p$. Assume next $o(x)\mid o(z)$. Then $k\mid (k/p)q$ and thus $p(k/p)\mid q(k/p)$ so that we reach the contradiction $p\mid q.$
					Then all the elements in $C$ have order a power of the same prime $p.$
					
					Choose now $x\in C$ such that $o(x)$ is minimum in $C$ and $y\in C$ such that $o(y)$ is maximum in $C$. Since those orders are distinct and different from $1$, we have that $o(x)=p^{s+1}$ for some $s\geq 0$ and $o(y)=p^{r}$ for some $r\geq s+2$. We are ready to show that $C$ has the form in the statement with respect to that $y$. 
					Pick $z\in C$. Since $z, y\in C$ we have $z\mathtt{N}y$ and since $o(z)\leq o(y)$, we have that $z$ is a power of $y$. Moreover $o(z)\geq p^{s+1}$. This shows that $$C\subseteq \{z\in \langle y\rangle: p^{s+1}\leq o(z)\leq p^r\}.$$ 
					Let next $z\in \langle y\rangle$ with $ o(z)\geq p^{s+1}$. Then $z\in N[y]=N[x]$ so that $z=x\in C$ or $z$ and $x$ are joined. In this last case we have that $x$ and $z$ are of prime power order and we also have that $o(z)\geq p^{s+1}=o(x),$ so we deduce that $x$ is power of $z$. Hence, by Lemma \ref{NdipotenzeDip}, $N[x]\supseteq N[z]$. Moreover, since $z$ is a power of $y$, again by Lemma \ref{NdipotenzeDip}, we also have $N[z]\supseteq N[y]=N(x)$ and thus $N[x]= N[z]$, so that $z\in C.$ Thus $C= \{z\in \langle y\rangle: o(z)\geq p^{s+1}\}.$
				\end{proof}
				
				The previous proposition highlights that there are two distinct types of $\mathtt{N}$-classes different from the star class. Let's distinguish them formally with new definitions. 
				
				\begin{definition}
					\label{NfirstType}\rm Let $C$ an $\mathtt{N}$-class different from the star class. If $C$ is also a $\diamond$-class, then $C$ is said to be a \emph{ $\mathtt{N}$-class of the first type}. In particular, for all $y \in C$, $C=[y]_{\mathtt{N}}=[y]_{\diamond}$.
				\end{definition}
			
				\begin{definition}
					\label{NsecondType}\rm Let $C$ an $\mathtt{N}$-class. If there exist a prime $p$, an integer $r\geq 2$ and $y\in C$ with $o(y)=p^r$ and also an integer $s \in [r-2]_0$ such that $$C=\{z \in \langle y \rangle \, | \, p^{s+1} \leq o(z) \leq p^r\},$$ then we denote $C$ with $C_y$, and we call $C_y$ a \emph{$\mathtt{N}$-class of the second type}, we call the triad $(p,r,s)$ the \emph{parameters of} $C$.
				\end{definition}
				
				If $C$ is either an $\mathtt{N}$-class of the first or of the second type, then we have that $C\subset \langle y \rangle$ for some $y \in C$.
				
				Note that with $C_y$ we refer exclusively to $\mathtt{N}$-classes of the second type. In particular such a class $C_y$ is not a $\diamond$-class, since contains elements of distinct orders. It is not the star class either because it does not contain $1$. 
				Then, by Proposition \ref{propC_y} and by Definitions \ref{NfirstType} and \ref{NsecondType}, taken $C$, an $\mathtt{N}$-class, we know that holds exactly one of these three cases:
				
				\begin{itemize}
					
					\item \emph{$C$ is of the first type},
					
					\item \emph{$C$ is of the second type} or
					
					\item \emph{$C$ is the star class}.
				\end{itemize}

				For an example of each type of $\mathtt{N}$-classes look at Figure \ref{UPG_S_4_NeDiamond} where $\mathcal{P}(S_4)$ is represented with all the $\mathtt{N}$ and $\diamond$-classes highlighted. The $\mathtt{N}$-class of the identity, the white vertex, is clearly the star class. The classes of the elements coloured with a shade of green, red or pink are examples of $\mathtt{N}$-classes of the first type. The remaining vertices, the ones in a shade of yellow/brown, form three $\mathtt{N}$-classes of the second type. If we examine in details the $\mathtt{N}$-classes of the second type of this example we obtain that the parameters $(p,r,s)$ are exactly: $p=2, r=2$ and $s=0$. 
				
				Remember that, by Corollary \ref{abeldiamond_N_1}, if $G$ is an abelian group, then an $\mathtt{N}$-class of $G$ can only be the star class or a class of the first type.
			
				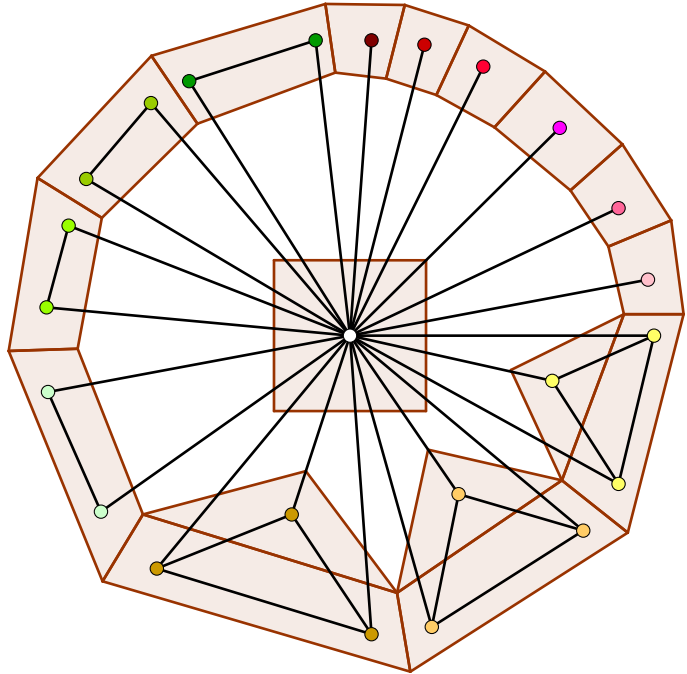
\begin{figure}
					\centering
					\begin{tikzpicture}[line cap=round,line join=round,>=triangle 45,x=1.0cm,y=1.0cm]
						\clip(-6.,2.) rectangle (4.,12.);
						\fill[line width=2.pt,color=zzttqq,fill=zzttqq,fill opacity=0.10000000149011612] (-3.00711852963106,9.809911400437345) -- (-1.1933294275569224,10.487257783623239) -- (-1.3240850694054276,11.397934907263139) -- (-3.6127226668446086,10.710096634610261) -- cycle;
						\fill[line width=2.pt,color=zzttqq,fill=zzttqq,fill opacity=0.10000000149011612] (-5.110910354913101,9.090550324695746) -- (-4.265322159020893,8.56129342866397) -- (-3.00711852963106,9.809911400437345) -- (-3.6127226668446086,10.710096634610261) -- cycle;
						\fill[line width=2.pt,color=zzttqq,fill=zzttqq,fill opacity=0.10000000149011612] (-5.110910354913101,9.090550324695746) -- (-5.490950168512438,6.796738592614046) -- (-4.58366337471664,6.826333803122147) -- (-4.265322159020893,8.56129342866397) -- cycle;
						\fill[line width=2.pt,color=zzttqq,fill=zzttqq,fill opacity=0.10000000149011612] (-3.7241420923381203,4.629779414821492) -- (-4.255820774314594,3.7428472333336757) -- (-5.490950168512438,6.796738592614046) -- (-4.58366337471664,6.826333803122147) -- cycle;
						\fill[line width=2.pt,color=zzttqq,fill=zzttqq,fill opacity=0.10000000149011612] (2.604291484014996,7.284127314600211) -- (2.400341814808526,8.181505859108677) -- (3.22633797509473,8.528220296759674) -- (3.389497710459906,7.284127314600211) -- cycle;
						\fill[line width=2.pt,color=zzttqq,fill=zzttqq,fill opacity=0.10000000149011612] (2.400341814808526,8.181505859108677) -- (1.9006651252526743,8.92592215171229) -- (2.5838965170943493,9.5377711593317) -- (3.22633797509473,8.528220296759674) -- cycle;
						\fill[line width=2.pt,color=zzttqq,fill=zzttqq,fill opacity=0.10000000149011612] (1.9006651252526743,8.92592215171229) -- (0.9013117461409711,9.762115795458815) -- (1.5669320840466672,10.502126775207563) -- (2.5838965170943493,9.5377711593317) -- cycle;
						\fill[line width=2.pt,color=zzttqq,fill=zzttqq,fill opacity=0.10000000149011612] (0.9013117461409711,9.762115795458815) -- (0.13650048661670827,10.1904101007924) -- (0.5625411481055639,11.112905047063636) -- (1.5669320840466672,10.502126775207563) -- cycle;
						\fill[line width=2.pt,color=zzttqq,fill=zzttqq,fill opacity=0.10000000149011612] (0.13650048661670827,10.1904101007924) -- (-0.5248128745958504,10.407672479699302) -- (-0.2789755820072525,11.384362056777448) -- (0.5625411481055639,11.112905047063636) -- cycle;
						\fill[line width=2.pt,color=zzttqq,fill=zzttqq,fill opacity=0.10000000149011612] (-0.2789755820072525,11.384362056777448) -- (-1.3240850694054276,11.397934907263139) -- (-1.1933294275569224,10.487257783623239) -- (-0.5248128745958504,10.407672479699302) -- cycle;
						\fill[line width=2.pt,color=zzttqq,fill=zzttqq,fill opacity=0.10000000149011612] (-2.,8.) -- (8.905437690318596E-4,7.999107867261943) -- (0.,6.) -- (-2.,6.) -- cycle;
						\fill[line width=2.pt,color=zzttqq,fill=zzttqq,fill opacity=0.10000000149011612] (-4.255820774314594,3.7428472333336757) -- (-0.20647165890010305,2.544644452014348) -- (-0.3808547962166728,3.589809124493197) -- (-3.7241420923381203,4.629779414821492) -- cycle;
						\fill[line width=2.pt,color=zzttqq,fill=zzttqq,fill opacity=0.10000000149011612] (-0.3808547962166728,3.589809124493197) -- (1.7884928071891157,5.08147088717034) -- (2.655278901316614,4.388042011868344) -- (-0.20647165890010305,2.544644452014348) -- cycle;
						\fill[line width=2.pt,color=zzttqq,fill=zzttqq,fill opacity=0.10000000149011612] (1.7884928071891157,5.08147088717034) -- (2.604291484014996,7.284127314600211) -- (3.389497710459906,7.284127314600211) -- (2.655278901316614,4.388042011868344) -- cycle;
						\fill[line width=2.pt,color=zzttqq,fill=zzttqq,fill opacity=0.10000000149011612] (2.604291484014996,7.284127314600211) -- (1.1171708774262428,6.54424864751115) -- (1.7884928071891157,5.08147088717034) -- cycle;
						\fill[line width=2.pt,color=zzttqq,fill=zzttqq,fill opacity=0.10000000149011612] (1.7884928071891157,5.08147088717034) -- (0.027873487448356438,5.488419264647737) -- (-0.3808547962166728,3.589809124493197) -- cycle;
						\fill[line width=2.pt,color=zzttqq,fill=zzttqq,fill opacity=0.10000000149011612] (-0.3808547962166728,3.589809124493197) -- (-1.580788935019527,5.203823602504077) -- (-3.7241420923381203,4.629779414821492) -- cycle;
						\draw [line width=1.pt,color=zzttqq] (-3.00711852963106,9.809911400437345)-- (-1.1933294275569224,10.487257783623239);
						\draw [line width=1.pt,color=zzttqq] (-1.1933294275569224,10.487257783623239)-- (-1.3240850694054276,11.397934907263139);
						\draw [line width=1.pt,color=zzttqq] (-1.3240850694054276,11.397934907263139)-- (-3.6127226668446086,10.710096634610261);
						\draw [line width=1.pt,color=zzttqq] (-3.6127226668446086,10.710096634610261)-- (-3.00711852963106,9.809911400437345);
						\draw [line width=1.pt,color=zzttqq] (-5.110910354913101,9.090550324695746)-- (-4.265322159020893,8.56129342866397);
						\draw [line width=1.pt,color=zzttqq] (-4.265322159020893,8.56129342866397)-- (-3.00711852963106,9.809911400437345);
						\draw [line width=1.pt,color=zzttqq] (-3.00711852963106,9.809911400437345)-- (-3.6127226668446086,10.710096634610261);
						\draw [line width=1.pt,color=zzttqq] (-3.6127226668446086,10.710096634610261)-- (-5.110910354913101,9.090550324695746);
						\draw [line width=1.pt,color=zzttqq] (-5.110910354913101,9.090550324695746)-- (-5.490950168512438,6.796738592614046);
						\draw [line width=1.pt,color=zzttqq] (-5.490950168512438,6.796738592614046)-- (-4.58366337471664,6.826333803122147);
						\draw [line width=1.pt,color=zzttqq] (-4.58366337471664,6.826333803122147)-- (-4.265322159020893,8.56129342866397);
						\draw [line width=1.pt,color=zzttqq] (-4.265322159020893,8.56129342866397)-- (-5.110910354913101,9.090550324695746);
						\draw [line width=1.pt,color=zzttqq] (-3.7241420923381203,4.629779414821492)-- (-4.255820774314594,3.7428472333336757);
						\draw [line width=1.pt,color=zzttqq] (-4.255820774314594,3.7428472333336757)-- (-5.490950168512438,6.796738592614046);
						\draw [line width=1.pt,color=zzttqq] (-5.490950168512438,6.796738592614046)-- (-4.58366337471664,6.826333803122147);
						\draw [line width=1.pt,color=zzttqq] (-4.58366337471664,6.826333803122147)-- (-3.7241420923381203,4.629779414821492);
						\draw [line width=1.pt,color=zzttqq] (2.604291484014996,7.284127314600211)-- (2.400341814808526,8.181505859108677);
						\draw [line width=1.pt,color=zzttqq] (2.400341814808526,8.181505859108677)-- (3.22633797509473,8.528220296759674);
						\draw [line width=1.pt,color=zzttqq] (3.22633797509473,8.528220296759674)-- (3.389497710459906,7.284127314600211);
						\draw [line width=1.pt,color=zzttqq] (3.389497710459906,7.284127314600211)-- (2.604291484014996,7.284127314600211);
						\draw [line width=1.pt,color=zzttqq] (2.400341814808526,8.181505859108677)-- (1.9006651252526743,8.92592215171229);
						\draw [line width=1.pt,color=zzttqq] (1.9006651252526743,8.92592215171229)-- (2.5838965170943493,9.5377711593317);
						\draw [line width=1.pt,color=zzttqq] (2.5838965170943493,9.5377711593317)-- (3.22633797509473,8.528220296759674);
						\draw [line width=1.pt,color=zzttqq] (3.22633797509473,8.528220296759674)-- (2.400341814808526,8.181505859108677);
						\draw [line width=1.pt,color=zzttqq] (1.9006651252526743,8.92592215171229)-- (0.9013117461409711,9.762115795458815);
						\draw [line width=1.pt,color=zzttqq] (0.9013117461409711,9.762115795458815)-- (1.5669320840466672,10.502126775207563);
						\draw [line width=1.pt,color=zzttqq] (1.5669320840466672,10.502126775207563)-- (2.5838965170943493,9.5377711593317);
						\draw [line width=1.pt,color=zzttqq] (2.5838965170943493,9.5377711593317)-- (1.9006651252526743,8.92592215171229);
						\draw [line width=1.pt,color=zzttqq] (0.9013117461409711,9.762115795458815)-- (0.13650048661670827,10.1904101007924);
						\draw [line width=1.pt,color=zzttqq] (0.13650048661670827,10.1904101007924)-- (0.5625411481055639,11.112905047063636);
						\draw [line width=1.pt,color=zzttqq] (0.5625411481055639,11.112905047063636)-- (1.5669320840466672,10.502126775207563);
						\draw [line width=1.pt,color=zzttqq] (1.5669320840466672,10.502126775207563)-- (0.9013117461409711,9.762115795458815);
						\draw [line width=1.pt,color=zzttqq] (0.13650048661670827,10.1904101007924)-- (-0.5248128745958504,10.407672479699302);
						\draw [line width=1.pt,color=zzttqq] (-0.5248128745958504,10.407672479699302)-- (-0.2789755820072525,11.384362056777448);
						\draw [line width=1.pt,color=zzttqq] (-0.2789755820072525,11.384362056777448)-- (0.5625411481055639,11.112905047063636);
						\draw [line width=1.pt,color=zzttqq] (0.5625411481055639,11.112905047063636)-- (0.13650048661670827,10.1904101007924);
						\draw [line width=1.pt,color=zzttqq] (-0.2789755820072525,11.384362056777448)-- (-1.3240850694054276,11.397934907263139);
						\draw [line width=1.pt,color=zzttqq] (-1.3240850694054276,11.397934907263139)-- (-1.1933294275569224,10.487257783623239);
						\draw [line width=1.pt,color=zzttqq] (-1.1933294275569224,10.487257783623239)-- (-0.5248128745958504,10.407672479699302);
						\draw [line width=1.pt,color=zzttqq] (-0.5248128745958504,10.407672479699302)-- (-0.2789755820072525,11.384362056777448);
						\draw [line width=1.pt,color=zzttqq] (-2.,8.)-- (8.905437690318596E-4,7.999107867261943);
						\draw [line width=1.pt,color=zzttqq] (8.905437690318596E-4,7.999107867261943)-- (0.,6.);
						\draw [line width=1.pt,color=zzttqq] (0.,6.)-- (-2.,6.);
						\draw [line width=1.pt,color=zzttqq] (-2.,6.)-- (-2.,8.);
						\draw [line width=1.pt,color=zzttqq] (-4.255820774314594,3.7428472333336757)-- (-0.20647165890010305,2.544644452014348);
						\draw [line width=1.pt,color=zzttqq] (-0.20647165890010305,2.544644452014348)-- (-0.3808547962166728,3.589809124493197);
						\draw [line width=1.pt,color=zzttqq] (-0.3808547962166728,3.589809124493197)-- (-3.7241420923381203,4.629779414821492);
						\draw [line width=1.pt,color=zzttqq] (-3.7241420923381203,4.629779414821492)-- (-4.255820774314594,3.7428472333336757);
						\draw [line width=1.pt,color=zzttqq] (-0.3808547962166728,3.589809124493197)-- (1.7884928071891157,5.08147088717034);
						\draw [line width=1.pt,color=zzttqq] (1.7884928071891157,5.08147088717034)-- (2.655278901316614,4.388042011868344);
						\draw [line width=1.pt,color=zzttqq] (2.655278901316614,4.388042011868344)-- (-0.20647165890010305,2.544644452014348);
						\draw [line width=1.pt,color=zzttqq] (-0.20647165890010305,2.544644452014348)-- (-0.3808547962166728,3.589809124493197);
						\draw [line width=1.pt,color=zzttqq] (1.7884928071891157,5.08147088717034)-- (2.604291484014996,7.284127314600211);
						\draw [line width=1.pt,color=zzttqq] (2.604291484014996,7.284127314600211)-- (3.389497710459906,7.284127314600211);
						\draw [line width=1.pt,color=zzttqq] (3.389497710459906,7.284127314600211)-- (2.655278901316614,4.388042011868344);
						\draw [line width=1.pt,color=zzttqq] (2.655278901316614,4.388042011868344)-- (1.7884928071891157,5.08147088717034);
						\draw [line width=1.pt,color=zzttqq] (2.604291484014996,7.284127314600211)-- (1.1171708774262428,6.54424864751115);
						\draw [line width=1.pt,color=zzttqq] (1.1171708774262428,6.54424864751115)-- (1.7884928071891157,5.08147088717034);
						\draw [line width=1.pt,color=zzttqq] (1.7884928071891157,5.08147088717034)-- (2.604291484014996,7.284127314600211);
						\draw [line width=1.pt,color=zzttqq] (1.7884928071891157,5.08147088717034)-- (0.027873487448356438,5.488419264647737);
						\draw [line width=1.pt,color=zzttqq] (0.027873487448356438,5.488419264647737)-- (-0.3808547962166728,3.589809124493197);
						\draw [line width=1.pt,color=zzttqq] (-0.3808547962166728,3.589809124493197)-- (1.7884928071891157,5.08147088717034);
						\draw [line width=1.pt,color=zzttqq] (-0.3808547962166728,3.589809124493197)-- (-1.580788935019527,5.203823602504077);
						\draw [line width=1.pt,color=zzttqq] (-1.580788935019527,5.203823602504077)-- (-3.7241420923381203,4.629779414821492);
						\draw [line width=1.pt,color=zzttqq] (-3.7241420923381203,4.629779414821492)-- (-0.3808547962166728,3.589809124493197);
						\draw [line width=1.pt] (-3.6189179549840818,10.083616523866972)-- (-1.,7.);
						\draw [line width=1.pt] (-4.470198577067798,9.077557606858948)-- (-1.,7.);
						\draw [line width=1.pt] (-4.702366019454266,8.458444427161703)-- (-1.,7.);
						\draw [line width=1.pt] (-4.992575322437351,7.374996362691523)-- (-1.,7.);
						\draw [line width=1.pt] (-4.973228035571812,6.252853724490266)-- (-1.,7.);
						\draw [line width=1.pt] (-4.276725708412408,4.666376201516074)-- (-1.,7.);
						\draw [line width=1.pt] (-3.541528807521925,3.9118320137600566)-- (-1.,7.);
						\draw [line width=1.pt] (-1.7666619570249664,4.628983561782251)-- (-1.,7.);
						\draw [line width=1.pt] (-0.7168249251532304,3.041204104810805)-- (-1.,7.);
						\draw [line width=1.pt] (0.07641383633386878,3.1379405391384996)-- (-1.,7.);
						\draw [line width=1.pt] (0.428937407628609,4.898618571476549)-- (-1.,7.);
						\draw [line width=1.pt] (2.069184383484386,4.414861472264069)-- (-1.,7.);
						\draw [line width=1.pt] (2.533519268257322,5.033974651961314)-- (-1.,7.);
						\draw [line width=1.pt] (1.661554594802546,6.400870768344779)-- (-1.,7.);
						\draw [line width=1.pt] (3.,7.)-- (-1.,7.);
						\draw [line width=1.pt] (2.920465005568102,7.742594813136765)-- (-1.,7.);
						\draw [line width=1.pt] (2.533519268257322,8.69061186954817)-- (-1.,7.);
						\draw [line width=1.pt] (1.759627793635761,9.754712647152811)-- (-1.,7.);
						\draw [line width=1.pt] (0.7535688766277328,10.567298695505446)-- (-1.,7.);
						\draw [line width=1.pt] (-0.02032259799382734,10.85750799848853)-- (-1.,7.);
						\draw [line width=1.pt] (-0.7168249251532315,10.915549859085147)-- (-1.,7.);
						\draw [line width=1.pt] (-1.4520218260437137,10.915549859085147)-- (-1.,7.);
						\draw [line width=1.pt] (-3.1158884964800677,10.373825826850057)-- (-1.,7.);
						
						\draw [line width=1.pt] (-4.470198577067798,9.077557606858948)-- (-3.6189179549840818,10.083616523866972);
						\draw [line width=1.pt] (-3.1158884964800677,10.373825826850057)-- (-1.4520218260437137,10.915549859085147);
						\draw [line width=1.pt] (-4.702366019454266,8.458444427161703)-- (-4.992575322437351,7.374996362691523);
						\draw [line width=1.pt] (-4.973228035571812,6.252853724490266)-- (-4.276725708412408,4.666376201516074);
						\draw [line width=1.pt] (-3.541528807521925,3.9118320137600566)-- (-1.7666619570249664,4.628983561782251);
						\draw [line width=1.pt] (-3.541528807521925,3.9118320137600566)-- (-0.7168249251532304,3.041204104810805);
						\draw [line width=1.pt] (-1.7666619570249664,4.628983561782251)-- (-0.7168249251532304,3.041204104810805);
						\draw [line width=1.pt] (0.428937407628609,4.898618571476549)-- (0.07641383633386878,3.1379405391384996);
						\draw [line width=1.pt] (0.428937407628609,4.898618571476549)-- (2.069184383484386,4.414861472264069);
						\draw [line width=1.pt] (0.07641383633386878,3.1379405391384996)-- (2.069184383484386,4.414861472264069);
						\draw [line width=1.pt] (1.661554594802546,6.400870768344779)-- (2.533519268257322,5.033974651961314);
						\draw [line width=1.pt] (1.661554594802546,6.400870768344779)-- (3.,7.);
						\draw [line width=1.pt] (3.,7.)-- (2.533519268257322,5.033974651961314);
						\begin{scriptsize}
							\draw [fill=ffffww] (2.533519268257322,5.033974651961314) circle (2.5pt);
							\draw [fill=ffffww] (1.661554594802546,6.400870768344779) circle (2.5pt);
							\draw [fill=ffffww] (3.,7.) circle (2.5pt);
							
							\draw [fill=cczzqq] (-0.7168249251532304,3.041204104810805) circle (2.5pt);
							\draw [fill=cczzqq] (-3.541528807521925,3.9118320137600566) circle (2.5pt);
							\draw [fill=cczzqq] (-1.7666619570249664,4.628983561782251) circle (2.5pt);
							
							\draw [fill=ffccww] (0.07641383633386878,3.1379405391384996) circle (2.5pt);
							\draw [fill=ffccww] (0.428937407628609,4.898618571476549) circle (2.5pt);
							\draw [fill=ffccww] (2.069184383484386,4.414861472264069) circle (2.5pt);
							
							\draw [fill=qqzzqq] (-1.4520218260437137,10.915549859085147) circle (2.5pt);
							\draw [fill=qqzzqq] (-3.1158884964800677,10.373825826850057) circle (2.5pt);
							
							\draw [fill=zzccqq] (-4.470198577067798,9.077557606858948) circle (2.5pt);
							\draw [fill=zzccqq] (-3.6189179549840818,10.083616523866972) circle (2.5pt);
							
							\draw [fill=zzffqq] (-4.702366019454266,8.458444427161703) circle (2.5pt);
							\draw [fill=zzffqq] (-4.992575322437351,7.374996362691523) circle (2.5pt);
							
							\draw [fill=ccffcc] (-4.973228035571812,6.252853724490266) circle (2.5pt);
							\draw [fill=ccffcc] (-4.276725708412408,4.666376201516074) circle (2.5pt);
							
							\draw [fill=ffttww] (1.759627793635761,9.754712647152811) circle (2.5pt);
							
							\draw [fill=yqqqqq] (-0.7168249251532315,10.915549859085147) circle (2.5pt);
								
							\draw [fill=ccqqqq] (-0.02032259799382734,10.85750799848853) circle (2.5pt);
						
							\draw [fill=ffcqcb] (2.920465005568102,7.742594813136765) circle (2.5pt);
							
							\draw [fill=ffwwzz] (2.533519268257322,8.69061186954817) circle (2.5pt);
							
							\draw [fill=ffqqtt] (0.7535688766277328,10.567298695505446) circle (2.5pt);
							
							\draw [fill=ffffff] (-1.,7.) circle (2.5pt);
						\end{scriptsize}
					\end{tikzpicture}
					\caption{$\mathcal{P}(S_4)$ where all the vertices with the same colour are in the same $\mathtt{N}$-class, and vertices in the same box are in the same $\diamond$-class.}
					\label{UPG_S_4_NeDiamond}
				\end{figure}
							
				The distinction of the types of $\mathtt{N}$-classes is going to allow us to break $\mathtt{N}$-classes in $\diamond$-classes. In general, at the moment, we are not able to recognize which of the above three types is a given $\mathtt{N}$-class.
				We have to introduce a new player that will help us in the distinction of the type of all $\mathtt{N}$-classes.
				
				\begin{definition}
					\rm For any subset $X$ of a group $G$, let $N[X]:=\bigcap_{x\in X}  N[x]$, and let $\hat{X}=N[N[X]]$.
				\end{definition}
				
				Before using this new definition we expose some immediate results related to it, all contained in the following lemma. Note that, in the article \cite{Cameron_2}, Cameron never strictly refers to some of the results contained in the lemma below, but they seem to be quite necessary and useful to make next proofs formal and clear. 
				
				\begin{lemma}
					\label{operatoreChiusura} Let $G$ be a group and $C$ be an $\mathtt{N}$-class. Then the followings hold:
					\begin{enumerate}
						\item[\rm i)] If $A \subseteq B\subseteq G$ then $N[A] \supseteq N[B]$;
						
						\item[\rm ii)] For all $X\subseteq G$ we have that $X\subseteq \hat{X}$ and $\mathcal{S} \subseteq \hat{X}$;
						
						\item[\rm iii)] If $y\in C$, then we have that $N[C]=N[y]$, and that $\hat{C}=\bigcap_{z \in N[y]} N[z] \subseteq N[y] $.
						
						\item[\rm iv)] $\hat{C}$ is union of $\mathtt{N}$-classes.
					\end{enumerate}
				\end{lemma}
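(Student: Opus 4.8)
The plan is to establish the four items in order, each resting on two elementary facts: the \emph{symmetry} of the closed neighbourhood, namely $a\in N[z]$ if and only if $z\in N[a]$ (both assertions unravel to ``$a=z$ or $\{a,z\}\in E$''), and the fact that $1$ is a star vertex, so $1\in N[x]$ for every $x\in G$.

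For i), I would simply note that $A\subseteq B$ makes $N[B]=\bigcap_{x\in B}N[x]$ an intersection over an index set containing that of $N[A]=\bigcap_{x\in A}N[x]$, hence $N[B]\subseteq N[A]$. For ii), pick $x\in X$ and an arbitrary $z\in N[X]=\bigcap_{x'\in X}N[x']$; then $z\in N[x]$, so by symmetry $x\in N[z]$, and since $z\in N[X]$ was arbitrary we get $x\in\bigcap_{z\in N[X]}N[z]=N[N[X]]=\hat{X}$, proving $X\subseteq\hat{X}$. For the inclusion $\mathcal{S}\subseteq\hat{X}$, recall that a star vertex $s$ satisfies $N[s]=G$, hence $z\in N[s]$ and so $s\in N[z]$ for \emph{every} $z\in G$; as $N[X]\neq\emptyset$ (it contains the identity, which is a star vertex), it follows that $s\in\bigcap_{z\in N[X]}N[z]=\hat{X}$.

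For iii), the defining property of an $\mathtt{N}$-class gives $N[x]=N[y]$ for all $x\in C$, whence $N[C]=\bigcap_{x\in C}N[x]=N[y]$; substituting, $\hat{C}=N[N[C]]=\bigcap_{z\in N[y]}N[z]$, and since $y\in N[y]$ the factor $N[y]$ occurs in this intersection, giving $\hat{C}\subseteq N[y]$. For iv), the key remark is that for each fixed $z\in G$ the set $N[z]$ is itself a union of $\mathtt{N}$-classes: if $a\in N[z]$ and $a\mathtt{N}b$ then $z\in N[a]=N[b]$, so $b\in N[z]$. Since the $\mathtt{N}$-classes partition $G$, any intersection of unions of $\mathtt{N}$-classes is again such a union; applying this to $\hat{C}=\bigcap_{z\in N[y]}N[z]$ from iii) completes the proof.

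I do not anticipate a genuine obstacle: every step is formal once the symmetry of $N[\cdot]$ is in hand. The only points deserving care are checking that $N[X]$ is nonempty in ii) so that the intersection defining $\hat{X}$ ranges over a nonempty family, and unwinding the nested applications of the $N[\cdot]$ operator in iii) without confusing $N[N[y]]$ with $N[y]$.
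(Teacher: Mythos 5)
Your proof is correct and follows essentially the same route as the paper: i) and iii) are identical, ii) uses the same symmetry-of-$N[\cdot]$ argument, and your iv) (each $N[z]$ is a union of $\mathtt{N}$-classes, and an intersection of such sets is again one) is just a repackaging of the paper's direct verification that $\hat{C}$ is closed under the relation $\mathtt{N}$. The extra remark about $N[X]$ being nonempty is harmless but not actually needed for $\mathcal{S}\subseteq\hat{X}$.
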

				\begin{proof}
					\begin{enumerate}
						\item[i)] $N[B]=\bigcap_{x\in B}  N[x] = \bigcap_{x \in A} N[v] \cap \bigcap_{x \in B\setminus A} N[x] \subseteq N[A] $.
						
						\item[ii)] 
						Let $y\in X$, let's see that $y \in \hat{X}=\bigcap_{x \in N[X]} N[x] $. Let $x \in N[X]=\bigcap_{z\in X}N[z]$, then $x \in N[y]$, hence $y \in N[x]$. Now let $s\in \mathcal{S}$. $s \in N[x]$ for all $x \in N[X]$, then $s\in \hat{X}$.

						\item[iii)] $N[C]=\bigcap_{z \in C} N[z]= N[y] $ because the neighbourhoods of the elements in $C$ are all equal. Hence, $\hat{C}=N[N[C]]=\bigcap_{z\in N[y]} N[z]$. Now, by $y\in N[y]$, it follows that $\hat{C}\subseteq N[y]$.
						
						\item[iv)] Let $x\in \hat{C}$. We pick $z\in G$ such that $z\mathtt{N} x$, and we show that $z \in \hat{C}$. 
						From $z\mathtt{N}x$ we have $N[z]=N[x]$. Now let $y \in N[C]$, we have that $x\in N[y]$, hence $y \in N[x]=N[z]$, thus $z\in N[y]$ holds. Therefore we have that $z \in \hat{C}$.
					\end{enumerate}
				\end{proof}
				
				Let's see some example exclusively related to $\mathtt{N}$-classes.
				Look at Figure \ref{UPG_C_12_Nclassi}, it represents $\mathcal{P}(C_{12})$. In that figure the set of white vertices is the star class. Consider the $\mathtt{N}$-class composed by the two green vertices, let call it $C$. Since $C_{12}$ is abelian, $C$ is a $\diamond$-class.
				For $y \in C$, by lemma \ref{operatoreChiusura} iii), $N[C]=N[y]$. Then $N[C]$ is composed by all the vertices coloured in green, purple, pink and white. Instead $\hat{C}$ is composed by only the green, purple and white vertices. Indeed the pink one is not joined to the purple vertices so it is not in $\hat{C}$. Note that if we start with $C'$ as the $\mathtt{N}$-class of the purple vertices, then $N[C']\not=N[C]$, but $\hat{C}'=\hat{C}$.
				
				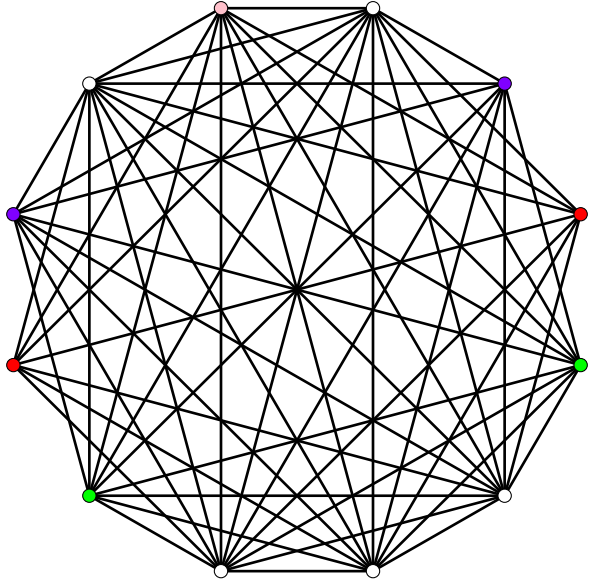
\begin{figure}
					\centering
					\begin{tikzpicture}[line cap=round,line join=round,>=triangle 45,x=1.0cm,y=1.0cm]
						\clip(1.,-4.5) rectangle (9.,4.);
						\draw [line width=1.pt] (4.,-4.)-- (2.2679491924311215,-3.);
						\draw [line width=1.pt] (4.,-4.)-- (1.2679491924311215,-1.2679491924311197);
						\draw [line width=1.pt] (4.,-4.)-- (1.2679491924311224,0.7320508075688783);
						\draw [line width=1.pt] (4.,-4.)-- (2.2679491924311233,2.4641016151377566);
						\draw [line width=1.pt] (4.,-4.)-- (4.,3.4641016151377553);
						\draw [line width=1.pt] (4.,-4.)-- (6.,3.4641016151377553);
						\draw [line width=1.pt] (4.,-4.)-- (7.7320508075688785,2.4641016151377544);
						\draw [line width=1.pt] (4.,-4.)-- (8.732050807568879,0.7320508075688769);
						\draw [line width=1.pt] (4.,-4.)-- (8.732050807568879,-1.2679491924311224);
						\draw [line width=1.pt] (4.,-4.)-- (7.732050807568877,-3.);
						\draw [line width=1.pt] (4.,-4.)-- (6.,-4.);
						\draw [line width=1.pt] (2.2679491924311215,-3.)-- (1.2679491924311224,0.7320508075688783);
						\draw [line width=1.pt] (2.2679491924311215,-3.)-- (4.,3.4641016151377553);
						\draw [line width=1.pt] (2.2679491924311215,-3.)-- (7.7320508075688785,2.4641016151377544);
						\draw [line width=1.pt] (2.2679491924311215,-3.)-- (8.732050807568879,-1.2679491924311224);
						\draw [line width=1.pt] (2.2679491924311215,-3.)-- (6.,-4.);
						\draw [line width=1.pt] (1.2679491924311215,-1.2679491924311197)-- (4.,3.4641016151377553);
						\draw [line width=1.pt] (1.2679491924311215,-1.2679491924311197)-- (8.732050807568879,0.7320508075688769);
						\draw [line width=1.pt] (1.2679491924311215,-1.2679491924311197)-- (6.,-4.);
						\draw [line width=1.pt] (1.2679491924311224,0.7320508075688783)-- (7.7320508075688785,2.4641016151377544);
						\draw [line width=1.pt] (1.2679491924311224,0.7320508075688783)-- (6.,-4.);
						\draw [line width=1.pt] (2.2679491924311233,2.4641016151377566)-- (8.732050807568879,-1.2679491924311224);
						\draw [line width=1.pt] (2.2679491924311233,2.4641016151377566)-- (1.2679491924311215,-1.2679491924311197);
						\draw [line width=1.pt] (2.2679491924311233,2.4641016151377566)-- (7.7320508075688785,2.4641016151377544);
						\draw [line width=1.pt] (2.2679491924311233,2.4641016151377566)-- (4.,3.4641016151377553);
						\draw [line width=1.pt] (2.2679491924311233,2.4641016151377566)-- (6.,3.4641016151377553);
						\draw [line width=1.pt] (2.2679491924311233,2.4641016151377566)-- (8.732050807568879,0.7320508075688769);
						\draw [line width=1.pt] (2.2679491924311233,2.4641016151377566)-- (1.2679491924311224,0.7320508075688783);
						\draw [line width=1.pt] (2.2679491924311233,2.4641016151377566)-- (7.732050807568877,-3.);
						\draw [line width=1.pt] (2.2679491924311233,2.4641016151377566)-- (6.,-4.);
						\draw [line width=1.pt] (2.2679491924311233,2.4641016151377566)-- (2.2679491924311215,-3.);
						\draw [line width=1.pt] (4.,3.4641016151377553)-- (6.,-4.);
						\draw [line width=1.pt] (6.,3.4641016151377553)-- (4.,3.4641016151377553);
						\draw [line width=1.pt] (6.,3.4641016151377553)-- (1.2679491924311224,0.7320508075688783);
						\draw [line width=1.pt] (6.,3.4641016151377553)-- (1.2679491924311215,-1.2679491924311197);
						\draw [line width=1.pt] (6.,3.4641016151377553)-- (2.2679491924311215,-3.);
						\draw [line width=1.pt] (6.,3.4641016151377553)-- (6.,-4.);
						\draw [line width=1.pt] (6.,3.4641016151377553)-- (7.732050807568877,-3.);
						\draw [line width=1.pt] (6.,3.4641016151377553)-- (8.732050807568879,-1.2679491924311224);
						\draw [line width=1.pt] (6.,3.4641016151377553)-- (8.732050807568879,0.7320508075688769);
						\draw [line width=1.pt] (6.,3.4641016151377553)-- (7.7320508075688785,2.4641016151377544);
						\draw [line width=1.pt] (7.7320508075688785,2.4641016151377544)-- (6.,-4.);
						\draw [line width=1.pt] (8.732050807568879,0.7320508075688769)-- (6.,-4.);
						\draw [line width=1.pt] (8.732050807568879,-1.2679491924311224)-- (7.7320508075688785,2.4641016151377544);
						\draw [line width=1.pt] (8.732050807568879,-1.2679491924311224)-- (4.,3.4641016151377553);
						\draw [line width=1.pt] (8.732050807568879,-1.2679491924311224)-- (1.2679491924311224,0.7320508075688783);
						\draw [line width=1.pt] (7.732050807568877,-3.)-- (6.,-4.);
						\draw [line width=1.pt] (7.732050807568877,-3.)-- (2.2679491924311215,-3.);
						\draw [line width=1.pt] (7.732050807568877,-3.)-- (1.2679491924311215,-1.2679491924311197);
						\draw [line width=1.pt] (7.732050807568877,-3.)-- (1.2679491924311224,0.7320508075688783);
						\draw [line width=1.pt] (7.732050807568877,-3.)-- (4.,3.4641016151377553);
						\draw [line width=1.pt] (7.732050807568877,-3.)-- (7.7320508075688785,2.4641016151377544);
						\draw [line width=1.pt] (7.732050807568877,-3.)-- (8.732050807568879,0.7320508075688769);
						\draw [line width=1.pt] (7.732050807568877,-3.)-- (8.732050807568879,-1.2679491924311224);
						\draw [line width=1.pt] (4.,3.4641016151377553)-- (8.732050807568879,0.7320508075688769);
						\draw [line width=1.pt] (8.732050807568879,-1.2679491924311224)-- (6.,-4.);
						\begin{scriptsize}
							\draw [fill=ffffff] (4.,-4.) circle (2.5pt);
							\draw [fill=ffffff] (6.,-4.) circle (2.5pt);
							\draw [fill=ffffff] (7.732050807568877,-3.) circle (2.5pt);
							\draw [fill=qqffqq] (8.732050807568879,-1.2679491924311224) circle (2.5pt);
							\draw [fill=ffqqqq] (8.732050807568879,0.7320508075688769) circle (2.5pt);
							\draw [fill=xfqqff] (7.7320508075688785,2.4641016151377544) circle (2.5pt);
							\draw [fill=ffffff] (6.,3.4641016151377553) circle (2.5pt);
							\draw [fill=ffcqcb] (4.,3.4641016151377553) circle (2.5pt);
							\draw [fill=ffffff] (2.2679491924311233,2.4641016151377566) circle (2.5pt);
							\draw [fill=xfqqff] (1.2679491924311224,0.7320508075688783) circle (2.5pt);
							\draw [fill=ffqqqq] (1.2679491924311215,-1.2679491924311197) circle (2.5pt);
							\draw [fill=qqffqq] (2.2679491924311215,-3.) circle (2.5pt);
						\end{scriptsize}
					\end{tikzpicture}
					\caption{$\mathcal{P}(C_{12})$ where all the vertices with the same colour are in the same $\mathtt{N}$-class.}
					\label{UPG_C_12_Nclassi}
				\end{figure}
				
				We have already seen in Figure \ref{UPG_S_4_NeDiamond} that in $\mathcal{P}(S_4)$ there are three $\mathtt{N}$-classes of the second type. Let $C$ be one of them. We have that $N[C]=C\cup \{1\}=\hat{C}$.  
				We can construct $\mathtt{N}$-classes $C$ of the second type having $\hat{C}=C\cup \{1\}$ taking $D_n$, with $n=p^k$ for $p$ a prime and $k\in \mathbb{N}$. Indeed, look at Figure \ref{UPG_D_p^k_labeled} where $\mathcal{P}(D_{p^k})$ is represented schematically. Recall that $$ D_{p^k}:=\langle a,b \, |\, a^{p^k}=1=b^2, b^{-1}ab=a^{-1} \rangle.$$ $\mathcal{P}(D_{p^k})$ is composed by the complete graph $K_{p^k}$ and $p^k$ more vertices joined only with the identity. Therefore we have that $[a]_{\mathtt{N}}=\langle a \rangle \setminus \{1\}$. Note that $[a]_{\mathtt{N}}$ cannot be neither of the first type, since it contains elements of different orders, nor the star class, since it does not contain the identity. In conclusion, $[a]_{\mathtt{N}}$ is of the second type with $\hat{[a]}_{\mathtt{N}}=\langle a \rangle= [a]_{\mathtt{N}} \cup \{1\}$. 
			
				\begin{figure}
					\centering
					\begin{tikzpicture}[line cap=round,line join=round,>=triangle 45,x=1.0cm,y=1.0cm]
						\clip(3.5,-6.) rectangle (9.5,0.);
						\draw [line width=1.5pt] (6.0149005420162185,-1.6193168603621786) circle (1.554274322171015cm);
						\draw (6.0865139017686305,-2.439893402030971) node[anchor=north west] {$1$};
						\draw (3.7968124992139227,-4.219454077591621) node[anchor=north west] {$b$};
						\draw (4.757775264016676,-5.1092344153719464) node[anchor=north west] {$ab$};
						\draw (7.344070112498159,-5.0855069396978045) node[anchor=north west] {$a^{p^k-2}b$};
						\draw (8.487,-4.343181553265763) node[anchor=north ] {$a^{p^k-1}b$};
						\draw (5.338341974740563,-1.8) node[anchor=north] {$a^{p^k-1}$};
						\draw (5.1,-0.6) node[anchor=north] {$a^{p^k-2}$};
						\draw (6.728082970496916,-2.375108245456738) node[anchor=west] {$a$};
						\draw (6.774610696318749,-1.) node[anchor=north west] {$a^2$};
						\draw (5.7,-1.3721569966945808) node[anchor=north west] {$K_{p^k}$};
						\draw [line width=1.pt] (3.7912078968897096,-4.3621630915358125)-- (5.996709459753548,-2.777371549358208);
						\draw [line width=1.pt] (4.755289418381088,-5.247005035918309)-- (5.996709459753548,-2.777371549358208);
						\draw [line width=1.pt] (7.343782270604515,-5.233798439733495)-- (5.996709459753548,-2.777371549358208);
						\draw [line width=1.pt] (8.2154176188022,-4.388576283905439)-- (5.996709459753548,-2.777371549358208);
						\begin{scriptsize}
							\draw [fill=qqqqff] (5.996709459753548,-2.777371549358208) circle (2.5pt);
							\draw [fill=qqqqff] (3.7912078968897096,-4.3621630915358125) circle (2.5pt);
							\draw [fill=qqqqff] (4.755289418381088,-5.247005035918309) circle (2.5pt);
							\draw [fill=qqqqff] (7.343782270604515,-5.233798439733495) circle (2.5pt);
							\draw [fill=qqqqff] (8.2154176188022,-4.388576283905439) circle (2.5pt);
							\draw [fill=black] (5.48453197899291,-5.528482828939864) circle (1.0pt);
							\draw [fill=black] (5.98453197899291,-5.528482828939864) circle (1.0pt);
							\draw [fill=black] (6.48453197899291,-5.548482828939863) circle (1.0pt);
							\draw [fill=qqqqff] (5.338341974740563,-2.3165047094911086) circle (2.5pt);
							\draw [fill=qqqqff] (6.728082970496916,-2.375108245456738) circle (2.5pt);
							\draw [fill=qqqqff] (5.1,-1.161177857597276) circle (2.5pt);
							\draw [fill=qqqqff] (6.954125180650059,-1.161177857597276) circle (2.5pt);
							\draw [fill=black] (5.522524516346826,-0.6421179676159887) circle (1.0pt);
							\draw [fill=black] (6.027157890160115,-0.5161555274114648) circle (1.0pt);
							\draw [fill=black] (6.5606442963094045,-0.6756057024534912) circle (1.0pt);
						\end{scriptsize}
					\end{tikzpicture}
					\caption{$\mathcal{P}(D_{p^k})$.}
					\label{UPG_D_p^k_labeled}
				\end{figure}
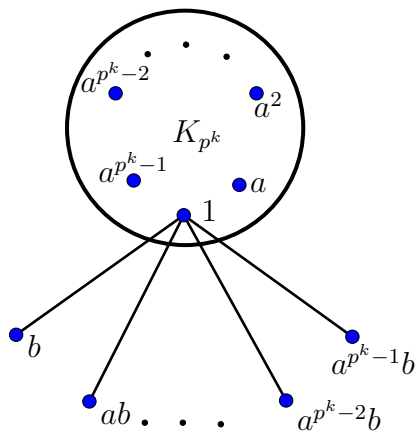
				
				The next results about $\hat{C}$, for $C$ an $\mathtt{N}$-class, clarify how $\hat{C}$ is useful to determine the type of $C$.
				
				\begin{lemma}
					\label{CarachetisationN-classes_1}Let $C_y$ be an $\mathtt{N}$-class of the second type for some $y \in G$ with parameters $(p,r,s)$. Then we have $|C_y|=p^r-p^s$ and $\hat{C_y}=\langle y \rangle $. In particular, $|\hat{C_y}|=p^r$.
				\end{lemma}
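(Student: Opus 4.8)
The plan is to prove the two assertions separately: the cardinality $|C_y|=p^r-p^s$ by a direct count inside the cyclic $p$-group $\langle y\rangle$, and $\hat{C_y}=\langle y\rangle$ by a double inclusion, after first rewriting $\hat{C_y}$ in a more workable form. For the count, recall from Definition \ref{NsecondType} that $C_y=\{z\in\langle y\rangle : p^{s+1}\le o(z)\le p^r\}$ with $o(y)=p^r$, $r\ge 2$ and $0\le s\le r-2$; since $\langle y\rangle\cong C_{p^r}$, every element has order dividing $p^r$, and the elements with $o(z)\le p^s$ are exactly those of the unique subgroup of order $p^s$, so $|C_y|=p^r-p^s$. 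For the second part I would first observe, using Lemma \ref{operatoreChiusura} iii) (so $N[C_y]=N[y]$) and the symmetry $u\in N[v]\iff v\in N[u]$, that
\[
\hat{C_y}=N[N[y]]=\bigcap_{u\in N[y]}N[u]=\{v\in G : N[y]\subseteq N[v]\}.
\]

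Next I would prove $\langle y\rangle\subseteq\hat{C_y}$. Let $v\in\langle y\rangle$ and $u\in N[y]$. If $u\in\langle y\rangle$, then $u$ and $v$ both lie in $\langle y\rangle$, whose power graph is complete by Proposition \ref{PGcompleto} (and Remark \ref{InducedPG}), so $u\in N[v]$; if $u\notin\langle y\rangle$, then $u\in N[y]$ forces $y\in\langle u\rangle$, hence $v\in\langle y\rangle\subseteq\langle u\rangle$ and again $u\in N[v]$. Thus $N[y]\subseteq N[v]$, i.e. $v\in\hat{C_y}$. For the reverse inclusion $\hat{C_y}\subseteq\langle y\rangle$, take $v\in\hat{C_y}$ and suppose $v\notin\langle y\rangle$. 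Since $y\in N[y]\subseteq N[v]$, the elements $v$ and $y$ are joined, so $\langle v\rangle\subseteq\langle y\rangle$ or $\langle y\rangle\subseteq\langle v\rangle$; the first would give $v\in\langle y\rangle$, so $\langle y\rangle\subsetneq\langle v\rangle$, hence $p^r\mid o(v)$ and $o(v)>p^r$. The crucial idea is now to bring in the minimum-order element: pick $x\in C_y$ with $o(x)=p^{s+1}$ (possible since $p^{s+1}\le p^{r-1}<p^r$); because $x\in C_y=[x]_{\mathtt{N}}=[y]_{\mathtt{N}}$ we have $N[x]=N[y]$, from which I extract the \emph{key property}: every cyclic subgroup $\langle g\rangle\supseteq\langle x\rangle$ is comparable with $\langle y\rangle$ (indeed $x$ is then a power of $g$, so $g\in N[x]=N[y]$, whence $\langle g\rangle\subseteq\langle y\rangle$ or $\langle y\rangle\subseteq\langle g\rangle$).

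Finally I would split into two cases on $o(v)$. If $o(v)$ is a power of $p$, say $o(v)=p^b$ with $b>r$, then $y$ is a power of $v$, both have $p$-power order and $o(y)<o(v)$, so Lemma \ref{NdipotenzeDip} gives $N[y]\supseteq N[v]$; combined with $N[y]\subseteq N[v]$ this yields $N[y]=N[v]$, so $v\in[y]_{\mathtt{N}}=C_y\subseteq\langle y\rangle$, a contradiction. If instead some prime $q\ne p$ divides $o(v)$, then $p^{r-1}q\mid o(v)$, so the cyclic group $\langle v\rangle$ contains an element $w$ with $o(w)=p^{r-1}q$; as $p^{s+1}\mid p^{r-1}$ and $\langle v\rangle$ is cyclic we get $\langle x\rangle\subseteq\langle w\rangle$, so the key property forces $\langle w\rangle$ comparable with $\langle y\rangle$ — impossible, since $p^{r-1}q$ and $p^r$ divide neither the other. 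Both cases being absurd, $v\in\langle y\rangle$, so $\hat{C_y}=\langle y\rangle$ and $|\hat{C_y}|=|\langle y\rangle|=o(y)=p^r$. The main obstacle is this last inclusion $\hat{C_y}\subseteq\langle y\rangle$: the point that makes it work is to argue not with $y$ itself but with the minimum-order element $x$ of $C_y$ (extracting comparability of cyclic overgroups of $\langle x\rangle$ with $\langle y\rangle$) and then to treat the prime-power and non-prime-power values of $o(v)$ separately, the former relying on Lemma \ref{NdipotenzeDip} and the latter on the key property.
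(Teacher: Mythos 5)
Your proof is correct and follows essentially the same route as the paper's: the same counting argument for $|C_y|=p^r-p^s$, the inclusion $\langle y\rangle\subseteq\hat{C_y}$, and the reverse inclusion by contradiction split according to whether $o(v)$ is a power of $p$, with your element $w$ of order $p^{r-1}q$ playing exactly the role of the paper's witness $u^{p^{t+1}}$ of order $p^{r-1}m$. The only differences are cosmetic: you violate $N[x]=N[y]$ using the minimal-order element $x\in C_y$ where the paper violates $N[y]=N[y^p]$, and in the $p$-power case you conclude $N[v]=N[y]$, hence $v\in C_y$, directly from Lemma \ref{NdipotenzeDip}, instead of exhibiting a vertex of $N[y]\setminus N[v]$ as the paper does.
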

				\begin{proof}
					Since $C_y$ is an $\mathtt{N}$-class of the second type we have $C_y \subseteq \langle y\rangle=:Y$. In particular $|C_y|=|Y|-|\left\lbrace z\in Y: o(z)\leq p^s \right\rbrace |=p^r-p^s$. We also have that $\langle C_y\rangle=Y$
					since $y\in C_y$.
					
					\setcounter{claim}{0}
					\begin{claim}
					\rm	$Y \subseteq \hat{C_y}$
					\end{claim}
					For all $a \in Y$, $a$ is power of $y$, so, by Lemma \ref{NdipotenzeDip}, we have $N[a]\supseteq N[y]$. Since $N[y]=N[c]$ for all $c\in C_y$, we also have $N[a]\supseteq N[c]$ for all $a \in Y$ and $c \in C_y$.
					Hence $$N[C_y]=\bigcap_{c\in C_y} N[c] \subseteq \bigcap_{a \in Y} N[a]=N[Y].$$
					Therefore, by Lemma \ref{operatoreChiusura} i) and ii), $Y\subseteq \hat{Y}=N[N[Y]] \subseteq N[N[C_y]]=\hat{C_y}$.
					
					\begin{claim}
					\rm	$Y=\hat{C_y}$.
					\end{claim}
					Suppose, by contradiction, that there exists $u\in \hat{C_y}\setminus Y$.
					We consider two cases:
					\begin{itemize}
						\item \underline{The order of $u$ is not a power of $p$}.
						As, by Lemma \ref{operatoreChiusura} iii), $u \in \hat{C_y}\subseteq N[y]$, $u$ and $y$ must be joined. Then $y$ is a power of $u$, otherwise $u \in Y$. Hence $\langle u\rangle>\langle y \rangle$. Then there exist $t\in \mathbb{N}_0$ and $m \in \mathbb{N}$ with $m\geq 2$ and $\gcd(m,p)=1$, such that $o(u)=p^{r+t}m$.
						By $o(u^{p^tm})=o(y)=p^r$, we get that there exists $k \in \mathbb{N}$, with $\gcd(k,p^r)=1$, such that $u^{p^tmk}=y$. Thus $y^p=(u^{p^tmk})^p=u^{p^{t+1}mk}$. Note that $y\mathtt{N} y^p$ as $C_y$ is an $\mathtt{N}$-class of the second type and surely $y^p\in C_y$ since $o(y^p)=p^{r-1}$. Then we have $N[y]=N[y_p]$. Let's see that this brings us to a contradiction. We set $w:=u^{p^{t+1}}$. $w$ is adjacent to $y^p$ since $y^p=w^{mk}$ and $o(w)=p^{r-1}m\not= p^{r-1}=o(y^p)$ that implies that $y^p\not=w$. But $w$ is not joined to $y$ since we have that $o(y)=p^r \nmid o(w)=p^{r-1}m$ and $o(w) \nmid o(y)$. Therefore $N[y]\not=N[y^p]$, which is a contradiction.
						
						\item \underline{The order of $u$ is a power of $p$}. By Lemma \ref{operatoreChiusura} iii) we have that $u\in \hat{C_y}\setminus Y \subseteq N[y]\setminus Y$. Thus necessarily $y$ is a power of $u$ and then $o(u)>o(y)$. Then, by Lemma \ref{NdipotenzeDip}, $N[u]\subseteq N[y]$ and the containment is proper since $u \notin C_y\subseteq Y$. So there is a $w \in N[y]$ such that $w \notin N[u]$. Thus $u\notin N[w]$, that implies $u \notin \hat{C_y}=\bigcap_{z\in N[y]} N[z]$, a contradiction.
					\end{itemize}
				
				\end{proof}
				
				Lemma \ref{CarachetisationN-classes_1} was proved by Cameron in \cite{Cameron_2}. It gives us an idea to recognize the type of an $\mathtt{N}$-class. An $\mathtt{N}$-class $C$ of the second type has algebraic restrictions on $|C|$ and $|\hat{C}|$. In the following lemma is shown that these algebraic restrictions are satisfied by an $\mathtt{N}$-class of the first type only in a specific case. So every $\mathtt{N}$-class, different from $\mathcal{S}$, that does not satisfy the algebraic restrictions on the orders is necessarily of the first type. We follow Cameron's idea, but we reach a different result. Indeed Cameron version of this lemma\footnote{Cameron never enunciates formally this result, but he gives its proof in \cite[pg. 5-6]{Cameron_2} } get a wrong conclusion. In \cite{Cameron_2} the same proof is mistakenly used to prove that there is no $\mathtt{N}$-class $C$ of the first type having $|C|=p^r-p^s$ and $|\hat{C}|=p^r$ with $p,r$ and $s$ as in Lemma \ref{CarachetisationN-classes_1}.
				
				\begin{lemma}
					\label{CarachetisationN-classes_2}	Let $G$ be a group and $C$ be an $\mathtt{N}$-class of the first type. Assume that there exist $p$ be a prime, an integer $r\geq 2$ and an integer $s\in [r-2]_0$ such that $|\hat{C}|=p^r$ and $|C|=p^r-p^s$. Then $\hat{C}=C\cup \{1\}$. Moreover $s=0$ and $C=[y]_{\diamond}$ for some $y\in G$ with $o(y)$ not a prime power and such that $\phi(o(y))=p^r-1$.
				\end{lemma}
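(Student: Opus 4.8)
The plan is to exploit that a first-type $\mathtt{N}$-class is simultaneously a $\diamond$-class: fixing $y\in C$ we have $C=[y]_{\diamond}$ and, by Lemma~\ref{OssDiamondRelation}, $|C|=\phi(o(y))$, so the hypothesis reads $\phi(o(y))=p^{r}-p^{s}$. I will combine this with a description of $\hat{C}$ to force $s=0$; once $s=0$ the rest is free, since $|C|=p^{r}-1$ together with $C\cup\{1\}\subseteq\hat{C}$ (because $C\subseteq\hat{C}$ and $1\in\mathcal{S}\subseteq\hat{C}$ by Lemma~\ref{operatoreChiusura}~ii)) and $|\hat{C}|=p^{r}$ gives $\hat{C}=C\cup\{1\}$ and $\phi(o(y))=p^{r}-1$. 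Throughout I will use that $\hat{C}\subseteq N[y]$ and that $\hat{C}$ is a union of $\mathtt{N}$-classes (Lemma~\ref{operatoreChiusura}~iii), iv)), and that, since $C$ is not the star class, $C$ and $\mathcal{S}$ are disjoint, so $|\mathcal{S}|\le|\hat{C}\setminus C|=p^{s}$.

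First I would dispose of the case $o(y)=q^{k}$ a prime power. Then $\langle y\rangle$ has complete power graph (Proposition~\ref{PGcompleto}), hence $\langle y\rangle\subseteq N[z]$ for every $z\in N[y]$ (by completeness of $\mathcal{P}(\langle y\rangle)$ when $z\in\langle y\rangle$, and because $\langle y\rangle\le\langle z\rangle$ when $y\in\langle z\rangle$), so $\langle y\rangle\subseteq\hat{C}$ and in particular $q^{k}\le p^{r}$. Feeding $q^{k-1}(q-1)=\phi(q^{k})=p^{r}-p^{s}$ into this, a $p$-adic valuation computation excludes $q=p$ (it would give $r-s=1$, against $s\le r-2$), and for $q\ne p$ the relations $p^{s}\mid q-1$ and $q^{k}\le p^{r}$ yield $q^{k-1}\le p^{s}\le q-1$, hence $k=1$ and $q=p^{r}-p^{s}+1$. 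To close this case I would argue that the elements of $\hat{C}$ lying outside $\langle y\rangle$ all lie above $y$ and are pairwise joined (each one lies in $N[z]$ for any other, since the other lies in $N[y]$), so their cyclic subgroups form a chain above $\langle y\rangle$; taking its top $\langle v\rangle$ gives $\hat{C}\subseteq\langle v\rangle$, and then analysing the $\mathtt{N}$-class of $v$ by Proposition~\ref{propC_y} (using Proposition~\ref{S>1} to handle the subcase where $v$ is a star vertex) one is driven back to $q\mid p^{r}$, i.e. $q=p$, the excluded case. So $o(y)$ is not a prime power.

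Now suppose $o(y)=:n$ is not a prime power, so $\phi(n)=p^{r}-p^{s}$ is even (the remark on $\phi$). The heart of the matter is the claim $\hat{C}\subseteq\langle y\rangle$. Granting it, $\hat{C}=C\cup\{1\}$: a non-identity, non-generating element $w$ of $\langle y\rangle$ has order a proper divisor $d$ of $n$ with $d>1$, and since $n$ is not a prime power there is a divisor of $n$ incomparable to $d$, hence an element of $\langle y\rangle\subseteq N[y]$ not joined to $w$, so $w\notin\hat{C}$; combined with $C\cup\{1\}\subseteq\hat{C}\subseteq\langle y\rangle$ this gives $\hat{C}=C\cup\{1\}$, whence $p^{r}=|\hat{C}|=|C|+1=p^{r}-p^{s}+1$ and $s=0$. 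To prove $\hat{C}\subseteq\langle y\rangle$ I would suppose, for contradiction, that some $u\in\hat{C}$ has $u\notin\langle y\rangle$; then $y\in\langle u\rangle$ and $o(u)>n$, the elements of $\hat{C}$ outside $\langle y\rangle$ have pairwise comparable cyclic subgroups all properly containing $\langle y\rangle$, and passing to the maximal one $\langle v\rangle$ I would show $v$ must be joined to every member of $N[y]$; confronting this with the explicit shape of $N[y]$ (only orders dividing $o(y)$ or divisible by it), the classification of $[v]_{\mathtt{N}}$ (Proposition~\ref{propC_y}, Lemma~\ref{NdipotenzeDip}, Proposition~\ref{S>1}) and the bound $|\hat{C}\setminus C|=p^{s}$, I expect to reach a contradiction.

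The genuine obstacle is precisely this last step — ruling out elements of $\hat{C}$ above $y$, equivalently pinning down $\hat{C}$ when a priori it could be strictly larger than $C\cup\{1\}$. This is exactly where Cameron's original argument in \cite{Cameron_2} went wrong: there the analysis was used to claim that no first-type $C$ with $|C|=p^{r}-p^{s}$ and $|\hat{C}|=p^{r}$ exists, whereas in fact such classes can occur, but only in the constrained form of the statement ($s=0$, $o(y)$ non-prime-power, $\phi(o(y))=p^{r}-1$, $\hat{C}=C\cup\{1\}$). Once that step is secured, the remainder is routine manipulation of $\phi$, $p$-adic valuations, and the already-proved structural facts about $\hat{C}$.
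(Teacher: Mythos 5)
Your reduction of the statement to the single claim $\hat{C}\subseteq\langle y\rangle$ is sound, and the bookkeeping you do after that point (using Lemma \ref{nonGeneratoriInPG} to kill non-generators of $\langle y\rangle$, then counting to get $s=0$, $\phi(o(y))=p^r-1$) matches the paper. But the proof has a genuine gap exactly where you admit it does: you never actually rule out elements of $\hat{C}$ strictly above $y$. Your sketch (pairwise-joined elements above $y$ form a chain, pass to the top $\langle v\rangle$, "analyse $[v]_{\mathtt{N}}$ \dots and I expect to reach a contradiction") is not an argument, and in the prime-power case the arithmetic you lean on is also broken: from $p^s\mid q-1$ and $q^k\le p^r$ you cannot deduce $q^{k-1}\le p^s$ (indeed $q>p^s$ makes that inequality equivalent to $k=1$, which is what you were trying to prove), so even that branch is left open.

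What closes the gap in the paper is a short counting argument that you in fact had all the ingredients for but never assembled. From $r-s\ge 2$ one gets $|\hat{C}|=p^r<2(p^r-p^s)=2|C|$, equivalently $|\hat{C}\setminus C|=p^s<|C|$, so every $\diamond$-class contained in $\hat{C}$ other than $C$ has size strictly less than $|C|$ (Lemma \ref{operatoreChiusura} iv) plus Remark \ref{DiamondRefinementOfN} give that $\hat{C}$ is a union of $\diamond$-classes). Now if $x\in\hat{C}\setminus[y]_{\diamond}$ had $o(x)\ge o(y)$, then $x\in N[y]$ forces $y\in\langle x\rangle$, hence $o(y)\mid o(x)$ and, by Lemma \ref{phiEulero}, $|[x]_{\diamond}|=\phi(o(x))\ge\phi(o(y))=|C|$, contradicting the size bound since $[x]_{\diamond}\subseteq\hat{C}$ and $[x]_{\diamond}\ne C$. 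So every element of $\hat{C}\setminus[y]_{\diamond}$ has order $<o(y)$ and, lying in $N[y]$, lies in $\langle y\rangle$; thus $\hat{C}\subseteq\langle y\rangle$ \emph{before} any case split. This also disposes of your prime-power case cleanly: there $\langle y\rangle\subseteq\hat{C}$ (as you showed), so $\hat{C}=\langle y\rangle$, giving $q^k=p^r$, hence $q=p$ and $s=r-1$, against $s\le r-2$ --- no chain or valuation analysis needed. Without this (or an equivalent) argument for the "no elements above $y$" step, your proposal does not establish the lemma.
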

				\begin{proof}
					By the conditions $|\hat{C}|=p^r$ and $|C|=p^r-p^s$ it follows that $|\hat{C}|< 2|C|$. Indeed we have $p^r< 2(p^r-p^s) $ because it is equivalent to
					$ 2< p^{r-s} $ and we surely have that $$p^{r-s}\geq p^2> 2. $$
					By Lemma \ref{operatoreChiusura} iv) $\hat{C}$ is union of $\diamond$-classes and $C \subseteq \hat{C}$.
					So any $\diamond$-class included in $\hat{C}$ and distinct from $C$ must have size smaller than $|C|$. Pick $y\in C$.
					Since $C$ is of the first type we have $C=[y]_{\mathtt{N}}=[y]_{\diamond}$.
					
					\setcounter{claim}{0}
					\begin{claim}
						\label{claim1_1}
						\rm $\hat{C}\setminus [y]_{\diamond}$ cannot contain elements of order greater or equal than $o(y)$.
					\end{claim}
					Assume, by contradiction, that there exists $x\in \hat{C}\setminus [y]_{\diamond}$ such that $o(x)\geq o(y)$. By Proposition \ref{operatoreChiusura} iii) we have $\hat{C}\subseteq N[y]$ and thus $x \in N[y]\setminus [y]_{\diamond}$. Hence, since $o(x)\geq o(y)$, we necessarily have $y\in \langle x \rangle$. Then $o(y)\mid o(x)$, which implies, by Lemma \ref{phiEulero}, $\phi(o(y))\mid \phi(o(x))$. In particular we have that $\phi(o(y))\leq \phi(o(x))$.
					Since $\hat{C}$ is union of $\diamond$-classes, we have that $[x]_{\diamond}\subseteq \hat{C}$. It follows that $|[x]_{\diamond}|=\phi(o(x))\geq \phi(o(y))=|C|$, a contradiction. 
					
					We next show that $\hat{C}\subseteq \langle y \rangle$. Suppose, by contradiction, that there exists $x\in \hat{C}\setminus \langle y \rangle .$ Then $x\in \hat{C} \setminus [y]_{\diamond}$ and by Claim \ref{claim1_1} we deduce $o(x)< o(y)$. Now, since $\hat{C}\subseteq N[y]$, we have that $x \in N[y]$ and thus necessarily $x \in \langle y \rangle$, a contradiction. 
					
					By Lemma \ref{operatoreChiusura} ii) we also have $1\in \hat{C}$ and hence following holds:
					\begin{equation}\label{inequali}
						C\cup \{1\}\subseteq \hat{C}\subseteq \langle y \rangle.
					\end{equation}

					Now we distinguish two cases:
					\begin{itemize}
						\item \underline{There exist a prime $q$ and an integer $t\geq1$ such that $C$ consists of elements of} \\ \underline{order $q^t$}.
						\begin{claim}
							\rm	$\hat{C}=\langle y \rangle$.
						\end{claim}
						
						Let $z\in N[y]$, so $z\in \langle y\rangle$ or $y\in \langle z\rangle$. If $o(z)\leq o(y)$, then $z \in \langle y \rangle$. Moreover we have $\langle y \rangle \subseteq N[z]$ since $o(y)=q^t$ and, by Proposition \ref{PGcompleto}, $\mathcal{P}(\langle y \rangle)$ is a complete graph. Now, if $o(z)>o(y)$, then $\langle y \rangle \leq \langle z \rangle \subseteq N[z]$. Thus we have  $N[z]\supseteq \langle y \rangle$ for all $z\in N[y]$. Now, by Lemma \ref{operatoreChiusura} iii) we have $\hat{C}= \bigcap_{z\in N[y]} N[z] $. Then $\langle y \rangle \subseteq \hat{C}$ and, by \eqref{inequali}, $\hat{C}=\langle y \rangle$ follows.
						
						It follows that $p^r=|\hat{C}|=|\langle y \rangle|=q^t$. But then $p=q$, $r=t$, and hence $p^r-p^s=|C|=\phi(p^r)=p^r -p^{r-1}$ which implies $s=r-1$, a contradiction.
						
						\item \underline{There exists a positive integer $m$, not a prime power, such that $C$ consist of elements} \underline{of order $m$}.
						
						\begin{claim}
							$\hat{C}=C\cup \{1\}$.
						\end{claim}
					
						By \eqref{inequali}, we just need to show that $\hat{C}\subseteq C\cup \{1\}$. Let $x\in \hat{C}$. By \eqref{inequali}, we have that $x\in \langle y \rangle$ and thus $x=y^k$, for some $k \in \mathbb{N}$. Assume, by contradiction, that $y^k\notin C\cup \{1\}$.  Then $y^k$  is neither a generator nor the identity of the cyclic group $\langle y \rangle\cong C_m$ of order $m$. Since, by Lemma \ref{nonGeneratoriInPG}, the power graph of $C_m$ is not complete, there exists $w\in \langle y \rangle $ such that $\{y^k,w\}\notin E_{\mathcal{P}(C_m)}$. Since $\mathcal{P}(C_m)$ is an induced subgraph of $\mathcal{P}(G)$, we also have that $\{y^k,w\}\notin E_{\mathcal{P}(G)}$. It follows that $w\in N[y]$ while  $y^k\notin N[w]$. Since by Lemma \ref{operatoreChiusura} iii)  we have $\hat{C}=\bigcap_{z \in N[y]} N[z]$, we deduce that $x=y^k \notin \hat{C}$, a contradiction.

						As a consequence, we also have  $|\hat{C}|=|C|+1$ and thus $p^r-p^s=\phi(m)=|C|=|\hat{C}|-1=p^r-1$ which implies $s=0$.
						
					\end{itemize}
				\end{proof}
				
				Let's see, in details, how we use the previous lemmas to recognize the type of an $\mathtt{N}$-class.
				Let $C$ be an $\mathtt{N}$-class, distinct from the star class.
				Assume first that $\hat{C}\not=C \cup \{1\}$, then, by Lemma \ref{CarachetisationN-classes_1} and Lemma \ref{CarachetisationN-classes_2}, $C$ is of the second type if there exist a prime $p$ and an integer $r\geq 2$ and $s\in [r-2]_0$ for which $|C|=p^r-p^s$ and $|\hat{C}|=p^r$. Otherwise $C$ is of the first type.
				Assume next that $\hat{C}=C\cup \{1\}$, but $|\hat{C}|\ne p^r$ for a prime $p$ and an $r\in \mathbb{N}$ with $r\geq 2$, then, by Lemma \ref{CarachetisationN-classes_1} and \ref{CarachetisationN-classes_2}, $C$ is of the first type.
				It remains the case of $\hat{C}=C\cup\{1\}$ and $|\hat{C}|=p^r$ with $p$ a prime and $r$ an integer such that $r\geq2$. For this case we need some new results.
						
				We emphasize that in Cameron's paper \cite{Cameron_2} we read that the results so far are enough to distinguish the type of all $\mathtt{N}$-classes. But this is not true. Indeed the arguments on the orders of $C$ and $\hat{C}$ do not distinguish completely the type of the classes having $\hat{C}=C\cup \{1\}$ and $|\hat{C}|=p^r$. Then we introduce the following crucial definition.  
				
				\begin{definition}
				 \rm	A \emph{critical class}, is an $\mathtt{N}$-class $C$ with $\hat{C}=C \cup \{1\}$ and for which there exist a prime number $p$ and an integer $r\geq 2$, such that $|\hat{C}|=p^r$. We call $(p,r)$, with $p$ and $r$ defined as above, the \emph{parameters of the critical class}.
				\end{definition}
				
				We give an example that underlines the need of some new arguments to distinguish the type of critical classes. 
				\begin{Oss}
					\label{numeriCritici} Let $m$ be a positive integer. If there exist  a prime  $p$ and an integer $r\geq 2$ for which $\phi(m)=p^r-1$, then $p\geq3$. Moreover $m=15$ is the first integer for which $\phi(m)=p^r-1$. In this particular case $\phi(15)=8$, so that  $p=3$ and $r=2$. 
				\end{Oss}
				\begin{proof}
					For $p=2$ and $r\geq 2$ there cannot be a positive integer $m$ such that $\phi(m)=p^r-1$.  Indeed $2^r-1$ is odd and is greater or equal than $3$. But $\phi(m)$ is odd if and only if $\phi(m)=1$. 
					So pick $p=3$ and $r=2$. Now, the first positive integer $m$ to have $\phi(m)=3^2-1=8$ is $15$.
					Note that the values of the Euler's function before $15$ are in the set $\{1,2,4,6, 10, 12\}$. None of them is $q^k-1$ for some $q$ prime and $k\geq2$. This concludes the proof.
				\end{proof}
			
				\begin{es}
					\rm Consider $D_{15}
					$ and refer to its representation \eqref{Dn_presentation}. Then $[a]_{\mathtt{N}}$ is a critical class of the first type with parameters $(3,2)$. In particular $|[a]_{\mathtt{N}}|=8$ and $|\hat{[a]}_{\mathtt{N}}|=9$.
				\end{es}
				\begin{proof}
					$[a]_{\mathtt{N}}$ is surely of the first type since $o(a)=15$ is not a prime power. Thus $[a]_{\mathtt{N}}=[a]_{\diamond}$.
					We want to prove that $\hat{[a]}_{\mathtt{N}}=[a]_{\mathtt{N}} \cup \{1\}$.
					By Lemma \ref{operatoreChiusura} ii), we have $[a]_{\mathtt{N}} \cup \{1\}\subseteq \hat{[a]}_{\mathtt{N}}$.
					Suppose, by contradiction, that there exists $x\in \hat{[a]}_{\mathtt{N}}\setminus ([a]_{\mathtt{N}}\cup \{1\})$. Since we have $x\in \hat{[a]}_{\mathtt{N}}$, by Lemma \ref{operatoreChiusura} iii), we also have $x\in N[a]=\langle a \rangle$. But $x\notin [a]_{\mathtt{N}} \cup \{1\}=[a]_{\diamond} \cup \{1\}$, hence, by Lemma \ref{nonGeneratoriInPG}, there exists $y \in \langle a \rangle=N[a]$ such that $x\notin N[y]$. Therefore, since, by Lemma \ref{operatoreChiusura} iii), $\hat{[a]}_{\mathtt{N}}= \bigcap_{z\in N[a]}N[z]$, it follows that $x\notin \hat{[a]}_{\mathtt{N}}$, a contradiction. 
					In conclusion, we have that $[a]_{\mathtt{N}}$ is a critical class of the first type, and, since $$|[a]_{\mathtt{N}}|=|[a]_{\diamond}|=\phi(15)=8=3^2-1,$$ it has parameters $(3,2)$.   
				\end{proof}
			
				\begin{es}
					\rm Consider $D_9$ and refer to its representation \eqref{Dn_presentation}. Then $[a]_{\mathtt{N}}$ is a critical class of the second type with parameters $(3,2)$. In particular $|[a]_{\mathtt{N}}|=8$ and $|\hat{[a]}_{\mathtt{N}}|=9$.
				\end{es}
				\begin{proof}
					We have already seen, in the discussion after Lemma \ref{operatoreChiusura}, that $[a]_{\mathtt{N}}$ is of the second type having $\hat{[a]}_{\mathtt{N}}=[a]_{\mathtt{N}}\cup \{1\}$. Therefore $[a]_{\mathtt{N}}$ is a critical class of the second type. Since $\hat{[a]}_{\mathtt{N}}=\langle a \rangle$, we then have that $|[a]_{\mathtt{N}}|=8=3^2-1$, which means that $[a]_{\mathtt{N}}$ has parameters $(3,2)$.
				\end{proof}
				
				Note that we cannot distinguish the type of the two classes of the examples above using Lemmas \ref{CarachetisationN-classes_1} and \ref{CarachetisationN-classes_2}.
				
				The critical classes are not rare dealing with power graphs. Indeed it is not difficult to find plenty of them.
				
				\begin{Oss}
					\label{FirstTypeMethod_1}Let $G$ be a group and let $m\in \mathbb{N}$ be a not prime power integer such that $\phi(m)=p^r-1$ for a prime $p$ and an integer $r\geq 2$. Let $y \in G$ be an element with $o(y)=m$. Consider $C:=[y]_{\mathtt{N}}$. If $\hat{C}\subseteq \langle y \rangle$, then $C$ is a critical class of the first type.
				\end{Oss}
				\begin{proof}
					$C$ is of the first type, because $o(y)$ is not a prime power.
					Since, by Lemma \ref{operatoreChiusura} ii), we have that $C\cup \{1\} \subseteq \hat{C}$, then it holds $$C\cup \{1\} \subseteq \hat{C} \subseteq \langle y \rangle.$$
					Now we ensue the proof of the second point of Lemma \ref{CarachetisationN-classes_2} obtaining that $C$ is a critical class.
				\end{proof}
			
				We propose two distinct examples for which the hypothesis of the previous lemma hold.\\
				
				\begin{Oss}
					Let $G$ be a group. Let $m$ and $y$ be as in {\rm Remark \ref{FirstTypeMethod_1}}. Consider $C:=[y]_{\mathtt{N}}$. If $N[y]=\langle y \rangle$, then $\hat{C}\subseteq \langle y \rangle$. Hence $C$ is a critical class of the first type.
				\end{Oss}
				\begin{proof}
					Remember that, by Lemma \ref{operatoreChiusura} iii), $\hat{C}\subseteq N[y]$. So $\hat{C}\subseteq \langle y \rangle$. Hence, by Remark \ref{FirstTypeMethod_1}, $C$ is a critical class of the first type.
				\end{proof}
				
				\begin{es}
					\rm Let $G$ be a group and let $m$ and $y$ be as in Remark \ref{FirstTypeMethod_1}. If there is no elements in $G$ having order $l$ such that $m$ properly divides $l$, then $\hat{C}=\langle y \rangle$.\\
					Therefore the class of a permutation with type $[3,5]$ in $S_8$ is a critical class of the first type with parameters $(3,2)$. 
				\end{es}
				\begin{proof}
					Surely $\langle y \rangle \subseteq N[y]$. Suppose, by contradiction, that there exists $x\in N[y]\setminus \langle y \rangle$. Then we have $\{x,y\}\in E$. Hence we have that $y\in \langle x \rangle \setminus [x]_{\diamond}$, because $x \notin \langle y \rangle$. It follows that $o(y)=m$ properly divides $o(x)$, a contradiction.
					
					Let $y\in S_8$ be an element having type $[3,5]$. Then $o(y)=15$. Now, in $S_8$  there is no elements with order properly divided by $15$. Thus $[y]_{\mathtt{N}}$ is a critical class of the first type of parameters $(3,2)$. Indeed $\phi(o(y))=8=3^2-1$.
				\end{proof}
				
				\begin{Oss}
					Let $G$ be a group and let $m$ and $y$ be as in {\rm Remark \ref{FirstTypeMethod_1}}. Consider $C:=[y]_{\mathtt{N}}$. If $C_G(y)=\langle y \rangle$, then $\hat{C}\subseteq \langle y \rangle$. Hence $C$ is a critical class of the first type.
				\end{Oss}
				\begin{proof}
					By Remark \ref{Nclassi-centralizzante}, we have that $\hat{C}\subseteq C_G(y)$. Then $\hat{C}\subseteq \langle y \rangle$ holds, and hence, by Remark \ref{FirstTypeMethod_1}, $C$ is a critical class of the first type.
				\end{proof}
			
				\begin{es}
					\rm In $GL(2,5)$ the $\mathtt{N}$-class of the Singer's cycle is a critical class of the first type with parameters $(3,2)$.
				\end{es}
				\begin{proof}
						Let's denote $s$ as the Singer's cycle of $G:=GL(2,5)$. We have that $C_G(s)=\langle s \rangle$. Since $o(s)=24$ and $\phi(24)=8=3^2-1$ hold, then $[s]_{\mathtt{N}}$ is a critical class of the first type.
				\end{proof}
			
				Now that we have clear in mind the existence of critical classes we need a way to distinguish their type.
				
				\begin{lemma}
					\label{latiNclassi}Let $X$ and $Y$ two $\mathtt{N}$-classes. If $\{x,y\}\in E$ for $x\in X$ and $y\in Y$, then, for all $\bar{x}\in X$ and $\bar{y}\in Y$, we have that $\{\bar{x},\bar{y}\}\in E$.
				\end{lemma}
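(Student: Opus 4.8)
The plan is to exploit the only property of $\mathtt{N}$-classes that matters here: all elements of one $\mathtt{N}$-class have the same closed neighbourhood, so adjacency to a fixed vertex is constant along an $\mathtt{N}$-class. First I would fix $x\in X$ and $y\in Y$ with $\{x,y\}\in E$ (these exist by hypothesis), pick arbitrary $\bar x\in X$ and $\bar y\in Y$, and aim to show $\{\bar x,\bar y\}\in E$.

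\textbf{Degenerate case.} If $X=Y$, then $\bar x$ and $\bar y$ lie in a common $\mathtt{N}$-class, and since the subgraph of $\mathcal{P}(G)$ induced by an $\mathtt{N}$-class is complete (as observed right after Definition \ref{Ndef}), we get $\{\bar x,\bar y\}\in E$ as soon as $\bar x\neq\bar y$. So I would henceforth assume $X\neq Y$; being distinct classes of a partition they are disjoint, hence automatically $\bar x\neq\bar y$, $\bar x\neq y$ and $x\neq\bar y$, which avoids any issue with $\{\cdot,\cdot\}$ failing to be a $2$-element set.

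\textbf{Main step.} The argument then proceeds in two symmetric moves. From $\bar x\,\mathtt{N}\,x$ we have $N[\bar x]=N[x]$; since $\{x,y\}\in E$ gives $y\in N[x]=N[\bar x]$, and $\bar x\neq y$, we conclude $\{\bar x,y\}\in E$. Repeating the reasoning in the second coordinate: from $\bar y\,\mathtt{N}\,y$ we get $N[\bar y]=N[y]$, and $\{\bar x,y\}\in E$ gives $\bar x\in N[y]=N[\bar y]$, whence $\{\bar x,\bar y\}\in E$ because $\bar x\neq\bar y$. This finishes the proof.

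\textbf{Obstacle.} There is essentially no obstacle: the statement is a direct unfolding of the definitions. The only point deserving a word of care is the possibility that the two classes coincide (or that a vertex of $X$ equals a vertex of $Y$), which is precisely why I would split off the case $X=Y$ at the very start and handle it via completeness of the induced subgraph, leaving the disjoint case to the two-step neighbourhood chase above.
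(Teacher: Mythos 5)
Your proposal is correct and follows essentially the same two-step neighbourhood-swapping argument as the paper: first replace $x$ by $\bar x$ using $N[x]=N[\bar x]$, then replace $y$ by $\bar y$ using $N[y]=N[\bar y]$. The only difference is your explicit handling of the case $X=Y$ and of vertex distinctness, which the paper leaves implicit.
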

				\begin{proof}
					Let $\bar{x} \in X$ and $\bar{y}\in Y$. Since $N[x]=N[\bar{x}]$ and $N[y]=N[\bar{y}]$, then $\{\bar{x},\bar{y}\}\in E$. Indeed, since $\{x,y\}\in E$, we have that also $\{\bar{x},y\}\in E$ because $x$ and $\bar{x}$ have the same closed neighbourhood. Next, since $\{\bar{x},y\}\in E$, then $\{\bar{x},\bar{y}\}\in E$, because $N[y]=N[\bar{y}]$. 
				\end{proof}
			
				The next proposition is a transcription of \cite[Lemma 3.5 ]{Ma et al} using our notation. This proposition is the foundations of the arguments that distinguish the type of critical classes.
				
				\begin{prop}{\rm \cite[Lemma 3.5]{Ma et al}}
					\label{propLatiNclassi}Let $G$ be a group and $x,y \in G$. Let $[x]_{\mathtt{N}}$ and $[y]_{\mathtt{N}}$ be two distinct $\mathtt{N}$-classes of first or second type. If $\langle x \rangle < \langle y \rangle $, then $|[x]_{\mathtt{N}}|\leq |[y]_{\mathtt{N}}|$, with equality if and only if the following two conditions hold.
					\begin{enumerate}
						\item[\rm 1)] Both $[x]_{\mathtt{N}}$ and $[y]_{\mathtt{N}}$ are of the first type.
						
						\item[\rm 2)] $o(y)=2o(x)$ with $o(x)$ odd and at least $3$.
					\end{enumerate} 
				\end{prop}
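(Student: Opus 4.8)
The plan is to split into four cases according to whether each of $[x]_{\mathtt{N}}$ and $[y]_{\mathtt{N}}$ is of the first or of the second type, and in each to reduce the comparison of the two sizes to an arithmetic inequality. The size formulas I use are $|[z]_{\mathtt{N}}|=\phi(o(z))$ when $[z]_{\mathtt{N}}$ is of the first type (Lemma~\ref{OssDiamondRelation} and Definition~\ref{NfirstType}) and, when $[z]_{\mathtt{N}}=C_{z'}$ is of the second type with parameters $(p,r,s)$, $|C_{z'}|=p^{r}-p^{s}$ with $o(z')=p^{r}$ and $C_{z'}=\{u\in\langle z'\rangle:\ p^{s+1}\le o(u)\le p^{r}\}$ (Lemma~\ref{CarachetisationN-classes_1}, Definition~\ref{NsecondType}, Proposition~\ref{propC_y}). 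I shall repeatedly use that $\langle x\rangle<\langle y\rangle$ gives: $o(x)$ is a proper divisor of $o(y)$; $x=y^{m}$ for some $m$, so $x$ and $y$ are joined and $y\in N[x]$, $x\in N[y]$; and $N[x]\ne N[y]$ because the two classes are distinct. Also, since neither class is the star class, no element involved is the identity.

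I would start with the case in which both classes are of the first type. Then $|[x]_{\mathtt{N}}|=\phi(o(x))$, $|[y]_{\mathtt{N}}|=\phi(o(y))$, and since $o(x)$ properly divides $o(y)$, Lemma~\ref{phiEulero} gives $\phi(o(x))\mid\phi(o(y))$, hence $|[x]_{\mathtt{N}}|\le|[y]_{\mathtt{N}}|$, with equality exactly when $o(y)=2o(x)$ and $o(x)$ is odd. In that case $o(x)\ne 1$ (else $x=1$ and $[x]_{\mathtt{N}}=\mathcal{S}$), so $o(x)\ge 3$; this is precisely conditions~1) and 2). Conversely, if 1) and 2) hold then $\phi(o(y))=\phi(2o(x))=\phi(o(x))$ since $\gcd(2,o(x))=1$, so equality holds. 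It remains to show that in the three cases where at least one class is of the second type the inequality is strict, so that no further equality arises.

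Assume first $[x]_{\mathtt{N}}=C_{x'}$ is of the second type with parameters $(p,r,s)$, so $x'\,\mathtt{N}\,x$, $o(x')=p^{r}$ and $o(x)=p^{i}$ with $s+1\le i\le r$. From $y\in N[x]=N[x']$ either $y\in\langle x'\rangle$ or $x'\in\langle y\rangle$; the first leads to $o(y)=p^{j}$ with $j\le s$ (otherwise $y\in C_{x'}=[x]_{\mathtt{N}}$), contradicting $o(y)>o(x)=p^{i}\ge p^{s+1}$, so $x'\in\langle y\rangle$, and in fact $\langle x'\rangle<\langle y\rangle$ (equality would give $x'\diamond y$, hence $[x']_{\mathtt{N}}=[y]_{\mathtt{N}}$). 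If $[y]_{\mathtt{N}}$ is of the first type, Lemma~\ref{phiEulero} applied to $p^{r}=o(x')\mid o(y)$ gives $\phi(o(y))\ge 2\phi(p^{r})>p^{r}-p^{s}=|[x]_{\mathtt{N}}|$ unless $p$ is odd and $o(y)=2p^{r}$. If $[y]_{\mathtt{N}}=C_{y'}$ is of the second type with parameters $(q,r',s')$, then $o(x)\mid o(y)$ forces $q=p$, the same argument as above shows $y\notin\langle x'\rangle$, so $x'\in\langle y\rangle\subseteq\langle y'\rangle$ with $o(x')=p^{r}$; then $r\le s'$ (otherwise $x'\in C_{y'}=[y]_{\mathtt{N}}=[x']_{\mathtt{N}}$), whence $|[x]_{\mathtt{N}}|=p^{r}-p^{s}<p^{r}\le p^{s'}<p^{r'}-p^{s'}=|[y]_{\mathtt{N}}|$. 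Finally, if $[x]_{\mathtt{N}}$ is of the first type and $[y]_{\mathtt{N}}=C_{y'}$ of the second type with parameters $(p,r',s')$, then $o(y)=p^{j}$ with $j\le r'$ and $o(x)=p^{i}$ with $1\le i<j$, so $|[x]_{\mathtt{N}}|=\phi(p^{i})=p^{i}-p^{i-1}\le p^{r'-1}-p^{r'-2}<p^{r'}-p^{s'}=|[y]_{\mathtt{N}}|$.

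The main obstacle is the single leftover sub-case, $[x]_{\mathtt{N}}=C_{x'}$ of the second type with parameters $(p,r,s)$, $x'\in\langle y\rangle$, $o(x')=p^{r}$, $p$ odd, $o(y)=2p^{r}$, where the size estimate alone fails; I must show it cannot occur. Here $s\le r-2$, so $w:=y^{2p}$ lies in the (unique) subgroup $\langle x'\rangle$ of order $p^{r}$ of $\langle y\rangle$ and has order $p^{r-1}\ge p^{s+1}$, hence $w\in C_{x'}$ and $N[w]=N[x']$. But $w=(y^{p})^{2}$ is a power of $y^{p}$, so $y^{p}\in N[w]=N[x']$, while $o(y^{p})=2p^{r-1}$ neither divides nor is divisible by $o(x')=p^{r}$ (as $p$ is odd), so $y^{p}\notin N[x']$: a contradiction. (For $p=2$ this sub-case does not arise, since $o(y)=2p^{r}$ already gives $\phi(o(y))\ge p^{r}>p^{r}-p^{s}$, so the element-chasing is needed only in the odd case.) Assembling the four cases yields $|[x]_{\mathtt{N}}|\le|[y]_{\mathtt{N}}|$ and shows equality forces 1) and 2), while 1) and 2) conversely give equality.
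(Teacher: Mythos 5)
Your proof is correct, and it reaches the conclusion by a route that differs from the paper's in the treatment of the second-type cases. The paper splits into three cases and, when $[x]_{\mathtt{N}}$ is of the second type with parameters $(p,r,s)$, handles both possible types of $[y]_{\mathtt{N}}$ at once: for every prime $p'$ dividing $o(y)/p^{r}$ it produces an auxiliary element $z_0\in\langle y\rangle$ of order $p^{s+1}p'$, which is adjacent to the minimal-order element of the class, and adjacency with the maximal-order element then forces $p'=p$ (using $r-s\ge 2$); hence $p^{r+1}\mid o(y)$ and $|[y]_{\mathtt{N}}|\ge|[y]_{\diamond}|=\phi(o(y))\ge p^{r}>p^{r}-p^{s}$. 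You instead split into four type-combinations: when $[y]_{\mathtt{N}}$ is of the first type you apply Lemma \ref{phiEulero} to $o(x')\mid o(y)$ and then kill the exceptional case $o(y)=2p^{r}$, $p$ odd, by your own element-chasing argument with $y^{p}$ and $y^{2p}\in C_{x'}$ (which is a cousin of the paper's $z_0$-argument, again exploiting $r-s\ge2$ and equality of closed neighbourhoods within an $\mathtt{N}$-class, but applied to different elements and only where needed); when $[y]_{\mathtt{N}}$ is of the second type you observe $r\le s'$ and conclude arithmetically, whereas the paper never needs this observation. For the case $[x]_{\mathtt{N}}$ first, $[y]_{\mathtt{N}}$ second, the paper embeds $[x]_{\mathtt{N}}$ in a cyclic subgroup of order $q^{j}$ to get the bound $q^{j}-1$, while you bound $\phi(p^{i})\le p^{r'-1}-p^{r'-2}$ directly; both suffice. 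The trade-off is that the paper's single uniform argument is shorter, while your decomposition is more elementary and arithmetic but pays for it with the extra exceptional-case exclusion; the both-first-type case and the equality characterization via Lemma \ref{phiEulero} are identical in the two proofs.
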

			
				\begin{proof}
					We divide the proof in three cases:
					\begin{enumerate}
						\item[\rm i)] \underline{$[x]_{\mathtt{N}}$ is of second type with parameters $(p,r,s)$ and $[y]_{\mathtt{N}}$ of first or second type}.
						Pick elements $x_1$ and $x_r$ in $[x]_{\mathtt{N}}$ of minimum and maximum order, $p^{s+1}$ and $p^r$, respectively. Then $\langle x_1\rangle \leq \langle x\rangle \leq \langle x_r\rangle$. Since $\langle x \rangle <\langle y \rangle$, we have $y\in N[x]=N[x_r]$. Note that any element $z$ satisfying $\langle x \rangle \leq \langle z \rangle \leq \langle x_r \rangle$ belongs to $[x]_{\mathtt{N}}$. Then $\langle x_r \rangle <\langle y \rangle$, otherwise $y \in [x]_{\mathtt{N}}$, and we have $[x]_{\mathtt{N}}\not=[y]_{\mathtt{N}}$. So $o(x_r)=p^r$ properly divides $o(y)$. Hence there exists $p'$ a prime divisor of $\frac{o(y)}{p^r}$. Pick an element $z_0\in \langle y\rangle$ of order $p^{s+1}p'$. Then $z_0\in N[x_1]=N[x_r]$, because $\langle x_1 \rangle \leq \langle y \rangle$ and $o(x_1)=p^{s+1}$, thus $\langle x_1 \rangle \leq \langle z_0 \rangle$. Hence we have $\{z_0,x_r\}\in E$, so one of $p^{s+1}p'$ and $p^r$ is divisible by the other. If $\frac{p^r}{p^{s+1}p'}\in \mathbb{N}$, then $p'\mid p$, and hence $p'=p$. If $\frac{p^{s+1}p'}{p^r}\in \mathbb{N}$, then $p^{s+1-r}p'\in \mathbb{N}$. In view of $r-s\geq 2 $, we get $p'=p$, also in this case. It follows that $p^{r+1}$ divides $o(y)$, and so $|[y]_{\diamond}|\geq p^{r}(p-1)\geq p^{r}$. By Lemma \ref{CarachetisationN-classes_1}, we have that $|[x]_{\mathtt{N}}|=p^r-p^s<p^r$. Since $[y]_{\diamond}\subseteq [y]_{\mathtt{N}}$, we have that $|[x]_{\mathtt{N}}|<|[y]_{\mathtt{N}}|$.
						
						\item[\rm ii)] \underline{$[y]_{\mathtt{N}}$ is of the second type, with parameters $(q,t,j)$, and $[x]_{\mathtt{N}}$ of the first type }. Pick an element $y_1$ in $[y]_{\mathtt{N}}$ of order $p^{j+1}$. Since any element $z$ satisfying $\langle y_1 \rangle \leq \langle z \rangle \leq \langle y \rangle$ belongs to $[y]_{\mathtt{N}}$, we obtain that $\langle x\rangle < \langle y_1 \rangle$. Hence $o(x)$ properly divides $o(y_1)=q^{j+1}$. So pick $y_0 \in \langle y_1 \rangle $ of order $q^j$. Then $\langle x \rangle \leq \langle y_0 \rangle$. Therefore we have that $[x]_{\mathtt{N}}\subseteq \langle y_0 \rangle \setminus \{1\}$. Indeed $[x]_{\mathtt{N}}$ is of the first type  so it does not contains $1$ and we also  have that $[x]_{\mathtt{N}}=[x]_{\diamond}\subset \langle x \rangle$. Hence $|[x]_{\mathtt{N}}|\leq |\langle y_0\rangle \setminus \{1\} | = q^j-1<q^t-q^j$. Since $[y]_{\mathtt{N}}$ is of the second type, and then, by Lemma \ref{CarachetisationN-classes_1}, $|[y]_{\mathtt{N}}|=q^t-q^j$, we have $|[x]_{\mathtt{N}}|<|[y]_{\mathtt{N}}|$.
						
						\item[\rm iii)] \underline{Both $[x]_{\mathtt{N}}$ and $[y]_{\mathtt{N}}$ are of the first type}. Then $|[x]_{\mathtt{N}}|=|[x]_{\diamond}|=\phi(o(x))$ and $|[y]_{\mathtt{N}}|=\phi(o(y))$. Since $o(x)\mid o(y)$, it follows that $|[x]_{\mathtt{N}}|\mid|[y]_{\mathtt{N}}|$, and so, by Lemma \ref{phiEulero}, $|[x]_{\mathtt{N}}|\leq |[y]_{\mathtt{N}}|$. Note that $o(x)\not=o(y)$ and $x\not=1$. Then, again by Lemma \ref{phiEulero}, $|[y]_{\mathtt{N}}|=|[x]_{\mathtt{N}}|$ if and only if $o(y)=2o(x)$ and $o(x)$ odd and at least $3$.
						
						Combining all these cases, we get the desired result.

					\end{enumerate}
				\end{proof}
				
				Now we can focus only on the critical classes. Note that whenever $|\mathcal{S}|>1$, for the reconstruction of the directed power graph from the power graph, we will use an argument that does not consider $\mathtt{N}$ or $\diamond$-classes. So we study the critical classes only in the case of $\mathcal{S}=\{1\}$.
				
				\begin{lemma}
					\label{LemmaCriticalClassMio}Let $G$ be a group with $\mathcal{S}=\{1\}$. Let $C=[y]_{\mathtt{N}}$ be a critical class. There exists $x\in G\setminus \hat{C}$ such that $|[x]_{\mathtt{N}}|\leq |C|$ and such that $\{x,y\}\in E$ if and only if $C$ is of the first type.
				\end{lemma}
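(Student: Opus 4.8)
The plan is to prove the two implications separately, using Proposition \ref{propLatiNclassi} as the main tool in each direction, after a few preliminary observations. First I would record that, since $\mathcal S=\{1\}$ and $|\hat C|=p^r\geq 4$, we have $|C|=p^r-1\geq 3$ and $1\notin C$, so $C$ is not the star class; hence, by Proposition \ref{propC_y}, $C$ is of the first type or of the second type. I would also note that for any $x\notin\hat C$ (so $x\notin C$, since $C\subseteq\hat C$, and $x\neq 1$) the condition $\{x,y\}\in E$ does not depend on the chosen representative $y\in C$: by Lemma \ref{latiNclassi} it is equivalent to the $\mathtt{N}$-classes $[x]_{\mathtt{N}}$ and $C$ being joined. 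This representative-independence is what will let me test the edge condition against whichever element of $C$ is convenient.

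For the direction ``$C$ of the first type $\Rightarrow$ such an $x$ exists'': the critical-class hypothesis gives $\hat C=C\cup\{1\}$ and $|\hat C|=p^r$, so $|C|=p^r-p^0$ with $0\in[r-2]_0$ because $r\geq 2$; hence Lemma \ref{CarachetisationN-classes_2} applies and yields $C=[y]_{\diamond}$ with $o(y)=:m$ not a prime power. In particular $m$ is not prime, so the cyclic group $\langle y\rangle$ contains an element $x$ with $1<o(x)<m$. Such an $x$ is neither $1$ nor a generator of $\langle y\rangle$, so $x\notin [y]_{\diamond}\cup\{1\}=C\cup\{1\}=\hat C$; and $x\in\langle y\rangle$ with $x\neq y$ gives $\{x,y\}\in E$. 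Finally $\langle x\rangle<\langle y\rangle$, the classes $[x]_{\mathtt{N}}$ and $[y]_{\mathtt{N}}=C$ are distinct (else $x\diamond y$ forces $\langle x\rangle=\langle y\rangle$), and both are $\mathtt{N}$-classes other than the star class, hence of the first or second type; so Proposition \ref{propLatiNclassi} gives $|[x]_{\mathtt{N}}|\leq|[y]_{\mathtt{N}}|=|C|$, as wanted.

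For the converse I would argue by contraposition: assuming $C$ of the second type, I would show that every $x\in G\setminus\hat C$ with $\{x,y\}\in E$ satisfies $|[x]_{\mathtt{N}}|>|C|$, so that no $x$ as in the statement can exist. By Lemma \ref{CarachetisationN-classes_1} we may write $\hat C=\langle w\rangle$ for an element $w\in C$ of maximum order $p^r$. Given such an $x$: $x\neq 1$ and $x\notin C$, so $[x]_{\mathtt{N}}\neq C=[y]_{\mathtt{N}}=[w]_{\mathtt{N}}$; by Lemma \ref{latiNclassi}, $\{x,y\}\in E$ forces $\{x,w\}\in E$, and since $x\notin\langle w\rangle$ this means $w\in\langle x\rangle$, i.e.\ $\langle w\rangle<\langle x\rangle$. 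Proposition \ref{propLatiNclassi}, applied to this strict inclusion, gives $|C|=|[w]_{\mathtt{N}}|\leq|[x]_{\mathtt{N}}|$ with equality only if both classes are of the first type; since $[w]_{\mathtt{N}}=C$ is of the second type, the inequality is strict, i.e.\ $|[x]_{\mathtt{N}}|>|C|$.

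All steps are short once Proposition \ref{propLatiNclassi} and Lemmas \ref{CarachetisationN-classes_1}--\ref{CarachetisationN-classes_2} are available. I expect the only delicate points to be: (i) in the forward direction, ensuring the produced element $x$ genuinely lies outside $\hat C$ — this requires $o(y)$ to be not a prime power, which is exactly what Lemma \ref{CarachetisationN-classes_2} supplies and which does \emph{not} follow from the critical-class hypothesis alone; and (ii) in both directions, verifying carefully the hypotheses of Proposition \ref{propLatiNclassi}, namely that the two $\mathtt{N}$-classes involved are distinct, each of the first or second type, and related by a strict inclusion of cyclic subgroups \emph{in the correct direction}. The representative-independence remark (via Lemma \ref{latiNclassi}) is the glue that lets these checks be carried out against the handy representatives $w$ and $y$.
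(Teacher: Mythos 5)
Your proof is correct and relies on the same machinery as the paper's: Proposition \ref{propLatiNclassi} in both directions, Lemma \ref{CarachetisationN-classes_1} to get $\hat{C}=\langle w\rangle$ when $C$ is of the second type, and the fact that $o(y)$ is not a prime power in the first-type case (which you justify explicitly via Lemma \ref{CarachetisationN-classes_2}, where the paper merely asserts it) to produce the element $x$. The only organisational difference is that you prove the ``existence $\Rightarrow$ first type'' implication by contraposition in a single case, while the paper argues directly by splitting on whether $\langle x\rangle<\langle y\rangle$ or $\langle y\rangle<\langle x\rangle$; both arguments ultimately rest on the same equality clause of Proposition \ref{propLatiNclassi}.
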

				\begin{proof}
					Assume first that there exists  $x\in G\setminus \hat{C}$ such that $|[x]_{\mathtt{N}}|\leq |C|=[y]_{\mathtt{N}}$ and such that $\{x,y\}\in E$.  
					Since $\{x,y\}\in E$, then there are two possibilities, $\langle y \rangle < \langle x \rangle$ or $\langle x \rangle < \langle y \rangle$. Indeed $\langle x \rangle \not= \langle y \rangle$, otherwise we would have $x\in C\subseteq \hat{C}$. \\
					If $\langle y \rangle < \langle x \rangle$, by Proposition \ref{propLatiNclassi}, we have that $|[y]_{\mathtt{N}}|\leq |[x]_{\mathtt{N}}|$, and hence $|[x]_{\mathtt{N}}|=|[y]_{\mathtt{N}}|$. So, again by Proposition \ref{propLatiNclassi}, $[y]_{\mathtt{N}}$ is of the first type.\\
					If $\langle x \rangle < \langle y \rangle$, in particular $x\in \langle y \rangle $.
					Suppose, by contradiction, that $C$ is of the second type. Then, by Lemma \ref{CarachetisationN-classes_1}, $\hat{C}=\langle y \rangle$ and hence $x\in \hat{C}$, a contradiction.
					
					Assume next that $C$ is of the first type. Then $o(y)=m$, with $m$ a not a prime power integer. So, by Lemma \ref{nonGeneratoriInPG}, there exists $x \in \langle y \rangle\setminus \{1\}$ such that $x \notin [y]_{\mathtt{N}}$. Therefore we have that $\langle x \rangle < \langle y \rangle$, and hence, by Proposition \ref{propLatiNclassi}, $|[x]_{\mathtt{N}}|\leq |C|$.
				\end{proof}
				
				In the end, whenever $\mathcal{S}=\{1\}$, for each $\mathtt{N}$-class we know how to recognize of which type it is. If $C$ is an $\mathtt{N}$-class not critical, then, by Lemma \ref{CarachetisationN-classes_1} and Lemma \ref{CarachetisationN-classes_2}, we are able to distinguish the type of $C$. If $C$ is a critical class, then, by Lemma \ref{LemmaCriticalClassMio}, we are able to distinguish the type of $C$. 
				
				So now we have all the tools to prove the main result of this section.
				
				\begin{theorem}{\rm \cite[Theorem 2]{Cameron_2}}
				\label{UPG-DPG}	If $G_1$ and $G_2$ are groups with $\mathcal{P}(G_1)\cong \mathcal{P}(G_2)$, then $\vec{\mathcal{P}}(G_1)\cong \vec{\mathcal{P}}(G_2)$.
				\end{theorem}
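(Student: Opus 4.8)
The plan is to start from a graph isomorphism $\psi:\mathcal{P}(G_1)\to\mathcal{P}(G_2)$ and to post-compose it with a suitable graph automorphism of $\mathcal{P}(G_2)$ so as to turn it into a digraph isomorphism $\vec{\mathcal{P}}(G_1)\to\vec{\mathcal{P}}(G_2)$. The guiding fact, recorded in the discussion after Lemma \ref{lemmaClassiDiamond}, is that once we know, inside $\mathcal{P}(G)$, the identity vertex and the partition of $G$ into $\diamond$-classes (with the sizes of those classes, whence the involutions as the non-identity vertices in singleton $\diamond$-classes), the orientation of every edge of $\mathcal{P}(G)$ is forced. So the whole argument reduces to making the modified isomorphism respect this data.

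First I would dispose of the case $|\mathcal{S}_1|>1$. By Remark \ref{CardinalityS} we have $|\mathcal{S}_1|=|\mathcal{S}_2|$, and both the value $|\mathcal{S}|$ and completeness of the power graph (Proposition \ref{PGcompleto}) are graph-isomorphism invariants; together with $|G_1|=|G_2|$ and Proposition \ref{S>1} this puts $G_1$ and $G_2$ in the same one of the three families of Proposition \ref{S>1} (complete power graph forces cyclic of prime-power order; not complete with $|\mathcal{S}|=2$ forces generalised quaternion $2$-group; not complete with $|\mathcal{S}|\geq 3$ forces cyclic of non-prime-power order). Each of these families contains exactly one group of any given order, so $G_1\cong G_2$ and we conclude by Remark \ref{isoGroup-isoGraph}.

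From now on assume $|\mathcal{S}_1|=|\mathcal{S}_2|=1$, so $\psi$ carries the unique star vertex $1_{G_1}$ to $1_{G_2}$. By Lemma \ref{Niso} it maps the $\mathtt{N}$-classes of $G_1$ bijectively onto those of $G_2$ preserving their sizes, and since $\hat{C}=N[N[C]]$ is built from closed neighbourhoods it preserves each set $\hat{C}$ and $|\hat{C}|$ as well. The central step is to verify that $\psi$ preserves the \emph{type} of every non-star $\mathtt{N}$-class, and for second-type classes the parameters $(p,r,s)$. For non-critical classes this is immediate from Lemmas \ref{CarachetisationN-classes_1} and \ref{CarachetisationN-classes_2}, whose criteria involve only the integers $|C|$ and $|\hat{C}|$ (and the relations $|C|=p^r-p^s$, $|\hat{C}|=p^r$ recover $(p,r,s)$ by unique factorisation). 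For \emph{critical} classes the size argument alone does not decide the type — this is exactly the gap in Cameron's proof — and here I would invoke Lemma \ref{LemmaCriticalClassMio}, whose criterion is the existence of an adjacent $\mathtt{N}$-class of size at most $|C|$ and is again invariant under $\psi$. I expect this to be the real obstacle, since it depends on Proposition \ref{propLatiNclassi}.

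Knowing the type of every $\mathtt{N}$-class of $G_1$, I partition it into $\diamond$-classes: the star class is $\{1\}$, a first-type class is a single $\diamond$-class, and a second-type class $C_y$ with parameters $(p,r,s)$ breaks into $r-s$ $\diamond$-classes of sizes $\phi(p^{s+1}),\dots,\phi(p^r)$ by Proposition \ref{propC_y}. Since $\psi(C_y)$ is a second-type class of $G_2$ with the same parameters, it breaks into $\diamond$-classes of the same sizes; the $\psi$-images of the $\diamond$-subclasses of $C_y$ are blocks of $\psi(C_y)$ of the correct sizes but need not be its $\diamond$-subclasses, so on each such image $\mathtt{N}$-class I post-compose $\psi$ with a permutation of that class fixing everything else — a graph automorphism of $\mathcal{P}(G_2)$ by the indistinguishability of $\mathtt{N}$-class members — matching the blocks with the $\diamond$-subclasses of $\psi(C_y)$. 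The resulting $\psi'$ fixes $1$, sends involutions to involutions, and carries each $\diamond$-class of $G_1$ onto a $\diamond$-class of $G_2$ of the same size. Finally I would check $\psi'$ is a digraph isomorphism by the orientation case analysis after Lemma \ref{lemmaClassiDiamond}: for an arc $(x,y)\in A_{\vec{\mathcal{P}}(G_1)}$ the pair $(\psi'(x),\psi'(y))$ lands in the matching case (same $\diamond$-class; larger to smaller $\diamond$-class; equal-size classes with one joined outward to an involution; involution to identity), so Lemma \ref{lemmaClassiDiamond} yields $(\psi'(x),\psi'(y))\in A_{\vec{\mathcal{P}}(G_2)}$; the same argument applied to $\psi'^{-1}$ gives the converse, whence $\vec{\mathcal{P}}(G_1)\cong\vec{\mathcal{P}}(G_2)$.
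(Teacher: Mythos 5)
Your proposal is correct and follows essentially the same route as the paper: the case split via $|\mathcal{S}|$ and Proposition \ref{S>1}, recognition of the $\mathtt{N}$-class types through Lemmas \ref{CarachetisationN-classes_1}, \ref{CarachetisationN-classes_2} and \ref{LemmaCriticalClassMio}, the arbitrary splitting of second-type classes into blocks of sizes $\phi(p^{s+1}),\dots,\phi(p^r)$, and the orientation rules of Lemma \ref{lemmaClassiDiamond}. The only difference is presentational: you make explicit, by post-composing with automorphisms permuting within $\mathtt{N}$-classes, the digraph isomorphism that the paper's proof leaves implicit in its ``up to graph isomorphism'' reconstruction argument.
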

				\begin{proof}
					Due to the graph isomorphism we have $|G_1|=|G_2|=n$.
					Let $\mathcal{S}_i=\mathcal{S}_{\mathcal{P}(G_i)}$. We have $|\mathcal{S}_1|=|\mathcal{
					S}_2|$ by Remark \ref{CardinalityS}.
					By Proposition \ref{S>1} we have $4$ possibilities:
					\begin{itemize}
						\item \underline{$|\mathcal{S}_i|=n$ for $i \in \left\lbrace 1,2\right\rbrace $}. Then $\mathcal{P}(G_i)$ is a complete graph. By the characterisation in the Proposition \ref{S>1} we get that $G_i$ is cyclic of prime-power order and therefore $G_1 \cong G_2$. It follows that $\vec{\mathcal{P}}(G_1)\cong \vec{\mathcal{P}}(G_2)$.
						
						\item \underline{$|\mathcal{S}_i|=1+\phi(n)$ for $i \in \left\lbrace 1,2\right\rbrace $}. By the characterisation in the Proposition \ref{S>1} we have that $G_i$ is cyclic of order $n$ for $i \in \left\lbrace 1,2\right\rbrace  $. Therefore $G_1 \cong G_2$, then we have $\vec{\mathcal{P}}(G_1)\cong \vec{\mathcal{P}}(G_2)$.
						
						\item \underline{$|\mathcal{S}_i|=2\not=n$ for $i \in \left\lbrace 1,2\right\rbrace $}. Then we are not in one of the previous cases since $|\mathcal{S}_i|=1+\phi(n)=2$ if and only if $\phi(n)=1$, then we must have $n=2$, that we already exclude, or $n=1$ that is a contradiction since $|\mathcal{S}_i|\leq n$. Hence, by Proposition \ref{S>1}, $G_i$ is generalised quaternion. Thus $G_1 \cong G_2$, and hence $\vec{\mathcal{P}}(G_1)\cong \vec{\mathcal{P}}(G_2)$.
						
						\item \underline{$|\mathcal{S}_i|=1$ for $i \in \left\lbrace 1,2\right\rbrace $}. From the power graph we recognize all the $\mathtt{N}$-classes. We are able to identify the type of every $\mathtt{N}$-classes.
						The star class is $[1]_{\diamond}$, and all the $\mathtt{N}$-classes of the first type are already splitted in $\diamond$-classes since they are themself $\diamond$-classes. We only have to analyse $\mathtt{N}$-classes of the second type.
						Let $C_y$ an $\mathtt{N}$-class of the second type with parameters $(p,r,s)$. Remember that the elements of an $\mathtt{N}$-class are indistinguishable (see discussion after Definition \ref{Ndef}). So we partition it arbitrarily into subsets of sizes $p^i-p^{i-1}=p^{i-1}(p-1)=\phi(p^i)$ for $i=s+1,...,r$. Now, up to graph isomorphism, we have partitioned the vertices of $C_y$ into equivalence classes of the relation $\circ$ and hence, by Corollary \ref{o-partition}, of the relation $\diamond$. So we can partition $G_i$ in $\diamond$-classes for $i \in \{1,2\}$.\\
						Now for all $\left\lbrace x,y\right\rbrace \in E_{\mathcal{P}(G_i)} $, retracing the discussion that follows Lemma \ref{lemmaClassiDiamond}, we are able to identify which of $(x,y)$ and $(y,x)$ is in $A_{\vec{\mathcal{P}}(G_i)}$ for $i \in \{1,2\}$. Hence we can reconstruct $\vec{\mathcal{P}}(G_i)$ from $\mathcal{P}(G_i)$ for $i\in \{1,2\}$.
						Therefore, since $\mathcal{P}(G_1)\cong \mathcal{P}(G_2)$ we obtain that $\vec{\mathcal{P}}(G_1)\cong \vec{\mathcal{P}}(G_2)$.
					\end{itemize}
				\end{proof}
			
			\section{Two examples of reconstruction}
			
			In this Section we show concretely the reconstruction of $\vec{\mathcal{P}}(G)$ from $\mathcal{P}(G)$ in a couple of significant examples.
			Let $G$ be a group with $\mathcal{P}(G)$ the graph in Figure \ref{PGD15}.
			We immediately recognize the identity as the only star vertex, thus, for the reconstruction of $\vec{\mathcal{P}}(G)$, we have to follow the last point of the proof of Theorem \ref{UPG-DPG}.
			We can focus on $\mathcal{P}^*(G)$, represented in Figure \ref{PPGD15}. We already know the directions of all the arcs that join the identity with all the other vertices. Then we easily manage the blue vertices. They are clearly involutions so they are joined to the identity by arcs that are directed from the involutions to the identity.
			
			\begin{figure}
				\centering
			\begin{tikzpicture}[line cap=round,line join=round,>=triangle 45,x=1.0cm,y=1.0cm]
				\clip(-6.,-8.) rectangle (8.,10.);
				\draw [line width=2.pt] (2.993657089646132,-0.5747290551732837)-- (4.641980191492829,0.6246686768274756);
				\draw [line width=2.pt] (2.993657089646132,-0.5747290551732837)-- (5.659959266685525,2.390806431953024);
				\draw [line width=2.pt] (2.993657089646132,-0.5747290551732837)-- (5.871576485073833,4.418302947484353);
				\draw [line width=2.pt] (2.993657089646132,-0.5747290551732837)-- (5.240241307116339,6.356585656658499);
				\draw [line width=2.pt] (2.993657089646132,-0.5747290551732837)-- (3.8751173205819267,7.870507870313641);
				\draw [line width=2.pt] (2.993657089646132,-0.5747290551732837)-- (2.0122468645044806,8.698298684157265);
				\draw [line width=2.pt] (2.993657089646132,-0.5747290551732837)-- (-0.026262835613412694,8.696825546232356);
				\draw [line width=2.pt] (2.993657089646132,-0.5747290551732837)-- (-1.8879349333421296,7.866343175469165);
				\draw [line width=2.pt] (2.993657089646132,-0.5747290551732837)-- (-3.2508694102243165,6.350449518468134);
				\draw [line width=2.pt] (2.993657089646132,-0.5747290551732837)-- (-3.8794025133360366,4.411256359986279);
				\draw [line width=2.pt] (2.993657089646132,-0.5747290551732837)-- (-3.6648551591052874,2.3840678141418907);
				\draw [line width=2.pt] (2.993657089646132,-0.5747290551732837)-- (-2.6443245341800923,0.6194031969412541);
				\draw [line width=2.pt] (2.993657089646132,-0.5747290551732837)-- (-0.9942696548396859,-0.5776109478268854);
				\draw [line width=2.pt] (-0.9942696548396859,-0.5776109478268854)-- (-2.6443245341800923,0.6194031969412541);
				\draw [line width=2.pt] (-0.9942696548396859,-0.5776109478268854)-- (-3.6648551591052874,2.3840678141418907);
				\draw [line width=2.pt] (-0.9942696548396859,-0.5776109478268854)-- (-3.8794025133360366,4.411256359986279);
				\draw [line width=2.pt] (-0.9942696548396859,-0.5776109478268854)-- (-3.2508694102243165,6.350449518468134);
				\draw [line width=2.pt] (-0.9942696548396859,-0.5776109478268854)-- (-1.8879349333421296,7.866343175469165);
				\draw [line width=2.pt] (-0.9942696548396859,-0.5776109478268854)-- (-0.026262835613412694,8.696825546232356);
				\draw [line width=2.pt] (-0.9942696548396859,-0.5776109478268854)-- (2.0122468645044806,8.698298684157265);
				\draw [line width=2.pt] (-0.9942696548396859,-0.5776109478268854)-- (3.8751173205819267,7.870507870313641);
				\draw [line width=2.pt] (-0.9942696548396859,-0.5776109478268854)-- (5.240241307116339,6.356585656658499);
				\draw [line width=2.pt] (-0.9942696548396859,-0.5776109478268854)-- (5.871576485073833,4.418302947484353);
				\draw [line width=2.pt] (-0.9942696548396859,-0.5776109478268854)-- (5.659959266685525,2.390806431953024);
				\draw [line width=2.pt] (-0.9942696548396859,-0.5776109478268854)-- (4.641980191492829,0.6246686768274756);
				\draw [line width=2.pt] (4.641980191492829,0.6246686768274756)-- (5.659959266685525,2.390806431953024);
				\draw [line width=2.pt] (4.641980191492829,0.6246686768274756)-- (5.871576485073833,4.418302947484353);
				\draw [line width=2.pt] (4.641980191492829,0.6246686768274756)-- (5.240241307116339,6.356585656658499);
				\draw [line width=2.pt] (4.641980191492829,0.6246686768274756)-- (3.8751173205819267,7.870507870313641);
				\draw [line width=2.pt] (4.641980191492829,0.6246686768274756)-- (2.0122468645044806,8.698298684157265);
				\draw [line width=2.pt] (4.641980191492829,0.6246686768274756)-- (-0.026262835613412694,8.696825546232356);
				\draw [line width=2.pt] (4.641980191492829,0.6246686768274756)-- (-1.8879349333421296,7.866343175469165);
				\draw [line width=2.pt] (4.641980191492829,0.6246686768274756)-- (-3.2508694102243165,6.350449518468134);
				\draw [line width=2.pt] (4.641980191492829,0.6246686768274756)-- (-3.8794025133360366,4.411256359986279);
				\draw [line width=2.pt] (4.641980191492829,0.6246686768274756)-- (-3.6648551591052874,2.3840678141418907);
				\draw [line width=2.pt] (4.641980191492829,0.6246686768274756)-- (-2.6443245341800923,0.6194031969412541);
				\draw [line width=2.pt] (5.871576485073833,4.418302947484353)-- (5.659959266685525,2.390806431953024);
				\draw [line width=2.pt] (5.871576485073833,4.418302947484353)-- (5.240241307116339,6.356585656658499);
				\draw [line width=2.pt] (5.871576485073833,4.418302947484353)-- (3.8751173205819267,7.870507870313641);
				\draw [line width=2.pt] (5.871576485073833,4.418302947484353)-- (2.0122468645044806,8.698298684157265);
				\draw [line width=2.pt] (5.871576485073833,4.418302947484353)-- (-0.026262835613412694,8.696825546232356);
				\draw [line width=2.pt] (5.871576485073833,4.418302947484353)-- (-1.8879349333421296,7.866343175469165);
				\draw [line width=2.pt] (5.871576485073833,4.418302947484353)-- (-3.2508694102243165,6.350449518468134);
				\draw [line width=2.pt] (5.871576485073833,4.418302947484353)-- (-3.8794025133360366,4.411256359986279);
				\draw [line width=2.pt] (5.871576485073833,4.418302947484353)-- (-3.6648551591052874,2.3840678141418907);
				\draw [line width=2.pt] (5.871576485073833,4.418302947484353)-- (-2.6443245341800923,0.6194031969412541);
				\draw [line width=2.pt] (2.0122468645044806,8.698298684157265)-- (5.240241307116339,6.356585656658499);
				\draw [line width=2.pt] (2.0122468645044806,8.698298684157265)-- (3.8751173205819267,7.870507870313641);
				\draw [line width=2.pt] (2.0122468645044806,8.698298684157265)-- (-0.026262835613412694,8.696825546232356);
				\draw [line width=2.pt] (2.0122468645044806,8.698298684157265)-- (-1.8879349333421296,7.866343175469165);
				\draw [line width=2.pt] (2.0122468645044806,8.698298684157265)-- (-3.2508694102243165,6.350449518468134);
				\draw [line width=2.pt] (2.0122468645044806,8.698298684157265)-- (-3.8794025133360366,4.411256359986279);
				\draw [line width=2.pt] (2.0122468645044806,8.698298684157265)-- (-3.6648551591052874,2.3840678141418907);
				\draw [line width=2.pt] (2.0122468645044806,8.698298684157265)-- (-2.6443245341800923,0.6194031969412541);
				\draw [line width=2.pt] (-0.026262835613412694,8.696825546232356)-- (3.8751173205819267,7.870507870313641);
				\draw [line width=2.pt] (-0.026262835613412694,8.696825546232356)-- (5.240241307116339,6.356585656658499);
				\draw [line width=2.pt] (-0.026262835613412694,8.696825546232356)-- (5.659959266685525,2.390806431953024);
				\draw [line width=2.pt] (2.0122468645044806,8.698298684157265)-- (5.659959266685525,2.390806431953024);
				\draw [line width=2.pt] (-0.026262835613412694,8.696825546232356)-- (-2.6443245341800923,0.6194031969412541);
				\draw [line width=2.pt] (-0.026262835613412694,8.696825546232356)-- (-1.8879349333421296,7.866343175469165);
				\draw [line width=2.pt] (-0.026262835613412694,8.696825546232356)-- (-3.2508694102243165,6.350449518468134);
				\draw [line width=2.pt] (-0.026262835613412694,8.696825546232356)-- (-3.8794025133360366,4.411256359986279);
				\draw [line width=2.pt] (-0.026262835613412694,8.696825546232356)-- (-3.6648551591052874,2.3840678141418907);
				\draw [line width=2.pt] (-3.8794025133360366,4.411256359986279)-- (-3.2508694102243165,6.350449518468134);
				\draw [line width=2.pt] (-3.8794025133360366,4.411256359986279)-- (-1.8879349333421296,7.866343175469165);
				\draw [line width=2.pt] (-3.8794025133360366,4.411256359986279)-- (3.8751173205819267,7.870507870313641);
				\draw [line width=2.pt] (-3.8794025133360366,4.411256359986279)-- (5.240241307116339,6.356585656658499);
				\draw [line width=2.pt] (-3.8794025133360366,4.411256359986279)-- (5.659959266685525,2.390806431953024);
				\draw [line width=2.pt] (-3.8794025133360366,4.411256359986279)-- (-2.6443245341800923,0.6194031969412541);
				\draw [line width=2.pt] (-3.8794025133360366,4.411256359986279)-- (-3.6648551591052874,2.3840678141418907);
				\draw [line width=2.pt] (-2.6443245341800923,0.6194031969412541)-- (5.659959266685525,2.390806431953024);
				\draw [line width=2.pt] (-2.6443245341800923,0.6194031969412541)-- (5.240241307116339,6.356585656658499);
				\draw [line width=2.pt] (-2.6443245341800923,0.6194031969412541)-- (3.8751173205819267,7.870507870313641);
				\draw [line width=2.pt] (-2.6443245341800923,0.6194031969412541)-- (-1.8879349333421296,7.866343175469165);
				\draw [line width=2.pt] (-2.6443245341800923,0.6194031969412541)-- (-3.2508694102243165,6.350449518468134);
				\draw [line width=2.pt] (-2.6443245341800923,0.6194031969412541)-- (-3.6648551591052874,2.3840678141418907);
				\draw [line width=2.pt] (5.659959266685525,2.390806431953024)-- (3.8751173205819267,7.870507870313641);
				\draw [line width=2.pt] (-1.8879349333421296,7.866343175469165)-- (5.659959266685525,2.390806431953024);
				\draw [line width=2.pt] (-3.6648551591052874,2.3840678141418907)-- (5.659959266685525,2.390806431953024);
				\draw [line width=2.pt] (5.240241307116339,6.356585656658499)-- (-3.2508694102243165,6.350449518468134);
				\draw [line width=2.pt] (3.8751173205819267,7.870507870313641)-- (-3.6648551591052874,2.3840678141418907);
				\draw [line width=2.pt] (3.8751173205819267,7.870507870313641)-- (-1.8879349333421296,7.866343175469165);
				\draw [line width=2.pt] (-1.8879349333421296,7.866343175469165)-- (-3.6648551591052874,2.3840678141418907);
				\draw [line width=2.pt] (-0.9942696548396859,-0.5776109478268854)-- (1.,-1.);
				\draw [line width=2.pt] (-2.6443245341800923,0.6194031969412541)-- (1.,-1.);
				\draw [line width=2.pt] (-3.6648551591052874,2.3840678141418907)-- (1.,-1.);
				\draw [line width=2.pt] (-3.8794025133360366,4.411256359986279)-- (1.,-1.);
				\draw [line width=2.pt] (-3.2508694102243165,6.350449518468134)-- (1.,-1.);
				\draw [line width=2.pt] (-1.8879349333421296,7.866343175469165)-- (1.,-1.);
				\draw [line width=2.pt] (-0.026262835613412694,8.696825546232356)-- (1.,-1.);
				\draw [line width=2.pt] (2.0122468645044806,8.698298684157265)-- (1.,-1.);
				\draw [line width=2.pt] (3.8751173205819267,7.870507870313641)-- (1.,-1.);
				\draw [line width=2.pt] (5.240241307116339,6.356585656658499)-- (1.,-1.);
				\draw [line width=2.pt] (5.871576485073833,4.418302947484353)-- (1.,-1.);
				\draw [line width=2.pt] (5.659959266685525,2.390806431953024)-- (1.,-1.);
				\draw [line width=2.pt] (4.641980191492829,0.6246686768274756)-- (1.,-1.);
				\draw [line width=2.pt] (2.993657089646132,-0.5747290551732837)-- (1.,-1.);
				\draw [line width=2.pt] (-4.76303418111067,-0.9849782327152496)-- (1.,-1.);
				\draw [line width=2.pt] (-4.632934599631879,-2.2177177104782277)-- (1.,-1.);
				\draw [line width=2.pt] (-4.206057079291719,-3.471792530391428)-- (1.,-1.);
				\draw [line width=2.pt] (-3.393564195655358,-4.729528426597333)-- (1.,-1.);
				\draw [line width=2.pt] (-2.7330563266424126,-5.390567057711037)-- (1.,-1.);
				\draw [line width=2.pt] (-1.9550595614531452,-5.947768346881919)-- (1.,-1.);
				\draw [line width=2.pt] (6.762864846375749,-0.9533375038290286)-- (1.,-1.);
				\draw [line width=2.pt] (6.626501119770371,-2.247106160423023)-- (1.,-1.);
				\draw [line width=2.pt] (6.191394438175647,-3.502441290704189)-- (1.,-1.);
				\draw [line width=2.pt] (5.404330695240475,-4.716807736903384)-- (1.,-1.);
				\draw [line width=2.pt] (4.72012682257888,-5.401527581428005)-- (1.,-1.);
				\draw [line width=2.pt] (3.891677844983721,-5.985076515658771)-- (1.,-1.);
				\draw [line width=2.pt] (0.9853608434239401,-6.7630351657123375)-- (1.,-1.);
				\draw [line width=2.pt] (-0.28027978423590794,-6.619045497254774)-- (1.,-1.);
				\draw [line width=2.pt] (2.296625560040442,-6.615296143854935)-- (1.,-1.);
				\begin{scriptsize}
					\draw [fill=ffffff] (1.,-1.) circle (2.5pt);
					\draw [fill=ffqqqq] (2.993657089646132,-0.5747290551732837) circle (2.5pt);
					\draw [fill=ffqqqq] (4.641980191492829,0.6246686768274756) circle (2.5pt);
					\draw [fill=qqffqq] (5.659959266685525,2.390806431953024) circle (2.5pt);
					\draw [fill=ffqqqq] (5.871576485073833,4.418302947484353) circle (2.5pt);
					\draw [fill=ffqqff] (5.240241307116339,6.356585656658499) circle (2.5pt);
					\draw [fill=qqffqq] (3.8751173205819267,7.870507870313641) circle (2.5pt);
					\draw [fill=ffqqqq] (2.0122468645044806,8.698298684157265) circle (2.5pt);
					\draw [fill=ffqqqq] (-0.026262835613412694,8.696825546232356) circle (2.5pt);
					\draw [fill=qqffqq] (-1.8879349333421296,7.866343175469165) circle (2.5pt);
					\draw [fill=ffqqff] (-3.2508694102243165,6.350449518468134) circle (2.5pt);
					\draw [fill=ffqqqq] (-3.8794025133360366,4.411256359986279) circle (2.5pt);
					\draw [fill=qqffqq] (-3.6648551591052874,2.3840678141418907) circle (2.5pt);
					\draw [fill=ffqqqq] (-2.6443245341800923,0.6194031969412541) circle (2.5pt);
					\draw [fill=ffqqqq] (-0.9942696548396859,-0.5776109478268854) circle (2.5pt);
					\draw [fill=qqqqff] (-4.76303418111067,-0.9849782327152496) circle (2.5pt);
					\draw [fill=qqqqff] (-4.632934599631879,-2.2177177104782277) circle (2.5pt);
					\draw [fill=qqqqff] (-4.206057079291719,-3.471792530391428) circle (2.5pt);
					\draw [fill=qqqqff] (-3.393564195655358,-4.729528426597333) circle (2.5pt);
					\draw [fill=qqqqff] (-1.9550595614531452,-5.947768346881919) circle (2.5pt);
					\draw [fill=qqqqff] (3.891677844983721,-5.985076515658771) circle (2.5pt);
					\draw [fill=qqqqff] (5.404330695240475,-4.716807736903384) circle (2.5pt);
					\draw [fill=qqqqff] (6.191394438175647,-3.502441290704189) circle (2.5pt);
					\draw [fill=qqqqff] (6.626501119770371,-2.247106160423023) circle (2.5pt);
					\draw [fill=qqqqff] (6.762864846375749,-0.9533375038290286) circle (2.5pt);
					\draw [fill=qqqqff] (-0.28027978423590794,-6.619045497254774) circle (2.5pt);
					\draw [fill=qqqqff] (2.296625560040442,-6.615296143854935) circle (2.5pt);
					\draw [fill=qqqqff] (0.9853608434239401,-6.7630351657123375) circle (2.5pt);
					\draw [fill=qqqqff] (-2.7330563266424126,-5.390567057711037) circle (2.5pt);
					\draw [fill=qqqqff] (4.72012682257888,-5.401527581428005) circle (2.5pt);
				\end{scriptsize}
			\end{tikzpicture}
			
			\caption{$\mathcal{P}(D_{15})$}
			\label{PGD15}
			\end{figure}
		
			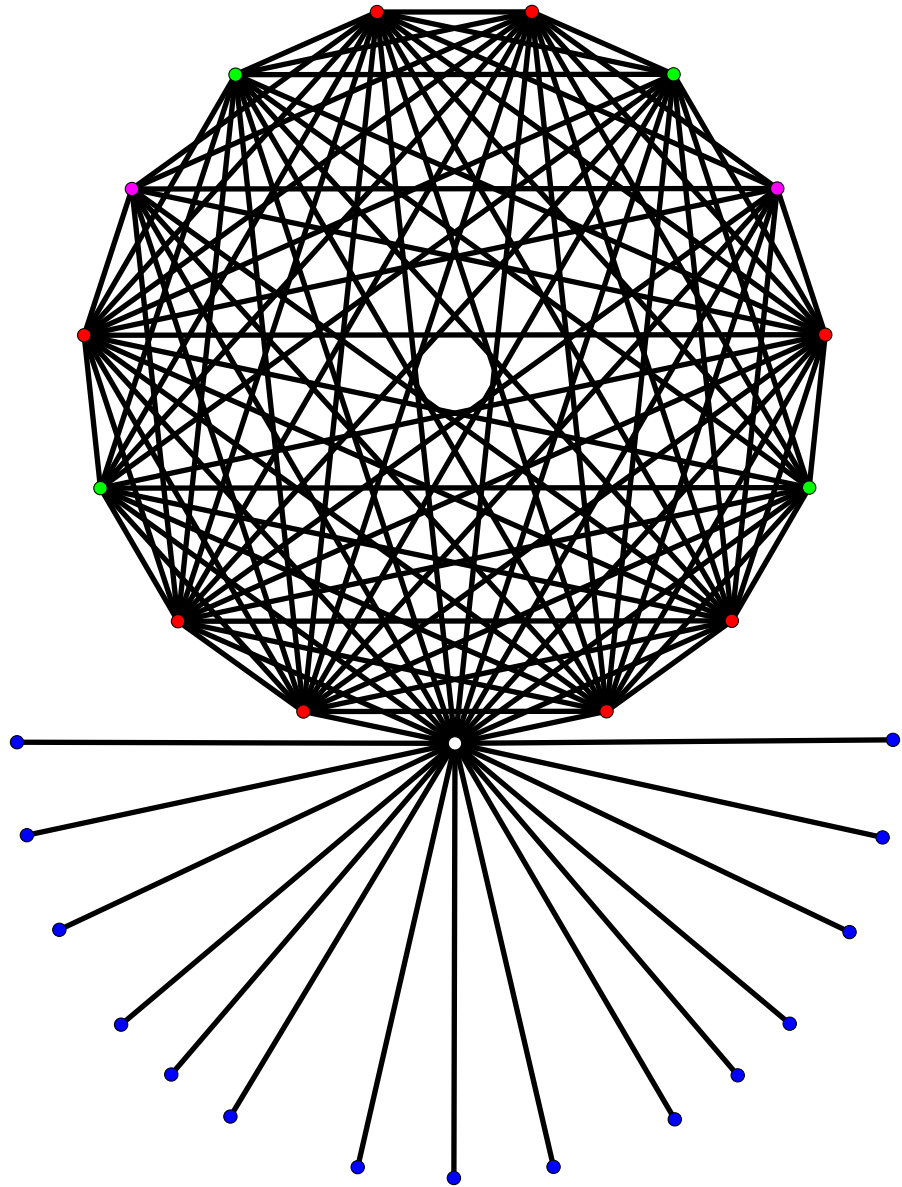
\begin{figure}
				\centering
				\begin{tikzpicture}[line cap=round,line join=round,>=triangle 45,x=1.0cm,y=1.0cm]
					\clip(-6.,-8.) rectangle (8.,10.);
					\draw [line width=2.pt] (2.993657089646132,-0.5747290551732837)-- (4.641980191492829,0.6246686768274756);
					\draw [line width=2.pt] (2.993657089646132,-0.5747290551732837)-- (5.659959266685525,2.390806431953024);
					\draw [line width=2.pt] (2.993657089646132,-0.5747290551732837)-- (5.871576485073833,4.418302947484353);
					\draw [line width=2.pt] (2.993657089646132,-0.5747290551732837)-- (5.240241307116339,6.356585656658499);
					\draw [line width=2.pt] (2.993657089646132,-0.5747290551732837)-- (3.8751173205819267,7.870507870313641);
					\draw [line width=2.pt] (2.993657089646132,-0.5747290551732837)-- (2.0122468645044806,8.698298684157265);
					\draw [line width=2.pt] (2.993657089646132,-0.5747290551732837)-- (-0.026262835613412694,8.696825546232356);
					\draw [line width=2.pt] (2.993657089646132,-0.5747290551732837)-- (-1.8879349333421296,7.866343175469165);
					\draw [line width=2.pt] (2.993657089646132,-0.5747290551732837)-- (-3.2508694102243165,6.350449518468134);
					\draw [line width=2.pt] (2.993657089646132,-0.5747290551732837)-- (-3.8794025133360366,4.411256359986279);
					\draw [line width=2.pt] (2.993657089646132,-0.5747290551732837)-- (-3.6648551591052874,2.3840678141418907);
					\draw [line width=2.pt] (2.993657089646132,-0.5747290551732837)-- (-2.6443245341800923,0.6194031969412541);
					\draw [line width=2.pt] (2.993657089646132,-0.5747290551732837)-- (-0.9942696548396859,-0.5776109478268854);
					\draw [line width=2.pt] (-0.9942696548396859,-0.5776109478268854)-- (-2.6443245341800923,0.6194031969412541);
					\draw [line width=2.pt] (-0.9942696548396859,-0.5776109478268854)-- (-3.6648551591052874,2.3840678141418907);
					\draw [line width=2.pt] (-0.9942696548396859,-0.5776109478268854)-- (-3.8794025133360366,4.411256359986279);
					\draw [line width=2.pt] (-0.9942696548396859,-0.5776109478268854)-- (-3.2508694102243165,6.350449518468134);
					\draw [line width=2.pt] (-0.9942696548396859,-0.5776109478268854)-- (-1.8879349333421296,7.866343175469165);
					\draw [line width=2.pt] (-0.9942696548396859,-0.5776109478268854)-- (-0.026262835613412694,8.696825546232356);
					\draw [line width=2.pt] (-0.9942696548396859,-0.5776109478268854)-- (2.0122468645044806,8.698298684157265);
					\draw [line width=2.pt] (-0.9942696548396859,-0.5776109478268854)-- (3.8751173205819267,7.870507870313641);
					\draw [line width=2.pt] (-0.9942696548396859,-0.5776109478268854)-- (5.240241307116339,6.356585656658499);
					\draw [line width=2.pt] (-0.9942696548396859,-0.5776109478268854)-- (5.871576485073833,4.418302947484353);
					\draw [line width=2.pt] (-0.9942696548396859,-0.5776109478268854)-- (5.659959266685525,2.390806431953024);
					\draw [line width=2.pt] (-0.9942696548396859,-0.5776109478268854)-- (4.641980191492829,0.6246686768274756);
					\draw [line width=2.pt] (4.641980191492829,0.6246686768274756)-- (5.659959266685525,2.390806431953024);
					\draw [line width=2.pt] (4.641980191492829,0.6246686768274756)-- (5.871576485073833,4.418302947484353);
					\draw [line width=2.pt] (4.641980191492829,0.6246686768274756)-- (5.240241307116339,6.356585656658499);
					\draw [line width=2.pt] (4.641980191492829,0.6246686768274756)-- (3.8751173205819267,7.870507870313641);
					\draw [line width=2.pt] (4.641980191492829,0.6246686768274756)-- (2.0122468645044806,8.698298684157265);
					\draw [line width=2.pt] (4.641980191492829,0.6246686768274756)-- (-0.026262835613412694,8.696825546232356);
					\draw [line width=2.pt] (4.641980191492829,0.6246686768274756)-- (-1.8879349333421296,7.866343175469165);
					\draw [line width=2.pt] (4.641980191492829,0.6246686768274756)-- (-3.2508694102243165,6.350449518468134);
					\draw [line width=2.pt] (4.641980191492829,0.6246686768274756)-- (-3.8794025133360366,4.411256359986279);
					\draw [line width=2.pt] (4.641980191492829,0.6246686768274756)-- (-3.6648551591052874,2.3840678141418907);
					\draw [line width=2.pt] (4.641980191492829,0.6246686768274756)-- (-2.6443245341800923,0.6194031969412541);
					\draw [line width=2.pt] (5.871576485073833,4.418302947484353)-- (5.659959266685525,2.390806431953024);
					\draw [line width=2.pt] (5.871576485073833,4.418302947484353)-- (5.240241307116339,6.356585656658499);
					\draw [line width=2.pt] (5.871576485073833,4.418302947484353)-- (3.8751173205819267,7.870507870313641);
					\draw [line width=2.pt] (5.871576485073833,4.418302947484353)-- (2.0122468645044806,8.698298684157265);
					\draw [line width=2.pt] (5.871576485073833,4.418302947484353)-- (-0.026262835613412694,8.696825546232356);
					\draw [line width=2.pt] (5.871576485073833,4.418302947484353)-- (-1.8879349333421296,7.866343175469165);
					\draw [line width=2.pt] (5.871576485073833,4.418302947484353)-- (-3.2508694102243165,6.350449518468134);
					\draw [line width=2.pt] (5.871576485073833,4.418302947484353)-- (-3.8794025133360366,4.411256359986279);
					\draw [line width=2.pt] (5.871576485073833,4.418302947484353)-- (-3.6648551591052874,2.3840678141418907);
					\draw [line width=2.pt] (5.871576485073833,4.418302947484353)-- (-2.6443245341800923,0.6194031969412541);
					\draw [line width=2.pt] (2.0122468645044806,8.698298684157265)-- (5.240241307116339,6.356585656658499);
					\draw [line width=2.pt] (2.0122468645044806,8.698298684157265)-- (3.8751173205819267,7.870507870313641);
					\draw [line width=2.pt] (2.0122468645044806,8.698298684157265)-- (-0.026262835613412694,8.696825546232356);
					\draw [line width=2.pt] (2.0122468645044806,8.698298684157265)-- (-1.8879349333421296,7.866343175469165);
					\draw [line width=2.pt] (2.0122468645044806,8.698298684157265)-- (-3.2508694102243165,6.350449518468134);
					\draw [line width=2.pt] (2.0122468645044806,8.698298684157265)-- (-3.8794025133360366,4.411256359986279);
					\draw [line width=2.pt] (2.0122468645044806,8.698298684157265)-- (-3.6648551591052874,2.3840678141418907);
					\draw [line width=2.pt] (2.0122468645044806,8.698298684157265)-- (-2.6443245341800923,0.6194031969412541);
					\draw [line width=2.pt] (-0.026262835613412694,8.696825546232356)-- (3.8751173205819267,7.870507870313641);
					\draw [line width=2.pt] (-0.026262835613412694,8.696825546232356)-- (5.240241307116339,6.356585656658499);
					\draw [line width=2.pt] (-0.026262835613412694,8.696825546232356)-- (5.659959266685525,2.390806431953024);
					\draw [line width=2.pt] (2.0122468645044806,8.698298684157265)-- (5.659959266685525,2.390806431953024);
					\draw [line width=2.pt] (-0.026262835613412694,8.696825546232356)-- (-2.6443245341800923,0.6194031969412541);
					\draw [line width=2.pt] (-0.026262835613412694,8.696825546232356)-- (-1.8879349333421296,7.866343175469165);
					\draw [line width=2.pt] (-0.026262835613412694,8.696825546232356)-- (-3.2508694102243165,6.350449518468134);
					\draw [line width=2.pt] (-0.026262835613412694,8.696825546232356)-- (-3.8794025133360366,4.411256359986279);
					\draw [line width=2.pt] (-0.026262835613412694,8.696825546232356)-- (-3.6648551591052874,2.3840678141418907);
					\draw [line width=2.pt] (-3.8794025133360366,4.411256359986279)-- (-3.2508694102243165,6.350449518468134);
					\draw [line width=2.pt] (-3.8794025133360366,4.411256359986279)-- (-1.8879349333421296,7.866343175469165);
					\draw [line width=2.pt] (-3.8794025133360366,4.411256359986279)-- (3.8751173205819267,7.870507870313641);
					\draw [line width=2.pt] (-3.8794025133360366,4.411256359986279)-- (5.240241307116339,6.356585656658499);
					\draw [line width=2.pt] (-3.8794025133360366,4.411256359986279)-- (5.659959266685525,2.390806431953024);
					\draw [line width=2.pt] (-3.8794025133360366,4.411256359986279)-- (-2.6443245341800923,0.6194031969412541);
					\draw [line width=2.pt] (-3.8794025133360366,4.411256359986279)-- (-3.6648551591052874,2.3840678141418907);
					\draw [line width=2.pt] (-2.6443245341800923,0.6194031969412541)-- (5.659959266685525,2.390806431953024);
					\draw [line width=2.pt] (-2.6443245341800923,0.6194031969412541)-- (5.240241307116339,6.356585656658499);
					\draw [line width=2.pt] (-2.6443245341800923,0.6194031969412541)-- (3.8751173205819267,7.870507870313641);
					\draw [line width=2.pt] (-2.6443245341800923,0.6194031969412541)-- (-1.8879349333421296,7.866343175469165);
					\draw [line width=2.pt] (-2.6443245341800923,0.6194031969412541)-- (-3.2508694102243165,6.350449518468134);
					\draw [line width=2.pt] (-2.6443245341800923,0.6194031969412541)-- (-3.6648551591052874,2.3840678141418907);
					\draw [line width=2.pt] (5.659959266685525,2.390806431953024)-- (3.8751173205819267,7.870507870313641);
					\draw [line width=2.pt] (-1.8879349333421296,7.866343175469165)-- (5.659959266685525,2.390806431953024);
					\draw [line width=2.pt] (-3.6648551591052874,2.3840678141418907)-- (5.659959266685525,2.390806431953024);
					\draw [line width=2.pt] (5.240241307116339,6.356585656658499)-- (-3.2508694102243165,6.350449518468134);
					\draw [line width=2.pt] (3.8751173205819267,7.870507870313641)-- (-3.6648551591052874,2.3840678141418907);
					\draw [line width=2.pt] (3.8751173205819267,7.870507870313641)-- (-1.8879349333421296,7.866343175469165);
					\draw [line width=2.pt] (-1.8879349333421296,7.866343175469165)-- (-3.6648551591052874,2.3840678141418907);
					\begin{scriptsize}
						\draw [fill=ffqqqq] (2.993657089646132,-0.5747290551732837) circle (2.5pt);
						\draw [fill=ffqqqq] (4.641980191492829,0.6246686768274756) circle (2.5pt);
						\draw [fill=qqffqq] (5.659959266685525,2.390806431953024) circle (2.5pt);
						\draw [fill=ffqqqq] (5.871576485073833,4.418302947484353) circle (2.5pt);
						\draw [fill=ffqqff] (5.240241307116339,6.356585656658499) circle (2.5pt);
						\draw [fill=qqffqq] (3.8751173205819267,7.870507870313641) circle (2.5pt);
						\draw [fill=ffqqqq] (2.0122468645044806,8.698298684157265) circle (2.5pt);
						\draw [fill=ffqqqq] (-0.026262835613412694,8.696825546232356) circle (2.5pt);
						\draw [fill=qqffqq] (-1.8879349333421296,7.866343175469165) circle (2.5pt);
						\draw [fill=ffqqff] (-3.2508694102243165,6.350449518468134) circle (2.5pt);
						\draw [fill=ffqqqq] (-3.8794025133360366,4.411256359986279) circle (2.5pt);
						\draw [fill=qqffqq] (-3.6648551591052874,2.3840678141418907) circle (2.5pt);
						\draw [fill=ffqqqq] (-2.6443245341800923,0.6194031969412541) circle (2.5pt);
						\draw [fill=ffqqqq] (-0.9942696548396859,-0.5776109478268854) circle (2.5pt);
						\draw [fill=qqqqff] (-4.76303418111067,-0.9849782327152496) circle (2.5pt);
						\draw [fill=qqqqff] (-4.632934599631879,-2.2177177104782277) circle (2.5pt);
						\draw [fill=qqqqff] (-4.206057079291719,-3.471792530391428) circle (2.5pt);
						\draw [fill=qqqqff] (-3.393564195655358,-4.729528426597333) circle (2.5pt);
						\draw [fill=qqqqff] (-1.9550595614531452,-5.947768346881919) circle (2.5pt);
						\draw [fill=qqqqff] (3.891677844983721,-5.985076515658771) circle (2.5pt);
						\draw [fill=qqqqff] (5.404330695240475,-4.716807736903384) circle (2.5pt);
						\draw [fill=qqqqff] (6.191394438175647,-3.502441290704189) circle (2.5pt);
						\draw [fill=qqqqff] (6.626501119770371,-2.247106160423023) circle (2.5pt);
						\draw [fill=qqqqff] (6.762864846375749,-0.9533375038290286) circle (2.5pt);
						\draw [fill=qqqqff] (-0.28027978423590794,-6.619045497254774) circle (2.5pt);
						\draw [fill=qqqqff] (2.296625560040442,-6.615296143854935) circle (2.5pt);
						\draw [fill=qqqqff] (0.9853608434239401,-6.7630351657123375) circle (2.5pt);
						\draw [fill=qqqqff] (-2.7330563266424126,-5.390567057711037) circle (2.5pt);
						\draw [fill=qqqqff] (4.72012682257888,-5.401527581428005) circle (2.5pt);
					\end{scriptsize}
				\end{tikzpicture}
				
				\caption{$\mathcal{P}^*(D_{15})$}
				\label{PPGD15}
			\end{figure}

			We now have to analyse the remaining three $\mathtt{N}$-classes. We use $[y]_{\mathtt{N}}$ to denote the class of red vertices and we use $[x]_{\mathtt{N}}$  and $[z]_{\mathtt{N}}$ to denote respectively the classes of green and of pink vertices.
			On the one hand, we have that $\hat{[y]}_{\mathtt{N}}=[y]_{\mathtt{N}} \cup \{1\}$ and that $|\hat{[y]}_{\mathtt{N}}|=9$, which means that $[y]_{\mathtt{N}}$ is a critical class. On the other hand $\hat{[x]}_{\mathtt{N}}=[x]_{\mathtt{N}}\cup [y]_{\mathtt{N}}\cup \{1\}$ and $\hat{[z]}_{\mathtt{N}}=[z]_{\mathtt{N}}\cup [y]_{\mathtt{N}}\cup \{1\}$, hence they are not critical classes.
			Let's focus on the non-critical classes. We have that $|[x]_{\mathtt{N}}|=4$, $|\hat{[x]}_{\mathtt{N}}|=12$, $|[z]_{\mathtt{N}}|=2$ and $|\hat{[z]}_{\mathtt{N}}|=10$. Therefore, neither $[x]_{\mathtt{N}}$ nor $[z]_{\mathtt{N}}$ are of the second type and thus we have $[x]_{\mathtt{N}}=[x]_{\diamond}$ and $[z]_{\mathtt{N}}=[z]_{\diamond}$. \\
			To recognize the type of $[y]_{\mathtt{N}}$ we use Lemma \ref{LemmaCriticalClassMio}. We consider $x$, the representative element of the class of green vertices. We have $x\in G\setminus \hat{[y]}_{\mathtt{N}}$, $4=|[x]_{\mathtt{N}}|\leq|[y]_{\mathtt{N}}|=8$ and $\{x,y\}\in E$. Therefore $[y]_{\mathtt{N}}=[y]_{\diamond}$ because it is an $\mathtt{N}$-class of the first type.
			We can now use Lemma \ref{lemmaClassiDiamond} to reconstruct the directed power graph since we know the identity and all the $\diamond$-classes.
			Pick two elements in the same $\diamond$-class. They are joined with an arc in both directions. Otherwise, if we pick a red vertex and one green or pink, the arc between the two vertices is directed from the red one, that belong to a class having greater size, to the other one.
			Finally there are no edges between pink and green vertices, thus there are also no arcs between them.
			Note that we have reconstructed the directed power graph without knowing the group, that was $D_{15}$ for the previous example.
			
			Now consider the power graph in Figure \ref{UPGD8_1}. As before we are in the case of $\mathcal{S}=\{1\}$. The white vertex is the only star vertex. But now the vertices that are neither the identity nor an involution are all in the same $\mathtt{N}$-class. Let's denote this $\mathtt{N}$-class $C$. Since
			$\hat{C}=C \cup \{1\}$ and $|\hat{C}|=8$, we have that $C$ is a critical class. We also have that $|C|=7=2^3-1$ and $|\hat{C}|=8=2^3$, so it has parameters $(p,r)=(2,3)$.
			There are two ways to prove that $C$ is of the second type.
			\begin{itemize}
				\item By Lemma \ref{LemmaCriticalClassMio}, $C$ is of the second type because there is no $x\in G\setminus \hat{C}$ joined with a vertex in $C$. 
				
				\item We can also give an arithmetic explanation: There is no integer $n$ such that $\phi(n)=7$, thus $C$ cannot be of the first type.  
			\end{itemize} 
			Therefore there exists $y \in C$, with $o(y)=2^3$, such that: $$ C=\{z\in \langle y \rangle \, | \, 2\leq o(z) \leq 2^3\}.$$
			Now we can partition the vertices of $C$ in three sets, one formed by $4$ elements, one by $2$ elements and the last one by $1$ element. Up to graph isomorphism we have partitioned the vertices of $C$ in three sets where all the elements have the same order, precisely order $8, 4$  and $2$. Look at Figure \ref{UPGD8} for an example of such a partition.
			By Corollary \ref{o-partition}, we now have the partition of $G$ in $\diamond$-classes. Then, by Lemma \ref{lemmaClassiDiamond}, we are able to reconstruct the directed power graph. 
			\begin{figure}
				\centering
				\begin{tikzpicture}[line cap=round,line join=round,>=triangle 45,x=1.0cm,y=1.0cm]
					\clip(-1.,-3.8) rectangle (5.,1.9);
					\draw [line width=1.pt] (-0.6476093605195885,-1.518936914502213)-- (2.,-1.);
					\draw [line width=1.pt] (-0.2504106699601461,-2.485508342942641)-- (2.,-1.);
					\draw [line width=1.pt] (0.4893710737630752,-3.233625632830833)-- (2.,-1.);
					\draw [line width=1.pt] (2.,-1.)-- (1.4591323776205516,-3.6416936679873135);
					\draw [line width=1.pt] (2.,-1.)-- (2.511235874008093,-3.647587789205458);
					\draw [line width=1.pt] (2.,-1.)-- (3.485508342942641,-3.2504106699601456);
					\draw [line width=1.pt] (2.,-1.)-- (4.233625632830833,-2.5106289262369246);
					\draw [line width=1.pt] (2.,-1.)-- (4.641715239301444,-1.5331665818853286);
					\draw [line width=1.pt] (2.,-1.)-- (2.9742724689345477,-0.602822880754688);
					\draw [line width=1.pt] (2.,-1.)-- (3.3823405040910277,0.3669384231027888);
					\draw [line width=1.pt] (2.,-1.)-- (2.985163384845716,1.3412108920373362);
					\draw [line width=1.pt] (2.,-1.)-- (2.015402080988239,1.7492789271938167);
					\draw [line width=1.pt] (2.,-1.)-- (1.0411296120536917,1.3521018079485048);
					\draw [line width=1.pt] (2.,-1.)-- (0.6330615768972112,0.3823405040910282);
					\draw [line width=1.pt] (2.,-1.)-- (1.030238696142523,-0.5919319648435194);
					\draw [line width=1.pt] (1.030238696142523,-0.5919319648435194)-- (2.9742724689345477,-0.602822880754688);
					\draw [line width=1.pt] (1.030238696142523,-0.5919319648435194)-- (3.3823405040910277,0.3669384231027888);
					\draw [line width=1.pt] (1.030238696142523,-0.5919319648435194)-- (2.985163384845716,1.3412108920373362);
					\draw [line width=1.pt] (1.030238696142523,-0.5919319648435194)-- (2.015402080988239,1.7492789271938167);
					\draw [line width=1.pt] (1.030238696142523,-0.5919319648435194)-- (1.0411296120536917,1.3521018079485048);
					\draw [line width=1.pt] (1.030238696142523,-0.5919319648435194)-- (0.6330615768972112,0.3823405040910282);
					\draw [line width=1.pt] (1.0411296120536917,1.3521018079485048)-- (0.6330615768972112,0.3823405040910282);
					\draw [line width=1.pt] (1.0411296120536917,1.3521018079485048)-- (2.9742724689345477,-0.602822880754688);
					\draw [line width=1.pt] (1.0411296120536917,1.3521018079485048)-- (3.3823405040910277,0.3669384231027888);
					\draw [line width=1.pt] (1.0411296120536917,1.3521018079485048)-- (2.985163384845716,1.3412108920373362);
					\draw [line width=1.pt] (1.0411296120536917,1.3521018079485048)-- (2.015402080988239,1.7492789271938167);
					\draw [line width=1.pt] (2.015402080988239,1.7492789271938167)-- (2.985163384845716,1.3412108920373362);
					\draw [line width=1.pt] (2.015402080988239,1.7492789271938167)-- (3.3823405040910277,0.3669384231027888);
					\draw [line width=1.pt] (2.015402080988239,1.7492789271938167)-- (2.9742724689345477,-0.602822880754688);
					\draw [line width=1.pt] (2.985163384845716,1.3412108920373362)-- (3.3823405040910277,0.3669384231027888);
					\draw [line width=1.pt] (2.985163384845716,1.3412108920373362)-- (2.9742724689345477,-0.602822880754688);
					\draw [line width=1.pt] (3.3823405040910277,0.3669384231027888)-- (0.6330615768972112,0.3823405040910282);
					\draw [line width=1.pt] (3.3823405040910277,0.3669384231027888)-- (2.9742724689345477,-0.602822880754688);
					\draw [line width=1.pt] (0.6330615768972112,0.3823405040910282)-- (2.015402080988239,1.7492789271938167);
					\draw [line width=1.pt] (0.6330615768972112,0.3823405040910282)-- (2.985163384845716,1.3412108920373362);
					\draw [line width=1.pt] 
					(2.9742724689345477,-0.602822880754688)--
					(0.6330615768972112,0.3823405040910282);
					\begin{scriptsize}
						
						\draw [fill=ffffff] (2.,-1.) circle (2.5pt);

						\draw [fill=qqqqff] (2.9742724689345477,-0.602822880754688) circle (2.5pt);
						\draw [fill=qqqqff] (3.3823405040910277,0.3669384231027888) circle (2.5pt);
						\draw [fill=qqqqff] (2.985163384845716,1.3412108920373362) circle (2.5pt);
						\draw [fill=qqqqff] (2.015402080988239,1.7492789271938167) circle (2.5pt);
						\draw [fill=qqqqff] (1.0411296120536917,1.3521018079485048) circle (2.5pt);
						\draw [fill=qqqqff] (0.6330615768972112,0.3823405040910282) circle (2.5pt);
						\draw [fill=qqqqff] (1.030238696142523,-0.5919319648435194) circle (2.5pt);
						
						\draw [fill=qqqqff] (-0.6476093605195885,-1.518936914502213) circle (2.0pt);
						\draw [fill=qqqqff] (-0.2504106699601461,-2.485508342942641) circle (2.0pt);
						\draw [fill=qqqqff] (0.4893710737630752,-3.233625632830833) circle (2.0pt);
						\draw [fill=qqqqff] (1.4591323776205516,-3.6416936679873135) circle (2.0pt);
						\draw [fill=qqqqff] (2.511235874008093,-3.647587789205458) circle (2.0pt);
						\draw [fill=qqqqff] (3.485508342942641,-3.2504106699601456) circle (2.0pt);
						\draw [fill=qqqqff] (4.233625632830833,-2.5106289262369246) circle (2.0pt);
						\draw [fill=qqqqff] (4.641715239301444,-1.5331665818853286) circle (2.0pt);
					\end{scriptsize}
				\end{tikzpicture}
				\caption{$\mathcal{P}(D_8)$}
				\label{UPGD8_1}
			\end{figure}
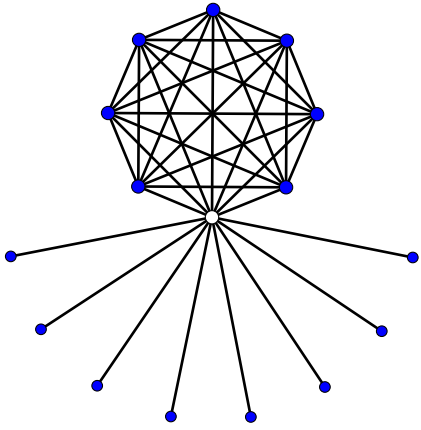
		
			\begin{figure}
				\centering
				\begin{tikzpicture}[line cap=round,line join=round,>=triangle 45,x=1.0cm,y=1.0cm]
					\clip(-1.,-3.8) rectangle (5.,1.9);
					\draw [line width=1.pt] (-0.6476093605195885,-1.518936914502213)-- (2.,-1.);
					\draw [line width=1.pt] (-0.2504106699601461,-2.485508342942641)-- (2.,-1.);
					\draw [line width=1.pt] (0.4893710737630752,-3.233625632830833)-- (2.,-1.);
					\draw [line width=1.pt] (2.,-1.)-- (1.4591323776205516,-3.6416936679873135);
					\draw [line width=1.pt] (2.,-1.)-- (2.511235874008093,-3.647587789205458);
					\draw [line width=1.pt] (2.,-1.)-- (3.485508342942641,-3.2504106699601456);
					\draw [line width=1.pt] (2.,-1.)-- (4.233625632830833,-2.5106289262369246);
					\draw [line width=1.pt] (2.,-1.)-- (4.641715239301444,-1.5331665818853286);
					\draw [line width=1.pt] (2.,-1.)-- (2.9742724689345477,-0.602822880754688);
					\draw [line width=1.pt] (2.,-1.)-- (3.3823405040910277,0.3669384231027888);
					\draw [line width=1.pt] (2.,-1.)-- (2.985163384845716,1.3412108920373362);
					\draw [line width=1.pt] (2.,-1.)-- (2.015402080988239,1.7492789271938167);
					\draw [line width=1.pt] (2.,-1.)-- (1.0411296120536917,1.3521018079485048);
					\draw [line width=1.pt] (2.,-1.)-- (0.6330615768972112,0.3823405040910282);
					\draw [line width=1.pt] (2.,-1.)-- (1.030238696142523,-0.5919319648435194);
					\draw [line width=1.pt] (1.030238696142523,-0.5919319648435194)-- (2.9742724689345477,-0.602822880754688);
					\draw [line width=1.pt] (1.030238696142523,-0.5919319648435194)-- (3.3823405040910277,0.3669384231027888);
					\draw [line width=1.pt] (1.030238696142523,-0.5919319648435194)-- (2.985163384845716,1.3412108920373362);
					\draw [line width=1.pt] (1.030238696142523,-0.5919319648435194)-- (2.015402080988239,1.7492789271938167);
					\draw [line width=1.pt] (1.030238696142523,-0.5919319648435194)-- (1.0411296120536917,1.3521018079485048);
					\draw [line width=1.pt] (1.030238696142523,-0.5919319648435194)-- (0.6330615768972112,0.3823405040910282);
					\draw [line width=1.pt] (1.0411296120536917,1.3521018079485048)-- (0.6330615768972112,0.3823405040910282);
					\draw [line width=1.pt] (1.0411296120536917,1.3521018079485048)-- (2.9742724689345477,-0.602822880754688);
					\draw [line width=1.pt] (1.0411296120536917,1.3521018079485048)-- (3.3823405040910277,0.3669384231027888);
					\draw [line width=1.pt] (1.0411296120536917,1.3521018079485048)-- (2.985163384845716,1.3412108920373362);
					\draw [line width=1.pt] (1.0411296120536917,1.3521018079485048)-- (2.015402080988239,1.7492789271938167);
					\draw [line width=1.pt] (2.015402080988239,1.7492789271938167)-- (2.985163384845716,1.3412108920373362);
					\draw [line width=1.pt] (2.015402080988239,1.7492789271938167)-- (3.3823405040910277,0.3669384231027888);
					\draw [line width=1.pt] (2.015402080988239,1.7492789271938167)-- (2.9742724689345477,-0.602822880754688);
					\draw [line width=1.pt] (2.985163384845716,1.3412108920373362)-- (3.3823405040910277,0.3669384231027888);
					\draw [line width=1.pt] (2.985163384845716,1.3412108920373362)-- (2.9742724689345477,-0.602822880754688);
					\draw [line width=1.pt] (3.3823405040910277,0.3669384231027888)-- (0.6330615768972112,0.3823405040910282);
					\draw [line width=1.pt] (3.3823405040910277,0.3669384231027888)-- (2.9742724689345477,-0.602822880754688);
					\draw [line width=1.pt] (0.6330615768972112,0.3823405040910282)-- (2.015402080988239,1.7492789271938167);
					\draw [line width=1.pt] (0.6330615768972112,0.3823405040910282)-- (2.985163384845716,1.3412108920373362);
					\draw [line width=1.pt] 
					(2.9742724689345477,-0.602822880754688)--
					(0.6330615768972112,0.3823405040910282);
					\begin{scriptsize}
						
					\draw [fill=ffffff] (2.,-1.) circle (2.5pt);

						\draw [fill=qqffqq] (2.9742724689345477,-0.602822880754688) circle (2.5pt);
							\draw [fill=ffqqqq] (3.3823405040910277,0.3669384231027888) circle (2.5pt);
							\draw [fill=qqffqq] (2.985163384845716,1.3412108920373362) circle (2.5pt);
							\draw [fill=cyan] (2.015402080988239,1.7492789271938167) circle (2.5pt);
							\draw [fill=qqffqq] (1.0411296120536917,1.3521018079485048) circle (2.5pt);
							\draw [fill=ffqqqq] (0.6330615768972112,0.3823405040910282) circle (2.5pt);
							\draw [fill=qqffqq] (1.030238696142523,-0.5919319648435194) circle (2.5pt);
						
						\draw [fill=qqqqff] (-0.6476093605195885,-1.518936914502213) circle (2.0pt);
						\draw [fill=qqqqff] (-0.2504106699601461,-2.485508342942641) circle (2.0pt);
						\draw [fill=qqqqff] (0.4893710737630752,-3.233625632830833) circle (2.0pt);
						\draw [fill=qqqqff] (1.4591323776205516,-3.6416936679873135) circle (2.0pt);
						\draw [fill=qqqqff] (2.511235874008093,-3.647587789205458) circle (2.0pt);
						\draw [fill=qqqqff] (3.485508342942641,-3.2504106699601456) circle (2.0pt);
						\draw [fill=qqqqff] (4.233625632830833,-2.5106289262369246) circle (2.0pt);
						\draw [fill=qqqqff] (4.641715239301444,-1.5331665818853286) circle (2.0pt);
					\end{scriptsize}
				\end{tikzpicture}
			\caption{$\mathcal{P}(D_8)$ with elements of the same order highlighted.}
			\label{UPGD8}
			\end{figure}
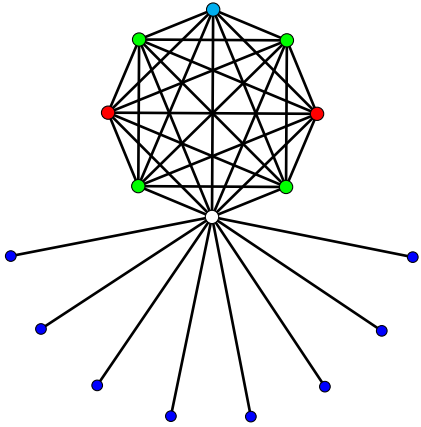
			   
			\section{Some open problems about critical classes}
			\label{OpenProblems}
			We conclude this chapter with some open questions.
			
			\begin{Prob}
				Do there exist groups having a critical class of the first type and a critical class of the second type with the same parameters? 
			\end{Prob}
			
			But before try to answer at the question above it is reasonable to study the following problem.
			\begin{Prob}
				Does there exist a group having a critical class of the first type and a critical class of the second type?
			\end{Prob}
		
			During the research of interesting examples another question has arisen:
			\begin{Prob}
				Does there exist $\mathtt{N}$-classes of the second type that are not critical classes? In other words, does there exist a group having an $\mathtt{N}$-class of the second type with parameters $(p,r,s)$ with $s\ne 0$?
			\end{Prob}
		
			At the moment we have no example of this kind.
			
	\chapter{Order of elements and power graph}
			\label{RelatedResults}
			\begin{prop}{\rm \cite[Proposition 1]{Cameron_2}}
				\label{DPG-elementOrder}Let $G_1$ and $G_2$ be groups with $\vec{\mathcal{P}}(G_1)\cong \vec{\mathcal{P}}(G_2)$. Then $G_1$ and $G_2$ have the same number of elements of each order.
			\end{prop}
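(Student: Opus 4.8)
The plan is to read the order of each element directly off the out-degree of the corresponding vertex in the directed power graph. First I would note that, by Definition \ref{defDPG}, for a fixed $x\in G$ the set of vertices dominated by $x$ in $\vec{\mathcal{P}}(G)$ is
$$\{y\in G \mid (x,y)\in A\}=\{x^{m}\mid m\in\mathbb{N}\}\setminus\{x\}=\langle x\rangle\setminus\{x\},$$
because in a finite group the positive powers of $x$ already exhaust $\langle x\rangle=\{x^{k}\mid 0\le k\le o(x)-1\}$. Consequently the out-degree of $x$ in $\vec{\mathcal{P}}(G)$ equals $|\langle x\rangle|-1=o(x)-1$; in other words, $o(x)$ is a graph-theoretic invariant of the vertex $x$ inside $\vec{\mathcal{P}}(G)$.

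Next I would exploit that a digraph isomorphism $\psi:\vec{\mathcal{P}}(G_1)\to\vec{\mathcal{P}}(G_2)$ preserves out-degrees: since $\psi$ is a bijection on vertices and $(x,y)\in A_{\vec{\mathcal{P}}(G_1)}$ if and only if $(\psi(x),\psi(y))\in A_{\vec{\mathcal{P}}(G_2)}$, the map $\psi$ restricts to a bijection between the set of vertices dominated by $x$ and the set of vertices dominated by $\psi(x)$. Combining this with the first step gives $o(\psi(x))-1=o(x)-1$, hence $o(\psi(x))=o(x)$ for every $x\in G_1$.

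Finally, fix $n\in\mathbb{N}$. The previous identity shows that $\psi$ maps $\{x\in G_1\mid o(x)=n\}$ into $\{y\in G_2\mid o(y)=n\}$, and since $\psi$ is a bijection with $o(\psi^{-1}(y))=o(y)$ for all $y\in G_2$ (apply the same argument to the inverse isomorphism), this restriction is in fact a bijection between the two sets. Therefore $G_1$ and $G_2$ have the same number of elements of each order, as required. I do not expect any real obstacle here: the entire content of the proof is the observation that the closed out-neighbourhood of a vertex $x$ in $\vec{\mathcal{P}}(G)$ is exactly the cyclic subgroup $\langle x\rangle$, so the only point needing care is that every element of $\langle x\rangle$ other than $x$ arises as a positive power of $x$, which is immediate from finiteness.
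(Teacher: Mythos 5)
Your proof is correct, and it is genuinely simpler than the argument the paper follows. You observe that the out-neighbourhood of a vertex $x$ in $\vec{\mathcal{P}}(G)$ is exactly $\langle x\rangle\setminus\{x\}$ (every element of $\langle x\rangle$, including $1$ and $x^{-1}$, being a \emph{positive} power of $x$ by finiteness), so the out-degree of $x$ equals $o(x)-1$; since a digraph isomorphism restricts to a bijection between out-neighbourhoods, orders are preserved vertex by vertex and the count of elements of each order follows. The paper instead follows Cameron's route: it identifies the $\diamond$-classes as the maximal complete subdigraphs of $\vec{\mathcal{P}}(G)$, shows that they carry a well-defined partial order induced by the arcs, recognizes the minimal non-identity classes as the classes of prime-order elements (of size $p-1$), reads off from the minimal classes below $[x]_{\diamond}$ the set of primes dividing $o(x)$, and finally recovers the exponents from $|[x]_{\diamond}|=\phi(o(x))$. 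Your argument buys brevity: a single numerical invariant (out-degree) already encodes the order, and no class structure or number theory is needed. The paper's argument buys a structural picture that is reused throughout the thesis: it shows the orders are recoverable from the poset of $\diamond$-classes together with their sizes (data which also underlies the reconstruction of $\vec{\mathcal{P}}(G)$ from $\mathcal{P}(G)$), and along the way it exhibits how prime divisors and exponents of $o(x)$ are visible in that poset. For the statement as written, your proof is complete and needs no repair.
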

			
			\begin{proof}
				In the directed power graph of a group $G$ the maximal complete digraphs are, by Lemma \ref{lemmaClassiDiamond}, the equivalence classes of the relation $\diamond$. Let's see that the $\diamond$-classes are partially ordered by the relation $[y]_{\diamond}\leq [x]_{\diamond}$ if $(x,y)\in A$ or $[x]_{\diamond}=[y]_{\diamond}$, which means that both $(x,y)$ and $(y,x)$ are in $A$. First of all let's see that the partial order is well given:
				
				Let $x,x_1, y, y_1 \in G$ such that $[x]_{\diamond}=[x_1]_{\diamond}$ and $[y]_{\diamond}=[y_1]_{\diamond}$, with $[y]_{\diamond}\leq[x]_{\diamond}$. Thus, if $[x]_{\diamond}=[y]_{\diamond}$, then, clearly, also $[x_1]_{\diamond}=[y_1]_{\diamond}$; else if $(x,y)\in A$ and $[x]_{\diamond}\not=[y]_{\diamond}$, then, by Lemma \ref{lemmaClassiDiamond}, all the vertices of $[x]_{\diamond}$ are joined with all the vertices of $[y]_{\diamond}$ with arcs that are directed from $[x]_{\diamond}$ to $[y]_{\diamond}$, and hence we have also $(x_1,y_1) \in A$. In both cases $[y_1]_{\diamond}\leq[x_1]_{\diamond}$, and so the relation is well defined.
				
				Now we have to prove that $\leq$ is a relation of order:
				
				\begin{itemize}
					\item \underline{Reflexivity} follows immediately by definition.
					
					\item \underline{Antisymmetry} holds since, if we have $[x]_{\diamond}\leq [y]_{\diamond}$ and $[y]_{\diamond}\leq [x]_{\diamond}$, then both $(y,x)$ and $(x,y)$ are in $A$. Thus $[x]_{\diamond}=[y]_{\diamond}$.
					
					\item \underline{Transitivity} also holds. Let be $[x]_{\diamond}\leq [y]_{\diamond}$ and $[y]_{\diamond}\leq [z]_{\diamond}$ for some $x,y,z \in G$. We surely have $(y,x)$ and $(z,y)$ in $A$. Now, since $\vec{\mathcal{P}}(G)$ is transitive, then $(z,x)\in A$, and hence $[x]_{\diamond}\leq [z]_{\diamond}$.
				\end{itemize}
				
				Now consider $\Diamond ^*$ the set of all $\diamond$-classes less the class of the identity. In $\Diamond ^*$, a minimal $\diamond$-class, under the relation $\leq$, consists of elements of prime order $p$, and has cardinality $p-1$. For any equivalence class $[x]_{\diamond}$, we can now determine the set of primes dividing the order of $x$ from the set of minimal classes below $[x]_{\diamond}$ in the partial order. If $x$ has order $n=p_1^{a_1}p_2^{a_2}...p_k^{a_k}$, where $p_i$ are distinct primes and $a_i \in \mathbb{N}$ for all $i \in [k]$, then we now know those primes and hence also the value of $\phi(n)=|[x]_{\diamond}|$. The exponent $a_i$ is one more than the exponent of the power of $p_i$ dividing $$\frac{|[x]_{\diamond}|}{\prod_{i=1}^{k}(p_i-1)}=\prod_{i=1}^{k}p^{a_i-1}.$$ So, for all $x\in G$, the order of $x$ is determined by the directed power graph.
				Therefore, since $\vec{\mathcal{P}}(G_1)\cong \vec{\mathcal{P}}(G_2)$, $G_1$ and $G_2$ have the same number of elements of each order.
			\end{proof}
			
			Let's see some examples of the relation $\leq$ defined in the proposition above.
			
			With $\Diamond$ we refer to the set of all the $\diamond$-classes.
		
			\begin{figure}
				\centering
				\begin{tikzpicture}[line cap=round,line join=round,>=triangle 45,x=1.0cm,y=1.0cm]
					\clip(2.,-6.5) rectangle (6.5,-1.5);
					\fill[line width=2.pt,color=zzttqq,fill=zzttqq,fill opacity=0.10000000149011612] (2.3533590643557503,-1.57835036299306) -- (3.3533590643557503,-1.57835036299306) -- (3.3533590643557503,-2.07835036299306) -- (2.3533590643557503,-2.07835036299306) -- cycle;
					\fill[line width=2.pt,color=zzttqq,fill=zzttqq,fill opacity=0.10000000149011612] (3.8533590643557503,-1.57835036299306) -- (3.8533590643557503,-2.07835036299306) -- (4.853359064355752,-2.07835036299306) -- (4.853359064355752,-1.57835036299306) -- cycle;
					\fill[line width=2.pt,color=zzttqq,fill=zzttqq,fill opacity=0.10000000149011612] (5.353359064355752,-1.57835036299306) -- (5.353359064355752,-2.07835036299306) -- (6.353359064355752,-2.07835036299306) -- (6.353359064355752,-1.57835036299306) -- cycle;
					\fill[line width=2.pt,color=zzttqq,fill=zzttqq,fill opacity=0.10000000149011612] (2.3533590643557503,-4.07835036299306) -- (2.3533590643557503,-4.57835036299306) -- (3.3533590643557503,-4.57835036299306) -- (3.3533590643557503,-4.07835036299306) -- cycle;
					\fill[line width=2.pt,color=zzttqq,fill=zzttqq,fill opacity=0.10000000149011612] (3.8533590643557503,-4.07835036299306) -- (3.8533590643557503,-4.57835036299306) -- (4.35335906435575,-4.57835036299306) -- (4.35335906435575,-4.07835036299306) -- cycle;
					\fill[line width=2.pt,color=zzttqq,fill=zzttqq,fill opacity=0.10000000149011612] (4.853359064355752,-4.07835036299306) -- (4.853359064355752,-4.57835036299306) -- (5.353359064355752,-4.57835036299306) -- (5.353359064355752,-4.07835036299306) -- cycle;
					\fill[line width=2.pt,color=zzttqq,fill=zzttqq,fill opacity=0.10000000149011612] (5.853359064355752,-4.07835036299306) -- (5.853359064355752,-4.57835036299306) -- (6.353359064355752,-4.57835036299306) -- (6.353359064355752,-4.07835036299306) -- cycle;
					\fill[line width=2.pt,color=zzttqq,fill=zzttqq,fill opacity=0.10000000149011612] (3.8533590643557503,-5.57835036299306) -- (3.8533590643557503,-6.07835036299306) -- (4.35335906435575,-6.07835036299306) -- (4.35335906435575,-5.57835036299306) -- cycle;
					\draw (-0.4609052993849324,5.559523070650456) node[anchor=north west] {$C_6xC_2=<a,b>$};
					\draw [line width=1.pt,color=zzttqq] (2.3533590643557503,-1.57835036299306)-- (3.3533590643557503,-1.57835036299306);
					\draw [line width=1.pt,color=zzttqq] (3.3533590643557503,-1.57835036299306)-- (3.3533590643557503,-2.07835036299306);
					\draw [line width=1.pt,color=zzttqq] (3.3533590643557503,-2.07835036299306)-- (2.3533590643557503,-2.07835036299306);
					\draw [line width=1.pt,color=zzttqq] (2.3533590643557503,-2.07835036299306)-- (2.3533590643557503,-1.57835036299306);
					\draw [line width=1.pt,color=zzttqq] (3.8533590643557503,-1.57835036299306)-- (3.8533590643557503,-2.07835036299306);
					\draw [line width=1.pt,color=zzttqq] (3.8533590643557503,-2.07835036299306)-- (4.853359064355752,-2.07835036299306);
					\draw [line width=1.pt,color=zzttqq] (4.853359064355752,-2.07835036299306)-- (4.853359064355752,-1.57835036299306);
					\draw [line width=1.pt,color=zzttqq] (4.853359064355752,-1.57835036299306)-- (3.8533590643557503,-1.57835036299306);
					\draw [line width=1.pt,color=zzttqq] (5.353359064355752,-1.57835036299306)-- (5.353359064355752,-2.07835036299306);
					\draw [line width=1.pt,color=zzttqq] (5.353359064355752,-2.07835036299306)-- (6.353359064355752,-2.07835036299306);
					\draw [line width=1.pt,color=zzttqq] (6.353359064355752,-2.07835036299306)-- (6.353359064355752,-1.57835036299306);
					\draw [line width=1.pt,color=zzttqq] (6.353359064355752,-1.57835036299306)-- (5.353359064355752,-1.57835036299306);
					\draw [line width=1.pt,color=zzttqq] (2.3533590643557503,-4.07835036299306)-- (2.3533590643557503,-4.57835036299306);
					\draw [line width=1.pt,color=zzttqq] (2.3533590643557503,-4.57835036299306)-- (3.3533590643557503,-4.57835036299306);
					\draw [line width=1.pt,color=zzttqq] (3.3533590643557503,-4.57835036299306)-- (3.3533590643557503,-4.07835036299306);
					\draw [line width=1.pt,color=zzttqq] (3.3533590643557503,-4.07835036299306)-- (2.3533590643557503,-4.07835036299306);
					\draw [line width=1.pt,color=zzttqq] (3.8533590643557503,-4.07835036299306)-- (3.8533590643557503,-4.57835036299306);
					\draw [line width=1.pt,color=zzttqq] (3.8533590643557503,-4.57835036299306)-- (4.35335906435575,-4.57835036299306);
					\draw [line width=1.pt,color=zzttqq] (4.35335906435575,-4.57835036299306)-- (4.35335906435575,-4.07835036299306);
					\draw [line width=1.pt,color=zzttqq] (4.35335906435575,-4.07835036299306)-- (3.8533590643557503,-4.07835036299306);
					\draw [line width=1.pt,color=zzttqq] (4.853359064355752,-4.07835036299306)-- (4.853359064355752,-4.57835036299306);
					\draw [line width=1.pt,color=zzttqq] (4.853359064355752,-4.57835036299306)-- (5.353359064355752,-4.57835036299306);
					\draw [line width=1.pt,color=zzttqq] (5.353359064355752,-4.57835036299306)-- (5.353359064355752,-4.07835036299306);
					\draw [line width=1.pt,color=zzttqq] (5.353359064355752,-4.07835036299306)-- (4.853359064355752,-4.07835036299306);
					\draw [line width=1.pt,color=zzttqq] (5.853359064355752,-4.07835036299306)-- (5.853359064355752,-4.57835036299306);
					\draw [line width=1.pt,color=zzttqq] (5.853359064355752,-4.57835036299306)-- (6.353359064355752,-4.57835036299306);
					\draw [line width=1.pt,color=zzttqq] (6.353359064355752,-4.57835036299306)-- (6.353359064355752,-4.07835036299306);
					\draw [line width=1.pt,color=zzttqq] (6.353359064355752,-4.07835036299306)-- (5.853359064355752,-4.07835036299306);
					\draw [line width=1.pt,color=zzttqq] (3.8533590643557503,-5.57835036299306)-- (3.8533590643557503,-6.07835036299306);
					\draw [line width=1.pt,color=zzttqq] (3.8533590643557503,-6.07835036299306)-- (4.35335906435575,-6.07835036299306);
					\draw [line width=1.pt,color=zzttqq] (4.35335906435575,-6.07835036299306)-- (4.35335906435575,-5.57835036299306);
					\draw [line width=1.pt,color=zzttqq] (4.35335906435575,-5.57835036299306)-- (3.8533590643557503,-5.57835036299306);
					\draw [line width=1.pt] (2.8533590643557503,-4.57835036299306)-- (3.8533590643557503,-5.57835036299306);
					\draw [line width=1.pt] (4.10335906435575,-4.57835036299306)-- (4.10335906435575,-5.57835036299306);
					\draw [line width=1.pt] (5.103359064355752,-4.57835036299306)-- (4.10335906435575,-5.57835036299306);
					\draw [line width=1.pt] (5.853359064355752,-4.57835036299306)-- (4.35335906435575,-5.57835036299306);
					\draw [line width=1.pt] (4.353359064355751,-2.07835036299306)-- (5.103359064355752,-4.07835036299306);
					\draw [line width=1.pt] (5.853359064355752,-2.07835036299306)-- (6.103359064355752,-4.07835036299306);
					\draw [line width=1.pt] (2.8533590643557503,-2.07835036299306)-- (2.8533590643557503,-4.07835036299306);
					\draw [line width=1.pt] (4.353359064355751,-2.07835036299306)-- (2.8533590643557503,-4.07835036299306);
					\draw [line width=1.pt] (5.853359064355752,-2.07835036299306)-- (2.8533590643557503,-4.07835036299306);
					\draw [line width=1.pt] (2.8533590643557503,-2.07835036299306)-- (4.10335906435575,-4.07835036299306);
					\begin{scriptsize}
						\draw [fill=qqffff] (4.10335906435575,-4.32835036299306) circle (2.5pt);
						\draw [fill=ffffww] (6.103359064355752,-1.82835036299306) circle (2.5pt);
						\draw [fill=ffqqff] (2.6033590643557503,-1.82835036299306) circle (2.5pt);
						\draw [fill=yqqqqq] (6.103359064355752,-4.32835036299306) circle (2.5pt);
						\draw [fill=ffqqff] (3.1033590643557503,-1.82835036299306) circle (2.5pt);
						\draw [fill=qqffqq] (4.10335906435575,-1.82835036299306) circle (2.5pt);
						\draw [fill=ffqqqq] (2.6033590643557503,-4.32835036299306) circle (2.5pt);
						\draw [fill=ffzztt] (5.103359064355752,-4.32835036299306) circle (2.5pt);
						\draw [fill=ffqqqq] (3.1033590643557503,-4.32835036299306) circle (2.5pt);
						\draw [fill=qqffqq] (4.603359064355752,-1.82835036299306) circle (2.5pt);
						\draw [fill=ffffww] (5.603359064355752,-1.82835036299306) circle (2.5pt);
						\draw [fill=qqqqff] (4.10335906435575,-5.82835036299306) circle (2.5pt);
					\end{scriptsize}
				\end{tikzpicture}
				\caption{The representation of $(\Diamond,\leq)$ for $C_6 \times C_2$.}
				\label{C_6xC_2leqRelation}
			\end{figure}
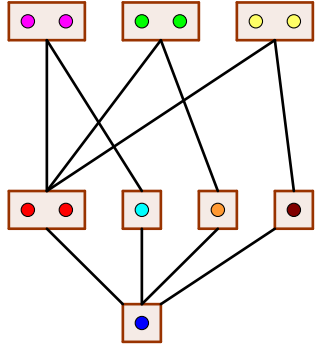
		
			\begin{figure}
				\centering
				\begin{tikzpicture}[line cap=round,line join=round,>=triangle 45,x=1.0cm,y=1.0cm]
					\clip(-7.5,-0.5) rectangle (7.5,3.5);
					\fill[line width=2.pt,color=zzttqq,fill=zzttqq,fill opacity=0.10000000149011612] (-7.2,2.2) -- (-6.4,2.2) -- (-6.4,1.8) -- (-7.2,1.8) -- cycle;
					\fill[line width=2.pt,color=zzttqq,fill=zzttqq,fill opacity=0.10000000149011612] (-1.4,3.2) -- (-0.6,3.2) -- (-0.6,2.8) -- (-1.4,2.8) -- cycle;
					\fill[line width=2.pt,color=zzttqq,fill=zzttqq,fill opacity=0.10000000149011612] (-0.4,3.2) -- (-0.4,2.8) -- (0.4,2.8) -- (0.4,3.2) -- cycle;
					\fill[line width=2.pt,color=zzttqq,fill=zzttqq,fill opacity=0.10000000149011612] (0.6,3.2) -- (0.6,2.8) -- (1.4,2.8) -- (1.4,3.2) -- cycle;
					\fill[line width=2.pt,color=zzttqq,fill=zzttqq,fill opacity=0.10000000149011612] (-5.6,2.2) -- (-5.6,1.8) -- (-4.8,1.8) -- (-4.8,2.2) -- cycle;
					\fill[line width=2.pt,color=zzttqq,fill=zzttqq,fill opacity=0.10000000149011612] (-4.2,2.2) -- (-4.2,1.8) -- (-3.4,1.8) -- (-3.4,2.2) -- cycle;
					\fill[line width=2.pt,color=zzttqq,fill=zzttqq,fill opacity=0.10000000149011612] (-2.6,2.2) -- (-2.6,1.8) -- (-1.8,1.8) -- (-1.8,2.2) -- cycle;
					\fill[line width=2.pt,color=zzttqq,fill=zzttqq,fill opacity=0.10000000149011612] (-1.2,2.2) -- (-1.2,1.8) -- (-0.8,1.8) -- (-0.8,2.2) -- cycle;
					\fill[line width=2.pt,color=zzttqq,fill=zzttqq,fill opacity=0.10000000149011612] (-0.2,2.2) -- (-0.2,1.8) -- (0.2,1.8) -- (0.2,2.2) -- cycle;
					\fill[line width=2.pt,color=zzttqq,fill=zzttqq,fill opacity=0.10000000149011612] (0.8,2.2) -- (0.8,1.8) -- (1.2,1.8) -- (1.2,2.2) -- cycle;
					\fill[line width=2.pt,color=zzttqq,fill=zzttqq,fill opacity=0.10000000149011612] (1.8,2.2) -- (1.8,1.8) -- (2.2,1.8) -- (2.2,2.2) -- cycle;
					\fill[line width=2.pt,color=zzttqq,fill=zzttqq,fill opacity=0.10000000149011612] (2.8,2.2) -- (2.8,1.8) -- (3.2,1.8) -- (3.2,2.2) -- cycle;
					\fill[line width=2.pt,color=zzttqq,fill=zzttqq,fill opacity=0.10000000149011612] (3.8,2.2) -- (4.2,2.2) -- (4.2,1.8) -- (3.8,1.8) -- cycle;
					\fill[line width=2.pt,color=zzttqq,fill=zzttqq,fill opacity=0.10000000149011612] (4.8,2.2) -- (4.8,1.8) -- (5.2,1.8) -- (5.2,2.2) -- cycle;
					\fill[line width=2.pt,color=zzttqq,fill=zzttqq,fill opacity=0.10000000149011612] (5.8,2.2) -- (5.8,1.8) -- (6.2,1.8) -- (6.2,2.2) -- cycle;
					\fill[line width=2.pt,color=zzttqq,fill=zzttqq,fill opacity=0.10000000149011612] (6.8,2.2) -- (6.8,1.8) -- (7.2,1.8) -- (7.2,2.2) -- cycle;
					\fill[line width=2.pt,color=zzttqq,fill=zzttqq,fill opacity=0.10000000149011612] (-0.2,0.2) -- (0.2,0.2) -- (0.2,-0.2) -- (-0.2,-0.2) -- cycle;
					\draw [line width=1.pt,color=zzttqq] (-7.2,2.2)-- (-6.4,2.2);
					\draw [line width=1.pt,color=zzttqq] (-6.4,2.2)-- (-6.4,1.8);
					\draw [line width=1.pt,color=zzttqq] (-6.4,1.8)-- (-7.2,1.8);
					\draw [line width=1.pt,color=zzttqq] (-7.2,1.8)-- (-7.2,2.2);
					\draw [line width=1.pt,color=zzttqq] (-1.4,3.2)-- (-0.6,3.2);
					\draw [line width=1.pt,color=zzttqq] (-0.6,3.2)-- (-0.6,2.8);
					\draw [line width=1.pt,color=zzttqq] (-0.6,2.8)-- (-1.4,2.8);
					\draw [line width=1.pt,color=zzttqq] (-1.4,2.8)-- (-1.4,3.2);
					\draw [line width=1.pt,color=zzttqq] (-0.4,3.2)-- (-0.4,2.8);
					\draw [line width=1.pt,color=zzttqq] (-0.4,2.8)-- (0.4,2.8);
					\draw [line width=1.pt,color=zzttqq] (0.4,2.8)-- (0.4,3.2);
					\draw [line width=1.pt,color=zzttqq] (0.4,3.2)-- (-0.4,3.2);
					\draw [line width=1.pt,color=zzttqq] (0.6,3.2)-- (0.6,2.8);
					\draw [line width=1.pt,color=zzttqq] (0.6,2.8)-- (1.4,2.8);
					\draw [line width=1.pt,color=zzttqq] (1.4,2.8)-- (1.4,3.2);
					\draw [line width=1.pt,color=zzttqq] (1.4,3.2)-- (0.6,3.2);
					\draw [line width=1.pt,color=zzttqq] (-5.6,2.2)-- (-5.6,1.8);
					\draw [line width=1.pt,color=zzttqq] (-5.6,1.8)-- (-4.8,1.8);
					\draw [line width=1.pt,color=zzttqq] (-4.8,1.8)-- (-4.8,2.2);
					\draw [line width=1.pt,color=zzttqq] (-4.8,2.2)-- (-5.6,2.2);
					\draw [line width=1.pt,color=zzttqq] (-4.2,2.2)-- (-4.2,1.8);
					\draw [line width=1.pt,color=zzttqq] (-4.2,1.8)-- (-3.4,1.8);
					\draw [line width=1.pt,color=zzttqq] (-3.4,1.8)-- (-3.4,2.2);
					\draw [line width=1.pt,color=zzttqq] (-3.4,2.2)-- (-4.2,2.2);
					\draw [line width=1.pt,color=zzttqq] (-2.6,2.2)-- (-2.6,1.8);
					\draw [line width=1.pt,color=zzttqq] (-2.6,1.8)-- (-1.8,1.8);
					\draw [line width=1.pt,color=zzttqq] (-1.8,1.8)-- (-1.8,2.2);
					\draw [line width=1.pt,color=zzttqq] (-1.8,2.2)-- (-2.6,2.2);
					\draw [line width=1.pt,color=zzttqq] (-1.2,2.2)-- (-1.2,1.8);
					\draw [line width=1.pt,color=zzttqq] (-1.2,1.8)-- (-0.8,1.8);
					\draw [line width=1.pt,color=zzttqq] (-0.8,1.8)-- (-0.8,2.2);
					\draw [line width=1.pt,color=zzttqq] (-0.8,2.2)-- (-1.2,2.2);
					\draw [line width=1.pt,color=zzttqq] (-0.2,2.2)-- (-0.2,1.8);
					\draw [line width=1.pt,color=zzttqq] (-0.2,1.8)-- (0.2,1.8);
					\draw [line width=1.pt,color=zzttqq] (0.2,1.8)-- (0.2,2.2);
					\draw [line width=1.pt,color=zzttqq] (0.2,2.2)-- (-0.2,2.2);
					\draw [line width=1.pt,color=zzttqq] (0.8,2.2)-- (0.8,1.8);
					\draw [line width=1.pt,color=zzttqq] (0.8,1.8)-- (1.2,1.8);
					\draw [line width=1.pt,color=zzttqq] (1.2,1.8)-- (1.2,2.2);
					\draw [line width=1.pt,color=zzttqq] (1.2,2.2)-- (0.8,2.2);
					\draw [line width=1.pt,color=zzttqq] (1.8,2.2)-- (1.8,1.8);
					\draw [line width=1.pt,color=zzttqq] (1.8,1.8)-- (2.2,1.8);
					\draw [line width=1.pt,color=zzttqq] (2.2,1.8)-- (2.2,2.2);
					\draw [line width=1.pt,color=zzttqq] (2.2,2.2)-- (1.8,2.2);
					\draw [line width=1.pt,color=zzttqq] (2.8,2.2)-- (2.8,1.8);
					\draw [line width=1.pt,color=zzttqq] (2.8,1.8)-- (3.2,1.8);
					\draw [line width=1.pt,color=zzttqq] (3.2,1.8)-- (3.2,2.2);
					\draw [line width=1.pt,color=zzttqq] (3.2,2.2)-- (2.8,2.2);
					\draw [line width=1.pt,color=zzttqq] (3.8,2.2)-- (4.2,2.2);
					\draw [line width=1.pt,color=zzttqq] (4.2,2.2)-- (4.2,1.8);
					\draw [line width=1.pt,color=zzttqq] (4.2,1.8)-- (3.8,1.8);
					\draw [line width=1.pt,color=zzttqq] (3.8,1.8)-- (3.8,2.2);
					\draw [line width=1.pt,color=zzttqq] (4.8,2.2)-- (4.8,1.8);
					\draw [line width=1.pt,color=zzttqq] (4.8,1.8)-- (5.2,1.8);
					\draw [line width=1.pt,color=zzttqq] (5.2,1.8)-- (5.2,2.2);
					\draw [line width=1.pt,color=zzttqq] (5.2,2.2)-- (4.8,2.2);
					\draw [line width=1.pt,color=zzttqq] (5.8,2.2)-- (5.8,1.8);
					\draw [line width=1.pt,color=zzttqq] (5.8,1.8)-- (6.2,1.8);
					\draw [line width=1.pt,color=zzttqq] (6.2,1.8)-- (6.2,2.2);
					\draw [line width=1.pt,color=zzttqq] (6.2,2.2)-- (5.8,2.2);
					\draw [line width=1.pt,color=zzttqq] (6.8,2.2)-- (6.8,1.8);
					\draw [line width=1.pt,color=zzttqq] (6.8,1.8)-- (7.2,1.8);
					\draw [line width=1.pt,color=zzttqq] (7.2,1.8)-- (7.2,2.2);
					\draw [line width=1.pt,color=zzttqq] (7.2,2.2)-- (6.8,2.2);
					\draw [line width=1.pt,color=zzttqq] (-0.2,0.2)-- (0.2,0.2);
					\draw [line width=1.pt,color=zzttqq] (0.2,0.2)-- (0.2,-0.2);
					\draw [line width=1.pt,color=zzttqq] (0.2,-0.2)-- (-0.2,-0.2);
					\draw [line width=1.pt,color=zzttqq] (-0.2,-0.2)-- (-0.2,0.2);
					\draw [line width=1.pt] (-1.,1.8)-- (0.,0.2);
					\draw [line width=1.pt] (0.,1.8)-- (0.,0.2);
					\draw [line width=1.pt] (1.,1.8)-- (0.,0.2);
					\draw [line width=1.pt] (2.,1.8)-- (0.,0.2);
					\draw [line width=1.pt] (-1.,2.8)-- (-1.,2.2);
					\draw [line width=1.pt] (0.,2.8)-- (0.,2.2);
					\draw [line width=1.pt] (1.,2.8)-- (1.,2.2);
					\draw [line width=1.pt] (-2.2,1.8)-- (0.,0.2);
					\draw [line width=1.pt] (3.,1.8)-- (0.,0.2);
					\draw [line width=1.pt] (0.,0.2)-- (4.,1.8);
					\draw [line width=1.pt] (-6.8,1.8)-- (0.,0.2);
					\draw [line width=1.pt] (-3.8,1.8)-- (0.,0.2);
					\draw [line width=1.pt] (-5.2,1.8)-- (0.,0.2);
					\draw [line width=1.pt] (0.,0.2)-- (5.,1.8);
					\draw [line width=1.pt] (6.,1.8)-- (0.,0.2);
					\draw [line width=1.pt] (0.,0.2)-- (7.,1.8);
					\begin{scriptsize}
						\draw [fill=qqzzqq] (-2.,2.) circle (2.5pt);
						\draw [fill=qqzzqq] (-2.4,2.) circle (2.5pt);
						\draw [fill=zzccqq] (-6.6,2.) circle (2.5pt);
						\draw [fill=zzccqq] (-7.,2.) circle (2.5pt);
						\draw [fill=zzffqq] (-5.,2.) circle (2.5pt);
						\draw [fill=zzffqq] (-5.4,2.) circle (2.5pt);
						\draw [fill=ccffcc] (-3.6,2.) circle (2.5pt);
						\draw [fill=ccffcc] (-4.,2.) circle (2.5pt);
						\draw [fill=ffffww] (0.8,3.) circle (2.5pt);
						\draw [fill=ffffww] (1.,2.) circle (2.5pt);
						\draw [fill=ffccww] (0.2,3.) circle (2.5pt);
						\draw [fill=ffffww] (1.2,3.) circle (2.5pt);
						\draw [fill=yqqqqq] (7.,2.) circle (2.5pt);
						\draw [fill=cczzqq] (-0.8,3.) circle (2.5pt);
						\draw [fill=cczzqq] (-1.2,3.) circle (2.5pt);
						\draw [fill=ccqqqq] (6.,2.) circle (2.5pt);
						\draw [fill=ffcqcb] (2.,2.) circle (2.5pt);
						\draw [fill=ffccww] (-0.2,3.) circle (2.5pt);
						\draw [fill=cczzqq] (-1.,2.) circle (2.5pt);
						\draw [fill=ffccww] (0.,2.) circle (2.5pt);
						\draw [fill=ffttww] (4.,2.) circle (2.5pt);
						\draw [fill=ffwwzz] (3.,2.) circle (2.5pt);
						\draw [fill=ffqqtt] (5.,2.) circle (2.5pt);
						\draw [fill=ffffff] (0.,0.) circle (2.5pt);
					\end{scriptsize}
				\end{tikzpicture}
				\caption{The representation of $(\Diamond,\leq)$ for $S_4$.}
				\label{S_4leqRelation}
			\end{figure}
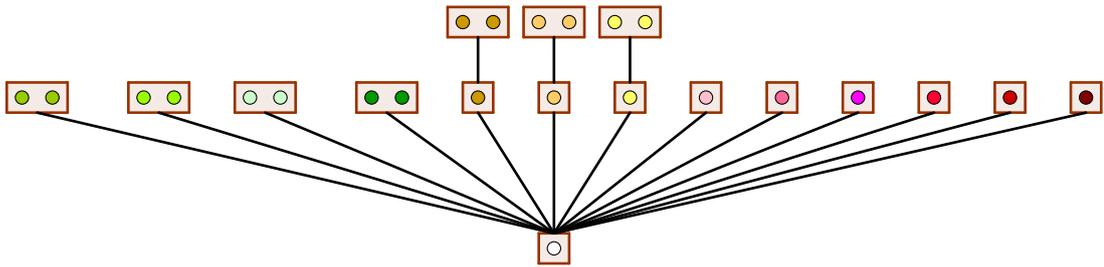
			
			In Figure \ref{C_6xC_2leqRelation} we maintain the colours of the vertices of Figure \ref{C_6xC_2Ncolored} to represent $(\Diamond,\leq)$ of $C_6\times C_2$. These colours represent the $\mathtt{N}$-classes, but, since $C_6 \times C_2$ is abelian, by Corollary \ref{abeldiamond_N}, in this case they represent also the $\diamond$-classes.
			The lowest vertex is the identity, the row above it contains the vertices of prime order and over this row the vertices not having prime order.
			Note that, in general, not all the classes of vertex of prime order are connected with classes of a row above.
			For example see $(\Diamond, \leq)$ of $S_4$ in Figure \ref{S_4leqRelation}. Also in this figure the colours represent the $\mathtt{N}$-classes and the boxes the $\diamond$-classes. Note that the colours are taken from Figure \ref{S_4NeDiamond}.
			
			In the row of the vertex of prime order, for both the examples above, the boxes with $2$ vertices contain elements of order $3$ and the ones with only one vertex contain elements of order $2$.
			In $C_6\times C_2$ the $\diamond$-classes in the third row are, for the relation $\leq$ that we are analysing, above the $\diamond$-class of elements of order $3$ and above one of the classes with an involution. So their elements have order divided by $6=2\cdot 3$, in this case the order is $6$ itself. Instead, for $S_4$, in the third row, all the $\diamond$-classes are above only the class of an involution. Then the three $\diamond$-classes in the third row contain elements of order a power of $2$, that is $4$ in this case.

			Now thanks to the reconstruction of the directed power graph from the power graph, so by Theorem \ref{UPG-DPG}, we immediately  obtain the following corollary of Proposition \ref{DPG-elementOrder}.
	
			\begin{corollary}{\rm \cite[Corollary 3]{Cameron_2}}
				Two groups whose power graph are isomorphic have the same number of elements of each order.
			\end{corollary}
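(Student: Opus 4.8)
The plan is to simply chain together the two main results already established in the preceding material, since the corollary is an immediate consequence of them and requires no new argument of its own.

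First I would invoke Theorem \ref{UPG-DPG}: given two groups $G_1$ and $G_2$ with $\mathcal{P}(G_1)\cong \mathcal{P}(G_2)$, that theorem guarantees $\vec{\mathcal{P}}(G_1)\cong \vec{\mathcal{P}}(G_2)$. This is the substantive step, but it has already been proved (its proof is where all the work on $\mathtt{N}$-classes, $\diamond$-classes, the operator $\hat{\cdot}$, and critical classes was deployed), so here I would merely cite it.

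Then I would apply Proposition \ref{DPG-elementOrder} to the conclusion just obtained: since $\vec{\mathcal{P}}(G_1)\cong \vec{\mathcal{P}}(G_2)$, that proposition tells us directly that $G_1$ and $G_2$ have the same number of elements of each order. Composing the two implications yields the corollary.

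There is no real obstacle at this stage: the only potential pitfall would be a gap in the hypotheses of the two cited results, but both are stated for arbitrary finite groups with no extra assumptions, so the composition goes through verbatim. Concretely, the proof is one sentence: \emph{by Theorem \ref{UPG-DPG} the hypothesis $\mathcal{P}(G_1)\cong \mathcal{P}(G_2)$ gives $\vec{\mathcal{P}}(G_1)\cong \vec{\mathcal{P}}(G_2)$, and then Proposition \ref{DPG-elementOrder} yields that $G_1$ and $G_2$ have the same number of elements of each order.}
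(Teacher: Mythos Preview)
Your proposal is correct and matches the paper's approach exactly: the paper states the corollary as an immediate consequence of Proposition~\ref{DPG-elementOrder} together with Theorem~\ref{UPG-DPG}, without giving any further argument. There is nothing to add.
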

		
		\chapter{Quotient power graphs}
		\label{QuotientGraphs}
		
			Our main references for quotient graphs are the articles \cite{Bubboloni_1, Bubboloni_2}.\\ 
			Let $\Gamma=(V,E)$ be a graph. Consider $E_{\ell}=E \cup \{\{x\} \, |\,  x\in V \}$ and define the \emph{loop graph} associated with $\Gamma$ by $\Gamma_{\ell}=(V,E_{\ell})$. We simply added, for each vertex of $\Gamma$, \emph{loops}, that are the edges $\{x\}$. With abuse of notation we write $\Gamma$ instead of $\Gamma_{\ell}$ and we continue to refer to $\mathcal{P}(G)$ as the power graph of $G$ even if we are considering $\mathcal{P}(G)_{\ell}$.
			
			Note that the addition of loops does not change the sets $[x]_{\mathtt{N}}$. Indeed $N[x]$ does not change if we consider the graph with or without loops.
			
			\begin{definition}
				\rm \label{defQuotient}Let $\Gamma=(V,E)$ be a graph and $\sim$ an equivalence relation on $V$. The quotient graph of $\Gamma$ with respect to $\sim$, denoted by $\Gamma/{\sim}$, is the graph with vertex set $[V]_{\sim}:=V/{\sim}$ and edge
				set $[E]_{\sim}$ defined as follows: for every $[x_1]_{\sim}\in [V]_{\sim}$ and $[x_2]_{\sim}\in [V]_{\sim}$, $\{[x_1]_{\sim}, [x_2]_{\sim}\}\in [E]_{\sim}$ if
				there exist $\tilde{x}_1\in [x_1]_{\sim}$ and $ \tilde{x}_2\in [x_2]_{\sim}$ such that $\{\tilde{x}_1, \tilde{x}_2\}\in E$.
			\end{definition}
		
			Let's see two examples of the definition above. In Figure \ref{S_4/diamond} we represent $\mathcal{P}(S_4)/\diamond$. Note that the colours are the same as in Figure \ref{UPG_S_4_NeDiamond}. Finally note that we do not represent the loops on each of the vertices of the power graph, even if, as we will see in a while, it is useful to consider their presence.
			In Figure \ref{S_4/N} we represent $S_4/\mathtt{N}$. Again, the colours are the same as in Figure \ref{UPG_S_4_NeDiamond} and no loops are drawn.

			The graph $\mathcal{P}(G)/\diamond$ it was called \emph{quotient power graph} by Bubboloni et al. in \cite{Bubboloni_1,Bubboloni_2} and, with the same name, also in \cite{GroupsWithQuotientGraphSpecified}. There is no standard name for $\mathcal{P}(G)/\mathtt{N}$, yet. We decide to follow the notation in \cite[Section 3]{ForbiddenGraphs} and in \cite[Section 7]{GraphsOnGroups} and call $\mathcal{P}(G)/\mathtt{N}$ the \emph{closed twin reduction} of $\mathcal{P}(G)$.\\ Let's define now some new players to clarify the choice of the name used for $\mathcal{P}(G)/\mathtt{N}$.
			
			\section{Introduction of twin reduction}
			
			\begin{definition}
				\rm Consider a graph $\Gamma$, we say that $x,y\in V_{\Gamma}$ are \emph{closed twins} if $x\mathtt{N}y$.\\
				Otherwise, we say that $x,y$ are \emph{open twins} if $N_{\Gamma}(x)=N_{\Gamma}(y)$ holds.\\ 
				We finally say that $x$ and $y$ are \emph{twins} if they are open or closed twins. 
			\end{definition}
		
			Note that, if $x$ and $y$ are distinct open twins, then $\{x,y\}\notin E$. Indeed assume $\{x,y\}\in E$. Then $x\in N_{\Gamma}(y)$ holds. Hence, since $x\notin N_{\Gamma}(x)$, we have $N_{\Gamma}(x)\ne N_{\Gamma}(y)$.
			Recall that, instead, if $x$ and $y$ are closed twins, then $\{x,y\}\in E$.
			Hence, given $x$ and $y$ two vertices of a certain graph $\Gamma$, $x$ and $y$ cannot be both closed and open twins.
			
			\begin{definition}
			\rm \label{RelationOeT}Let $\Gamma$ be a graph and let $x,y\in V_{\Gamma}$. Then we write $x\mathtt{O}y$ if $x$ and $y$ are open twins.\\
			We also write $x\mathtt{T}y$ if $x$ and $y$ are twins.
			\end{definition} 
			
			Note that, for $x,y\in V_{\Gamma}$, we have that $x\mathtt{N}y$ or $ x\mathtt{O}y  $ if and only if $x \mathtt{T}y$.
			
			\begin{Oss}
				Let $\Gamma$ be a graph. The relation $\mathtt{O}$ is an equivalence relation on the set $V_{\Gamma}$.\\
				The relation $\mathtt{T}$ is an equivalence relation on the set $V_{\Gamma}$.\\ 
			\end{Oss}
			\begin{proof}
				We omit the proof for the relation $\mathtt{O}$ since it is trivial.\\
				Let's focus on $\mathtt{T}$. Let $x,y,z\in V_{\Gamma}$.
				Surely we have $x\mathtt{T}x$ because $x \mathtt{O}x$ holds.
				Also the symmetry is trivial because both $\mathtt{O}$ and $\mathtt{N}$ are symmetric.
				Assume then that $x\mathtt{T}y$ and $y\mathtt{T}z$ hold.
				If at least two of $x, y $ and $z$ are equal, we immediately have $x\mathtt{T}z$. Assume next $x,y,z$ to be distinct.
				If we have $x\mathtt{N}y$ and $y\mathtt{N}z$, then surely $x\mathtt{N}z$ holds, and then also $x\mathtt{T}z$ holds.
				In the same way, if we have $x\mathtt{O}y$ and $y\mathtt{O}z$, then $x\mathtt{T}z$ holds.
				We prove that no other case is possible. In other words, it cannot happen neither $x\mathtt{N}y$ and $y\mathtt{O}z$ nor $z\mathtt{N}y$ and $y\mathtt{O}x$.
				Consider the case $x\mathtt{N}y$ and $y\mathtt{O}z$.
				Then we have that $N_{\Gamma}[x]=N_{\Gamma}[y]$ and $N_{\Gamma}(y)=N_{\Gamma}(z)$. By $x\in N_{\Gamma}[x]=N_{\Gamma}[y]$ and $x\ne y$ we get $x\in N_{\Gamma}(y)=N_{\Gamma}(z)$ so that $x\in N_{\Gamma}[z]$. It follows that we have $z\in N_{\Gamma}[x]=N_{\Gamma}[y]$. Therefore $z\in N_{\Gamma}(y)=N_{\Gamma}(z)$, a contradiction.
				Note that the remaining case is obtained by swapping the vertices $x$ and $z$.
				
			\end{proof}

			\begin{definition}
				\rm Let $\Gamma$ be a graph. $\Gamma/\mathtt{O}$ is called the \emph{open twin reduction} of $\Gamma$, we simply write {\scriptsize OTR} of $\Gamma$.\\
				$\Gamma/\mathtt{N}$ is called the \emph{closed twin reduction} of $\Gamma$, we simply write {\scriptsize CTR} of $\Gamma$.\\
				$\Gamma/\mathtt{T}$ is called the \emph{twin reduction} of $\Gamma$, we simply write {\scriptsize TR} of $\Gamma$.
			\end{definition}
		
			Note that the definition of the twin reduction of a graph is quite different from the one given in \cite[Section 3]{ForbiddenGraphs} and in \cite[Section 7]{GraphsOnGroups}. There, with twin reduction, the authors refer to a subsequent application of the twin reduction defined above until, for the graph obtained, the relation $\mathtt{T}$ is equals to the relation of identity.
			
			We now focus on the twins relations in the case of power graphs.  
			We found an easy result that characterizes the open twins of a power graph.
			
			\begin{prop}
				\label{open_twins=involution}	Let $G$ be a group. Then $x,y \in G $ are open twins for $\mathcal{P}(G)$ if and only if $x$ and $y$ are involutions and $N(x)=N(y)=\{1\}$. 
			\end{prop}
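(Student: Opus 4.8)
The plan is to prove the biconditional directly, by establishing each implication separately, using only the definitions of open/closed twins, basic properties of the power graph, and the already-available characterization of star vertices (Proposition~\ref{S>1}, or more precisely the trivial fact that $1\in\mathcal{S}$ together with the structure of closed neighbourhoods of small elements). First I would handle the easy direction: suppose $x$ and $y$ are involutions with $N(x)=N(y)=\{1\}$. Then trivially $N_{\mathcal{P}(G)}(x)=N_{\mathcal{P}(G)}(y)$, so $x$ and $y$ are open twins (and they are distinct if we additionally assume $x\ne y$, which the statement of open twins presupposes anyway). Note $x\ne y$ is needed for them to be genuinely open twins rather than the same vertex; I would either assume it or remark that the case $x=y$ is vacuous.

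For the forward direction, suppose $x,y\in G$ are distinct open twins, i.e.\ $N_{\mathcal{P}(G)}(x)=N_{\mathcal{P}(G)}(y)$. The key observations are: (1) since $x$ and $y$ are open twins and distinct, $\{x,y\}\notin E$ — this was remarked just before Definition~\ref{RelationOeT}, so I may cite it; (2) the identity $1$ is a star vertex of $\mathcal{P}(G)$, hence $1\in N_{\mathcal{P}(G)}(z)$ for every $z\in G\setminus\{1\}$. From (2), if $x\ne 1$ then $1\in N(x)=N(y)$, forcing $y\ne 1$ as well (and symmetrically); and if $x=1$ then since $1$ is a star vertex $N(1)=G\setminus\{1\}\ni y$, contradicting $\{x,y\}\notin E$. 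So neither $x$ nor $y$ is the identity.

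Now I must show $o(x)=o(y)=2$ and $N(x)=N(y)=\{1\}$. For the order: consider any $w\in\langle x\rangle\setminus\{1,x\}$, which exists precisely when $o(x)\geq 3$. Then $\{x,w\}\in E$, so $w\in N(x)=N(y)$, i.e.\ $\{y,w\}\in E$; combined with $\{x,y\}\notin E$ and the subgroup structure forced by edges in a power graph (if $\{y,w\}\in E$ then $\langle y\rangle\leq\langle w\rangle$ or $\langle w\rangle\leq\langle y\rangle$, and $\langle w\rangle\leq\langle x\rangle$), I would derive that $y$ and $x$ must be joined — a contradiction. Concretely: from $w\in\langle x\rangle$ and $\{y,w\}\in E$ one gets either $y\in\langle w\rangle\leq\langle x\rangle$, forcing $\{x,y\}\in E$, or $w\in\langle y\rangle$, whence $\langle x\rangle\ni x$ has $x$ a power of $w\in\langle y\rangle$... this branch needs a little care, so I expect \textbf{this step to be the main obstacle}: pinning down that the existence of a proper nontrivial power of $x$ inevitably connects $x$ to $y$ through the shared neighbourhood. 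The cleanest route is probably to take $w$ of prime order dividing $o(x)$: if $o(x)$ is not prime, such a $w$ lies in $\langle x\rangle$, is adjacent to $x$, hence to $y$; since $o(w)=p$ is minimal, $\{y,w\}\in E$ forces $w\in\langle y\rangle$, so $\langle y\rangle$ and $\langle x\rangle$ share the subgroup $\langle w\rangle$, and one then bootstraps (using that $x,y$ both have that prime in their order and iterating) to contradict non-adjacency — or, more simply, once $o(x)=p$ is shown, if $p\geq 3$ pick $w=x^2\ne 1,x$ and run the same argument. Either way we conclude $o(x)=2$, and symmetrically $o(y)=2$. Finally, for $N(x)=\{1\}$: any $z\in N(x)\setminus\{1\}$ would satisfy, since $o(x)=2$, that $x\in\langle z\rangle$, so $x=z^{o(z)/2}$; then $z\in N(x)=N(y)$ gives $\{y,z\}\in E$, and by the same reasoning (using $o(y)=2$) $y\in\langle z\rangle$, so $\langle z\rangle$ contains both involutions... in a cyclic group there is at most one involution, so $x=y$, contradiction. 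Hence $N(x)=\{1\}$, and symmetrically $N(y)=\{1\}$, completing the proof.
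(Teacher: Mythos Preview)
Your backward direction, your exclusion of the identity, and your final step (using uniqueness of the involution in a cyclic group to force $N(x)=\{1\}$) are all correct and match the paper. The gap is exactly where you flagged it: your argument that $o(x)=2$ is incomplete. When you take $w\in\langle x\rangle\setminus\{1,x\}$ and land in the branch $w\in\langle y\rangle$, you have not derived a contradiction; knowing that $\langle x\rangle$ and $\langle y\rangle$ share a prime-order subgroup does not by itself contradict $\{x,y\}\notin E$, and your ``bootstrap'' is not an argument. Nor does that step show $o(x)$ is prime, so the sentence ``once $o(x)=p$ is shown'' is not justified. The fix is one line: choose $w$ to be a \emph{generator} of $\langle x\rangle$ other than $x$, e.g.\ $w=x^{-1}$ (this exists since $o(x)\ge 3$). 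Then in the second branch $w\in\langle y\rangle$ gives $\langle x\rangle=\langle w\rangle\le\langle y\rangle$, hence $x\in\langle y\rangle$ and $\{x,y\}\in E$, a contradiction. With that choice both branches close immediately.

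The paper's route to $o(x)=2$ is different and worth seeing: it first proves $[x]_{\mathtt N}=\{x\}$. Indeed, if some $z\ne x$ satisfied $z\,\mathtt N\,x$, then $z\in N(x)=N(y)$, so $y\in N(z)\subseteq N[z]=N[x]$, forcing $\{x,y\}\in E$. From $|[x]_{\mathtt N}|=1$ one gets $|[x]_{\diamond}|=1$, and Corollary~\ref{TrivialDiamondClasses} gives $o(x)\in\{1,2\}$. This is essentially your fixed argument in disguise (the $z$ playing the role of $x^{-1}$), but packaged through the $\mathtt N$/$\diamond$ machinery already set up in the paper.
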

		
			\begin{proof}
				Assume that $N(x)=\{1\}=N(y)$. Therefore, by definition, $x$ and $y$ are open twins.
				
				Assume next that $x,y$ are open twins.
				Then we have $N(x)=N(y)$. Suppose, by contradiction, that $[x]_{\mathtt{N}}\ne \{x\}$. Then there exists $z\in G\setminus\{x\}$ such that $x\mathtt{N} z$. Then $z \in N(x)$ holds.
				Hence we have $z \in N(y)$. It follows that $y \in N(z) \subset N[z]=N[x]$ holds. Then $y\in N[x]$ holds and this implies $\{x,y\}\in E$, a contradiction because open twins are never connected.
				Hence we have that $[x]_{\mathtt{N}}=\{x\}$. With analogous argument we get $[y]_{\mathtt{N}}=\{y\}$. Then $|[x]_{\mathtt{N}}|=1=|[y]_{\mathtt{N}}|$. Thus the $\mathtt{N}$-class of $x$ and of $y$ are equal to the $\diamond$-class, respectively, of $x$ and of $y$. Therefore we have $|[x]_{\diamond}|=1=|[y]_{\diamond}|$ and, by Corollary \ref{TrivialDiamondClasses}, it follows that $x$ and $y$ can only be the identity or an involution.
				Now $x$ and $y$ are necessarily both involutions, otherwise, since they are different, exactly one of then must be the identity which would imply $\{x,y\}\in E$, a contradiction. It remains to prove that $N(x)=\{1\}=N(y)$. Assume, by contradiction, that there exists $z\in N(x) \setminus \{1\}=N(y)\setminus \{1\}$. Now, since $x$ and $y$ are involutions, we get that $\langle x\rangle \leq \langle z \rangle$ and that $\langle y \rangle \leq \langle z \rangle$. Thus, since $\langle x \rangle \ne \langle y \rangle$, there would be two distinct subgroup of order $2$ in $\langle z \rangle$, a contradiction.

			\end{proof}
		
			In Figure \ref{S_4OTR} we represent how $\mathcal{P}(S_4)/\mathtt{O}$ appears. Note that the blue vertex represents the $\mathtt{O}$-class of all the transpositions. 
			
			The reader can easily note that $\mathcal{P}(S_4)/\mathtt{T}$ is a single vertex. 
			Whenever a graph become a single vertex after subsequent twin reductions it is a \emph{cograph}. In \cite[Section 3]{ForbiddenGraphs}, Cameron et al. 
			have found a necessary and sufficient condition for a power graph of a nilpotent group to be a cograph. This will not be subject of this thesis.  
			
			\begin{figure}
				\centering
					\begin{tikzpicture}[line cap=round,line join=round,>=triangle 45,x=1.0cm,y=1.0cm]
					\clip(-6.,2.) rectangle (3.5,11.5);
					\draw [line width=1.pt] (-3.9844300659473078,9.572523933685508)-- (-1.,7.);
					\draw [line width=1.pt] (-4.871008319307276,7.7613712161073)-- (-1.,7.);
					\draw [line width=1.pt] (-4.491046210724432,5.392940739274258)-- (-1.,7.);
					\draw [line width=1.pt] (-3.452483113931327,3.923753919420607)-- (-1.,7.);
					\draw [line width=1.pt] (-1.7666619570249664,4.628983561782251)-- (-1.,7.);
					\draw [line width=1.pt] (0.07641383633386878,3.1379405391384996)-- (-1.,7.);
					\draw [line width=1.pt] (0.428937407628609,4.898618571476549)-- (-1.,7.);
					\draw [line width=1.pt] (1.661554594802546,6.400870768344779)-- (-1.,7.);
					\draw [line width=1.pt] (3.,7.)-- (-1.,7.);
					\draw [line width=1.pt] (2.920465005568102,7.742594813136765)-- (-1.,7.);
					\draw [line width=1.pt] (2.533519268257322,8.69061186954817)-- (-1.,7.);
					\draw [line width=1.pt] (1.759627793635761,9.754712647152811)-- (-1.,7.);
					\draw [line width=1.pt] (0.7535688766277328,10.567298695505446)-- (-1.,7.);
					\draw [line width=1.pt] (-0.02032259799382734,10.85750799848853)-- (-1.,7.);
					\draw [line width=1.pt] (-0.7168249251532315,10.915549859085147)-- (-1.,7.);
					\draw [line width=1.pt] (-2.249269770085656,10.649083241336891)-- (-1.,7.);
					
					\draw [line width=1.pt] (3.,7.)-- (1.661554594802546,6.400870768344779);
					\draw [line width=1.pt] (0.428937407628609,4.898618571476549)-- (0.07641383633386878,3.1379405391384996);
					\draw [line width=1.pt] (-1.7666619570249664,4.628983561782251)-- (-3.452483113931327,3.923753919420607);
					\begin{scriptsize}
						\draw [fill=qqzzqq] (-2.249269770085656,10.649083241336891) circle (3.pt);
						\draw [fill=zzccqq] (-3.9844300659473078,9.572523933685508) circle (3.pt);
						\draw [fill=zzffqq] (-4.871008319307276,7.7613712161073) circle (3.pt);
						\draw [fill=ccffcc] (-4.491046210724432,5.392940739274258) circle (3.pt);
						\draw [fill=ffffww] (1.661554594802546,6.400870768344779) circle (3.pt);
						\draw [fill=ffffww] (3.,7.) circle (3.pt);
						\draw [fill=yqqqqq] (-0.7168249251532315,10.915549859085147) circle (3.pt);
						\draw [fill=cczzqq] (-3.452483113931327,3.923753919420607) circle (3.pt);
						\draw [fill=ccqqqq] (-0.02032259799382734,10.85750799848853) circle (3.pt);
						\draw [fill=ffcqcb] (2.920465005568102,7.742594813136765) circle (3.pt);
						\draw [fill=ffccww] (0.07641383633386878,3.1379405391384996) circle (3.pt);
						\draw [fill=cczzqq] (-1.7666619570249664,4.628983561782251) circle (3.pt);
						\draw [fill=ffccww] (0.428937407628609,4.898618571476549) circle (3.pt);
						\draw [fill=ffttww] (1.759627793635761,9.754712647152811) circle (3.pt);
						\draw [fill=ffwwzz] (2.533519268257322,8.69061186954817) circle (3.pt);
						\draw [fill=ffffff] (-1.,7.) circle (3.pt);
						\draw [fill=ffqqtt] (0.7535688766277328,10.567298695505446) circle (3.pt);
					\end{scriptsize}
				\end{tikzpicture}
				\caption{$\mathcal{P}(S_4)/\diamond$}
				\label{S_4/diamond}
			\end{figure}
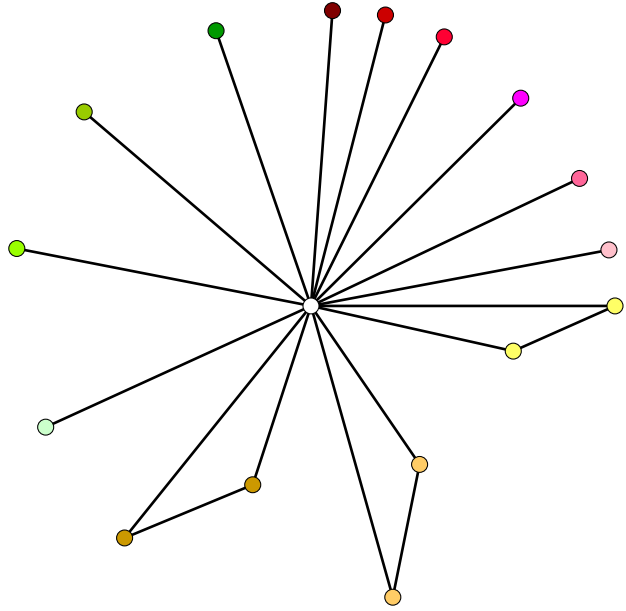
		
			\begin{figure}
				\centering
				\begin{tikzpicture}[line cap=round,line join=round,>=triangle 45,x=1.0cm,y=1.0cm]
					\clip(-7.885374380731168,1.6439812679235615) rectangle (7.313109962582573,12.029612235854547);
					\draw [line width=1.pt] (-3.9844300659473078,9.572523933685508)-- (-1.,7.);
					\draw [line width=1.pt] (-4.871008319307276,7.7613712161073)-- (-1.,7.);
					\draw [line width=1.pt] (-4.491046210724432,5.392940739274258)-- (-1.,7.);
					\draw [line width=1.pt] (-1.9706308904582372,3.607118828934906)-- (-1.,7.);
					\draw [line width=1.pt] (0.9044157311519452,3.923753919420607)-- (-1.,7.);
					\draw [line width=1.pt] (2.6269106233941693,6.013545516626232)-- (-1.,7.);
					\draw [line width=1.pt] (2.920465005568102,7.742594813136765)-- (-1.,7.);
					\draw [line width=1.pt] (2.533519268257322,8.69061186954817)-- (-1.,7.);
					\draw [line width=1.pt] (1.759627793635761,9.754712647152811)-- (-1.,7.);
					\draw [line width=1.pt] (0.7535688766277328,10.567298695505446)-- (-1.,7.);
					\draw [line width=1.pt] (-0.02032259799382734,10.85750799848853)-- (-1.,7.);
					\draw [line width=1.pt] (-0.7168249251532315,10.915549859085147)-- (-1.,7.);
					\draw [line width=1.pt] (-2.249269770085656,10.649083241336891)-- (-1.,7.);
					\begin{scriptsize}
						\draw [fill=qqzzqq] (-2.249269770085656,10.649083241336891) circle (3.pt);
						\draw [fill=zzccqq] (-3.9844300659473078,9.572523933685508) circle (3.pt);
						\draw [fill=zzffqq] (-4.871008319307276,7.7613712161073) circle (3.pt);
						\draw [fill=ccffcc] (-4.491046210724432,5.392940739274258) circle (3.pt);
						\draw [fill=ffffww] (2.6269106233941693,6.013545516626232) circle (3.pt);
						\draw [fill=yqqqqq] (-0.7168249251532315,10.915549859085147) circle (3.pt);
						\draw [fill=cczzqq] (-1.9706308904582372,3.607118828934906) circle (3.pt);
						\draw [fill=ccqqqq] (-0.02032259799382734,10.85750799848853) circle (3.pt);
						\draw [fill=ffcqcb] (2.920465005568102,7.742594813136765) circle (3.pt);
						\draw [fill=ffccww] (0.9044157311519452,3.923753919420607) circle (3.pt);
						\draw [fill=ffttww] (1.759627793635761,9.754712647152811) circle (3.pt);
						\draw [fill=ffwwzz] (2.533519268257322,8.69061186954817) circle (3.pt);
						\draw [fill=ffffff] (-1.,7.) circle (3.pt);
						\draw [fill=ffqqtt] (0.7535688766277328,10.567298695505446) circle (3.pt);
					\end{scriptsize}
				\end{tikzpicture}
				
				\caption{$\mathcal{P}(S_4)/\mathtt{N}$}
				\label{S_4/N}
			\end{figure}
			
			\begin{figure}
				\centering
				\begin{tikzpicture}[line cap=round,line join=round,>=triangle 45,x=1.0cm,y=1.0cm]
					\clip(-6.,2.) rectangle (4.,12.);
					
					\draw [line width=1.pt] (-3.6189179549840818,10.083616523866972)-- (-1.,7.);
					\draw [line width=1.pt] (-4.470198577067798,9.077557606858948)-- (-1.,7.);
					\draw [line width=1.pt] (-4.702366019454266,8.458444427161703)-- (-1.,7.);
					\draw [line width=1.pt] (-4.992575322437351,7.374996362691523)-- (-1.,7.);
					\draw [line width=1.pt] (-4.973228035571812,6.252853724490266)-- (-1.,7.);
					\draw [line width=1.pt] (-4.276725708412408,4.666376201516074)-- (-1.,7.);
					\draw [line width=1.pt] (-3.541528807521925,3.9118320137600566)-- (-1.,7.);
					\draw [line width=1.pt] (-1.7666619570249664,4.628983561782251)-- (-1.,7.);
					\draw [line width=1.pt] (-0.7168249251532304,3.041204104810805)-- (-1.,7.);
					\draw [line width=1.pt] (0.07641383633386878,3.1379405391384996)-- (-1.,7.);
					\draw [line width=1.pt] (0.428937407628609,4.898618571476549)-- (-1.,7.);
					\draw [line width=1.pt] (2.069184383484386,4.414861472264069)-- (-1.,7.);
					\draw [line width=1.pt] (2.533519268257322,5.033974651961314)-- (-1.,7.);
					\draw [line width=1.pt] (1.661554594802546,6.400870768344779)-- (-1.,7.);
					\draw [line width=1.pt] (3.,7.)-- (-1.,7.);
					\draw [line width=1.pt] (1.759627793635761,9.754712647152811)-- (-1.,7.);
					\draw [line width=1.pt] (-1.4520218260437137,10.915549859085147)-- (-1.,7.);
					\draw [line width=1.pt] (-3.1158884964800677,10.373825826850057)-- (-1.,7.);
					
					\draw [line width=1.pt] (-4.470198577067798,9.077557606858948)-- (-3.6189179549840818,10.083616523866972);
					\draw [line width=1.pt] (-3.1158884964800677,10.373825826850057)-- (-1.4520218260437137,10.915549859085147);
					\draw [line width=1.pt] (-4.702366019454266,8.458444427161703)-- (-4.992575322437351,7.374996362691523);
					\draw [line width=1.pt] (-4.973228035571812,6.252853724490266)-- (-4.276725708412408,4.666376201516074);
					\draw [line width=1.pt] (-3.541528807521925,3.9118320137600566)-- (-1.7666619570249664,4.628983561782251);
					\draw [line width=1.pt] (-3.541528807521925,3.9118320137600566)-- (-0.7168249251532304,3.041204104810805);
					\draw [line width=1.pt] (-1.7666619570249664,4.628983561782251)-- (-0.7168249251532304,3.041204104810805);
					\draw [line width=1.pt] (0.428937407628609,4.898618571476549)-- (0.07641383633386878,3.1379405391384996);
					\draw [line width=1.pt] (0.428937407628609,4.898618571476549)-- (2.069184383484386,4.414861472264069);
					\draw [line width=1.pt] (0.07641383633386878,3.1379405391384996)-- (2.069184383484386,4.414861472264069);
					\draw [line width=1.pt] (1.661554594802546,6.400870768344779)-- (2.533519268257322,5.033974651961314);
					\draw [line width=1.pt] (1.661554594802546,6.400870768344779)-- (3.,7.);
					\draw [line width=1.pt] (3.,7.)-- (2.533519268257322,5.033974651961314);
					\begin{scriptsize}
						\draw [fill=ffffww] (2.533519268257322,5.033974651961314) circle (2.5pt);
						\draw [fill=ffffww] (1.661554594802546,6.400870768344779) circle (2.5pt);
						\draw [fill=ffffww] (3.,7.) circle (2.5pt);
						
						\draw [fill=cczzqq] (-0.7168249251532304,3.041204104810805) circle (2.5pt);
						\draw [fill=cczzqq] (-3.541528807521925,3.9118320137600566) circle (2.5pt);
						\draw [fill=cczzqq] (-1.7666619570249664,4.628983561782251) circle (2.5pt);
						
						\draw [fill=ffccww] (0.07641383633386878,3.1379405391384996) circle (2.5pt);
						\draw [fill=ffccww] (0.428937407628609,4.898618571476549) circle (2.5pt);
						\draw [fill=ffccww] (2.069184383484386,4.414861472264069) circle (2.5pt);
						
						\draw [fill=qqzzqq] (-1.4520218260437137,10.915549859085147) circle (2.5pt);
						\draw [fill=qqzzqq] (-3.1158884964800677,10.373825826850057) circle (2.5pt);
						
						\draw [fill=zzccqq] (-4.470198577067798,9.077557606858948) circle (2.5pt);
						\draw [fill=zzccqq] (-3.6189179549840818,10.083616523866972) circle (2.5pt);
						
						\draw [fill=zzffqq] (-4.702366019454266,8.458444427161703) circle (2.5pt);
						\draw [fill=zzffqq] (-4.992575322437351,7.374996362691523) circle (2.5pt);
						
						\draw [fill=ccffcc] (-4.973228035571812,6.252853724490266) circle (2.5pt);
						\draw [fill=ccffcc] (-4.276725708412408,4.666376201516074) circle (2.5pt);
						
						\draw [fill=qqqqff] (1.759627793635761,9.754712647152811) circle (2.5pt);
						
						
						
						
						
						
						\draw [fill=ffffff] (-1.,7.) circle (2.5pt);
					\end{scriptsize}
				\end{tikzpicture}
				\caption{$\mathcal{P}(S_4)/\mathtt{O}$}
				\label{S_4OTR}
			\end{figure}
			
			\section{Quotients and homomorphisms}
			Let $\psi$ be a graph homomorphism between two graphs $\Gamma_1$ and $\Gamma_2$. We have that $\psi(\Gamma_1)$ is the subgraph of $\Gamma_2$ defined by $\psi(\Gamma_1)=(\psi(V_{\Gamma_1}), \psi(E_{\Gamma_1}))$.
			
			\begin{definition}
				\label{Properties-graph-homomorphism}\rm Let $\Gamma_1$ and $\Gamma_2$ be two graphs, let $\psi$ be a graph homomorphism between $\Gamma_1$ and $\Gamma_2$:
				\begin{itemize}
					\item[a)] $\psi$ is \emph{complete} if $\psi(\Gamma_1)=\Gamma_2$.
					
					\item[b)] $\psi$ is \emph{tame} if, for all $x,y \in V_{\Gamma_1}$, $\psi(x)=\psi(y)$ implies that $x$ and $y$ are joined by a path. 
					
					\item[c)] $\psi$ is \emph{locally surjective (injective, bijective)} if, for every $x \in V_{\Gamma_1}$, the restriction of $\psi$ to $N_{\Gamma}[x]$ is surjective (injective, bijective).
					
					\item[d)] $\psi$ is \emph{locally strong} if, for every $x_1,x_2 \in V_{\Gamma_1}$, $\{\psi(x_1), \psi_(x_2)\}\in E_{\Gamma_2}$ implies that, for every $\tilde{x}_1\in \psi^{-1}(\psi(x_1))$, there exists $\tilde{x}_2\in \psi^{-1}(\psi(x_2))$ such that $\{\tilde{x}_1, \tilde{x}_2\}\in E_{\Gamma_1}$.
					
					\item[e)] $\psi$ is \emph{pseudo-covering} if $\psi$ is locally strong and surjective.
					
				\end{itemize}
			\end{definition}
		
		Let $\Gamma=(V,E)$ a graph and $\sim$ an equivalence relation on $V$.
		The \emph{projection} on the quotient graph is the map $\pi_{\sim}$ defined as follows:
		\begin{align*}
			\pi_{\sim}:V\longrightarrow [V]_{\sim}& \\
			x\longmapsto [x]_{\sim}.
		\end{align*}
		
		Thanks to the addition of loops the projection $\pi_{\sim}$ is a graph homomorphism.
		
		\begin{Oss}
		 \label{Projection-homomorphism} Let $\Gamma=(V,E)$ be a graph and $\sim$ an equivalence relation on $V$. Then the projection	$\pi_{\sim}$ is a surjective graph homomorphism between $\Gamma$ and $\Gamma/\sim$.
		\end{Oss}
		\begin{proof}
			$\pi_{\sim}$ is clearly surjective.
			
			Let's see that $\pi_{\sim}$ is a graph homomorphism.
			Let $x$ and $y$ be elements of $V$. We have to prove that if $\{x,y\}\in E$, then $\{\pi_{\sim}(x), \pi_{\sim}(y)\}\in [E]_{\sim}$.\\
			Assume $\{x,y\}\in E$.
			We distinguish two cases:
			\begin{enumerate}
				\item[i)] \underline{$y\in [x]_{\sim}$}. Then $\pi_{\sim}(x)=\pi_{\sim}(y)=[x]_{\sim}$ and hence $\{\pi_{\sim}(x), \pi_{\sim}(y)\}\in [E]_{\sim}$; indeed it is a loop.
				
				\item[ii)] \underline{$y\notin [x]_{\sim}$}. Then, by definition \ref{defQuotient}, we have that $\{\pi_{\sim}(x), \pi_{\sim}(y)\}\in [E]_{\sim}$.
			\end{enumerate}
		\end{proof}
	
		So, for $\sim$ an equivalence relation on a vertex set, now we can see which properties, defined in \ref{Properties-graph-homomorphism}, $\pi_{\sim}$ satisfies.
		We say that a quotient graph is \emph{tame} (\emph{pseudo-covering}) if its projection is tame (pseudo-covering).
	
		We focus on the equivalence relation $\mathtt{N}$. From now on we use $\pi$ instead of $\pi_{\mathtt{N}}$.
		With the following remark and the next lemma we study some of the properties of $\pi$.
		
		\begin{Oss}
			\label{pi_psudo-covering}Let $x$ and $y$ be elements of $G$. Then we have that $\{x,y\}\in E$ if and only if $\{\pi(x), \pi(y)\}\in [E]_{\mathtt{N}}$. In particular $\pi$ is locally strong and hence, since $\pi$ is surjective, is also pseudo-covering.
		\end{Oss}
		\begin{proof}
			If $\{x,y\}\in E$, then $\{\pi(x), \pi(y)\}\in [E]_{\mathtt{N}}$ because, by Remark \ref{Projection-homomorphism}, $\pi$ is a graph homomorphism.
			Instead, if $\{\pi(x), \pi(y)\}\in [E]_{\mathtt{N}}$ we distinguish two cases:
			\begin{enumerate}
				\item[i)] \underline{$\pi(x)=\pi(y)$}. Then $x\mathtt{N}y$, and hence $\{x,y\} \in E$.
				
				\item[ii)] \underline{$\pi(x)\not=\pi(y)$}. Then there exists $\tilde{x}\in [x]_{\mathtt{N}}$ and $\tilde{y}\in [y]_{\mathtt{N}}$ such that $\{\tilde{x}, \tilde{y}\}\in E$. Therefore, remembering that $N[x]=N[\tilde{x}]$ and that $N[y]=N[\tilde{y}]$, we have $\{x,y\}\in E$.
			\end{enumerate} 
			Let's see that the property just proved implies that $\pi$ is locally strong; from this will follows immediately that $\pi$ is pseudo-covering since $\pi$ is surjective.
			Let $x_1, x_2\in G$ such that $\{\pi(x_1), \pi(x_2)\} \in [E]_{\mathtt{N}}$. Then, for every $\tilde{x}_1\in \pi^{-1}(\pi(x_1))=[x_1]_{\mathtt{N}}$, we have that $\{\tilde{x}_1, x_2\}\in E$. Indeed $x_2\in N[x_1]=N[\tilde{x}_1]$. Therefore $\pi$ is locally strong.
		\end{proof}
		
		\begin{lemma}
			\label{piNproperties}$\pi$ is a complete, tame and pseudo-covering graph homomorphism.
		\end{lemma}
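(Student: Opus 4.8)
The plan is to verify the three asserted properties of $\pi$ one at a time, assembling facts already established for $\pi=\pi_{\mathtt{N}}$ in Remark \ref{Projection-homomorphism} and Remark \ref{pi_psudo-covering}. Pseudo-covering requires essentially no new work: Remark \ref{pi_psudo-covering} already shows that $\pi$ is locally strong, and since $\pi$ is surjective by Remark \ref{Projection-homomorphism}, it is pseudo-covering by Definition \ref{Properties-graph-homomorphism} e). So only completeness and tameness remain to be argued.

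For completeness I would show $\pi(\mathcal{P}(G))=\mathcal{P}(G)/\mathtt{N}$ by checking the two defining data, namely $\pi(V)=[V]_{\mathtt{N}}$ and $\pi(E)=[E]_{\mathtt{N}}$. The vertex equality is just the surjectivity of $\pi$ recorded in Remark \ref{Projection-homomorphism}. For the edges, the inclusion $\pi(E)\subseteq [E]_{\mathtt{N}}$ holds because $\pi$ is a graph homomorphism (Remark \ref{Projection-homomorphism}); for the reverse inclusion, given an edge $\{[x_1]_{\mathtt{N}},[x_2]_{\mathtt{N}}\}\in [E]_{\mathtt{N}}$, Definition \ref{defQuotient} supplies $\tilde{x}_1\in[x_1]_{\mathtt{N}}$ and $\tilde{x}_2\in[x_2]_{\mathtt{N}}$ with $\{\tilde{x}_1,\tilde{x}_2\}\in E$, and $\pi$ carries this edge precisely onto $\{[x_1]_{\mathtt{N}},[x_2]_{\mathtt{N}}\}$. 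The loop graph convention causes no difficulty here, since $\pi$ sends the loop at $x$ to the loop at $[x]_{\mathtt{N}}$, so loops are matched on both sides; this is the only place where a small amount of bookkeeping is needed, to make sure the comparison of edge sets is read at the level of the loop graphs.

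For tameness I would take $x,y\in G$ with $\pi(x)=\pi(y)$, which by definition of $\pi$ means $x\mathtt{N}y$, and then invoke the observation made just after Definition \ref{Ndef} that the subgraph of $\mathcal{P}(G)$ induced by an $\mathtt{N}$-class is a complete graph. Hence either $x=y$ or $\{x,y\}\in E$, and in either case $x$ and $y$ lie on a common path (the one-vertex path, or the $1$-path $x,y$), which is exactly the definition of tameness. Overall the statement is a straightforward consolidation of earlier results, so I do not expect a genuine obstacle; the mildest care point is keeping the loop-graph convention consistent throughout, particularly in the completeness step.
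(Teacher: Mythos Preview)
Your proposal is correct and follows essentially the same route as the paper: completeness via surjectivity plus the definition of $[E]_{\mathtt{N}}$, tameness via the fact that $\mathtt{N}$-related vertices are adjacent, and pseudo-covering by direct appeal to Remark \ref{pi_psudo-covering}. Your extra remarks on the loop convention are a harmless elaboration not present in the paper's shorter argument.
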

		\begin{proof}
			Let's prove that $\pi$ is complete. We already said that $\pi$ is surjective, so we need to check that $\pi(E_{\mathcal{P}(G)})=[E]_{\mathtt{N}}$.
			Surely we have $\pi(E_{\mathcal{P}(G)})\subseteq[E]_{\mathtt{N}}$. So pick $\{[x_1]_{\mathtt{N}}, [x_2]_{\mathtt{N}}\}\in [E]_{\mathtt{N}}$. By definition of $[E]_{\mathtt{N}}$, there exist $\tilde{x}_1\in [x_1]_{\mathtt{N}}$ and $\tilde{x}_2\in [x_2]_{\mathtt{N}}$ such that $\{\tilde{x}_1,\tilde{x}_2\}\in E$. Hence $\pi(\{\tilde{x}_1,\tilde{x}_2\})=\{[x_1]_{\mathtt{N}}, [x_2]_{\mathtt{N}}\}$ and thus $\pi$ is complete.
			
			$\pi$ is tame. Indeed for all $x,y \in G$, such that $\pi(x)=\pi(y)$, we have that $\{x,y\}\in E$, and hence $x$ and $y$ are in the same connected component.
			
			By Remark \ref{pi_psudo-covering}, we have that $\pi$ is pseudo-covering.
		\end{proof}
	
		The tame quotient graphs are of particular interest because Proposition \ref{tame_quotient} holds.
		To state the result cited above we need to set some notation.
		For $\Gamma$ a graph and $G$ a group, we use $c(\Gamma)$, $c^*(G)$, $c_{\diamond}^*(G)$ and $c_{\mathtt{N}}^*(G)$ to refer to the number of connected components of, respectively, $\Gamma$, $\mathcal{P}^*(G)$, $\mathcal{P}^*(G)/\diamond$ and $\mathcal{P}^*(G)/\mathtt{N}$.
		
		\begin{prop}{\rm \cite[Proposition 3.2]{Bubboloni_1}}
			\label{tame_quotient}Let $\Gamma=(V,E)$ be a graph and $\sim$ be an equivalence relation on $V$. Then
			\begin{enumerate}
				\item[i)] $c(\Gamma)\leq c(\Gamma/{\sim})$
				
				\item[ii)] $c(\Gamma)=c(\Gamma/{\sim})$ if and only if $\sim$ is tame.
				
				\item[iii)] $\Gamma$ is connected if and only if $\Gamma/{\sim}$ is connected and tame. 
			\end{enumerate}
		\end{prop}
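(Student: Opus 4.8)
The plan is to build everything on the single fact, already recorded in Remark \ref{Projection-homomorphism}, that the projection $\pi_{\sim}\colon\Gamma\to\Gamma/{\sim}$ is a \emph{surjective} graph homomorphism (here it is essential that we work with loop graphs, so that in-class pairs do not break the homomorphism property; recall also that loops change neither connected components nor the classes $[x]_{\mathtt{N}}$, so their presence is harmless). Since a graph homomorphism sends a walk to a walk, a walk in $\Gamma$ joining $x$ to $y$ projects to a walk in $\Gamma/{\sim}$ joining $[x]_{\sim}$ to $[y]_{\sim}$; hence $\pi_{\sim}$ carries each connected component of $\Gamma$ into a single connected component of $\Gamma/{\sim}$, and because $\pi_{\sim}$ is onto, every component of $\Gamma/{\sim}$ is hit. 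This yields a well-defined surjection $\Phi$ from the (finite) set of connected components of $\Gamma$ onto the set of connected components of $\Gamma/{\sim}$, and counting the two sides of this surjection is exactly the content of part i).

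For part ii) I would show that equality in i) is equivalent to $\Phi$ being injective, and then that injectivity of $\Phi$ is equivalent to tameness of $\sim$. One direction is easy: if $\sim$ is not tame, pick $x\sim y$ with $x$ and $y$ in distinct components of $\Gamma$; since $[x]_{\sim}=[y]_{\sim}$ they lie, trivially, in the same component of $\Gamma/{\sim}$, so $\Phi$ identifies two distinct components and, $\Phi$ being a surjection of finite sets, the counts in i) are strictly different. For the converse, assume $\sim$ is tame and suppose $[v_0]_{\sim}$ and $[v_n]_{\sim}$ lie in the same component of $\Gamma/{\sim}$, witnessed by a walk $[v_0]_{\sim},[v_1]_{\sim},\dots,[v_n]_{\sim}$. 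By Definition \ref{defQuotient} each edge $\{[v_i]_{\sim},[v_{i+1}]_{\sim}\}$ of this walk is realised by a genuine edge $\{a_i,b_{i+1}\}$ of $\Gamma$ with $a_i\in[v_i]_{\sim}$ and $b_{i+1}\in[v_{i+1}]_{\sim}$. Tameness then supplies, inside each class, paths of $\Gamma$: one joining $v_0$ to $a_0$, one joining $b_i$ to $a_i$ for $1\le i\le n-1$, and one joining $b_n$ to $v_n$. Concatenating these ``vertical'' in-class paths with the ``horizontal'' realising edges produces a walk of $\Gamma$ from $v_0$ to $v_n$, so $v_0$ and $v_n$ are connected in $\Gamma$; this is precisely the injectivity of $\Phi$, hence equality in i).

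Part iii) is then a formal consequence: if $\Gamma$ is connected then $c(\Gamma)=1$, so by i) $c(\Gamma/{\sim})=1$ and the two counts agree, whence $\Gamma/{\sim}$ is connected and, by ii), $\sim$ is tame; conversely, if $\Gamma/{\sim}$ is connected and $\sim$ is tame, then by ii) $c(\Gamma)=c(\Gamma/{\sim})=1$, so $\Gamma$ is connected.

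I expect the main obstacle to be the lifting construction in the tame direction of part ii). The idea is transparent, but the bookkeeping needs care: one must correctly attach the first and last in-class paths to the chosen representatives $a_0$ and $b_n$, handle the degenerate cases where some $[v_i]_{\sim}$ is a singleton or where consecutive terms of the walk coincide (loops), and verify that what is assembled is a bona fide walk of $\Gamma$ rather than just a sequence of paths in unrelated places. Everything else — the homomorphism property of $\pi_{\sim}$, the surjectivity of $\Phi$, and the easy non-tame direction — is routine given Remark \ref{Projection-homomorphism} and Definition \ref{defQuotient}.
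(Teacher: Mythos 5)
Your overall route is sound, and in fact there is nothing in the thesis to compare it with: Proposition \ref{tame_quotient} is quoted from \cite[Proposition 3.2]{Bubboloni_1} and no proof is given in the paper, so the argument via the projection $\pi_{\sim}$ (a surjective homomorphism by Remark \ref{Projection-homomorphism}), the induced map $\Phi$ on sets of connected components, and the equivalence ``$\sim$ tame $\Leftrightarrow$ $\Phi$ injective'' obtained by lifting a walk of $\Gamma/{\sim}$ through representatives and in-class paths, is exactly the natural (and essentially Bubboloni's) way to prove it. The lifting bookkeeping you defer is genuinely routine: singleton classes and repeated consecutive terms only produce trivial in-class paths, and loops affect neither components nor neighbourhood classes.

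There is, however, one point you pass over that you must address explicitly. The surjection $\Phi$ from the components of $\Gamma$ onto those of $\Gamma/{\sim}$ gives $c(\Gamma/{\sim})\le c(\Gamma)$, which is the \emph{reverse} of part i) as printed ($c(\Gamma)\le c(\Gamma/{\sim})$). The printed inequality is false in general: take $\Gamma$ to be two isolated vertices and $\sim$ the relation identifying them, so $c(\Gamma)=2$ while $c(\Gamma/{\sim})=1$. The statement in the thesis evidently has the inequality transcribed backwards from \cite{Bubboloni_1}, and the rest of your own argument tacitly uses the corrected direction: your non-tame case concludes $c(\Gamma)>c(\Gamma/{\sim})$, and in part iii) you deduce $c(\Gamma/{\sim})=1$ from $c(\Gamma)=1$, which only follows from $c(\Gamma/{\sim})\le c(\Gamma)$. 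So the mathematics you propose proves the correct form of the proposition, but your claim that the count of the two sides of $\Phi$ ``is exactly the content of part i)'' is literally inaccurate for the statement as displayed; you should flag the misprint and state the inequality you actually prove, rather than leaving a silent mismatch between claim and argument.
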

	
		In \cite{Bubboloni_2}, Bubboloni et al. studied the relation $\diamond$ proving the following result. We state this result within our notation.
		
		\begin{lemma}{\rm \cite[Lemma 3.6]{Bubboloni_2}}
			\label{LemmaBubboloni}Let $G$ be a group. The graph $\mathcal{P}^*(G)/\diamond$ is a tame and pseudo-covering quotient of $\mathcal{P}^*(G)$. In particular $c^*(G)=c_{\diamond}^*(G)$.
		\end{lemma}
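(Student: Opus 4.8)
The plan is to verify, in order, that the projection $\pi_{\diamond}\colon\mathcal{P}^*(G)\to\mathcal{P}^*(G)/\diamond$ is a surjective graph homomorphism, that it is tame, and that it is locally strong; then pseudo-covering follows at once, and the equality $c^*(G)=c^*_{\diamond}(G)$ is immediate from Proposition \ref{tame_quotient} ii). First I would record that $[1]_{\diamond}=\{1\}$ by Lemma \ref{OssDiamondRelation} (as $\phi(1)=1$), so that the $\diamond$-classes contained in $G\setminus\{1\}$ are exactly the $\diamond$-classes of $G$ different from the star class; consequently $\pi_{\diamond}$ is just the restriction to $\mathcal{P}^*(G)$ of the projection of Remark \ref{Projection-homomorphism} and is a surjective graph homomorphism.

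For tameness, suppose $\pi_{\diamond}(x)=\pi_{\diamond}(y)$ with $x\neq y$ in $G\setminus\{1\}$. Then $x\diamond y$, hence $\langle x\rangle=\langle y\rangle$ by Lemma \ref{OssDiamondRelation}, so $y$ is a power of $x$ and $\{x,y\}\in E_{\mathcal{P}(G)}$; since $x,y\neq 1$ this edge lies in $E_{\mathcal{P}^*(G)}$, so $x$ and $y$ are joined by a path. Thus $\pi_{\diamond}$ is tame.

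The substantive step is the locally strong property. Given $x_1,x_2\in G\setminus\{1\}$ with $\{\pi_{\diamond}(x_1),\pi_{\diamond}(x_2)\}\in[E]_{\diamond}$ and a fixed $\tilde{x}_1\in[x_1]_{\diamond}$, I would split into two cases. If $[x_1]_{\diamond}=[x_2]_{\diamond}$, the edge in the quotient is a loop, and taking $\tilde{x}_2:=\tilde{x}_1\in[x_2]_{\diamond}$ gives the loop $\{\tilde{x}_1\}\in E_{\mathcal{P}^*(G)}$ (recall that in this chapter we work in the loop graph). If $[x_1]_{\diamond}\neq[x_2]_{\diamond}$, then by Definition \ref{defQuotient} there are $a\in[x_1]_{\diamond}$ and $b\in[x_2]_{\diamond}$ with $\{a,b\}\in E_{\mathcal{P}(G)}$; by Remark \ref{Edge-Arcs} one of $(a,b)$, $(b,a)$ is an arc of $\vec{\mathcal{P}}(G)$, so Lemma \ref{lemmaClassiDiamond} ii) forces every vertex of $[x_1]_{\diamond}$ to be joined to every vertex of $[x_2]_{\diamond}$ in $\mathcal{P}(G)$. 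In particular, for any choice of $\tilde{x}_2\in[x_2]_{\diamond}$ we get $\{\tilde{x}_1,\tilde{x}_2\}\in E_{\mathcal{P}(G)}$, and since $[x_1]_{\diamond},[x_2]_{\diamond}\subseteq G\setminus\{1\}$ this edge belongs to $E_{\mathcal{P}^*(G)}$. Hence $\pi_{\diamond}$ is locally strong, and being surjective it is pseudo-covering.

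Finally, tameness together with Proposition \ref{tame_quotient} ii) applied to $\Gamma=\mathcal{P}^*(G)$ and $\sim{=}\diamond$ yields $c(\mathcal{P}^*(G))=c(\mathcal{P}^*(G)/\diamond)$, that is $c^*(G)=c^*_{\diamond}(G)$. The only point that requires genuine care is the locally strong step: it rests entirely on the observation that a single edge joining two \emph{distinct} $\diamond$-classes already forces, through Lemma \ref{lemmaClassiDiamond} ii), the complete biclique between them; the remaining verifications are routine bookkeeping about loops and the removal of the identity vertex.
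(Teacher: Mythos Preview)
Your proof is correct. Note, however, that the paper does not give its own proof of this lemma: it is quoted as \cite[Lemma 3.6]{Bubboloni_2} and stated without argument. Your approach is exactly the one the paper uses immediately afterwards for the analogous statement with $\mathtt{N}$ in place of $\diamond$ (tameness from the fact that equivalent vertices are adjacent, locally strong from the ``one edge forces all edges'' phenomenon between classes, and the component count from Proposition~\ref{tame_quotient}); the only difference is that for $\diamond$ you appeal to Lemma~\ref{lemmaClassiDiamond}~ii) rather than to Remark~\ref{pi_psudo-covering}, which is the appropriate substitute since $\diamond$-classes need not have identical neighbourhoods.
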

		
		We now consider the relations $\mathtt{N}$, $\mathtt{O}$ and $\mathtt{T}$.
		\begin{lemma}
			Let $G$ be a group. The graph $\mathcal{P}^*(G)/\mathtt{N}$ is a tame and pseudo-covering quotient of $\mathcal{P}^*(G)$. In particular $c^*(G)=c_{\mathtt{N}}^*(G)$.
		\end{lemma}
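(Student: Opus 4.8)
The plan is to exhibit the projection $\pi\colon V_{\mathcal{P}^*(G)}\to V_{\mathcal{P}^*(G)/\mathtt{N}}$, $x\mapsto [x]_{\mathtt{N}}$, and show it is a surjective graph homomorphism that is both tame and locally strong: from this \emph{pseudo-covering} is immediate by Definition \ref{Properties-graph-homomorphism}(e), and the equality $c^*(G)=c^*_{\mathtt{N}}(G)$ follows from tameness via Proposition \ref{tame_quotient}(ii). This is the exact analogue, for the proper power graph, of Lemma \ref{piNproperties} together with Remark \ref{pi_psudo-covering}; so the real content of the proof is the observation that the arguments used there are purely graph-theoretic and hence survive the deletion of the identity vertex. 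In particular $\mathcal{P}^*(G)$ is understood with loops added, so by (the general form of) Remark \ref{Projection-homomorphism} the projection $\pi$ is a surjective graph homomorphism between $\mathcal{P}^*(G)$ and $\mathcal{P}^*(G)/\mathtt{N}$, and adding loops does not alter the closed neighbourhoods $N_{\mathcal{P}^*(G)}[\cdot]$, hence not the relation $\mathtt{N}$.

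First I would establish the key edge-lifting equivalence: for all $x,y\in V_{\mathcal{P}^*(G)}$ one has $\{x,y\}\in E_{\mathcal{P}^*(G)}$ if and only if $\{\pi(x),\pi(y)\}\in [E]_{\mathtt{N}}$. The forward implication holds because $\pi$ is a graph homomorphism. For the converse, if $\pi(x)=\pi(y)$ then $x\mathtt{N}y$ in $\mathcal{P}^*(G)$, so $y\in N_{\mathcal{P}^*(G)}[y]=N_{\mathcal{P}^*(G)}[x]$, which gives $\{x,y\}\in E_{\mathcal{P}^*(G)}$ when $x\neq y$ (and a loop when $x=y$); if $\pi(x)\neq\pi(y)$, then by definition of the quotient there are $\tilde x\in [x]_{\mathtt{N}}$ and $\tilde y\in [y]_{\mathtt{N}}$ with $\{\tilde x,\tilde y\}\in E_{\mathcal{P}^*(G)}$, and since $N_{\mathcal{P}^*(G)}[x]=N_{\mathcal{P}^*(G)}[\tilde x]$ and $N_{\mathcal{P}^*(G)}[y]=N_{\mathcal{P}^*(G)}[\tilde y]$, the argument of Lemma \ref{latiNclassi} (which uses only equality of closed neighbourhoods, so it applies verbatim in $\mathcal{P}^*(G)$) yields $\{x,y\}\in E_{\mathcal{P}^*(G)}$.

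From this equivalence, locally strongness is derived exactly as in Remark \ref{pi_psudo-covering}: if $\{\pi(x_1),\pi(x_2)\}\in [E]_{\mathtt{N}}$ then $x_2\in N_{\mathcal{P}^*(G)}[x_1]$, and for every $\tilde x_1\in\pi^{-1}(\pi(x_1))=[x_1]_{\mathtt{N}}$ we have $x_2\in N_{\mathcal{P}^*(G)}[\tilde x_1]$, so $\{\tilde x_1,x_2\}\in E_{\mathcal{P}^*(G)}$ and we may take $\tilde x_2=x_2$. Since $\pi$ is also surjective, $\pi$ is pseudo-covering. For tameness, suppose $\pi(x)=\pi(y)$ with $x\neq y$; then $x\mathtt{N}y$, so by the equivalence above $\{x,y\}\in E_{\mathcal{P}^*(G)}$, i.e. $x$ and $y$ are joined by a path of length one, so $\pi$ is tame. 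Finally, applying Proposition \ref{tame_quotient}(ii) to $\Gamma=\mathcal{P}^*(G)$ and the (tame) equivalence relation $\mathtt{N}$ gives $c(\mathcal{P}^*(G))=c(\mathcal{P}^*(G)/\mathtt{N})$, that is $c^*(G)=c^*_{\mathtt{N}}(G)$.

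I do not expect a genuine obstacle here: the entire proof reuses machinery already assembled in the excerpt, and the only point requiring care is the explicit remark that Remark \ref{Projection-homomorphism}, the adjacency of distinct $\mathtt{N}$-equivalent vertices, and the neighbourhood-transfer of Lemma \ref{latiNclassi} are statements about arbitrary (loop) graphs, so they transfer without change from $\mathcal{P}(G)$ to $\mathcal{P}^*(G)$; once that is noted, the verification is the same computation as for $\mathcal{P}(G)$. If anything, the mildly delicate step is simply making sure the definitions ``a quotient is tame / pseudo-covering iff its projection is'' are invoked correctly, in parallel with the treatment of $\diamond$ in Lemma \ref{LemmaBubboloni}.
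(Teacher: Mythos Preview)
Your proposal is correct and follows essentially the same approach as the paper: both establish tameness from the fact that $\mathtt{N}$-equivalent non-identity vertices are adjacent, both obtain local strongness from the edge-lifting property of Remark~\ref{pi_psudo-covering}, and both deduce $c^*(G)=c^*_{\mathtt{N}}(G)$ from Proposition~\ref{tame_quotient}. The only cosmetic difference is that the paper is terser---it simply restricts $\pi$ to $G\setminus\{1\}$ and observes that local strongness of $\pi$ immediately gives local strongness of the restriction---whereas you re-verify the edge-lifting equivalence inside $\mathcal{P}^*(G)$; but the underlying argument is identical.
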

		\begin{proof}
			Let $x, y\in G\setminus\{1\}$ such that $x\mathtt{N} y$. Then $\{x,y\}\in E$. This shows that the quotient graph $\mathcal{P}^*(G)/\mathtt{N}$ is tame. Then, by Proposition \ref{tame_quotient}, we have $c^*(G)=c_{\mathtt{N}}^*(G)$. The fact that the
			quotient is pseudo-covered is an immediate consequence of Remark \ref{pi_psudo-covering}. Indeed the map 
			\begin{align*}
				\pi^*:G\setminus \{1\}\longrightarrow [G\setminus &\{1\}]_{\mathtt{N}} \\
				x\longmapsto \pi(x)
			\end{align*} 
		is clearly surjective and, since $\pi$ is locally strong, $\pi^*$ is locally strong.
		\end{proof}
	
	
		\begin{lemma}
			\label{piO_tame}Let $G$ be a group. The graph $\mathcal{P}^*(G)/\mathtt{O}$ is a tame quotient of $\mathcal{P}^*(G)$ if and only if, for $x,y \in G\setminus\{1\}$, $x\mathtt{O}y$ implies $x=y$.
		\end{lemma}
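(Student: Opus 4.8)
The plan is to unravel the definition of tameness and feed in the explicit description of open twins of a power graph from Proposition \ref{open_twins=involution}. By definition $\mathcal{P}^*(G)/\mathtt{O}$ is a tame quotient precisely when its projection $\pi_{\mathtt{O}}\colon G\setminus\{1\}\to[G\setminus\{1\}]_{\mathtt{O}}$ is tame, i.e. when $\pi_{\mathtt{O}}(x)=\pi_{\mathtt{O}}(y)$ implies that $x$ and $y$ are joined by a path in $\mathcal{P}^*(G)$; and $\pi_{\mathtt{O}}(x)=\pi_{\mathtt{O}}(y)$ is exactly $x\mathtt{O}y$. So the statement to prove becomes: ``$x\mathtt{O}y$ forces $x$ and $y$ to lie in the same connected component of $\mathcal{P}^*(G)$'' if and only if ``$x\mathtt{O}y$ forces $x=y$''.

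First I would record a small bookkeeping fact: for $x,y\in G\setminus\{1\}$, the relation $x\mathtt{O}y$ read off $\mathcal{P}^*(G)$ agrees with the one read off $\mathcal{P}(G)$, because $1$ is a star vertex and hence $N_{\mathcal{P}(G)}(x)=N_{\mathcal{P}^*(G)}(x)\cup\{1\}$ for every $x\neq1$. Consequently Proposition \ref{open_twins=involution} applies directly: if $x,y\in G\setminus\{1\}$ satisfy $x\mathtt{O}y$ and $x\neq y$, then $x$ and $y$ are involutions with $N_{\mathcal{P}(G)}(x)=N_{\mathcal{P}(G)}(y)=\{1\}$, so each of them is an isolated vertex of $\mathcal{P}^*(G)$.

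For the implication ``(every $\mathtt{O}$-class trivial) $\Rightarrow$ tame'', note that then $\pi_{\mathtt{O}}(x)=\pi_{\mathtt{O}}(y)$ gives $x=y$, and a vertex is joined to itself by the trivial path; hence $\pi_{\mathtt{O}}$ is tame and $\mathcal{P}^*(G)/\mathtt{O}$ is a tame quotient. For the converse I would argue by contraposition: if there are distinct $x,y\in G\setminus\{1\}$ with $x\mathtt{O}y$, then by the bookkeeping fact $x$ is isolated in $\mathcal{P}^*(G)$, so it forms its own connected component and cannot be joined to $y\neq x$ by any path, while $\pi_{\mathtt{O}}(x)=\pi_{\mathtt{O}}(y)$; thus $\pi_{\mathtt{O}}$ is not tame. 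The only place where care is needed is the neighbourhood comparison between $\mathcal{P}(G)$ and $\mathcal{P}^*(G)$ that licenses the use of Proposition \ref{open_twins=involution}; apart from that the argument is a routine unfolding of the definitions, so I do not anticipate a genuine obstacle.
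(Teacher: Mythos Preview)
Your proposal is correct and follows essentially the same route as the paper: both directions hinge on Proposition \ref{open_twins=involution} to see that distinct open twins are isolated in $\mathcal{P}^*(G)$, whence no path can connect them. You are in fact slightly more careful than the paper, which silently invokes Proposition \ref{open_twins=involution} (stated for $\mathcal{P}(G)$) inside $\mathcal{P}^*(G)$ without justifying why the $\mathtt{O}$-relations coincide; your bookkeeping remark that $N_{\mathcal{P}(G)}(x)=N_{\mathcal{P}^*(G)}(x)\cup\{1\}$ for $x\neq1$ fills exactly that small gap.
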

		\begin{proof}
			Assume $\mathcal{P}^*(G)/\mathtt{O}$ to be a tame quotient of $\mathcal{P}^*(G)$.
			Then the graph homomorphism $$\pi^*_{\mathtt{O}}: G\setminus \{1\} \longrightarrow [G\setminus \{1\}]_{\mathtt{O}}$$ is tame.
			Hence, for all $x,y\in G\setminus \{1\}$, $\pi^*_{\mathtt{O}}(x)=\pi^*_{\mathtt{O}}(y)$ implies $x$ and $y$ joined by a path of $\mathcal{P}^*(G)$.
			But two distinct open twins $x,y$ of $\mathcal{P}(G)$, by Proposition \ref{open_twins=involution}, have $N(x)=N(y)=\{1\}$. So we have that $N_{\mathcal{P}^*(G)}(x)=N_{\mathcal{P}^*(G)}(y)=\emptyset$, and this implies that there is no path joining $x$ and $y$ in $\mathcal{P}^*(G)$. A contradiction.
			
			Assume next that, for $x,y \in G\setminus\{1\}$, $x\mathtt{O}y$ implies $x=y$.
			Then clearly, for all $x,y\in G\setminus \{1\}$, $\pi^*_{\mathtt{O}}(x)=\pi^*_{\mathtt{O}}(y)$ implies $x$ and $y$ joined by a path since $x=y$ and the path is the trivial one of length $0$.
		\end{proof}
	
		Then we have that the graph $\mathcal{P}^*(G)/\mathtt{O}$ is a tame quotient of $\mathcal{P}^*(G)$ if and only if $\mathcal{P}^*(G)/\mathtt{O}=\mathcal{P}^*(G)$.
	
		\begin{corollary}
				Let $G$ be a group. The graph $\mathcal{P}^*(G)/\mathtt{T}$ is a tame quotient of $\mathcal{P}^*(G)$ if and only if, for all $x,y \in G\setminus\{1\}$, $x\mathtt{T}y$ implies $x\mathtt{N}y$.
		\end{corollary}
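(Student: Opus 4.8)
The statement to prove is a corollary characterizing when $\mathcal{P}^*(G)/\mathtt{T}$ is a tame quotient of $\mathcal{P}^*(G)$, namely: this happens if and only if for all $x,y \in G\setminus\{1\}$, $x\mathtt{T}y$ implies $x\mathtt{N}y$. The plan is to combine the description of the relation $\mathtt{T}$ as ``$\mathtt{N}$ or $\mathtt{O}$'' (from Remark/Definition \ref{RelationOeT}) with the tameness criterion already worked out for $\mathtt{O}$ in Lemma \ref{piO_tame}, together with the structural fact (Proposition \ref{open_twins=involution}) that distinct open twins of $\mathcal{P}(G)$ are involutions whose only common neighbour is the identity, so they are isolated in $\mathcal{P}^*(G)$.

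First I would unwind the condition ``$x\mathtt{T}y$ implies $x\mathtt{N}y$ for all $x,y\in G\setminus\{1\}$''. Since $x\mathtt{T}y$ means $x\mathtt{N}y$ or $x\mathtt{O}y$, and since distinct vertices cannot be simultaneously open and closed twins (noted right before Definition \ref{RelationOeT}), the condition is equivalent to: there are no two \emph{distinct} elements $x,y\in G\setminus\{1\}$ with $x\mathtt{O}y$. In other words, it is exactly the condition appearing on the right-hand side of Lemma \ref{piO_tame}: for $x,y\in G\setminus\{1\}$, $x\mathtt{O}y$ implies $x=y$. Under this equivalent condition, $\mathtt{T}$ restricted to $G\setminus\{1\}$ coincides with $\mathtt{N}$, so $\mathcal{P}^*(G)/\mathtt{T}=\mathcal{P}^*(G)/\mathtt{N}$, which we have just shown (in the preceding lemma) to be a tame quotient of $\mathcal{P}^*(G)$. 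This gives the ``if'' direction.

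For the ``only if'' direction, I would argue by contrapositive: suppose there exist distinct $x,y\in G\setminus\{1\}$ with $x\mathtt{O}y$ but not $x\mathtt{N}y$, equivalently (by the incompatibility of the two twin types) just distinct open twins $x,y\in G\setminus\{1\}$. By Proposition \ref{open_twins=involution}, $x$ and $y$ are involutions with $N_{\mathcal{P}(G)}(x)=N_{\mathcal{P}(G)}(y)=\{1\}$, hence $N_{\mathcal{P}^*(G)}(x)=N_{\mathcal{P}^*(G)}(y)=\emptyset$, so $x$ and $y$ are isolated vertices of $\mathcal{P}^*(G)$; in particular they lie in distinct connected components and are not joined by any path. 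Since $x\mathtt{T}y$, the projection $\pi^*_{\mathtt{T}}$ identifies them, so $\pi^*_{\mathtt{T}}$ is not tame, i.e. $\mathcal{P}^*(G)/\mathtt{T}$ is not a tame quotient of $\mathcal{P}^*(G)$. This is essentially the same obstruction already exploited in the proof of Lemma \ref{piO_tame}, so I would simply mirror that argument.

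The proof is short and I do not expect a genuine obstacle; the only point to be careful about is the bookkeeping that ``$x\mathtt{T}y$ and $x\neq y$ and not $x\mathtt{N}y$'' is equivalent to ``$x\mathtt{O}y$ and $x\neq y$'', which rests on the two facts that $\mathtt{T}=\mathtt{N}\cup\mathtt{O}$ and that open and closed twinship are mutually exclusive for distinct vertices. Once this is in place, the statement follows by directly citing Lemma \ref{piO_tame}, Proposition \ref{open_twins=involution}, and the tameness of $\mathcal{P}^*(G)/\mathtt{N}$ from the immediately preceding lemma.
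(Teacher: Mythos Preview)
Your proposal is correct and follows essentially the same approach as the paper: reduce the condition ``$x\mathtt{T}y\Rightarrow x\mathtt{N}y$'' to the absence of distinct open twins in $G\setminus\{1\}$, use Proposition~\ref{open_twins=involution} (as in Lemma~\ref{piO_tame}) to see that such open twins would be isolated in $\mathcal{P}^*(G)$ and hence obstruct tameness, and for the converse identify $\mathtt{T}$ with $\mathtt{N}$ on $G\setminus\{1\}$ and invoke the tameness of $\mathcal{P}^*(G)/\mathtt{N}$ from the preceding lemma.
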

		\begin{proof}
			Assume $\mathcal{P}^*(G)/\mathtt{T}$ to be a tame quotient of $\mathcal{P}^*(G)$.
			Then the graph homomorphism $$\pi^*_{\mathtt{T}}: G\setminus \{1\} \longrightarrow [G\setminus \{1\}]_{\mathtt{T}}$$ is tame.
			Hence, for all $x,y\in G\setminus \{1\}$, $\pi^*_{\mathtt{T}}(x)=\pi^*_{\mathtt{T}}(y)$ implies $x$ and $y$ joined by a path of $\mathcal{P}^*(G)$.
			Remember that $x\mathtt{T}y$ holds if and only we have $x \mathtt{N}y$ or $\mathtt{O}y$.
			For all $x,y\in G\setminus \{1\}$ such that $x\mathtt{N}y$, surely $x$ and $y$ are joined by a path, they are indeed joined.
			Then consider $x,y \in G\setminus \{1\}$ such that $x\mathtt{O}y$. By Lemma \ref{piO_tame}, $x$ and $y$ are joined by a path of $\mathcal{P}^*(G)$ if and only if $x=y$. It follows that $x\mathtt{N}y$ holds. 
			Thus $x\mathtt{T}y$ implies $x\mathtt{N}y$ for all $x,y\in G\setminus \{1\}$.
			
			Assume next that, for all $x,y \in G\setminus\{1\}$, $x\mathtt{T}y$ implies $x\mathtt{N}y$.
			Then the relation $\mathtt{T}$ is the same as $\mathtt{N}$. Hence $\pi^*_{\mathtt{T}}=\pi^*$ that, by Lemma \ref{piNproperties}, is tame.
		\end{proof}
		
		It follows that the graph $\mathcal{P}^*(G)/\mathtt{T}$ is a tame quotient of $\mathcal{P}^*(G)$ if and only $\mathcal{P}^*(G)/\mathtt{T}=\mathcal{P}^*(G)/\mathtt{N}$.
		
	
		\chapter{Maximal paths and cycles in power graphs}
		\label{MaximalCycles}
		
		In this chapter we are going to study the paths and the cycles in the power graphs of groups. We refer to the definition of path and cycles given in Definition \ref{DefPath} and Definition \ref{DefCycle}.
				
		Let $\mathcal{W}$ and $\mathcal{C}$ be, respectively, a path and a cycle. Let $\Gamma$ be a graph. $\mathcal{W}$ is called a path of $\Gamma$ if it is a subgraph of $\Gamma$. $\mathcal{C}$ is a cycle of $\Gamma$ if it is a subgraph of $\Gamma$. 
		We say that $\mathcal{C}$ is a \emph{maximal cycle} of $\Gamma$ if there is no other cycle $\mathcal{C}'$ in $\Gamma$ such that $V_{\mathcal{C}}\subset V_{\mathcal{C}'}$. Similarly $\mathcal{W}$ is a \emph{maximal path} of $\Gamma$ if there is no other path $\mathcal{W}'$ in $\Gamma$ such that $V_{\mathcal{W}}\subset V_{\mathcal{W}'}$.
		
		We want to study paths and cycles in the power graph. In Figure \ref{EsempioCycle/pathInUPG_C_6} we represent $\mathcal{P}(C_6)$ and we highlight, in red, a cycle and, in green, a path. Note that neither the path nor the cycle are maximal. Indeed consider $\mathcal{C}=1,a^3,a^5,a^4,a,a^2,1$. $V_{\mathcal{C}}=C_6$ so $\mathcal{C}$ is a maximal cycle, and $\mathcal{W}=1,a^3,a^5,a^4,a,a^2$ is a maximal path. Another example of maximal path is $\mathcal{W}'=a^3,a,1,a^5,a^4,a^2$, in this case $a^3,a,1,a^5,a^4,a^2,a^3$ is not a cycle since $\{a^3,a^2\}\notin E$.
		
		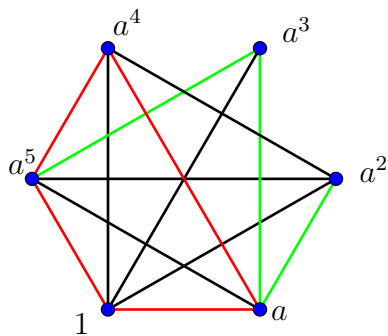
\begin{figure}
			\centering
			\begin{tikzpicture}[line cap=round,line join=round,>=triangle 45,x=1.0cm,y=1.0cm]
				\clip(-1.,-1.5) rectangle (5.,3.);
				\draw [line width=1.pt, color=red] (1.,-1.)-- (0.,0.732050807568879);
				\draw [line width=1.pt] (1.,-1.)-- (1.,2.4641016151377557);
				\draw [line width=1.pt] (1.,-1.)-- (3.,2.4641016151377553);
				\draw [line width=1.pt] (1.,-1.)-- (4.,0.7320508075688774);
				\draw [line width=1.pt, color=red] (1.,-1.)-- (3.,-1.);
				\draw [line width=1.pt, color=red] (0.,0.732050807568879)-- (1.,2.4641016151377557);
				\draw [line width=1.pt] (0.,0.732050807568879)-- (3.,-1.);
				\draw [line width=1.pt] (0.,0.732050807568879)-- (4.,0.7320508075688774);
				\draw [line width=1.pt, color=green] (0.,0.732050807568879)-- (3.,2.4641016151377553);
				\draw [line width=1.pt, color=red] (3.,-1.)-- (1.,2.4641016151377557);
				\draw [line width=1.pt, color=green] (3.,-1.)-- (3.,2.4641016151377553);
				\draw [line width=1.pt, color=green] (3.,-1.)-- (4.,0.7320508075688774);
				\draw [line width=1.pt] (1.,2.4641016151377557)-- (4.,0.7320508075688774);
				\draw (0.9,-0.9086580230955188) node[anchor=north east] {$1$};
				\draw (3.0041230403309376,-0.8146375163736965) node[anchor=north west] {$a$};
				\draw (4.16370928990008,1.1754632092382078) node[anchor=north west] {$a^2$};
				\draw (3.145153800413671,3.0245331747673787) node[anchor=north west] {$a^3$};
				\draw (0.9200018079972094,3.118553681489201) node[anchor=north west] {$a^4$};
				\draw (-0.4589656239228511,1.26948371596003) node[anchor=north west] {$a^5$};
				\begin{scriptsize}
					\draw [fill=qqqqff] (1.,-1.) circle (2.5pt);
					\draw [fill=qqqqff] (3.,-1.) circle (2.5pt);
					\draw [fill=qqqqff] (4.,0.7320508075688774) circle (2.5pt);
					\draw [fill=qqqqff] (3.,2.4641016151377553) circle (2.5pt);
					\draw [fill=qqqqff] (1.,2.4641016151377557) circle (2.5pt);
					\draw [fill=qqqqff] (0.,0.732050807568879) circle (2.5pt);
				\end{scriptsize}
			\end{tikzpicture}
			\caption{In red a $4$-cycle and in green a $3$-path of $\mathcal{P}(C_6)$.}
			\label{EsempioCycle/pathInUPG_C_6}
		\end{figure}
		
		Let $\vec{\mathcal{W}}$ and $\vec{\mathcal{C}}$ be, respectively, a directed path and a directed cycle. Let $\vec{\Gamma}$ be a digraph. $\vec{\mathcal{W}}$ is a directed path of $\vec{\Gamma}$ if it is a subdigraph of $\vec{\Gamma}$. $\vec{\mathcal{C}}$ is a directed cycle of $\vec{\Gamma}$ if it is a subdigraph of $\vec{\Gamma}$. 
		We say that $\vec{\mathcal{C}}$ is a \emph{maximal directed cycle} of $\vec{\Gamma}$ if there is no other directed cycles $\vec{\mathcal{C}}'$ in $\vec{\Gamma}$ such that $V_{\vec{\mathcal{C}}}\subset V_{\vec{\mathcal{C}}'}$. Again, $\vec{\mathcal{W}}$ is a \emph{maximal directed path} if there is no other directed paths $\vec{\mathcal{W}}'$ in $\vec{\Gamma}$ such that $V_{\vec{\mathcal{W}}}\subset V_{\vec{\mathcal{W}}'}$.
		
		In Figure \ref{EsempioDirectedcycleInDPGC_5} we represent a directed cycle and a directed path in $\vec{\mathcal{P}}(C_5)$. The example allows us to highlight some simple considerations on directed paths and cycles. Note that the identity is never a vertex of a directed cycle, indeed $1$ does not dominates any other vertex. Instead, dealing with directed paths, we have that the identity can only be the end of a directed paths. Look at $\vec{\mathcal{C}}=a,a^2,a^3,a^4,a$; it has the maximum length possible for a directed cycle in $\vec{\mathcal{P}}(C_5)$, so $\vec{\mathcal{C}}$ is maximal. Note that $V_{\mathcal{C}}=[a]_{\diamond}$ and that $l(\mathcal{C})=\phi(o(a))$. 
		 
		\begin{figure}
			\centering
			\begin{tikzpicture}[line cap=round,line join=round,>=triangle 45,x=1.0cm,y=1.0cm]
				\clip(-0.5,2.5) rectangle (4.5,7);
				\draw [latex-, line width=2.pt] (0.3819660112501053,4.902113032590307)-- (1.,3.);
				
				\draw [-latex, line width=2.pt, color=red] (1.,3.)-- (2.3,3.);
				\draw [ line width=2.pt, color=red] (1.,3.)-- (3.,3.);
				
				
				\draw [line width=2.pt, color=red] (3.,3.)-- (3.618033988749895,4.902113032590306);
				\draw [-latex, line width=2.pt, color=red] (3.,3.)-- (3.4,4.224);
				
				
				\draw [line width=2.pt, color=red] (3.618033988749895,4.902113032590306)-- (2.,6.077683537175253);
				\draw [-latex, line width=2.pt, color=red] (3.618033988749895,4.902113032590306)-- (2.5,5.715);
				
				\draw [-latex, line width=2.pt] (2.,6.077683537175253)-- (0.3819660112501053,4.902113032590307);
				
				\draw [line width=2.pt, color=red] (1.,3.)-- (2.,6.077683537175253);
				\draw [-latex, line width=2.pt, color=red] (2.,6.077683537175253)-- (1.4,4.232);
				
				\draw[-latex, line width=2.pt, color=green] (2.,6.077683537175253) arc (90:18:1.7);
				
				\draw [line width=2.pt] (0.3819660112501053,4.902113032590307)-- (3.,3.);
				\draw [latex-, line width=2.pt] (0.6,4.752)-- (3.,3.);
				
				\draw [line width=2.pt] (0.3819660112501053,4.902113032590307)-- (3.618033988749895,4.902113032590306);
				\draw [latex-, line width=2.pt] (0.6,4.902113032590307)-- (3.618033988749895,4.902113032590306);
				
				\draw [latex-latex, line width=2.pt] (1.,3.)-- (3.618033988749895,4.902113032590306);
				\draw [latex-latex, line width=2.pt] (3.,3.)-- (2.,6.077683537175253);
				
				\draw[-latex, line width=2.pt, color=green] (3.,3.) arc (306:234:1.7);
				
				\draw [latex-, shift={(2.,4.3763819204711725)},line width=2.pt]  plot[domain=-0.9424777960769379:0.31415926535897953,variable=\t]({1.*1.7013016167040786*cos(\t r)+0.*1.7013016167040786*sin(\t r)},{0.*1.7013016167040786*cos(\t r)+1.*1.7013016167040786*sin(\t r)});

				\draw [latex-, shift={(1.4633131033616402,4.55076206389909)},line width=2.pt, color=green]  plot[domain=1.232800986062001:4.4220657903996266,variable=\t]({1.*1.618493747462486*cos(\t r)+0.*1.618493747462486*sin(\t r)},{0.*1.618493747462486*cos(\t r)+1.*1.618493747462486*sin(\t r)});
				
				\draw (1.,3.) node[anchor=north east] {$a$};
				\draw (3.,3.) node[anchor=north west] {$a^2$};
				\draw (3.6,4.85) node[anchor= west] {$a^3$};
				\draw (2.,6.077683537175253) node[anchor=south] {$a^4$};
				\draw (0.3819660112501053,4.85)
				node[anchor= east] {$1$};
				
				\begin{scriptsize}
					\draw [fill=qqqqff] (1.,3.) circle (3pt);
					\draw [fill=qqqqff] (3.,3.) circle (3pt);
					\draw [fill=qqqqff] (3.618033988749895,4.902113032590306) circle (2.5pt);
					\draw [fill=qqqqff] (2.,6.077683537175253) circle (3pt);
					\draw [fill=qqqqff] (0.3819660112501053,4.902113032590307) circle (3pt);
					\draw [fill=qqqqff] (3.,2.) circle (3pt);
					\draw [fill=qqqqff] (1.,2.) circle (3pt);
					\draw [fill=qqqqff] (0.3819660112501051,0.09788696740969349) circle (3pt);
					\draw [fill=qqqqff] (2.,-1.0776835371752531) circle (3pt);
					\draw [fill=qqqqff] (3.618033988749895,0.09788696740969272) circle (3pt);
				\end{scriptsize}
			\end{tikzpicture}
			\caption{In red a directed $4$-cycle and in green a directed $3$-path of $\mathcal{P}(C_5)$.}
			\label{EsempioDirectedcycleInDPGC_5}
		\end{figure}
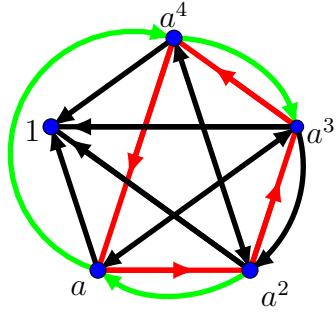
	
		\begin{definition}
			\rm Let $\Gamma$ be a graph. A cycle $\mathcal{C}$ of $\Gamma$ having $V_{\mathcal{C}}=V_{\Gamma}$ is called \emph{hamiltonian cycle}. A path $\mathcal{W}$ of $\Gamma$ with $V_{\mathcal{W}}=V_{\Gamma}$ is an \emph{hamiltonian path}. $\Gamma$ is an \emph{hamiltonian graph} if it has an hamiltonian cycle.
		\end{definition}
	
		Note that hamiltonian cycles/paths are maximal cycles/paths.
		
		\begin{definition}
			\rm Let $\vec{\Gamma}$ be a digraph. A directed cycle $\vec{\mathcal{C}}$ of $\vec{\Gamma}$ having $V_{\vec{\mathcal{C}}}=V_{\vec{\Gamma}}$ is called \emph{hamiltonian directed cycle}. A directed path $\vec{\mathcal{W}}$ of $\vec{\Gamma}$ with $V_{\vec{\mathcal{W}}}=V_{\vec{\Gamma}}$ is an \emph{hamiltonian directed path}. $\vec{\Gamma}$ is an \emph{hamiltonian digraph} if it has an hamiltonian directed cycle.
		\end{definition}
	
		Rédei's Theorem, that we are going to state, allows us to get an easy result about hamiltonian paths and hamiltonian directed paths.
		
		\begin{theorem}{\rm \cite[Theorem 2.3]{GraphTheory}}
			\label{Redei}Let $\vec{\Gamma}$ an orientation of a complete graph. Then $\vec{\Gamma}$ has an hamiltonian directed path. 
		\end{theorem}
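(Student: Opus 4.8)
The plan is to prove the statement by induction on the order $n := |V_{\vec{\Gamma}}|$. Throughout I will use the fact, immediate from Definition \ref{Def_orientation}, that in an orientation of a complete graph any two distinct vertices $u, w$ are joined by exactly one arc, either $(u,w)$ or $(w,u)$; in particular the subdigraph $\vec{\Gamma}_X$ obtained by deleting any vertex is again an orientation of a complete graph, so the inductive hypothesis applies to it.

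For the base case one may start from $n = 1$, where the single vertex is a directed $0$-path and there is nothing to prove (equivalently, from $n = 2$, where the unique arc is a directed $1$-path). For the inductive step, assume every orientation of a complete graph on $n-1$ vertices admits a Hamiltonian directed path, and let $\vec{\Gamma}$ have $n \geq 2$ vertices. Pick any $v \in V_{\vec{\Gamma}}$ and apply the inductive hypothesis to $\vec{\Gamma}_{V_{\vec{\Gamma}} \setminus \{v\}}$: this yields a directed path $\vec{\mathcal{W}} = x_1, x_2, \dots, x_{n-1}$ through all the remaining vertices, with $(x_i, x_{i+1}) \in A_{\vec{\Gamma}}$ for every $i \in [n-2]$. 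It remains to splice $v$ into this sequence.

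The argument splits into three cases according to the arcs between $v$ and the endpoints of $\vec{\mathcal{W}}$. If $(v, x_1) \in A_{\vec{\Gamma}}$, then $v, x_1, \dots, x_{n-1}$ is the desired Hamiltonian directed path; if $(x_{n-1}, v) \in A_{\vec{\Gamma}}$, then $x_1, \dots, x_{n-1}, v$ works. Otherwise, by the dichotomy above we have $(x_1, v) \in A_{\vec{\Gamma}}$ and $(v, x_{n-1}) \in A_{\vec{\Gamma}}$; let $i$ be the least index in $\{2, \dots, n-1\}$ with $(v, x_i) \in A_{\vec{\Gamma}}$ — such an $i$ exists because $i = n-1$ qualifies, and $i \geq 2$. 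By minimality of $i$ and the dichotomy, $(x_{i-1}, v) \in A_{\vec{\Gamma}}$, and together with $(x_{i-1}, x_i), (v, x_i) \in A_{\vec{\Gamma}}$ this shows that $x_1, \dots, x_{i-1}, v, x_i, \dots, x_{n-1}$ is a directed path through all $n$ vertices. This completes the induction.

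The proof is essentially elementary; the only place that requires a little care is the insertion step, where one must be sure the threshold index $i$ is well defined and lands strictly inside the sequence, so that both the arc into $v$ and the arc out of $v$ are available — this is exactly what handling the two degenerate cases beforehand guarantees. I therefore expect no real obstacle beyond bookkeeping with indices.
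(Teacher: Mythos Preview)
Your proof is correct and is the standard inductive argument for R\'edei's theorem. The paper itself does not give a proof of this statement at all---it merely quotes it from \cite[Theorem~2.3]{GraphTheory} and uses it as a black box---so there is nothing in the paper to compare against; your argument is essentially the classical one found in that reference.
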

	
		\begin{corollary}
			Let $p$ be a prime and $k$ a positive integer. Then $\vec{\mathcal{P}}(C_{p^k})$ has an hamiltonian directed path.
		\end{corollary}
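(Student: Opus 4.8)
The plan is to deduce this corollary directly from R\'edei's Theorem (Theorem \ref{Redei}) together with the structural fact about power graphs of cyclic $p$-groups established earlier. First I would invoke Proposition \ref{PGcompleto} (or equivalently Corollary \ref{PGcompleto_1}), which tells us that $\mathcal{P}(C_{p^k})$ is a complete graph on $p^k$ vertices, since $C_{p^k}$ is cyclic of prime-power order. Thus $\mathcal{P}(C_{p^k}) = K_{p^k}$.

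Next I would observe that $\vec{\mathcal{P}}(C_{p^k})$ is an orientation of this complete graph. This follows from Remark \ref{Edge-Arcs}: for every edge $\{x,y\} \in E_{\mathcal{P}(C_{p^k})}$ we have $(x,y) \in A$ or $(y,x) \in A$; and since no pair of distinct elements generates the same cyclic subgroup unless\ldots{} wait, more carefully, I need to rule out that \emph{both} arcs are present for some edge. Actually this is where one must be slightly careful: in a cyclic group of order $p^k$ with $k \geq 2$ there \emph{are} pairs $x,y$ with $\langle x \rangle = \langle y \rangle$, so both $(x,y)$ and $(y,x)$ lie in $A$ by Lemma \ref{OssDiamondRelation}, and hence $\vec{\mathcal{P}}(C_{p^k})$ is strictly speaking not an orientation of $K_{p^k}$ in the sense of Definition \ref{Def_orientation}. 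So the cleaner route is: pick \emph{any} orientation $\vec{\Gamma}$ of $\mathcal{P}(C_{p^k}) = K_{p^k}$ obtained by choosing, for each edge $\{x,y\}$, one of the two arcs present in $\vec{\mathcal{P}}(C_{p^k})$ — such a choice is always possible precisely because Remark \ref{Edge-Arcs} guarantees at least one of $(x,y),(y,x)$ is in $A$. Then $\vec{\Gamma}$ is a subdigraph of $\vec{\mathcal{P}}(C_{p^k})$ and is an orientation of $K_{p^k}$.

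Then by Theorem \ref{Redei}, $\vec{\Gamma}$ has a hamiltonian directed path, say $\vec{\mathcal{W}} = x_1, x_2, \dots, x_{p^k}$ with $(x_i, x_{i+1}) \in A_{\vec{\Gamma}} \subseteq A_{\vec{\mathcal{P}}(C_{p^k})}$ for all $i \in [p^k - 1]$. Since $V_{\vec{\mathcal{W}}} = V_{\vec{\Gamma}} = C_{p^k} = V_{\vec{\mathcal{P}}(C_{p^k})}$, this $\vec{\mathcal{W}}$ is a hamiltonian directed path of $\vec{\mathcal{P}}(C_{p^k})$ itself, which is exactly the claim.

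The only real obstacle is the subtlety just noted: $\vec{\mathcal{P}}(C_{p^k})$ is not literally an orientation of $K_{p^k}$ when $k \geq 2$, because it contains pairs of opposite arcs. This is handled by passing to a genuine orientation $\vec{\Gamma}$ sitting inside $\vec{\mathcal{P}}(C_{p^k})$, applying R\'edei there, and noting that a directed path in $\vec{\Gamma}$ is automatically a directed path in the larger digraph. For $k = 1$ (or $p^k \leq 2$) one could even argue more directly, but the orientation argument covers all cases uniformly, so I would not treat small cases separately.
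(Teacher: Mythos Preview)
Your proof is correct and follows essentially the same approach as the paper: establish that $\mathcal{P}(C_{p^k})$ is complete via Proposition~\ref{PGcompleto_1}, extract an orientation $\vec{\Gamma}$ of $K_{p^k}$ sitting inside $\vec{\mathcal{P}}(C_{p^k})$, apply R\'edei's Theorem to $\vec{\Gamma}$, and lift the resulting hamiltonian directed path back to $\vec{\mathcal{P}}(C_{p^k})$. Your discussion of the subtlety that $\vec{\mathcal{P}}(C_{p^k})$ is not literally an orientation (due to pairs of opposite arcs within $\diamond$-classes) is more explicit than the paper's, which simply asserts that $\vec{\mathcal{P}}(C_{p^k})$ \emph{contains} an orientation as a subdigraph without elaborating.
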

		\begin{proof}
			By Proposition \ref{PGcompleto_1}, $\mathcal{P}(C_{p^k})$ is the complete graph $K_{p^k}$. Then $\vec{\mathcal{P}}(C_{p^k})$ contains, as subdigraph, an orientation $\vec{\Gamma}$ of $K_{p^k}$.
			By Theorem \ref{Redei} $\vec{\Gamma}$ has an hamiltonian directed path and thus also $\vec{\mathcal{P}}(C_{p^k})$ has an hamiltonian directed path. 
			
		\end{proof}
	
		Dealing with cycles, instead of paths, in \cite[Corollary 5.13]{PowerGraphsofPowerGraoups} there is a relevant result about cycles in the power graph of cyclic groups. We state and prove this result.
		
		\begin{prop}
			\label{C_nHamilton}Let $G\cong C_n$ with $n\geq 3$. Then $\mathcal{P}(G)$ is hamiltonian.
		\end{prop}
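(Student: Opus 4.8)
The plan is to argue by cases according to whether $n$ is a prime power. If $n=p^k$ is a prime power, then by Proposition~\ref{PGcompleto} the graph $\mathcal{P}(C_n)$ is the complete graph $K_n$, which is Hamiltonian since $n\ge 3$; so from now on assume $n$ is not a prime power, whence $n$ has at least four divisors.

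For the general case I would first record the structure of $\mathcal{P}(C_n)$ as a \emph{clique blow-up} of the divisor comparability graph. Write $G=\langle g\rangle$ and, for each divisor $d$ of $n$, put $V_d=\{x\in G: o(x)=d\}$. Since $C_n$ has a unique subgroup of each order, $V_d$ is exactly the $\diamond$-class of any of its elements (Lemma~\ref{OssDiamondRelation}), so $|V_d|=\phi(d)$; moreover, for distinct $x\in V_d$ and $y\in V_{d'}$ one has $\{x,y\}\in E$ precisely when $d\mid d'$ or $d'\mid d$, independently of the chosen representatives. Hence $\mathcal{P}(C_n)$ is obtained from the graph $D_n$ on the divisors of $n$ (with $d,d'$ adjacent iff comparable) by replacing each vertex $d$ with a clique $V_d$ of size $\phi(d)$ and joining $V_d$ completely to $V_{d'}$ whenever $d$ and $d'$ are comparable; in particular $\mathcal{P}(C_n)/\diamond\cong D_n$.

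The engine is then the following elementary observation about such clique blow-ups: if $\Gamma$ is a graph with positive integer weights $w(v)$ and $\sum_v w(v)\ge 3$, the blow-up of $\Gamma$ by cliques of size $w(v)$ is Hamiltonian if and only if $\Gamma$ has a closed walk that visits every vertex and visits each $v$ at most $w(v)$ times. (One direction: a Hamiltonian cycle meets each clique $V_v$ in $k_v$ pairwise disjoint runs with $1\le k_v\le w(v)$, and reading off the successive clique labels along the cycle gives such a walk. Conversely, given the walk, split each $V_v$ into as many nonempty subpaths as the walk visits $v$, and concatenate these subpaths in the order dictated by the walk.) So it suffices to exhibit, in $D_n$, a closed walk visiting $1$ exactly once (this is forced since $\phi(1)=1$) and every other divisor $d$ at most $\phi(d)$ times. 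List the divisors of $n$ as $1=d_1<d_2<\cdots<d_t=n$, with $t\ge 4$; note $d_2$ is the least prime divisor, so $d_2=2$ if $n$ is even. Since $1$ and $n$ are comparable to every divisor, the sequence
\[
1,\ d_2,\ n,\ d_3,\ n,\ d_4,\ n,\ \dots,\ n,\ d_{t-1},\ n,\ 1
\]
is a closed walk in $D_n$; it visits each of $d_1,\dots,d_{t-1}$ once (in particular $d_2=2$ is visited once, as required by $\phi(2)=1$) and visits $n$ exactly $t-2$ times. The only condition left to check is $t-2\le\phi(n)$, that is, $\phi(n)$ is at least the number of divisors of $n$ minus two; this is a short elementary estimate, valid whenever $n$ is not a prime power (with equality only at $n=6,12$). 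Applying the blow-up criterion, $\mathcal{P}(C_n)$ is Hamiltonian.

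The main obstacle is bookkeeping rather than any single hard step: setting up the blow-up picture and the closed-walk criterion correctly, and verifying the numerical inequality above. If one prefers not to invoke that inequality, the number of visits to $n$ can be driven below $\phi(n)$ by omitting the detour through $n$ between two consecutive divisors with $d_i\mid d_{i+1}$, which already cuts the count down substantially.
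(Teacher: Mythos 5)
Your proof is correct in substance, but it takes a genuinely different route from the paper. The paper disposes of the statement by citing \cite[Theorem 11]{CyclicSubgroupGraphofFiniteGroup}, which asserts that $\mathcal{P}(C_n)/\diamond$ is hamiltonian, and then lifts a hamiltonian cycle of the quotient to $\mathcal{P}(C_n)$ by expanding each $\diamond$-class (a clique, by Lemma \ref{lemmaClassiDiamond} i)) into one consecutive block; since every class is visited exactly once, no multiplicity bookkeeping is needed. You avoid the external theorem entirely: you describe $\mathcal{P}(C_n)$ as a clique blow-up of the divisor comparability graph with weights $\phi(d)$ (which is exactly the quotient-plus-weights picture of Chapter \ref{QuotientGraphs} and \ref{MaximalCycles}), prove a lifting criterion that tolerates a closed walk visiting each divisor $d$ at most $\phi(d)$ times, and then build such a walk explicitly by bouncing through the top class $n$, which works because $d(n)-2\le \phi(n)$ when $n$ is not a prime power; prime powers are handled separately via Proposition \ref{PGcompleto}. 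What your approach buys is self-containedness and a reusable lemma very close in spirit to Proposition \ref{Weigth-length}; what it costs is the extra bookkeeping plus the arithmetic inequality $d(n)\le\phi(n)+2$, which you assert rather than prove. That inequality is true (with equality exactly at $n=6$ and $n=12$), but its verification, though elementary, requires a short case analysis on the prime factorisation and should be written out in a complete proof; your fallback of skipping detours through $n$ is only a heuristic as stated. Two minor remarks: only the ``walk implies hamiltonian cycle'' direction of your blow-up criterion is actually needed, and your separate treatment of prime powers is a clean way to sidestep degenerate cases that the paper leaves implicit in its citation.
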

		\begin{proof}
			Note that if $n\leq2$ there is no cycle in $\mathcal{P}(G)$. \\
			We can assume $[C_n]_{\diamond}=\{d_1,\dots,d_k\}$ where the indexing is such that $\mathcal{C}_{\diamond}:=d_1,\dots,d_k$ is an hamiltonian cycle of $C_n$.
			This is possible by \cite[Theorem 11]{CyclicSubgroupGraphofFiniteGroup}, that states that $\mathcal{P}(C_n)/\diamond$ is hamiltonian.  
			We also use the following notation: $$d_i:=\{x_1^i, \dots, x_{n_i}^i\}=[x_1^i]_{\diamond}$$ for $i\in [k]$, where $n_i:=|d_i|$.\\
			Note that $$|d_i|=|[x_1^i]_{\diamond}|=\phi(o(x_1^i)).$$
			Note also that $n=\sum_{i=1}^k|d_i|$.\\
			Since, by Lemma \ref{lemmaClassiDiamond} i), the subgraph of $\mathcal{P}(G)$ induced by a $\diamond$-class is complete, then $$\mathcal{C}=x_1^1, \dots, x_{n_1}^1, \dots, x_1^k, \dots, x_{n_k}^k, x_1^1$$ is an hamiltonian cycle for $\mathcal{P}(C_n)$.
		\end{proof}
		
		The figures of this chapter illustrate the last few proven results.
		 
		Let's now focus on the research of maximal cycles and maximal directed cycles in $\mathcal{P}(G)$ and $\vec{\mathcal{P}}(G)$ for $G$ a generic group. 
		The directed case is quite easy.
		Indeed we state and prove the next result that, more or less, concludes our study in this environment.
		
		\begin{prop}
			\label{directedCycle}Let $G$ be a group and let $\vec{\mathcal{C}}$ be a directed cycle of $\vec{\mathcal{P}}(G)$. $\vec{\mathcal{C}}$ is a maximal directed cycle if and only if $V_{\vec{\mathcal{C}}}=[x]_{\diamond}$ for some $x\in G$ with $|[x]_{\diamond}|\geq 3$.
		\end{prop}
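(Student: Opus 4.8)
The plan is to combine two ingredients that are already available: the partial order on $\diamond$-classes built in the proof of Proposition \ref{DPG-elementOrder} (where $[y]_{\diamond}\le[x]_{\diamond}$ iff $(x,y)\in A$ or $[x]_{\diamond}=[y]_{\diamond}$), and the fact from Lemma \ref{lemmaClassiDiamond} i) that the subdigraph induced by a $\diamond$-class is a complete digraph. The key lemma I would prove first is that \emph{every directed cycle of $\vec{\mathcal{P}}(G)$ has all its vertices inside a single $\diamond$-class}, and hence inside a $\diamond$-class of size at least $3$, since a directed cycle has at least three vertices by Definition \ref{DefCycle}.

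To prove this, write the directed cycle as $x_1,x_2,\dots,x_k,x_1$ with $k\ge 3$. For each $i$, either $[x_{i+1}]_{\diamond}=[x_i]_{\diamond}$, or $(x_i,x_{i+1})\in A$ with the two classes distinct, in which case the definition of $\le$ gives $[x_{i+1}]_{\diamond}<[x_i]_{\diamond}$; in all cases $[x_{i+1}]_{\diamond}\le[x_i]_{\diamond}$. Running once around the cycle and using transitivity of $\le$ yields $[x_1]_{\diamond}\ge[x_2]_{\diamond}\ge\cdots\ge[x_k]_{\diamond}\ge[x_1]_{\diamond}$, so by antisymmetry all the classes coincide. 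I would also record the converse fact that any $\diamond$-class $X$ with $|X|\ge 3$ carries a directed Hamiltonian cycle: by Lemma \ref{lemmaClassiDiamond} i) the induced digraph $\vec{\mathcal{P}}(G)_X$ is a complete digraph, so for any listing $X=\{v_1,\dots,v_m\}$ with $m=|X|\ge 3$ the sequence $v_1,v_2,\dots,v_m,v_1$ is a directed cycle (alternatively one may invoke Theorem \ref{Redei}).

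Given these, both implications are short. For the ``if'' direction, assume $V_{\vec{\mathcal{C}}}=[x]_{\diamond}$ with $|[x]_{\diamond}|\ge 3$ and let $\vec{\mathcal{C}}'$ be any directed cycle with $V_{\vec{\mathcal{C}}}\subseteq V_{\vec{\mathcal{C}}'}$; by the key lemma $V_{\vec{\mathcal{C}}'}\subseteq[z]_{\diamond}$ for some $z$, and since $[x]_{\diamond}\subseteq V_{\vec{\mathcal{C}}'}\subseteq[z]_{\diamond}$ and distinct $\diamond$-classes are disjoint, $[z]_{\diamond}=[x]_{\diamond}$, whence $V_{\vec{\mathcal{C}}'}=[x]_{\diamond}=V_{\vec{\mathcal{C}}}$ and the inclusion is not proper, so $\vec{\mathcal{C}}$ is maximal. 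For the ``only if'' direction, if $\vec{\mathcal{C}}$ is maximal then $V_{\vec{\mathcal{C}}}\subseteq[x]_{\diamond}$ with $|[x]_{\diamond}|\ge 3$ by the key lemma; were the inclusion proper, the directed Hamiltonian cycle of $\vec{\mathcal{P}}(G)_{[x]_{\diamond}}$ from the previous paragraph would have $V_{\vec{\mathcal{C}}}\subsetneq[x]_{\diamond}=V_{\vec{\mathcal{C}}'}$, contradicting maximality; hence $V_{\vec{\mathcal{C}}}=[x]_{\diamond}$.

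I do not expect a genuine obstacle: the substance is the key lemma, and its proof is just bookkeeping with the partial order of Proposition \ref{DPG-elementOrder}, while the rest follows formally from Lemma \ref{lemmaClassiDiamond} and the disjointness of $\diamond$-classes. The one point to state carefully is that a directed cycle has at least three vertices by definition, which is precisely what makes the bound $|[x]_{\diamond}|\ge 3$ appear and also guarantees that the Hamiltonian cycle used in the converse actually exists.
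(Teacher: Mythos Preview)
Your proof is correct and follows essentially the same approach as the paper. The paper argues each direction directly via the transitivity of $\vec{\mathcal{P}}(G)$ (Remark \ref{PG_transitive}) rather than via the partial order of Proposition \ref{DPG-elementOrder}, and it enlarges a non-spanning cycle by inserting a single extra vertex rather than invoking a Hamiltonian cycle on $[x]_{\diamond}$, but these are cosmetic differences: the mathematical content is identical, and your extraction of the ``all vertices lie in one $\diamond$-class'' lemma up front is a clean way to organize it.
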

		\begin{proof}
			Let $\vec{\mathcal{C}}=x_1, \dots, x_k, x_1$. 
			Assume that $\vec{\mathcal{C}}$ is a maximal directed cycle. Suppose, by contradiction, that $V_{\vec{\mathcal{C}}}\ne [x_1]_{\diamond}$.
			If there exists $y \in [x_1]_{\diamond}\setminus V_{\vec{\mathcal{C}}}$, then $\vec{\mathcal{C}}'=x_1, \dots, x_k, y, x_1$ is a directed cycle that contradicts the maximality of $\vec{\mathcal{C}}$. Hence $[x_1]_{\diamond}\subseteq V_{\vec{\mathcal{C}}}$. 
			If there exists an integer $i\in[k]\setminus \{1\}$ such that $x_i\notin [x_1]_{\diamond}$, then we reach a contradiction. Indeed we have that $$x_1, x_2, \dots, x_i $$ and $$x_i, x_{i+1}, \dots, x_k, x_1$$ are directed paths that has $x_1$ and $x_i$ as end points. In particular one directed path starts in $x_1 $ and ends in $x_i$ and the other one starts in $x_i$ and ends in $x_1$. So, since, by Remark \ref{PG_transitive}, $\vec{\mathcal{P}}(G)$ is transitive, $(x_i,x_1)\in A$ and also $(x_1,x_i)\in A$, that means that $x_i\diamond x_1$ holds, a contradiction. Therefore $V_{\vec{\mathcal{C}}}\subseteq [x_1]_{\diamond}$ and hence $V_{\vec{\mathcal{C}}}=[x_1]_{\diamond}$.
			 
			Assume next that $V_{\vec{\mathcal{C}}}=[x_1]_{\diamond}$.
			Suppose, by contradiction, that $\vec{\mathcal{C}}$ is not maximal. 
			Then there exists $\vec{\mathcal{C}}'$ a directed cycle such that $V_{\vec{\mathcal{C}}}\subset V_{\vec{\mathcal{C}}'}$.  
			In particular there exists $y\in V_{\vec{\mathcal{C}}'}$ such that $y\notin [x_1]_{\diamond}$.
			For this reason there exists at least two directed paths having $x_1$ and $y$ as end points; in particular one directed path starts in $x_1 $ and ends in $y$ and another one starts in $y$ and ends in $x_1$. So, since $\vec{\mathcal{P}}(G)$ is transitive, $(x_1,y)\in A$ and also $(y,x_1)\in A$, that means that $x_1\diamond y$, a contradiction.  
		\end{proof}
		
		It follows immediately, by the proposition above, that a directed cycle with maximum order in $\vec{\mathcal{P}}(G)$ has length $\phi(o(x))$ with $x$ an element having $\phi(o(x))$ maximum.
		As a consequence, we have that, in $\mathcal{P}(G)$, a cycle with maximum order has at least length $\phi(o(x))$ for $x$ an element with maximum $\phi(o(x))$. With the next proposition we improve this lower bound.
	
		\begin{prop}
		\label{orderBound}	Let $G$ be a group with $|G|\geq3$. Let $\mathcal{C}$ be a cycle of $\mathcal{P}(G)$ with maximum length. Then 
			$$ \ell(\mathcal{C})\geq \max_{x\in G}\{o(x)\}:=M_{o(G)}.$$
		\end{prop}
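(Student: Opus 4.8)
The plan is to realise the lower bound by exhibiting, inside $\mathcal{P}(G)$, a cycle whose length is exactly $M_{o(G)}$; then any cycle of maximum length is at least this long. First I would set $M:=M_{o(G)}$ and fix an element $x\in G$ with $o(x)=M$. The natural place to look for such a cycle is the power graph of the cyclic subgroup $\langle x\rangle$, which has order $M$.

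Assume first $M\geq 3$. Then $\langle x\rangle$ is cyclic of order $M\geq 3$, so by Remark \ref{InducedPG} the induced subgraph $\mathcal{P}(G)_{\langle x\rangle}$ coincides with $\mathcal{P}(\langle x\rangle)\cong \mathcal{P}(C_M)$. By Proposition \ref{C_nHamilton} this graph is hamiltonian, hence it contains a cycle on all $M$ of its vertices; such a cycle has length $M$ and, being a subgraph of the induced subgraph $\mathcal{P}(G)_{\langle x\rangle}$, it is a cycle of $\mathcal{P}(G)$. Consequently, if $\mathcal{C}$ is a cycle of $\mathcal{P}(G)$ of maximum length, then $\ell(\mathcal{C})\geq M=M_{o(G)}$, as claimed.

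It remains to address the degenerate possibility $M\leq 2$. Since $|G|\geq 3$ we cannot have $M=1$, so $M=2$; hence every non-identity element of $G$ is an involution, $G$ has exponent $2$, and $\mathcal{P}(G)$ is the star with centre the identity (no two distinct involutions are joined, since $x^m\in\{1,x\}$ whenever $x$ is an involution). A star is a tree and therefore contains no cycle, so in this case no cycle of maximum length exists and the statement holds vacuously.

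The argument is short, and the only point needing care is precisely this last case: the hypothesis that a maximum cycle exists silently excludes the exponent-$2$ groups, and one should flag this explicitly rather than apply Proposition \ref{C_nHamilton} outside its range $n\geq 3$. Everything else reduces to invoking Remark \ref{InducedPG} together with Proposition \ref{C_nHamilton}.
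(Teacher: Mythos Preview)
Your proof is correct and follows essentially the same approach as the paper: pick an element of maximum order, use Remark~\ref{InducedPG} to identify the induced subgraph on $\langle x\rangle$ with $\mathcal{P}(C_M)$, and invoke Proposition~\ref{C_nHamilton} to obtain a cycle of length $M$. You are in fact more careful than the paper, which applies Proposition~\ref{C_nHamilton} without explicitly checking that $o(x)\geq 3$ and does not discuss the exponent-$2$ case where no cycle exists.
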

		\begin{proof}
			Let $x\in G$. Consider the subgraph of $\mathcal{P}(G)$ induced by $\langle x \rangle$ that is, by Remark \ref{InducedPG}, $\mathcal{P}(\langle x \rangle)$.
			By Proposition \ref{C_nHamilton}, $\mathcal{P}(\langle x \rangle)$ is hamiltonian. Then there exists a cycle $\mathcal{C}'$ of $\mathcal{P}(G)$ with length $o(x)$.
			Therefore $o(x)\leq \ell (\mathcal{C})$. It follows the thesis.
		\end{proof}
	
		From now on, for a group $G$, we use $M_{o(G)}$ instead of $\max_{x\in G}{o(x)}$. 
	
		Recall that, since $\phi$ is not a monotone function, it can happen that an element $x$ of maximum order is not an element having maximum $\phi(o(x))$. For example consider a group $G$ for which the orders of the elements are $\{1,2,3,5,6\}$. Such a group has maximum length of a directed cycle equals to $\phi(o(5))=4$. But the lower bound for the maximum length of a cycle is given by the presence of elements of order $6$.  
		
		We want to further improve the lower bound. To achieve this result we need the following lemma.
		\begin{lemma}
			\label{N-classiinCicli}Let $\mathcal{C}$ be a cycle of $\mathcal{P}(G)$. Then there exists a cycle $\mathcal{C}'$ of $\mathcal{P}(G)$ with $V_{\mathcal{C}'}\supseteq V_{\mathcal{C}}$ and such that, for all $x\in V_{\mathcal{C}'}$ $[x]_{\mathtt{N}}\subseteq V_{\mathcal{C}'}$.
		\end{lemma}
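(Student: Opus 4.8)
The plan is to build $\mathcal{C}'$ from $\mathcal{C}$ by repeatedly inserting, one whole $\mathtt{N}$-class at a time, the vertices that are missing. First I would isolate the key local move. Suppose $\mathcal{C}$ is a cycle of $\mathcal{P}(G)$, $x$ is a vertex of $\mathcal{C}$ with successor $v$ along $\mathcal{C}$, and $y\in[x]_{\mathtt{N}}\setminus V_{\mathcal{C}}$. Since two elements of the same $\mathtt{N}$-class are joined, $\{x,y\}\in E$; and since $N[y]=N[x]$ and $v\in N[x]$, also $\{y,v\}\in E$. Hence replacing the edge from $x$ to $v$ by the path $x,y,v$ yields again a cycle of $\mathcal{P}(G)$, with vertex set $V_{\mathcal{C}}\cup\{y\}$. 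More generally, because $\mathcal{P}(G)_{[x]_{\mathtt{N}}}$ is a complete graph, if $y_1,\dots,y_m$ are all the members of $[x]_{\mathtt{N}}$ not yet on the cycle, the whole path $x,y_1,y_2,\dots,y_m,v$ can be spliced in at once ($\{y_j,y_{j+1}\}\in E$ by completeness, $\{x,y_1\}\in E$ as above, and $\{y_m,v\}\in E$ since $N[y_m]=N[x]$).

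Then I would perform the construction. Let $X_1,\dots,X_k$ be the distinct $\mathtt{N}$-classes meeting $V_{\mathcal{C}}$ and choose $x^{(j)}\in X_j\cap V_{\mathcal{C}}$ for each $j$. Put $\mathcal{C}^{(0)}:=\mathcal{C}$ and, for $j=1,\dots,k$, obtain $\mathcal{C}^{(j)}$ from $\mathcal{C}^{(j-1)}$ by applying the local move at $x^{(j)}$ — which still lies on $\mathcal{C}^{(j-1)}$ because no vertex is ever deleted — inserting every element of $X_j\setminus V_{\mathcal{C}^{(j-1)}}$; when $X_j$ is already contained, leave the cycle unchanged. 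Set $\mathcal{C}':=\mathcal{C}^{(k)}$. By construction $V_{\mathcal{C}'}=V_{\mathcal{C}}\cup\bigcup_{j=1}^{k}X_j\supseteq V_{\mathcal{C}}$, and since every vertex of $\mathcal{C}$ belongs to one of the $X_j$ we in fact have $V_{\mathcal{C}'}=\bigcup_{j=1}^{k}X_j$. Any $x\in V_{\mathcal{C}'}$ then lies in some $X_j$ — either as an original vertex or as one inserted from $X_j$ — so $[x]_{\mathtt{N}}=X_j\subseteq V_{\mathcal{C}'}$, as required.

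The argument has no genuinely difficult step; the care needed is purely in the bookkeeping: verifying that each insertion again produces a cycle in the sense of Definition~\ref{DefCycle} (no repeated vertex, and the new edges close the cyclic sequence up correctly), and observing that processing $X_j$ cannot undo the classes already completed, since all the vertices added at that stage lie in $X_j$ itself, which thereby becomes fully contained. Equivalently one may run an induction on the total deficiency $\sum_{j=1}^{k}\lvert X_j\setminus V_{\mathcal{C}}\rvert$, each single-vertex insertion strictly decreasing it until it reaches $0$. The only earlier facts used are the definition of the relation $\mathtt{N}$ (equal closed neighbourhoods) and the remark, recorded just after Definition~\ref{Ndef}, that the subgraph induced by an $\mathtt{N}$-class is a complete graph.
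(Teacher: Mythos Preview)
Your proof is correct and follows essentially the same approach as the paper: the paper also proceeds by repeatedly splicing the missing elements of an $\mathtt{N}$-class between a vertex $x_1$ of that class and its cycle-successor $x_2$, using exactly your two observations (completeness of the induced subgraph on $[x_1]_{\mathtt{N}}$, and $N[\bar{x}]=N[x_1]\ni x_2$). The only cosmetic difference is the bookkeeping parameter: the paper inducts on $|\{x\in V_{\mathcal{C}}:[x]_{\mathtt{N}}\nsubseteq V_{\mathcal{C}}\}|$, while you iterate over the distinct $\mathtt{N}$-classes meeting $V_{\mathcal{C}}$ (or, equivalently, over the total deficiency).
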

		\begin{proof}
			Let $\mathcal{C}=x_1, x_2, \dots, x_k, x_1$.
			and $Y_{\mathcal{C}}:=\{x\in V_{\mathcal{C}} \, | \, [x]_{\mathtt{N}}\nsubseteq V_{\mathcal{C}}\}$.
			We proceed by induction on $|Y_{\mathcal{C}}|$. If $|Y_{\mathcal{C}}|=0$, then we take $\mathcal{C}'=\mathcal{C}$. Assume the statement true for $|Y_{\mathcal{C}}|\leq n$. Suppose that there exists $i \in [k]$ such that $[x_i]_{\mathtt{N}}\nsubseteq V_{\mathcal{C}}$. After renaming, if necessary, we can consider $i=1$. Let $X:=[x_1]_{\mathtt{N}}\setminus V_{\mathcal{C}} $. Then $|X|\geq1$. In particular $|[x_1]_{\mathtt{N}}|\ne1$. 
			Let $X:=\{\bar{x}_1,...,\bar{x}_l\}$. 
			Since the graph induced by $[x_1]_{\mathtt{N}}$ is a complete graph, then, for all $i,j\in [l]$, $\bar{x}_i$ and $\bar{x}_j$ are adjacent, and we also have that $\bar{x}_i$ and $x_1$ are adjacent.
			Furthermore we have that $\bar{x}_l$ is adjacent to $x_{2}$, because $x_{2}$ is adjacent to $x_1$ and $N[\bar{x}_l]=N[x_1]$. 
			Thus consider $$\mathcal{C}':=x_1,\bar{x}_1,\bar{x}_2,\dots,\bar{x}_l, x_2, \dots, x_k,x_1.$$
			$\mathcal{C}'$ is a cycle such that $V_{\mathcal{C}}\cup [x_i]_{\mathtt{N}}\subseteq V_{\mathcal{C}'}$. Now $|Y_{\mathcal{C}'}|\leq |Y_{\mathcal{C}}|-1$. Therefore, by inductive hypothesis we obtain the thesis.
		\end{proof}
	
	
	In \cite{PowerGraphsofPowerGraoups}, Acharyya and Williams use a directed version of the quotient graph of a group $G$ to find a the longest directed path of $\vec{\mathcal{P}}(G)$. We want to use a similar idea, so we need the following definition extrapolated from \cite{PowerGraphsofPowerGraoups}.
	
	\begin{definition}
		\rm Let $\Gamma$ be a graph and $\sim$ be an equivalence relation on $V_{\Gamma}$. The \emph{$\sim$-weight function} is defined as follow:
		\begin{align*}
			w_{\sim}:[V_{\Gamma}]_{\sim}\longrightarrow \mathbb{N} \quad \, &\\
			[x]_{\sim}\longmapsto |[x]_{\sim}|.
		\end{align*}
	
		Furthermore, if $X\subseteq [V_{\Gamma}]_{\sim}$, then we define $w_{\sim}(X):= \sum_{x\in X}w_{\sim}(x)$.
	\end{definition}
	
	Let's have a look at an example of computation of $\mathtt{N}$-weight. Consider $G=\langle g \rangle$, then
	\[
	w_{\mathtt{N}}([g]_{\mathtt{N}})=
	\begin{cases}
		\phi(o(g))+1 & \text{if $o(g)$ is not a prime power} \\
		|G| & \text{if $o(g)$ is a prime power}
	\end{cases}
	\]
	Otherwise, if we consider $G=D_n$ with presentation give in \eqref{Dn_presentation}, then $w_{\mathtt{N}}(b)=1$ and 
	
	\[
	w_{\mathtt{N}}([a]_{\mathtt{N}})=
	\begin{cases}
		\phi(o(a)) & \text{if $o(a)$ is a not prime power} \\
		o(a)-1 & \text{if $o(a)$ is a prime power}
	\end{cases}
	\]
	
	Indeed $[b]_{\mathtt{N}}= \{b\}$ and $[a]_{\mathtt{N}}=\{\text{generators of } \langle a \rangle \}$ if $o(a)$ is not a prime power, instead $[a]_{\mathtt{N}}=\langle a \rangle \setminus \{1\}$ if $o(a)$ is a prime power. Then 
	\[
	w_{\mathtt{N}}(\{[b]_{\mathtt{N}}, [a]_{\mathtt{N}}\})=
	\begin{cases}
		\phi(o(a))+1 & \text{if $o(a)$ is a not prime power} \\
		o(a) & \text{if $o(a)$ is a prime power}
	\end{cases}
	\]
	
	 Denoted $\Omega$ the set of all paths in $\mathcal{P}^*(G)/\mathtt{N}$, then $$w_G:=\max_{\mathcal{W}\in \Omega} w_{\mathtt{N}}(\mathcal{W})$$.

	\begin{prop}
		\label{Weigth-length}Let $G$ be a group.
		If $\mathcal{C}$ is the cycle of $\mathcal{P}(G)$ with maximum length, then
		
		$$ \ell(\mathcal{C})\geq w_G+1. $$
		
		Moreover, if equality holds, then, denoted $\mathcal{W}=c_1, \dots, c_k$ a path in $\mathcal{P}^*(G)/\mathtt{N}$ with maximum $\mathtt{N}$-weight, for all $i\in [k-1]\setminus \{1\}$ such that $w_{\mathtt{N}}(c_i)\geq 2$, there is no vertex $y\in N_{\mathcal{P}^*(G)/\mathtt{N}}[c_i] \setminus V_{\mathcal{W}}$ such that $w_{\mathtt{N}}(y)\geq 2$. 
	
	\end{prop}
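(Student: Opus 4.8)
The plan is to realise a maximum-$\mathtt{N}$-weight path of the quotient $\mathcal{P}^*(G)/\mathtt{N}$ as a cycle of $\mathcal{P}(G)$ by replacing each quotient vertex with the complete subgraph induced by the corresponding class and then closing up the cycle through the identity.

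First I would set aside the degenerate case: if $\mathcal{P}(G)$ has no cycle there is nothing to prove, so assume it does; then any cycle (having length $\ge 3$) contains two consecutive non-identity vertices, hence an edge of $\mathcal{P}^*(G)$, which already forces $w_G\ge 2$. Fix a path $\mathcal{W}=c_1,\dots,c_k$ in $\mathcal{P}^*(G)/\mathtt{N}$ with $w_{\mathtt{N}}(\mathcal{W})=w_G$, and list the elements of $c_j$ as $a^j_1,\dots,a^j_{m_j}$ where $m_j=w_{\mathtt{N}}(c_j)$. The $c_j$ are pairwise disjoint subsets of $G\setminus\{1\}$, so the sequence
$$1,\ a^1_1,\dots,a^1_{m_1},\ a^2_1,\dots,a^2_{m_2},\ \dots,\ a^k_1,\dots,a^k_{m_k},\ 1$$
has no repeated vertex. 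I would check it is a cycle of $\mathcal{P}(G)$: the identity is a star vertex, hence joined to $a^1_1$ and to $a^k_{m_k}$; inside each block the induced subgraph of $\mathcal{P}(G)$ on $c_j$ is complete (an $\mathtt{N}$-class, or a block of star vertices); and since consecutive $c_j,c_{j+1}$ are joined in the quotient, Lemma \ref{latiNclassi} (trivially if one of the classes consists of star vertices) gives that $a^j_{m_j}$ is joined to $a^{j+1}_1$. This cycle $\mathcal{C}'$ has $1+\sum_j m_j=w_G+1\ge 3$ vertices, so $\ell(\mathcal{C}')=w_G+1$ and hence $\ell(\mathcal{C})\ge w_G+1$.

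For the second assertion, assume $\ell(\mathcal{C})=w_G+1$, so the cycle $\mathcal{C}'$ just built has maximum length, and suppose for contradiction that for some $i$ with $2\le i\le k-1$ and $w_{\mathtt{N}}(c_i)\ge 2$ there is $y\in N_{\mathcal{P}^*(G)/\mathtt{N}}[c_i]\setminus V_{\mathcal{W}}$ with $w_{\mathtt{N}}(y)\ge 2$. Since $y\notin V_{\mathcal{W}}$ we have $y\ne c_i$, so $y$ is joined to $c_i$; listing the vertices of $y$ as $y_1,\dots,y_t$ with $t=w_{\mathtt{N}}(y)\ge 2$, these form a complete subgraph of $\mathcal{P}(G)$ all of whose vertices are joined to all of $c_i$ (again by Lemma \ref{latiNclassi}), and they lie outside $\{1\}\cup\bigcup_j c_j$. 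Because $m_i\ge 2$, the cycle $\mathcal{C}'$ contains the consecutive pair $a^i_1,a^i_2$ inside $c_i$; replacing the edge $\{a^i_1,a^i_2\}$ by the detour $a^i_1,y_1,\dots,y_t,a^i_2$ yields a cycle $\mathcal{C}''$ of $\mathcal{P}(G)$ with $\ell(\mathcal{C}'')=w_G+1+t>\ell(\mathcal{C})$, contradicting maximality. (For $i=1$ or $i=k$ such a $y$ cannot exist at all, since $y,c_1,\dots,c_k$ would be a path of $\mathtt{N}$-weight exceeding $w_G$; this is why only internal indices occur in the statement.)

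The only point needing care is the verification in the second paragraph that the expanded sequence is genuinely a cycle of $\mathcal{P}(G)$ — in particular applying Lemma \ref{latiNclassi} across the boundary between consecutive classes, and noticing that a quotient vertex coming from $\mathcal{S}\setminus\{1\}$ is not literally an $\mathtt{N}$-class of $\mathcal{P}(G)$ but consists of star vertices, so the relevant adjacencies hold automatically. Once $\mathcal{C}'$ is in place, the splicing argument for the equality case is immediate.
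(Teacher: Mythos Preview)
Your proof is correct and follows essentially the same route as the paper: build the cycle $\mathcal{C}'$ by expanding each quotient vertex $c_j$ into its full class and closing through the identity, then for the equality case splice the class of $y$ between two consecutive elements of $c_i$ to obtain a strictly longer cycle $\mathcal{C}''$. Your treatment is in fact slightly more careful than the paper's (you address the degenerate case, invoke Lemma~\ref{latiNclassi} explicitly for the cross-class edges, and explain why only internal indices $2\le i\le k-1$ appear in the statement), but the underlying argument is the same.
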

	\begin{proof}
		Let $\mathcal{W}=c_1, \dots, c_k$ a path in $\mathcal{P}^*(G)/\mathtt{N}$ with maximum $\mathtt{N}$-weight. Then $$w_{\mathtt{N}}(\mathcal{W})=\max_{\mathcal{W}\in \Omega} w_{\mathtt{N}}(\mathcal{W})=w_G.$$ Let $n_i=w_{\mathtt{N}}(c_i)$ for all $i\in [k]$.
		Denote $c_i=\{x_1^i, \dots, x_{n_i}^i\}$ for all $i\in [k]$.		
		We have that $\{x_j^i, x_t^{i+1}\}\in E_{\mathcal{P}(G)}$ for all $i\in[k-1]$, $j\in [n_i]$ and $t\in [n_{i+1}]$ because $\{c_i,c_{i+1}\}\in E_{\mathcal{P}^*(G)/\mathtt{N}}$. Moreover, since $c_i$ is an $\mathtt{N}$-class for all $i\in [k]$, then $\{x_j^i,x_t^i\}\in E_{\mathcal{P}(G)}$ for all $j, t\in[n_i]$. Then, since $1$ is a star vertex of $\mathcal{P}(G)$, $$\mathcal{C}':= 1,x_1^1,\dots, x_{n_1}^1, x_1^2, \dots, x_{n_2}^2, \dots, x_1^k, \dots, x_{n_k}^k,1$$ is a cycle with length $w_{\mathtt{N}}(\mathcal{W})+1$. This proves $$w_G+1\leq \ell(\mathcal{C}). $$
		
		Assume next that equality holds. We have that the cycle $\mathcal{C}'$ above is a cycle of maximum length.
		Suppose, by contradiction, that there exists an index $i\in [k-1]\setminus \{1\}$ such that:
		\begin{enumerate}
			\item $w_{\mathtt{N}}(c_i)\geq 2$;
			
			\item There exists a vertex $y\in N_{\mathcal{P}^*(G)/\mathtt{N}}[c_i] \setminus V_{\mathcal{W}}$ such that $w_{\mathtt{N}}(y)\geq 2$.
		\end{enumerate}
		
		We denote $[y]_{\mathtt{N}}:=\{{\bf y_1,\dots, y_n}\}$.
		We reach a contradiction due to the presence of a cycle $\mathcal{C}''$ with $\ell(\mathcal{C}'')>\ell(\mathcal{C}')$.
		We define $\mathcal{C}''$ as follows:
		$$\mathcal{C}'':= 1,x_1^1,\dots, x_{n_1}^1,\dots,  x_1^i, {\bf y_1, \dots, y_n}, x_2^i, \dots, x_{n_i}^i, \dots, x_1^k, \dots, x_{n_k}^k,1.$$

	\end{proof}
	
	This new lower bound is not always better than the one in Proposition \ref{orderBound}. For example, think at $C_n$ or $D_n$. 
	By Proposition \ref{C_nHamilton}, we have that, for $\mathcal{P}(C_n)$, the maximum length for a cycle is $n$.
	For $\mathcal{P}(D_n)$ we have that there is a cycle $\mathcal{C}$ of length $n$ since, considering the presentation given in \eqref{Dn_presentation}, $o(a)=n$. Now we have $D_n\setminus V_{\mathcal{C}}=D_n\setminus \langle a \rangle$. We also have that every element $x\in D_n\setminus \langle a \rangle$ has order $2$. In particular $x$ cannot be a vertex of a cycle of $\mathcal{P}(D_n)$ because it is joined only with the identity. Therefore $\mathcal{C}$ has maximum length since no vertices of $\mathcal{P}(D_n)$ not in $\mathcal{C}$ can be in a cycle. 
	We have proven that the cycle with maximum length in the power graphs of $C_n$ and $D_n$ has length $n$, that is exactly the order of an element with maximum order.
	Anyway in those cases we have that $w_{\mathtt{N}}(\mathcal{W})=n-1$ for $\mathcal{W}$ a path with maximum $\mathtt{N}$-weight in the {\scriptsize CTR} of the proper power graph of those two groups.
	
	\begin{Oss}
		Let $G$ be a group. Then $$ M_{o(G)}\leq w_G+1.$$ 
	\end{Oss}
	\begin{proof}
		Let $x\in G$ be an element of order $M_{o(G)}$.
		Let $\mathcal{W}$ be an hamiltonian path of $\mathcal{P}^*(\langle x \rangle)/\diamond$. Such a path exists by \cite[Theorem 11]{CyclicSubgroupGraphofFiniteGroup}. Since $\mathcal{W}$ is hamiltonian, then $$w_{\mathtt{N}}(\mathcal{W})=w_{\mathtt{N}}(V_{\mathcal{P}^*(\langle x \rangle)/\diamond})=|V_{\mathcal{P}^*(\langle x \rangle)}|=o(x)-1.$$ Now, $\mathcal{P}^*(\langle x \rangle )/\diamond=\mathcal{P}^*(\langle x \rangle )/\mathtt{N}$ and $\mathcal{P}^*(\langle x \rangle )/\mathtt{N}$ is a subgraph of $\mathcal{P}^*(G)/\mathtt{N}$. Hence $$ w_{\mathtt{N}}(\mathcal{W})\leq w_G.$$
		Therefore $$ M_{o(G)}=o(x)=w_{\mathtt{N}}(\mathcal{W})+1\leq w_G+1.$$
	\end{proof}
	
	Let's analyse the examples in Figure \ref{C_2xC_10} and Figure \ref{C_2xC_4} where the two lower bounds are different. In Figure \ref{C_2xC_10} we see $\mathcal{P}^*(G)$ and $\mathcal{P}^*(G)/\mathtt{N}$ for $G=C_2\times C_{10}$. The colours represent the $\mathtt{N}$-classes. Remember that, since $G$ is abelian and $\mathcal{S}=\{1\}$, the $\mathtt{N}$-classes are equals to the $\diamond$-classes.
	Take a look at the red path $\mathcal{W}$ in $\mathcal{P}^*(G)/\mathtt{N}$. It is a path with maximum weight, and $w_{\mathtt{N}}(\mathcal{W})=w_G=14$. Note that $$M_{o(G)}=10<w_G+1=15.$$ Thus the lower bound in Proposition \ref{Weigth-length} is better than the one in Proposition \ref{orderBound}. The cycle $\mathcal{C}'$, defined in the proof of Proposition \ref{Weigth-length}, is composed by the two red paths highlighted in $\mathcal{P}^*(G)$ joined by the green edge and closed joining both the end vertices, the purple one and the pink one, with the identity. Hence the cycle obtained has length $w_G+1=15$.
	Note that for this path $\mathcal{W}$ the condition for the equality does not hold. Indeed $\mathcal{P}(C_2\times C_{10})$ is hamiltonian. An hamiltonian cycle of $\mathcal{P}(C_2\times C_{10})$ is obtained as follows:
	\begin{itemize}
		\item Start from the identity.
		\item Move to the purple vertex.
		\item Follow the red path until you reach to the first blue vertex.
		\item Now follow the cyan path until you reach the blue vertex joined with the previous blue vertex with the green edge.
		\item Follow the red path with this new blue vertex as an end vertex; you will arrive in the pink vertex.
		\item Move to the identity and close the cycle. 
	\end{itemize}

	\begin{figure}
		\centering
		\begin{tikzpicture}[line cap=round,line join=round,>=triangle 45,x=1.0cm,y=1.0cm]
			\clip(-1.5,-1.) rectangle (12.5,6.);
			\draw [line width=1.pt] (1.5034163897239292,-0.2887114013232972)-- (0.5575991480232942,0.03598806788138642);
			\draw [line width=1.pt,color=ffqqqq] (1.5034163897239292,-0.2887114013232972)-- (-0.231541361373099,0.6502007805710548);
			\draw [line width=1.pt] (1.5034163897239292,-0.2887114013232972)-- (-0.7784895194955253,1.487367258833583);
			\draw [line width=1.pt] (1.5034163897239292,-0.2887114013232972)-- (1.0170550863212076,5.580796648144843);
			\draw [line width=1.pt] (1.5034163897239292,-0.2887114013232972)-- (-1.023975006636324,2.4567675247729133);
			\draw [line width=1.pt] (1.5034163897239292,-0.2887114013232972)-- (-0.9413956611639915,3.453352017779583);
			\draw [line width=1.pt] (1.5034163897239292,-0.2887114013232972)-- (-0.5397002365110217,4.369125344434639);
			\draw [line width=1.pt,color=ffqqqq] (1.5034163897239292,-0.2887114013232972)-- (0.13758133511471993,5.10484925510777);
			\draw [line width=1.pt,color=qqffff] (3.449233631424563,0.03598806788138598)-- (0.5575991480232942,0.03598806788138642);
			\draw [line width=1.pt,color=qqffff] (3.449233631424563,0.03598806788138598)-- (4.785322298943383,1.4873672588335813);
			\draw [line width=1.pt] (3.449233631424563,0.03598806788138598)-- (-0.7784895194955253,1.487367258833583);
			\draw [line width=1.pt] (3.449233631424563,0.03598806788138598)-- (4.9482284406118495,3.4533520177795802);
			\draw [line width=1.pt] (3.449233631424563,0.03598806788138598)-- (-0.9413956611639915,3.453352017779583);
			\draw [line width=1.pt] (3.449233631424563,0.03598806788138598)-- (3.8692514443331403,5.104849255107769);
			\draw [line width=1.pt] (3.449233631424563,0.03598806788138598)-- (0.13758133511471993,5.10484925510777);
			\draw [line width=1.pt] (3.449233631424563,0.03598806788138598)-- (2.0034163897239297,5.745391238425577);
			\draw [line width=1.pt] (4.238374140820957,0.650200780571053)-- (-0.7784895194955253,1.487367258833583);
			\draw [line width=1.pt] (4.238374140820957,0.650200780571053)-- (4.546533015958881,4.369125344434638);
			\draw [line width=1.pt] (4.238374140820957,0.650200780571053)-- (0.13758133511471993,5.10484925510777);
			\draw [line width=1.pt,color=ffqqqq] (4.238374140820957,0.650200780571053)-- (2.503416389723929,-0.2887114013232972);
			\draw [line width=1.pt] (4.238374140820957,0.650200780571053)-- (0.5575991480232942,0.03598806788138642);
			\draw [line width=1.pt,color=ffqqqq] (4.238374140820957,0.650200780571053)-- (5.030807786084182,2.4567675247729115);
			\draw [line width=1.pt] (4.238374140820957,0.650200780571053)-- (-0.9413956611639915,3.453352017779583);
			\draw [line width=1.pt] (4.238374140820957,0.650200780571053)-- (2.9897776931266518,5.580796648144843);
			\draw [line width=1.pt] (4.785322298943383,1.4873672588335813)-- (-0.9413956611639915,3.453352017779583);
			\draw [line width=1.pt] (4.785322298943383,1.4873672588335813)-- (2.0034163897239297,5.745391238425577);
			\draw [line width=1.pt] (4.785322298943383,1.4873672588335813)-- (0.5575991480232942,0.03598806788138642);
			\draw [line width=1.pt,color=qqffff] (4.785322298943383,1.4873672588335813)-- (4.9482284406118495,3.4533520177795802);
			\draw [line width=1.pt] (4.785322298943383,1.4873672588335813)-- (3.8692514443331403,5.104849255107769);
			\draw [line width=1.pt] (4.785322298943383,1.4873672588335813)-- (-0.7784895194955253,1.487367258833583);
			\draw [line width=1.pt] (4.785322298943383,1.4873672588335813)-- (0.13758133511471993,5.10484925510777);
			\draw [line width=1.pt] (5.030807786084182,2.4567675247729115)-- (0.13758133511471993,5.10484925510777);
			\draw [line width=1.pt] (5.030807786084182,2.4567675247729115)-- (-0.9413956611639915,3.453352017779583);
			\draw [line width=1.pt] (5.030807786084182,2.4567675247729115)-- (2.503416389723929,-0.2887114013232972);
			\draw [line width=1.pt] (5.030807786084182,2.4567675247729115)-- (-0.7784895194955253,1.487367258833583);
			\draw [line width=1.pt] (5.030807786084182,2.4567675247729115)-- (2.9897776931266518,5.580796648144843);
			\draw [line width=1.pt] (5.030807786084182,2.4567675247729115)-- (0.5575991480232942,0.03598806788138642);
			\draw [line width=1.pt,color=ffqqqq] (5.030807786084182,2.4567675247729115)-- (4.546533015958881,4.369125344434638);
			\draw [line width=1.pt,color=ffqqqq] (4.546533015958881,4.369125344434638)-- (2.9897776931266518,5.580796648144843);
			\draw [line width=1.pt] (4.546533015958881,4.369125344434638)-- (0.13758133511471993,5.10484925510777);
			\draw [line width=1.pt] (4.546533015958881,4.369125344434638)-- (-0.9413956611639915,3.453352017779583);
			\draw [line width=1.pt] (4.546533015958881,4.369125344434638)-- (-0.7784895194955253,1.487367258833583);
			\draw [line width=1.pt] (4.546533015958881,4.369125344434638)-- (0.5575991480232942,0.03598806788138642);
			\draw [line width=1.pt] (4.546533015958881,4.369125344434638)-- (2.503416389723929,-0.2887114013232972);
			\draw [line width=1.pt] (3.8692514443331403,5.104849255107769)-- (0.5575991480232942,0.03598806788138642);
			\draw [line width=1.pt] (3.8692514443331403,5.104849255107769)-- (-0.7784895194955253,1.487367258833583);
			\draw [line width=1.pt] (3.8692514443331403,5.104849255107769)-- (-0.9413956611639915,3.453352017779583);
			\draw [line width=1.pt] (3.8692514443331403,5.104849255107769)-- (0.13758133511471993,5.10484925510777);
			\draw [line width=1.pt,color=qqffff] (3.8692514443331403,5.104849255107769)-- (2.0034163897239297,5.745391238425577);
			\draw [line width=1.pt,color=qqffff] (3.8692514443331403,5.104849255107769)-- (4.9482284406118495,3.4533520177795802);
			\draw [line width=1.pt] (2.9897776931266518,5.580796648144843)-- (-0.9413956611639915,3.453352017779583);
			\draw [line width=1.pt,color=ffqqqq] (2.9897776931266518,5.580796648144843)-- (0.5575991480232942,0.03598806788138642);
			\draw [line width=1.pt] (2.9897776931266518,5.580796648144843)-- (2.503416389723929,-0.2887114013232972);
			\draw [line width=1.pt] (2.9897776931266518,5.580796648144843)-- (0.13758133511471993,5.10484925510777);
			\draw [line width=1.pt] (2.9897776931266518,5.580796648144843)-- (-0.7784895194955253,1.487367258833583);
			\draw [line width=1.pt] (2.0034163897239297,5.745391238425577)-- (0.13758133511471993,5.10484925510777);
			\draw [line width=1.pt] (2.0034163897239297,5.745391238425577)-- (-0.9413956611639915,3.453352017779583);
			\draw [line width=1.pt] (2.0034163897239297,5.745391238425577)-- (4.9482284406118495,3.4533520177795802);
			\draw [line width=1.pt,color=qqffff] (2.0034163897239297,5.745391238425577)-- (-0.7784895194955253,1.487367258833583);
			\draw [line width=1.pt] (2.0034163897239297,5.745391238425577)-- (0.5575991480232942,0.03598806788138642);
			\draw [line width=1.pt] (-0.231541361373099,0.6502007805710548)-- (0.5575991480232942,0.03598806788138642);
			\draw [line width=1.pt] (-0.231541361373099,0.6502007805710548)-- (-0.7784895194955253,1.487367258833583);
			\draw [line width=1.pt] (-0.231541361373099,0.6502007805710548)-- (-1.023975006636324,2.4567675247729133);
			\draw [line width=1.pt] (-0.231541361373099,0.6502007805710548)-- (-0.9413956611639915,3.453352017779583);
			\draw [line width=1.pt,color=ffqqqq] (-0.231541361373099,0.6502007805710548)-- (-0.5397002365110217,4.369125344434639);
			\draw [line width=1.pt] (-0.231541361373099,0.6502007805710548)-- (0.13758133511471993,5.10484925510777);
			\draw [line width=1.pt] (-0.231541361373099,0.6502007805710548)-- (1.0170550863212076,5.580796648144843);
			\draw [line width=1.pt] (-0.5397002365110217,4.369125344434639)-- (-1.023975006636324,2.4567675247729133);
			\draw [line width=1.pt] (-0.5397002365110217,4.369125344434639)-- (-0.9413956611639915,3.453352017779583);
			\draw [line width=1.pt] (-0.5397002365110217,4.369125344434639)-- (-0.7784895194955253,1.487367258833583);
			\draw [line width=1.pt] (-0.5397002365110217,4.369125344434639)-- (0.5575991480232942,0.03598806788138642);
			\draw [line width=1.pt] (-0.5397002365110217,4.369125344434639)-- (0.13758133511471993,5.10484925510777);
			\draw [line width=1.pt,color=ffqqqq] (-0.5397002365110217,4.369125344434639)-- (1.0170550863212076,5.580796648144843);
			\draw [line width=1.pt] (1.0170550863212076,5.580796648144843)-- (0.13758133511471993,5.10484925510777);
			\draw [line width=1.pt] (1.0170550863212076,5.580796648144843)-- (-0.9413956611639915,3.453352017779583);
			\draw [line width=1.pt,color=ffqqqq] (1.0170550863212076,5.580796648144843)-- (-1.023975006636324,2.4567675247729133);
			\draw [line width=1.pt] (1.0170550863212076,5.580796648144843)-- (-0.7784895194955253,1.487367258833583);
			\draw [line width=1.pt] (1.0170550863212076,5.580796648144843)-- (0.5575991480232942,0.03598806788138642);
			\draw [line width=1.pt,color=ffqqqq] (9.046193824211727,0.5312289784348669)-- (7.79921422049426,2.0948919433709268);
			\draw [line width=1.pt,color=ffqqqq] (9.046193824211727,0.5312289784348669)-- (8.244256088406889,4.044747767734574);
			\draw [line width=1.pt] (7.79921422049426,2.0948919433709268)-- (10.046193824211727,4.91251524596969);
			\draw [line width=1.pt,color=ffqqqq] (7.79921422049426,2.0948919433709268)-- (11.848131560016565,4.044747767734573);
			\draw [line width=1.pt] (10.046193824211727,4.91251524596969)-- (12.293173427929194,2.094891943370926);
			\draw [line width=1.pt,color=ffqqqq] (11.046193824211727,0.5312289784348669)-- (11.848131560016565,4.044747767734573);
			\draw [line width=1.pt,color=qqffqq] (0.5575991480232942,0.03598806788138642)-- (-0.7784895194955253,1.487367258833583);
			\draw [line width=1.pt] (0.5575991480232942,0.03598806788138642)-- (-0.9413956611639915,3.453352017779583);
			\draw [line width=1.pt] (0.5575991480232942,0.03598806788138642)-- (0.13758133511471993,5.10484925510777);
			\draw [line width=1.pt,color=ffqqqq] (-0.7784895194955253,1.487367258833583)-- (-0.9413956611639915,3.453352017779583);
			\draw [line width=1.pt,color=ffqqqq] (-0.9413956611639915,3.453352017779583)-- (0.13758133511471993,5.10484925510777);
			\begin{scriptsize}
				\draw [fill=ffffqq] (1.5034163897239292,-0.2887114013232972) circle (3.pt);
				\draw [fill=xfqqff] (2.503416389723929,-0.2887114013232972) circle (3.pt);
				\draw [fill=ffxfqq] (3.449233631424563,0.03598806788138598) circle (3.pt);
				\draw [fill=ffqqff] (4.238374140820957,0.650200780571053) circle (3.pt);
				\draw [fill=ffxfqq] (4.785322298943383,1.4873672588335813) circle (3.pt);
				\draw [fill=ffqqff] (5.030807786084182,2.4567675247729115) circle (3.pt);
				\draw [fill=ffffff] (4.9482284406118495,3.4533520177795802) circle (3.pt);
				\draw [fill=ffqqff] (4.546533015958881,4.369125344434638) circle (3.pt);
				\draw [fill=ffxfqq] (3.8692514443331403,5.104849255107769) circle (3.pt);
				\draw [fill=ffqqff] (2.9897776931266518,5.580796648144843) circle (3.pt);
				\draw [fill=ffxfqq] (2.0034163897239297,5.745391238425577) circle (3.pt);
				\draw [fill=ffffqq] (1.0170550863212076,5.580796648144843) circle (3.pt);
				\draw [fill=qqqqff] (0.13758133511471993,5.10484925510777) circle (3.pt);
				\draw [fill=ffffqq] (-0.5397002365110217,4.369125344434639) circle (3.pt);
				\draw [fill=qqqqff] (-0.9413956611639915,3.453352017779583) circle (3.pt);
				\draw [fill=ffcqcb] (-1.023975006636324,2.4567675247729133) circle (3.pt);
				\draw [fill=qqqqff] (-0.7784895194955253,1.487367258833583) circle (3.pt);
				\draw [fill=ffffqq] (-0.231541361373099,0.6502007805710548) circle (3.pt);
				\draw [fill=qqqqff] (0.5575991480232942,0.03598806788138642) circle (3.pt);
				\draw [fill=ffffqq] (9.046193824211727,0.5312289784348669) circle (3.pt);
				\draw [fill=xfqqff] (11.046193824211727,0.5312289784348669) circle (3.pt);
				\draw [fill=ffffff] (12.293173427929194,2.094891943370926) circle (3.pt);
				\draw [fill=ffqqff] (11.848131560016565,4.044747767734573) circle (3.pt);
				\draw [fill=ffxfqq] (10.046193824211727,4.91251524596969) circle (3.pt);
				\draw [fill=ffcqcb] (8.244256088406889,4.044747767734574) circle (3.pt);
				\draw [fill=qqqqff] (7.79921422049426,2.0948919433709268) circle (3.pt);
			\end{scriptsize}
		\end{tikzpicture}
		\caption{$\mathcal{P}^*(C_2\times C_{10})$ and $\mathcal{P}^*(C_2\times C_{10})/\mathtt{N}$.}
		\label{C_2xC_10}
	\end{figure}
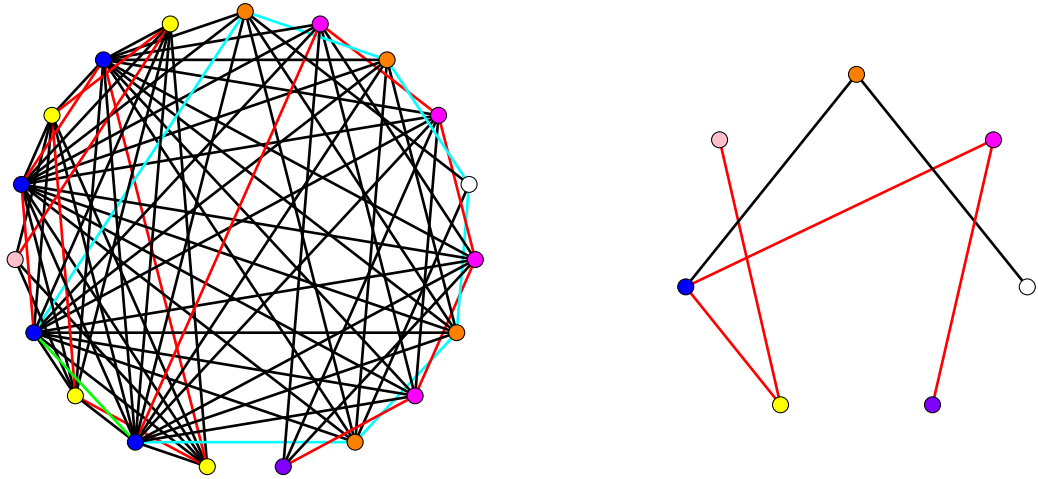

	In Figure \ref{C_2xC_4} there is represented both $\mathcal{P}^*(C_2\times C_4)$ and $\mathcal{P}^*(C_2\times C_4)/\mathtt{N}$. In this case in the quotient graph there is a unique path with maximum weight, that is the $2$-path that goes from the blue vertex to the yellow one. The cycle $\mathcal{C}'$ defined in the proof of the Proposition \ref{Weigth-length} it is also the cycle with maximum length. $\mathcal{C}'$ starts in the identity, goes to a blue vertex and then to the other blue one. Then, after it has passed the white vertex, it goes to a yellow vertex, passes the other yellow vertex and then comes back to the identity. In conclusion, we have that 
	 $$M_{o(G)}=4<w_G+1=6. $$
	
	\begin{figure}
		\centering
		\begin{tikzpicture}[line cap=round,line join=round,>=triangle 45,x=1.0cm,y=1.0cm]
			\clip(-5.5,0.) rectangle (9.5,8.);
			\draw [line width=1.pt] (-3.,1.)-- (-4.870469405576201,3.34549444740409);
			\draw [line width=1.pt] (-3.,1.)-- (-4.202906603707257,6.270278183949561);
			\draw [line width=1.pt] (-4.202906603707257,6.270278183949561)-- (-4.870469405576201,3.34549444740409);
			\draw [line width=1.pt] (1.8704694055762006,3.3454944474040893)-- (-4.870469405576201,3.34549444740409);
			\draw [line width=1.pt] (1.8704694055762006,3.3454944474040893)-- (-1.5,7.571929401302235);
			\draw [line width=1.pt] (-1.5,7.571929401302235)-- (-4.870469405576201,3.34549444740409);
			\draw [line width=1.pt] (5.,2.)-- (4.0729490168751585,4.853169548885461);
			\draw [line width=1.pt] (4.0729490168751585,4.853169548885461)-- (6.5,6.616525305762879);
			\draw [line width=1.pt,dash pattern=on 2pt off 2pt] (3.,-1.)-- (3.,10.);
			\begin{scriptsize}
				\draw [fill=qqqqff] (-3.,1.) circle (3.pt);
				\draw [fill=ffqqff] (0.,1.) circle (3.pt);
				\draw [fill=ffffqq] (1.8704694055762006,3.3454944474040893) circle (3.pt);
				\draw [fill=ffxfqq] (1.2029066037072575,6.27027818394956) circle (3.pt);
				\draw [fill=ffffqq] (-1.5,7.571929401302235) circle (3.pt);
				\draw [fill=qqqqff] (-4.202906603707257,6.270278183949561) circle (3.pt);
				\draw [fill=ffffff] (-4.870469405576201,3.34549444740409) circle (3.pt);
				\draw [fill=qqqqff] (5.,2.) circle (3.pt);
				\draw [fill=ffqqff] (8.,2.) circle (3.pt);
				\draw [fill=ffxfqq] (8.927050983124843,4.85316954888546) circle (3.pt);
				\draw [fill=ffffqq] (6.5,6.616525305762879) circle (3.pt);
				\draw [fill=ffffff] (4.0729490168751585,4.853169548885461) circle (3.pt);
				\draw [fill=ududff] (3.,-1.) circle (3.pt);
				\draw [fill=ududff] (3.,10.) circle (3.pt);
				
			\end{scriptsize}
		\end{tikzpicture}
		\caption{$\mathcal{P}^*(C_2\times C_4)$ and $\mathcal{P}^*(C_2 \times C_4)/\mathtt{N}$}
		\label{C_2xC_4}
	\end{figure}
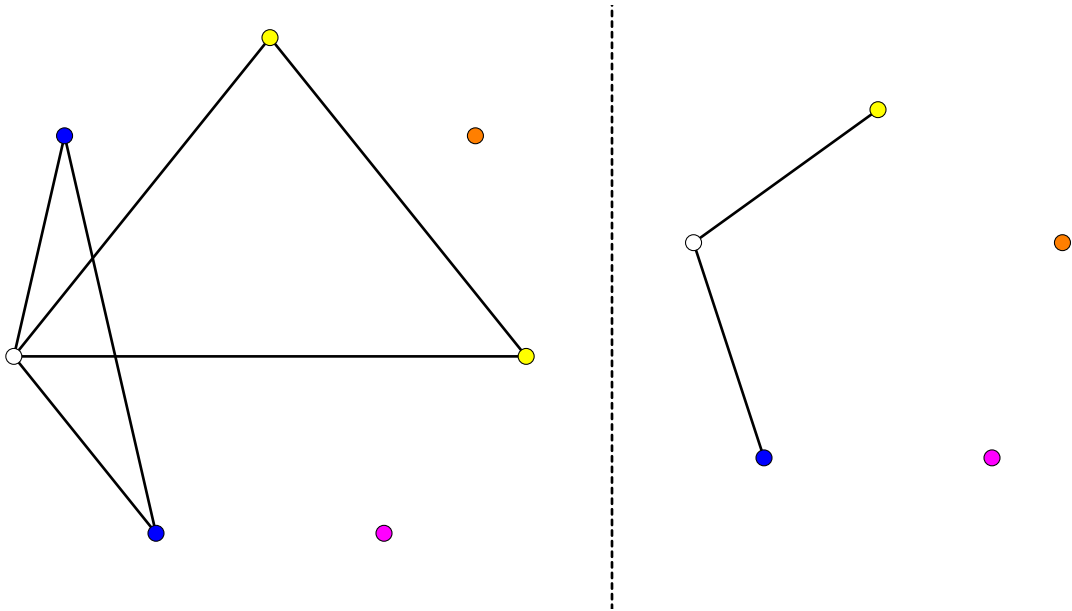

	We conclude this chapter with two questions for future research.
	
	\begin{Prob}
		Do there exists a formula for the maximum length of a cycle/path of a power graph?
	\end{Prob}

	\begin{Prob}
		What can be said about a group with fixed maximum length of a cycle/path of its power graph?
	\end{Prob}

	\newpage

\end{document}